\g@addto@macro{\endabstract}{\@setabstract}
\newcommand{\authorfootnotes}{\renewcommand\thefootnote{\@fnsymbol\c@footnote}}%
\def\amsbb{\use@mathgroup \M@U \symAMSb}
\pgfplotsset{compat=1.15}
\definecolor{webgreen}{rgb}{0,.5,0}
\definecolor{webbrown}{rgb}{.6,0,0}
\definecolor{RoyalBlue}{cmyk}{1, 0.50, 0, 0}
\DeclareSymbolFont{bbold}{U}{bbold}{m}{n}
\DeclareSymbolFontAlphabet{\mathbbold}{bbold}
\newcommand{\C}{{\mathbb C}}
\newcommand{\Z}{{\mathbb Z}}
\newcommand{\N}{{\mathbb N}}
\newcommand{\T}{{\mathbb T}}
\newcommand{\al}{\alpha}
\newcommand{\be}{\beta}
\newcommand{\ga}{\gamma}
\newcommand{\Ga}{\Gamma}
\newcommand{\La}{\Lambda}
\newcommand{\de}{\delta}
\newcommand{\ze}{\zeta}
\newcommand{\di}{\displaystyle}
\newcommand{\ic}{\textrm{i}}
\newcommand{\dd}{\textrm{d}}
\newcommand{\qandq}{\quad \text{and} \quad}
\newcommand{\tn}[1]{\textnormal{#1}}
\newcommand{\pr}[1]{\left( #1\right)}
\newtheorem{definition}{Definition}
\newtheorem{theorem}{Theorem}
\newtheorem{remark}[theorem]{Remark}
\newtheorem{lemma}[theorem]{Lemma}
\newtheorem{corollary}{Corollary}[theorem]
\numberwithin{equation}{section}
\numberwithin{theorem}{section}
\numberwithin{definition}{section}
\newcommand*\pFq[6][8]{%
	\begingroup 
	\pFqmuskip=#1mu\relax
	\mathchardef\normalcomma=\mathcode`,
	\mathcode`\,=\string"8000
	\begingroup\lccode`\~=`\,
	\lowercase{\endgroup\let~}\pFqcomma
	{}_{#2}F_{#3}{\left[\genfrac..{0pt}{}{#4}{#5};#6\right]}%
	\endgroup
}
\newcommand{\pFqcomma}{{\normalcomma}\mskip\pFqmuskip}
\let\origmaketitle\maketitle
\def\maketitle{
	\begingroup
	\def\uppercasenonmath##1{} 
	\let\MakeUppercase\relax 
	\origmaketitle
	\endgroup
}
\DeclareRobustCommand\widecheck[1]{{\mathpalette\@widecheck{#1}}}
\def\@widecheck#1#2{%
	\setbox\z@\hbox{\m@th$#1#2$}%
	\setbox\tw@\hbox{\m@th$#1%
		\widehat{%
			\vrule\@width\z@\@height\ht\z@
			\vrule\@height\z@\@width\wd\z@}$}%
	\dp\tw@-\ht\z@
	\@tempdima\ht\z@ \advance\@tempdima2\ht\tw@ \divide\@tempdima\thr@@
	\setbox\tw@\hbox{%
		\raise\@tempdima\hbox{\scalebox{1}[-1]{\lower\@tempdima\box
				\tw@}}}%
	{\ooalign{\box\tw@ \cr \box\z@}}}
\begin{document}

\title[Modulated Bi-orthogonal Polynomials on the Unit Circle: The $2j-k$ and $j-2k$ Systems]{Modulated Bi-orthogonal Polynomials on the Unit Circle: \\ The $2j-k$ and $j-2k$ Systems}

\maketitle

\begin{center}
\authorfootnotes	
Roozbeh Gharakhloo\footnote{Department of Mathematics, Colorado State University, Fort Collins, CO 80521, USA, E-mail: roozbeh.gharakhloo@colostate.edu},  
Nicholas S. Witte\footnote{School of Mathematics and Statistics, Victoria University of Wellington, Wellington, New Zealand e-mail: n.s.witte@protonmail.com}
\par \bigskip
\end{center}	

\begin{abstract}
	We construct the systems of bi-orthogonal polynomials on the unit circle where the Toeplitz structure of the moment determinants is replaced by
	$ \det(w_{2j-k})_{0\leq j,k \leq N-1} $ and the corresponding Vandermonde modulus squared is replaced by 
	$ \prod_{1 \le j < k \le N}(\zeta_k - \zeta_j)(\zeta^{-2}_k - \zeta^{-2}_j) $.
	This is the simplest case of a general system of $pj-qk$ with $p,q$ co-prime integers.
	We derive analogues of the structures well known in the Toeplitz case:
	third order recurrence relations, determinantal and multiple-integral representations, their reproducing kernel and Christoffel-Darboux sum, and associated (Carath{\'e}odory) functions. We close by giving full explicit details for the system defined by the simple weight $ w(\zeta)=e^{\zeta}$, which is a specialisation of a weight arising from averages of moments of derivatives of characteristic polynomials over $\tn{USp}(2N)$, $\tn{SO}(2N)$ and $\tn{O}^-(2N)$.
\end{abstract}

\begin{itemize}
	\item[] \footnotesize \textit{2020 Mathematics Subject Classification.} 42A80, 42A52, 47B35, 33C45, 39A06, 11M50.
	\item[] \footnotesize \textit{Keywords.} bi-orthogonal polynomials on the unit circle, Toeplitz matrices, linear difference equations, random matrix theory
\end{itemize}



\tableofcontents

\section{Motivation}\label{Sec Intro}

The unitary group $U(N)$ with Haar (uniform) measure possesses the explicit character formula of Weyl \cite{Weyl_1946}
\begin{equation}
	\frac{1}{(2 \pi )^N N!} \prod_{1 \le j < k \le N} (\zeta_k - \zeta_j)(\zeta^{-1}_k - \zeta^{-1}_j),
	\quad \zeta_l \coloneqq e^{i \theta_l} \in \T, \quad \theta_l \in (-\pi,\pi] ,
\label{UN_Haar}
\end{equation}
where $ \{\zeta_1,\dots, \zeta_N \} \in {\rm Spec}(U) $ and $ \T = \{\zeta \in \C: |\zeta|=1 \} $.
This also has an interpretation as the joint probability density function (PDF) (see e.g.~\cite[Chapter 2]{For_2010}) for the eigenphases of the Dyson circular ensemble (CUE).
One fundamental application of this formula is to characterise averages over $U \in U(N)$ of class functions $ c(U) $,
i.e. symmetric functions of the eigenvalues of $ U $ only.
An example of such functions are products $ \prod_{j=1}^{N}w(\zeta_j) $ where $ w(\zeta) $ may be interpreted as a weight function or density.
Introducing the Fourier components $\{w_l\}_{l\in \Z}$ of this weight
\begin{equation}\label{laurent_exp}
	w(\zeta) = \sum_{l=-\infty}^\infty w_l \zeta^l ,
\end{equation} 
due to the well known Heine identity \cite{So_1967}
\begin{equation}
	\mathbb{E} _{U(N)} \big[\prod_{l=1}^N w(\zeta_l)\big] = \det[ w_{j-k} ]_{j,k=0,\dots,N-1},
\label{Haar_avge}
\end{equation}
we recognise that this is equivalent to studying Toeplitz determinants.
Intimately connected with the above problem are systems of bi-orthogonal polynomials
on the unit circle and its relationship to general, non-hermitian (i.e. complex weight) Toeplitz determinants.
The system of bi-orthogonal polynomials $ \{ \varphi_n(z),\bar{\varphi}_n(z) \}^{\infty}_{n=0} $\footnote{The bar notation in  $\{\bar{\varphi}_n(z)\}^{\infty}_{n=0}$ is a standard notation for the polynomials orthogonal to the polynomials $\{\varphi_n(z)\}^{\infty}_{n=0}$   (see e.g. \cite{FW_2006}). If the weight $w$ is complex-valued the bar notation does not represent the complex conjugation, while if the weight $w$ is real-valued the bar notation represents the complex conjugation.} with respect to the 
weight $ w(\zeta) $ on the unit circle may be defined by the orthogonality relation
\begin{equation}
	\int_{\T} \frac{\dd \zeta}{2\pi i\zeta} w(\zeta)\varphi_m(\zeta)\bar{\varphi}_n(\bar{\zeta}) = \delta_{m,n} .
\label{ops_onorm}
\end{equation}

Such averages over the unitary group are ubiquitous in many areas of mathematical physics,  
in particular the gap probabilities and characteristic polynomial averages in the 
circular ensembles of random matrix theory \cite{AvM_2001b},\cite{AvM_2003},\cite{FW_2004},\cite{For_2010},
the spin-spin correlations of the planar Ising model \cite{MW_1973},\cite{JM_1980},
the density matrix of a system of impenetrable bosons on the ring \cite{FFGW_2003a} 
and probability distributions for various classes of non-intersecting lattice path problems \cite{For_2003}.

Our study is motivated by yet another application of random matrix techniques, in particular to analytic number theory.
Random matrix models have been very successful in constructing conjectures for estimating the integral moments of central values in the $U(N)$ 
family of $L$-functions, through the works of \cite{KS_2000}, \cite{CFK+_2003}, \cite{CFK+_2008}.
This program was extended by Al\.i Altu\u{g} et al \cite{ABP+_2014} to the three other families of $L$-functions:
those characterised by the symmetry types $\tn{USp}(2N)$, $\tn{SO}(2N)$ and $\tn{O}^-(2N)$, 
and where the statistic of concern was the $n$-th moment of the $m$-th derivative of the characteristic polynomial $\Lambda_{A}$
with $ A \in \tn{USp}(2N),\tn{SO}(2N),\tn{O}^-(2N) $. They computed 
\begin{equation}\label{key}
M_n(\tn{G}(2N),m) := \int_{\tn{G}(2N)} \pr{\Lambda^{(m)}_A(1)}^n\,\dd A,
\end{equation}
where $\tn{G}$ denotes $\tn{USp}$, $\tn{SO}$, or $\tn{O}^-$, and $\dd A$ is the Haar measure on $\tn{G}$, in the regime $N\to\infty$ and fixed $n,m$.
Employing similar techniques to \cite{CRS_2006} they found that the leading coefficient in the large-$N$ expansion is proportional to the $n$-th derivative of $e^{u}\mathcal{T}_{n,\ell}(u)$ where
\begin{equation}\label{taufunction}
	\mathcal{T}_{n,\ell}(u):=\det\pr{g_{2j-k+\ell}(u)}_{0 \leq j,k \leq n-1},  
\end{equation}
for $n\geq0$, $\ell\in\Z$ is fixed by the symmetry type and $u\in\C$, and where
\begin{equation}
	g_l(u) 	=\frac{1}{2\pi i}\int_{\T}\frac{e^{\zeta+u\zeta^{-2}}}{\zeta^{l+1}} \, \dd \zeta
			=\frac{1}{\Gamma(l+1)}{}_{0}F_{2}\pr{;\tfrac{1}{2}l+1,\tfrac{1}{2}(l+1);\tfrac{1}{4}u}.
\label{OSweight}
\end{equation}

Clearly for the $\tn{USp}(2N)$, $\tn{SO}(2N)$ and $\tn{O}^-(2N)$ types the relevant moment determinant has the structure
\begin{equation}\label{2j-k Det}
	\det(w_{2j-k+r})_{0\leq j,k \leq n-1}, 
\end{equation} 
and the corresponding joint density function has the form
\begin{equation}\label{1m2_JPDF}
	\prod_{1 \le j < k \le n} \left(\zeta_k - \zeta_j\right)  \left(\zeta^{-2}_k - \zeta^{-2}_j\right)  ,
	\quad \zeta_l \coloneqq e^{i \theta_l} \in \T, \quad \theta_l \in (-\pi,\pi] .
\end{equation}
Note that this joint density function is not real and positive, unlike the Toeplitz case \eqref{UN_Haar}, 
and it will become clear that meaning can only be given to more restrictive classes of weights than applies in the Toeplitz case, 
see Theorems \ref{P and Q exist and are unique} and \ref{R and S exist and are unique} for precise conditions on the existence of our systems. 
For example the system with a constant weight or even a terminating Laurent expansion of \eqref{laurent_exp}, i.e. a finite banded moment matrix, will not exist
\footnote{An initial sequence of moment determinants can be non-zero, but the remainder will vanish identically.}.
Of course one could take as the definition of a putative system the modulus of \eqref{1m2_JPDF} however this has some disadvantages and we have chosen to pursue the analytic form here. 
If one takes the modulus of \eqref{1m2_JPDF} it factorises into three factors
\begin{equation}\label{log-gas}
	\prod_{1 \le j < k \le n} \left|\zeta_k - \zeta_j\right|^{2}  \left|\zeta_k + \zeta_j\right| , 
\end{equation}
and one can see that this also applies to the joint density function
\begin{equation}\label{2m1_JPDF}
	\prod_{1 \le j < k \le n} \left(\zeta^{2}_k - \zeta^{2}_j\right)  \left(\zeta^{-1}_k - \zeta^{-1}_j\right)  ,
\end{equation}
and thus any distinction is lost.
More significantly is that the property of complex analyticity would be lost in some aspects of the theory, 
and in particular the differential structures whereby one requires spectral derivatives $ \dd/\dd\zeta $ of the bi-orthogonal polynomials and their associated functions 
- such derivatives form one member of key Lax pairs of the integrable system.
We will finish our discussion of these issues with a final observation on the log-gas interpretation of Eq. \eqref{log-gas}:
it has the repulsion of coincident eigenvalues $ \theta_k=\theta_j, k\neq j $ with strength $\beta=2$ but also a repulsion when $\theta_k=\theta_j\pm \pi$ with half this strength. 
It is our goal to extend the theory of Toeplitz determinants and associated bi-orthogonal polynomial systems on the unit circle to the case where they have a structure $ pj-qk $ for two positive co-prime integers, based upon \eqref{2j-k Det} and \eqref{1m2_JPDF}. 
In the first example of such an extension we consider the $ p=2, q=1 $ and $ p=1, q=2 $ cases exclusively, 
however we will only employ techniques which are capable of generalisation to the other cases.

Analogous systems of bi-orthogonal polynomials on the line with the Hankel structure $ \det(h_{pj+qk})_{0\leq j,k\leq N-1}$ have been studied for some time -
the area was initiated by Preiser \cite{Pre_1962}; 
general properties of these systems were investigated by Konhauser \cite{Kon_1965}, Ilyasov \cite{Il_1983}, Iserles and N{\o}rsett \cite{IN_1988}, \cite{IN_1989}; 
explicit examples of cases related to Laguerre polynomials by Konhauser \cite{Kon_1967}, 
Carlitz \cite{Car_1968}, \cite{Car_1973}, Genin and Calvez \cite{GC_1969}, \cite{GC_1969b}, Prabhakar \cite{Pra_1970}, Srivastava \cite{Sri_1973}, Raizada \cite{Rai_1993};
and to the Jacobi polynomials by Madhekar and Thakare \cite{MT_1982}, \cite{TM_1984}, \cite{MT_1984}, \cite{TM_1986}, \cite{TM_1986a}.
As they stand the results reported in the above works do not translate directly into the ones we seek.
Systems of bi-orthogonal Laurent polynomials on the line would be expected to provide an equivalent framework to the system we treat here, 
however we prefer our approach because of its direct linkage to the random matrix application described earlier.

In the random matrix literature systems of bi-orthogonal polynomials on the line are known as Muttalib-Borodin ensembles following the pioneering work of \cite{Mut_1995} and \cite{Bor_1999a}; 
Claeys and Romano have derived recurrence relations and a scalar Riemann-Hilbert problem for general classes of weights \cite{CR_2014}.
See \cite{FW_2017}, \cite{FI_2018} for some selected recent developments.
Other related lines of investigation are the multiple orthogonal polynomial ensembles studied by Kuijlaars and McLaughlin \cite{KMcL_2005}, and by Kuijlaars \cite{Kui_2010}. In the latter work higher rank (i.e. greater than two) matrix systems of bi-orthogonal multiple polynomials - the Nikishin systems and Angelesco systems - were discussed
and only the case of the Angelesco system (see Eq. (4.8)) with $ p=2 $ species of $ n_1=n_2=n $ particles $ \{x^{(1)}_k\}_{k=1}^{n} $ , $ \{x^{(2)}_k\}_{k=1}^{n}  $ 
linked by $ x^{(2)}_k = -x^{(1)}_k, k = 1,\ldots, n $ and $ w_2(x)=w_1(-x) $ has any correspondence with our example
 - it is actually proportional to the square of \eqref{1m2_JPDF}.
 
 It is also worth mentioning that on the Operator Theory side, the associated operators on $L^2(\T)$ and their restrictions on the Hardy space $H^2(\T)$ have been a subject of research in the last 25 years or so. In a series of works \cite{Hothesis,HoIndianaMathJournal,HoMichiganMathJournal,HoAdjoints1,HoAdjoints2}, Mark C. Ho introduced and made some fundamental studies on the $2j-k$ operators which he referred to as \textit{slant Toeplitz} operators. Later, Subhash C. Arora and Ruchika Batra studied the properties of $pj-k$ operators ($p \in \N_{\geq 2}$) which they referred to as \textit{generalized slant Toeplitz} or \textit{p-th order slant Toeplitz} operators in a collection of papers \cite{AB4,AB3,AB2,AB1}. Particularly regarding the large-size asymptotic analysis of the $pj-qk$ determinants, it is our hope that the interplay between Operator Theory and the Riemann-Hilbert method (which is rooted in the orthogonality structures studied this work and is the subject of a future paper) could be made somewhat tractable. Examples of such interplay for Toeplitz, bordered Toeplitz and Toeplitz+Hankel determinants can be respectively found in \cite{DIKimpetus}, \cite{BEGIL}, and the introduction of \cite{GI}.

\color{black}

\subsection{Outline}
Here we summarise the outline of our study. In \S \ref{Sec Definitions and Notations} we present the definitions of the $2j-k$ and $j-2k$ (master) determinants and the corresponding systems of bi-orthogonal polynomials. In this section we also introduce the two main tools for our analysis: the Dodgson Condensation identity and the multiple integral formulae for the $2j-k$ and $j-2k$ determinants.
The LDU decompositions for the $2j-k$ and $j-2k$ determinants are also derived which will be useful for proving the Christoffel-Darboux identity. In \S \ref{Sec Bordered}  we  provide the bordered determinant representations and prove the existence and uniqueness of the systems of bi-orthogonal polynomials provided that the associated determinants are non-zero. We also discuss the connection of $2j-k$ and $j-2k$ determinants and bi-orthogonal polynomials. Finally in this section we express $2j-k$ and $j-2k$ bi-orthogonal polynomials and reproducing kernels in terms of the corresponding master determinants. In \S \ref{Sec Rec Rel} we derive pure-degree and pure-offset recurrence relations for the $2j-k$ and $j-2k$ bi-orthogonal polynomials. We also discuss equivalent Dodgson condensation identities which result in the same recurrence relations and present several mixed recurrence relations involving both $2j-k$ and $j-2k$ bi-orthogonal polynomials. In subsection \ref{subsec poly tails, rec rels and dets} we derive several relationships between polynomial tails, recurrence coefficients and determinants of the $2j-k$ and $j-2k$ systems. In \S \ref{Sec MultInt} we prove multiple integral formulae for the $2j-k$ and $j-2k$ determinants, bi-orthogonal polynomials, and reproducing kernels. We use some of these formulae to derive representations of $Q$ and $R$-polynomials in terms of $P$ and $S$-polynomials, respectively. The multiple integral formulae are also useful in deriving the Christoffel-Darboux identity in subsection \ref{subsec CD}. In \S \ref{Sec Associated Functions} we introduce the associated functions and derive their multiple integral formulae representations. We also find the corresponding Casorati matrices and the first order recurrence relations they satisfy. In \S \ref{Sec Exp weight} we go back to the weight relevant to the 
$L$-functions of the symmetry types $\tn{USp}(2N)$, $\tn{SO}(2N)$ and $\tn{O}^-(2N)$. To study a first concrete example, in this subsection we specifically study the undeformed weight when $u=0$ where we find explicit formulae for the determinants,  Carath\'{e}odory functions, and the $2j-k$ polynomials. Eventually, in \S \ref{Sec open Qs}, we will lay out a list of important open questions and the prospects of future work. Throughout the paper, to highlight the distinguishing features of these modulated bi-orthogonal systems and for the convenience of the reader we try to make a comparison with the Toeplitz ($j-k$) theory whenever a result about $2j-k$ and $j-2k$ systems is presented, mainly by referring to \cite{FW_2006}.

\section{Definitions and Notations}\label{Sec Definitions and Notations}

In this section we will define the objects studied in the paper and introduce the necessary notations and conventions. Throughout the paper we respectively use $j$ and $k$ as indices referring to the rows and the columns, and we frequently use a boldfaced letter to distinguish the determinant of a matrix with the matrix itself: $\det\boldsymbol{\mathscr{M}} \equiv \mathscr{M}$. Let $\boldsymbol{\mathscr{M}}$ be an $n \times n$ matrix. By
\begin{equation}\label{matrix with removed col-rows}
	\boldsymbol{\mathscr{M}} \left\lbrace \begin{matrix} j_1& j_2& \cdots & j_{\ell} \\  k_1& k_2& \cdots & k_{\ell} \end{matrix} \right\rbrace, \qandq \mathscr{M} \left\lbrace \begin{matrix} j_1& j_2& \cdots & j_{\ell} \\  k_1& k_2& \cdots & k_{\ell} \end{matrix} \right\rbrace,
\end{equation}
we respectively mean the $(n-\ell)\times(n-\ell)$ matrix obtained from $\boldsymbol{\mathscr{M}}$ by removing the rows $j_i$ and the columns $k_i$, $1\leq i \leq \ell$, and its corresponding determinant. Although the order of writing the row and column indices is immaterial for this definition, in this work we prefer to respect the order of indices $j_{\ell_1}<j_{\ell_2}$ and $k_{\ell_1}<k_{\ell_2}$ if $\ell_1<\ell_2$. We now recall the \textit{Dodgson Condensation}  identity\footnote{which is also known as the \textit{Desnanot–Jacobi} identity or the \textit{Sylvester determinant}  identity.} (see \cite{Abeles,Fulmek-Kleber,Bressoud} and references therein) which is an important tool for deriving many important relationships between the objects studied in this work:
\begin{equation}\label{DODGSON}
\mathscr{M} \cdot \mathscr{M}\left\lbrace \begin{matrix} j_1 & j_2 \\  k_1& k_2 \end{matrix} \right\rbrace = \mathscr{M}\left\lbrace \begin{matrix} j_1  \\  k_1 \end{matrix} \right\rbrace \cdot \mathscr{M}\left\lbrace \begin{matrix} j_2  \\  k_2 \end{matrix} \right\rbrace - \mathscr{M}\left\lbrace \begin{matrix} j_1  \\  k_2 \end{matrix} \right\rbrace \cdot \mathscr{M}\left\lbrace \begin{matrix} j_2  \\  k_1 \end{matrix} \right\rbrace.
\end{equation}
 
\begin{definition} \normalfont
For fixed \textit{offset values} $r,s \in \Z$, we define the following $(n+3)\times(n+3)$ \textit{master matrices} of $2j-k$ and $j-2k$ structure:

\noindent
	\begin{minipage}{.5\linewidth}
		\begin{alignat}{2}
		&\boldsymbol{\mathscr{D}}_r(z,\mathcal{z}) &&= \begin{pmatrix}
		w_r & w_{r-1} &  \cdots & w_{r-n-1} & z^{n+1} \\ 
		w_{r+2} & w_{r+1} & \cdots & w_{r-n+1} & z^{n}  \\
		\vdots & \vdots &  \cdots & \vdots & \vdots \\
		w_{r+2n+2} & w_{r+2n+1} & \cdots & w_{r+n+1} & 1 \\
		1 & \mathcal{z} &  \cdots & \mathcal{z}^{n+1} & \star \\ 			 			
		\end{pmatrix}, \label{DDD}
		\end{alignat}	
	\end{minipage}	
	\begin{minipage}{.5\linewidth}
		\begin{alignat}{2}
		&\boldsymbol{\mathscr{E}}_s(z,\mathcal{z}) &&= \begin{pmatrix}
		w_s & w_{s-2} &  \cdots &  w_{s-2n-2} & z^{n+1} \\ 
		w_{s+1} & w_{s-1} &  \cdots  & w_{s-2n-1} & z^{n} \\
		\vdots & \vdots &  \cdots  & \vdots & \vdots \\
		w_{s+n+1} & w_{s+n-1} &  \cdots &  w_{s-n-1} & 1 \\
		1 & \mathcal{z} & \cdots  & \mathcal{z}^{n+1} & \star \\ 			 			
		\end{pmatrix}. \label{EEE} 
		\end{alignat}	
	\end{minipage}
where
	\begin{equation}\label{Fourier Coeff}
	w_k=\int_{\T} \zeta^{-k}w(\zeta)\frac{\dd \zeta}{2\pi \ic \zeta}, 
	\end{equation}
is the $k$-th Fourier coefficient of the symbol $ w(z) = \sum_{\ell \in \Z} w_{l}z^{l} $.
\end{definition}

 In \eqref{DDD} and \eqref{EEE} for simplicity of notation we have suppressed the dependence of $\boldsymbol{\mathscr{D}}_r(z,\mathcal{z})$ and $\boldsymbol{\mathscr{E}}_s(z,\mathcal{z})$ on $n$. Also, throughout the paper, when $z$ and $\mathcal{z}$ are not distinguished as \textit{distinct} independent variables we simply drop the arguments in the notation of the master matrices:
\begin{equation}\label{DDEE}
	\boldsymbol{\mathscr{D}}_r \equiv \boldsymbol{\mathscr{D}}_r(z,z) \qandq \boldsymbol{\mathscr{E}}_s \equiv \boldsymbol{\mathscr{E}}_s(z,z).
\end{equation}
 We use the determinants of master matrices $\boldsymbol{\mathscr{D}}_r$ and $\boldsymbol{\mathscr{E}}_s$ to construct the $2j-k$ and $j-2k$ systems of bi-orthogonal polynomials in \S \ref{Sec Bordered}, but since in all of those constructions either the last row or the last column is removed, the entry $\star$ in \eqref{DDD} and \eqref{EEE} never plays a role for construction of the bi-orthogonal polynomials and thus is arbitrary for the purposes of this work. Let $\boldsymbol{D}^{(r)}_{n}$ and $\boldsymbol{E}^{(s)}_{n}$ respectively denote the $n\times n$ matrices of $2j-k$ and $j-2k$ structure and by $D^{(r)}_{n}$ and $E^{(s)}_{n}$ denote their determinants:
\begin{equation}\label{Det}
D_{n}^{(r)} := \det \begin{pmatrix}
w_{r} & w_{r-1}   & \cdots & w_{r-n+1} \\
w_{r+2}  & w_{r+1}  & \cdots & w_{r-n+3} \\
\vdots & \vdots &  \vdots & \vdots \\
w_{r+2n-2} & w_{r+2n-3} &  \cdots & w_{r+n-1}
\end{pmatrix} \equiv \underset{0\leq j,k \leq n-1}{\det}\left( w_{r+2j-k} \right), 
\end{equation}
\begin{equation}\label{Det E}
E_{n}^{(s)} := \det \begin{pmatrix}
w_{s} & w_{s-2}  &  \cdots & w_{s-2n+2} \\
w_{s+1}  & w_{s-1} &  \cdots & w_{s-2n+3} \\
\vdots & \vdots &  \vdots & \vdots \\
w_{s+n-1} & w_{s+n-3} &  \cdots & w_{s-n+1} 
\end{pmatrix} \equiv \underset{0\leq j,k \leq n-1}{\det} \left( w_{s+j-2k} \right). 
\end{equation}
 $D^{(r)}_{n}$ and $E^{(s)}_{n}$ can obviously be written in terms of the determinant of master matrices $\boldsymbol{\mathscr{D}}_r$ and $\boldsymbol{\mathscr{E}}_s$,  as
 \begin{equation}\label{D DD E EE}
 	D^{(r)}_{n}=\mathscr{D}_r \left\lbrace \begin{matrix} n& n+1& n+2 \\  n& n+1& n+2 \end{matrix} \right\rbrace, \qandq E^{(s)}_{n} = \mathscr{E}_s \left\lbrace \begin{matrix} n& n+1& n+2 \\  n& n+1& n+2 \end{matrix} \right\rbrace.
 \end{equation}
Also notice that

\noindent\begin{minipage}{.5\linewidth}
	\begin{alignat}{2}
	&\boldsymbol{\mathscr{D}}_r \left\lbrace \begin{matrix} n& n+1& n+2 \\  n& n+1& n+2 \end{matrix} \right\rbrace  &&=\boldsymbol{\mathscr{D}}_r \left\lbrace \begin{matrix} 0& n+1& n+2 \\  0& 1& n+2 \end{matrix} \right\rbrace, \label{DeqD}
	\end{alignat}	
\end{minipage}	
\begin{minipage}{.5\linewidth}
	\begin{alignat}{2}
	&\boldsymbol{\mathscr{E}}_s \left\lbrace \begin{matrix} n& n+1& n+2 \\  n& n+1& n+2 \end{matrix} \right\rbrace &&= \boldsymbol{\mathscr{E}}_s \left\lbrace \begin{matrix} 0& 1& n+2 \\  0& n+1& n+2 \end{matrix} \right\rbrace. \label{EeqE} 
	\end{alignat}	
\end{minipage}
\begin{definition} \normalfont
	For an integrable function $f$ on the unit circle, we respectively define the  $2j-k$ and $j-2k$ multiple integrals as 
	\begin{equation}\label{DD}
	\mathcal{D}_n[f(\ze)]:= \frac{1}{n!} \int_{\T} \frac{\dd \ze_1}{2 \pi \ic \ze_1}\int_{\T} \frac{\dd \ze_2}{2 \pi \ic \ze_2} \cdots \int_{\T} \frac{\dd \ze_n}{2 \pi \ic \ze_n} \prod_{j=1}^{n}f(\ze_j) \prod_{1\leq j<k\leq n} (\ze_k-\ze_j)(\ze^{-2}_k-\ze^{-2}_j),
	\end{equation} 
	and
	\begin{equation}\label{EE}
	\mathcal{E}_n[f(\ze)]:= \frac{1}{n!} \int_{\T} \frac{\dd \ze_1}{2 \pi \ic \ze_1}\int_{\T} \frac{\dd \ze_2}{2 \pi \ic \ze_2} \cdots \int_{\T} \frac{\dd \ze_n}{2 \pi \ic \ze_n} \prod_{j=1}^{n}f(\ze_j) \prod_{1\leq j<k\leq n} (\ze^2_k-\ze^2_j)(\ze^{-1}_k-\ze^{-1}_j).
	\end{equation} 
\end{definition}
In \S \ref{Sec MultInt}, in particular, we show the following multiple integral representations for  $D^{(r)}_{n}$ and $E^{(s)}_{n}$:  

\noindent\begin{minipage}{.5\linewidth}
	\begin{alignat}{2}
	&\mathcal{D}_n[w(\ze)\ze^{-r}] && = D^{(r)}_n, \label{Dd}
	\end{alignat}	
\end{minipage}	
\begin{minipage}{.5\linewidth}
	\begin{alignat}{2}
	&\mathcal{E}_n[w(\ze)\ze^{-s}] && = E^{(s)}_n. \label{Ee}
	\end{alignat}	
\end{minipage}
\begin{definition} \normalfont
For each offset value $r \in \Z$, define the $2j-k$ sequences of monic polynomials $\{P_{n}(z;r)\}^{\infty}_{n=0}$ and $\{Q_{n}(z;r)\}^{\infty}_{n=0}$,  $\deg P_n(z;r)=\deg Q_n(z;r) = n$, satisfying the \textit{bi-orthogonality}  condition:
\begin{equation}\label{PQorth}
\int_{\T} P_{m}(\ze;r) Q_{n}(\ze^{-2};r)\ze^{-r}\frac{\dd \mu(\ze)}{2\pi \ic \ze} = h^{(r)}_{n}\delta_{mn}, \qquad m,n \in \N \cup \{0\},
\end{equation}
and for each offset value $s \in \Z$, define the $j-2k$ sequences of monic polynomials $\{R_{n}(z;s)\}^{\infty}_{n=0}$ and $\{S_{n}(z;s)\}^{\infty}_{n=0}$, $\deg R_n(z;s)=\deg S_n(z;s) = n$, satisfying the bi-orthogonality condition:
\begin{equation}\label{RSorth}
\int_{\T} R_{m}(\ze^2;s) S_{n}(\ze^{-1};s) \ze^{-s}\frac{\dd \mu(\ze)}{2\pi \ic \ze} = g^{(s)}_{n}\delta_{mn}, \qquad m,n \in \N \cup \{0\},
\end{equation}
where $h^{(r)}_{n}$ and $g^{(s)}_{n}$ are the \textit{norms} of the polynomials squared and $\dd\mu(\ze)\equiv w(\ze)\dd \ze$ for some weight function $w(z)$.
\end{definition}

We will give representations of $h^{(r)}_{n}$ and $g^{(s)}_{n}$ as ratios of $2j-k$ and $j-2k$ determinants in Theorems \ref{P and Q exist and are unique} and \ref{R and S exist and are unique}. Notice that the bi-orthogonality condition \eqref{PQorth} is equivalent to the orthogonality relations
\begin{equation}\label{OP1}
\int_{\T} P_{n}(\ze;r) \ze^{-2m-r} \frac{\dd\mu(\ze)}{2\pi \ic \ze} = h^{(r)}_{n} \delta_{mn}, \qquad m=0,1,\cdots, n, 
\end{equation}
and
\begin{equation}\label{OP2}
\int_{\T} Q_n(\ze^{-2};r) \ze^{m-r} \frac{\dd\mu(\ze)}{2\pi \ic \ze} = h^{(r)}_{n} \delta_{mn}, \qquad m=0,1,\cdots, n.
\end{equation}
Similarly, the bi-orthogonality condition \eqref{RSorth} is equivalent to the orthogonality relations
\begin{equation}\label{OP1 R}
\int_{\T} R_{n}(\ze^2;s) \ze^{-m-s} \frac{\dd\mu(\ze)}{2\pi \ic \ze} = g^{(s)}_{n} \delta_{mn},\qquad m=0,1,\cdots, n, 
\end{equation}
and
\begin{equation}\label{OP2 S}
\int_{\T} S_n(\ze^{-1};s) \ze^{2m-s} \frac{\dd\mu(\ze)}{2\pi \ic \ze} = g^{(s)}_{n} \delta_{mn},\qquad m=0,1,\cdots, n.
\end{equation}

\subsection{LDU Decomposition of $2j-k$ and $j-2k$ moment matrices}\label{LDU}

The linear space of polynomials $\{P_n(z)\}^{\infty}_{n=0}$,  $\{Q_n(z)\}^{\infty}_{n=0}$, etc. have expansions in an appropriate basis,
in this case the monomial basis $\{z^n\}^{\infty}_{n=-\infty}$ as befits a P{\'a}de approximation problem with two fixed singularities 
$0,  \infty$. We are therefore led to consider the linear transformations from this preferred basis to our orthogonal system.
Let us denote the coefficients of $2j-k$ and $j-2k$ polynomials more precisely as 
\begin{equation}\label{polys}
P_n(z;r) = \sum^{n}_{\ell=0} \mathcal{p}^{(r)}_{n,\ell} z^{\ell}, \qquad Q_n(z;r) =\sum^{n}_{\ell=0} \mathcal{q}^{(r)}_{n,\ell} z^{\ell}, \qquad R_n(z;s) = \sum^{n}_{\ell=0} \mathcal{r}^{(s)}_{n,\ell} z^{\ell}, \qquad S_n(z;s) =\sum^{n}_{\ell=0} \mathcal{s}^{(s)}_{n,\ell} z^{\ell}.
\end{equation}
with $\mathcal{p}^{(r)}_{n,n}=\mathcal{q}^{(r)}_{n,n}=\mathcal{r}^{(s)}_{n,n}=\mathcal{s}^{(s)}_{n,n}=1$. Let us also denote 
\begin{equation}\label{Vectors}
\boldsymbol{Z}_n(z) :=
 \begin{pmatrix}
1 \\ z \\ \vdots \\ z^{n}
\end{pmatrix}  \qandq \boldsymbol{F}_n(z;r) :=
\begin{pmatrix}
F_0(z;r) \\ F_1(z;r) \\ \vdots \\ F_n(z;r)
\end{pmatrix}, \qquad \boldsymbol{F} \in \{\boldsymbol{P},\boldsymbol{Q},\boldsymbol{R},\boldsymbol{S}\}.
\end{equation}
We thus have
\begin{equation}
\boldsymbol{P}_n(z;r) = \boldsymbol{\mathcal{P}}_{n}^{(r)}  \boldsymbol{Z}_n(z), \qquad	\boldsymbol{Q}_n(z;r) = \boldsymbol{\mathcal{Q}}_{n}^{(r)}  \boldsymbol{Z}_n(z), \qquad \boldsymbol{R}_n(z;s) = \boldsymbol{\mathcal{R}}_{n}^{(s)}  \boldsymbol{Z}_n(z), \qquad	\boldsymbol{S}_n(z;s) = \boldsymbol{\mathcal{S}}_{n}^{(s)}  \boldsymbol{Z}_n(z), 
\end{equation}
where $\boldsymbol{\mathcal{P}}_{n}^{(r)}, \boldsymbol{\mathcal{Q}}_{n}^{(r)},  \boldsymbol{\mathcal{R}}_{n}^{(s)}$ and $ \boldsymbol{\mathcal{S}}_{n}^{(s)}$ are the following $(n+1)\times(n+1)$ lower triangular matrices
\begin{equation}\label{A B}
\boldsymbol{\mathcal{P}}_{n}^{(r)} :=
\begin{pmatrix}
1 & 0 & \cdots & 0 \\
\mathcal{p}^{(r)}_{1,0} & 1 & \cdots & 0 \\
\vdots & \vdots & \ddots & \vdots \\
\mathcal{p}^{(r)}_{n,0} & \mathcal{p}^{(r)}_{n,1} & \cdots & 1
\end{pmatrix}, \qquad 	\boldsymbol{\mathcal{Q}}_{n}^{(r)} := 
\begin{pmatrix}
1 & 0 & \cdots & 0 \\
\mathcal{q}^{(r)}_{1,0} & 1 & \cdots & 0 \\
\vdots & \vdots & \ddots & \vdots \\
\mathcal{q}^{(r)}_{n,0} & \mathcal{q}^{(r)}_{n,1} & \cdots & 1
\end{pmatrix}, 
\end{equation}
\begin{equation}\label{C G}
\boldsymbol{\mathcal{R}}_{n}^{(s)} := 
\begin{pmatrix}
1 & 0 & \cdots & 0 \\
\mathcal{\mathcal{r}}^{(s)}_{1,0} & 1 & \cdots & 0 \\
\vdots & \vdots & \ddots & \vdots \\
\mathcal{r}^{(s)}_{n,0} & \mathcal{r}^{(s)}_{n,1} & \cdots & 1
\end{pmatrix}, \qquad 	\boldsymbol{\mathcal{S}}_{n}^{(s)} := 
\begin{pmatrix}
1 & 0 & \cdots & 0 \\
\mathcal{s}^{(s)}_{1,0} & 1 & \cdots & 0 \\
\vdots & \vdots & \ddots & \vdots \\
\mathcal{s}^{(s)}_{n,0} & \mathcal{s}^{(s)}_{n,1} & \cdots & 1
\end{pmatrix}, 
\end{equation}
whose inverses are also lower diagonal with $1$'s on the main diagonal. Also let us denote the diagonal matrices of norms of polynomials by $\boldsymbol{h}^{(r)}_{n}$ and $\boldsymbol{g}^{(r)}_{n}$:
\begin{equation}\label{h&H diag}
\boldsymbol{h}^{(r)}_{n} := 
\begin{pmatrix}
h^{(r)}_0 &  \cdots & 0 \\
\vdots  & \ddots & \vdots \\
0  & \cdots & h^{(r)}_n
\end{pmatrix}, \qandq \boldsymbol{g}^{(s)}_{n} := 
\begin{pmatrix}
g^{(s)}_0 &  \cdots & 0 \\
 \vdots & \ddots & \vdots \\
0  & \cdots & g^{(s)}_n
\end{pmatrix}.
\end{equation}

In the following result we give the LDU decompositions of the moment matrices for the $2j-k$ and $j-2k$ systems which parallels the Toeplitz case closely,
see the unpublished work \cite{Mag_2013}.

\begin{theorem}
The LDU decompositions of $\boldsymbol{D}^{(r)}_{n+1}$ and $\boldsymbol{E}^{(s)}_{n+1}$ are given by 
	
\noindent\begin{minipage}{.5\linewidth}
		\begin{alignat}{2}
		&\boldsymbol{D}^{(r)}_{n+1}  &&= \left[\boldsymbol{\mathcal{Q}}^{(r)}_{n}\right]^{-1} 	\boldsymbol{h}^{(r)}_{n}  \left[\left(\boldsymbol{\mathcal{P}}^{(r)}_{n}\right)^T\right]^{-1}, \label{LDU D}
		\end{alignat}	
	\end{minipage}	
	\begin{minipage}{.5\linewidth}
		\begin{alignat}{2}
		&\boldsymbol{E}^{(s)}_{n+1} &&= \left[\boldsymbol{\mathcal{S}}^{(s)}_{n}\right]^{-1} 	\boldsymbol{g}^{(s)}_{n}  \left[\left(\boldsymbol{\mathcal{R}}^{(s)}_{n}\right)^T\right]^{-1}. \label{LDU E} 
		\end{alignat}	
	\end{minipage}
\end{theorem}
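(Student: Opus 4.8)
The plan is to read off both LDU factorisations directly from the bi-orthogonality relations \eqref{PQorth} and \eqref{RSorth}, in exact parallel with the Toeplitz computation of \cite{Mag_2013}. Throughout I take for granted --- as is implicit in the statement --- that the systems \eqref{PQorth}--\eqref{RSorth} exist, equivalently that the pertinent $2j-k$ and $j-2k$ moment determinants are non-zero (Theorems~\ref{P and Q exist and are unique} and \ref{R and S exist and are unique}); the coefficient matrices \eqref{A B}--\eqref{C G} are then well defined, and being unit lower triangular they are automatically invertible, with unit lower triangular inverses.

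First I would record the elementary monomial pairings against the measure. Using $w_k=\int_{\T}\ze^{-k}w(\ze)\,\frac{\dd\ze}{2\pi\ic\ze}$ from \eqref{Fourier Coeff} together with $\dd\mu(\ze)=w(\ze)\,\dd\ze$, one has
\[
\int_{\T}\ze^{a}\,(\ze^{-2})^{b}\,\ze^{-r}\,\frac{\dd\mu(\ze)}{2\pi\ic\ze}=w_{r+2b-a},
\qquad
\int_{\T}(\ze^{2})^{a}\,(\ze^{-1})^{b}\,\ze^{-s}\,\frac{\dd\mu(\ze)}{2\pi\ic\ze}=w_{s+b-2a}.
\]
Comparing with \eqref{Det} and \eqref{Det E}, the first quantity is the $(b,a)$-entry of $\boldsymbol{D}^{(r)}_{n+1}$ and the second the $(b,a)$-entry of $\boldsymbol{E}^{(s)}_{n+1}$ for $0\le a,b\le n$. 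The only point that requires attention is the bookkeeping here: in the $2j-k$ system the factor carrying $\ze^{-2}$ (namely $Q$) must be assigned to the rows and the factor carrying $\ze$ (namely $P$) to the columns, which is precisely what produces the pattern $w_{r+2j-k}$; in the $j-2k$ system the analogous assignment is $\ze^{2}$ (that is $R$) to the columns and $\ze^{-1}$ (that is $S$) to the rows, producing $w_{s+j-2k}$.

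Next I would substitute the monomial expansions \eqref{polys} into the bi-orthogonality condition \eqref{PQorth} and interchange the finite sums with the integral. Because $\boldsymbol{\mathcal{P}}^{(r)}_{n}$ and $\boldsymbol{\mathcal{Q}}^{(r)}_{n}$ are lower triangular, the coefficient sums may be extended to run over all $0\le a,b\le n$, and for $0\le i,j\le n$ one obtains
\[
\bigl(\boldsymbol{\mathcal{Q}}^{(r)}_{n}\boldsymbol{D}^{(r)}_{n+1}(\boldsymbol{\mathcal{P}}^{(r)}_{n})^{T}\bigr)_{i,j}
=\sum_{b=0}^{n}\sum_{a=0}^{n}\mathcal{q}^{(r)}_{i,b}\,w_{r+2b-a}\,\mathcal{p}^{(r)}_{j,a}
=\int_{\T}P_{j}(\ze;r)Q_{i}(\ze^{-2};r)\ze^{-r}\,\frac{\dd\mu(\ze)}{2\pi\ic\ze}
=h^{(r)}_{i}\,\delta_{ij},
\]
that is, $\boldsymbol{\mathcal{Q}}^{(r)}_{n}\boldsymbol{D}^{(r)}_{n+1}(\boldsymbol{\mathcal{P}}^{(r)}_{n})^{T}=\boldsymbol{h}^{(r)}_{n}$. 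Left-multiplying by $[\boldsymbol{\mathcal{Q}}^{(r)}_{n}]^{-1}$ and right-multiplying by $[(\boldsymbol{\mathcal{P}}^{(r)}_{n})^{T}]^{-1}$ gives \eqref{LDU D}, and since $[\boldsymbol{\mathcal{Q}}^{(r)}_{n}]^{-1}$ is unit lower triangular, $\boldsymbol{h}^{(r)}_{n}$ diagonal, and $[(\boldsymbol{\mathcal{P}}^{(r)}_{n})^{T}]^{-1}$ unit upper triangular, this is indeed an LDU factorisation. The identity \eqref{LDU E} follows verbatim on replacing $(P,Q,D,h,r)$ by $(R,S,E,g,s)$ and the pairing $w_{r+2b-a}$ by $w_{s+b-2a}$, so that \eqref{RSorth} becomes $\boldsymbol{\mathcal{S}}^{(s)}_{n}\boldsymbol{E}^{(s)}_{n+1}(\boldsymbol{\mathcal{R}}^{(s)}_{n})^{T}=\boldsymbol{g}^{(s)}_{n}$. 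I do not expect any genuine obstacle; beyond the index-matching noted above, the argument is a one-line reformulation of orthogonality followed by inversion of unit-triangular matrices.
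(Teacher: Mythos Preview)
Your proposal is correct and follows essentially the same approach as the paper: both expand the bi-orthogonality relations \eqref{PQorth} and \eqref{RSorth} in monomials, recognise the result as $\boldsymbol{\mathcal{Q}}^{(r)}_{n}\boldsymbol{D}^{(r)}_{n+1}(\boldsymbol{\mathcal{P}}^{(r)}_{n})^{T}=\boldsymbol{h}^{(r)}_{n}$ (respectively $\boldsymbol{\mathcal{S}}^{(s)}_{n}\boldsymbol{E}^{(s)}_{n+1}(\boldsymbol{\mathcal{R}}^{(s)}_{n})^{T}=\boldsymbol{g}^{(s)}_{n}$), and invert. Your write-up is in fact slightly more careful about the row/column bookkeeping and about why the triangular coefficient matrices are invertible, but the substance is identical.
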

\begin{proof}
	In this proof we drop the dependence of all functions and quantities on the offsets $r$ and $s$ for simplicity of notation. We have
	\begin{equation}
	\begin{split}
	h_{\nu}\delta_{\nu \mu} & = \int_{\T} P_{\nu}(\ze) Q_{\mu}(\ze^{-2}) \ze^{-r} w(\ze) \frac{\dd \ze}{2\pi \ic \ze}  = \sum_{m=0}^{\nu} \sum_{\ell=0}^{\mu} \mathcal{p}_{\nu,m}\mathcal{q}_{\mu,\ell} \int_{\T} \ze^{m-2\ell-r} w(\ze) \frac{\dd \ze}{2\pi \ic \ze}  \\ &
	= \sum_{m=0}^{\nu} \sum_{\ell=0}^{\mu} \mathcal{p}_{\nu,m}\mathcal{q}_{\mu,\ell} w_{2\ell-m+r} = \sum_{m=0}^{\nu} \sum_{\ell=0}^{\mu} 	\left( \boldsymbol{\mathcal{P}}_{n} \right)_{\nu,m} \left( \boldsymbol{\mathcal{Q}}_{n} \right)_{\mu,\ell} \left( \boldsymbol{D}_{n+1} \right)_{\ell,m}  \\ & = \sum_{m=0}^{\nu} \sum_{\ell=0}^{\mu}  \left( \boldsymbol{\mathcal{Q}}_{n} \right)_{\mu,\ell} \left( \boldsymbol{D}_{n+1} \right)_{\ell,m}  \left( \boldsymbol{\mathcal{P}}_{n}^T \right)_{m,\nu} =  \left( \boldsymbol{\mathcal{Q}}_{n} \boldsymbol{D}_{n+1} \boldsymbol{\mathcal{P}}_{n}^T \right)_{\mu,\nu},
	\end{split}
	\end{equation} 
	which is equivalent to \eqref{LDU D}. For the decomposition of $\boldsymbol{E}^{(s)}_{n+1}$ we consider
	\begin{equation}
	\begin{split}
	g_{\nu}\delta_{\nu \mu} & = \int_{\T} R_{\nu}(\ze^2) S_{\mu}(\ze^{-1}) \ze^{-s} w(\ze) \frac{\dd \ze}{2\pi \ic \ze}  = \sum_{m=0}^{\nu} \sum_{\ell=0}^{\mu} \mathcal{r}_{\nu,m}\mathcal{s}_{\mu,\ell} \int_{\T} \ze^{2m-\ell-s} w(\ze) \frac{\dd \ze}{2\pi \ic \ze}  \\ &
	= \sum_{m=0}^{\nu} \sum_{\ell=0}^{\mu} \mathcal{r}_{\nu,m}\mathcal{s}_{\mu,\ell} w_{\ell-2m+s} = \sum_{m=0}^{\nu} \sum_{\ell=0}^{\mu} 	\left( \boldsymbol{\mathcal{R}}_{n} \right)_{\nu,m} \left( \boldsymbol{\mathcal{S}}_{n} \right)_{\mu,\ell} \left( \boldsymbol{E}_{n+1} \right)_{\ell,m}  \\ & = \sum_{m=0}^{\nu} \sum_{\ell=0}^{\mu}  \left( \boldsymbol{\mathcal{S}}_{n} \right)_{\mu,\ell} \left( \boldsymbol{E}_{n+1} \right)_{\ell,m}  \left( \boldsymbol{\mathcal{R}}_{n}^T \right)_{m,\nu} =  \left( \boldsymbol{\mathcal{S}}_{n} \boldsymbol{E}_{n+1} \boldsymbol{\mathcal{R}}_{n}^T \right)_{\mu,\nu},
	\end{split}
	\end{equation} 
	which yields \eqref{LDU E}.
\end{proof}

\section{Bordered determinant representations}\label{Sec Bordered}
In this section we focus on determinantal representations of fundamental elements in the theory where the moment matrix is 
bordered by rows or columns containing basis vectors for the polynomial spaces.
In the following result we find that the bi-orthogonal polynomials of both systems can be represented as bordered moment determinants in almost exactly the same way as for the Toeplitz case, see e.g. Eq.(2.15,16) of \cite{FW_2006} for comparison. As well as revealing the conditions on the existence and uniqueness of these systems in a simple way, this form will be very useful in our subsequent treatment.
\begin{theorem}\label{P and Q exist and are unique}
If $D_{n}^{(r)} \neq 0$, the polynomials $P_n(z;r)$ and $Q_n(z;r)$ exist and are uniquely given by
\begin{equation}\label{OP11}
	P_n(z;r) = \frac{1}{D_{n}^{(r)}} 
	\det \begin{pmatrix}
	w_{r} & w_{r-1}  & \cdots & w_{r-n} \\
	w_{r+2} & w_{r+1} & \cdots & w_{r-n+2} \\
	\vdots & \vdots  & \vdots & \vdots \\
	w_{r+2n-2} & w_{r+2n-3} &  \cdots & w_{r+n-2} \\
	1 & z & \cdots  & z^n
	\end{pmatrix},
\end{equation}
and 
\begin{equation}\label{OP22}
	Q_n(z;r) = \frac{1}{D_{n}^{(r)}} 
	\det \begin{pmatrix}
	w_{r} & w_{r-1}  & \cdots & w_{r-n+1} & 1 \\
	w_{r+2} & w_{r+1} &  \cdots & w_{r-n+3} & z \\
	\vdots & \vdots  & \vdots & \vdots & \vdots \\
	w_{r+2n} & w_{r+2n-1} & \cdots & w_{r+n+1} & z^n
	\end{pmatrix},
\end{equation}
from which one can observe that $	h^{(s)}_{n}$ exists and can be written as
\begin{equation}\label{h}
	   h^{(r)}_{n} = \frac{D_{n+1}^{(r)}}{D_{n}^{(r)}}, \qquad n \in \N \cup \{0\}, \qquad D^{(r)}_0 \equiv 1.
\end{equation}
Therefore if all $h^{(r)}_{\ell}$ exist and are non-zero for $\ell=0, \cdots, n-1$, then
\begin{equation}\label{Dets from norms}
	   D^{(r)}_{n}=\prod_{\ell=0}^{n-1} h^{(r)}_{\ell}.
\end{equation}
\end{theorem}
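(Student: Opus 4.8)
The strategy is to turn the whole statement into two square linear systems for the polynomial coefficients, whose matrices are (up to transposition) $\boldsymbol{D}^{(r)}_{n}$, and then to recognise the claimed bordered determinants as the solutions of those systems by a ``repeated row/column'' argument. First I would work with the equivalent forms \eqref{OP1} and \eqref{OP2} of the bi-orthogonality condition. Writing $P_n(z;r)=\sum_{\ell=0}^{n}\mathcal{p}^{(r)}_{n,\ell}z^\ell$ with $\mathcal{p}^{(r)}_{n,n}=1$ and using $\int_{\T}\ze^{\ell-2m-r}\,\dd\mu(\ze)/(2\pi\ic\ze)=w_{r+2m-\ell}$, the conditions \eqref{OP1} for $m=0,\dots,n-1$ become the inhomogeneous system $\sum_{\ell=0}^{n-1}\mathcal{p}^{(r)}_{n,\ell}\,w_{r+2m-\ell}=-\,w_{r+2m-n}$, $m=0,\dots,n-1$, whose coefficient matrix is exactly $\boldsymbol{D}^{(r)}_{n}$ from \eqref{Det}. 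Since $D^{(r)}_{n}\neq0$, this determines $\mathcal{p}^{(r)}_{n,0},\dots,\mathcal{p}^{(r)}_{n,n-1}$ uniquely, so $P_n(z;r)$ exists and is unique; the remaining case $m=n$ of \eqref{OP1} then reads off $h^{(r)}_{n}$. The identical computation applied to \eqref{OP2}, with $\int_{\T}\ze^{-2\ell+m-r}\,\dd\mu(\ze)/(2\pi\ic\ze)=w_{r+2\ell-m}$, produces a system for the coefficients of $Q_n(z;r)$ whose matrix is the transpose of $\boldsymbol{D}^{(r)}_{n}$, again invertible, giving existence and uniqueness of $Q_n(z;r)$.

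Next I would check that the bordered determinants \eqref{OP11} and \eqref{OP22}, divided by $D^{(r)}_{n}$, are these solutions. Expanding \eqref{OP11} along its last row $(1,z,\dots,z^n)$ shows it is a polynomial of degree $n$ whose coefficient of $z^n$ is the top-left $n\times n$ minor $D^{(r)}_{n}$, so after dividing by $D^{(r)}_{n}$ it is monic of degree $n$. Using linearity of the determinant in that last row, integrating $P_n(\ze;r)\ze^{-2m-r}$ against $\dd\mu(\ze)/(2\pi\ic\ze)$ replaces the last row by $(w_{r+2m},w_{r+2m-1},\dots,w_{r+2m-n})$; for $m=0,\dots,n-1$ this duplicates the $m$-th row of the moment block, so the determinant vanishes, while for $m=n$ it yields the row $(w_{r+2n},\dots,w_{r+n})$ and the determinant becomes precisely $D^{(r)}_{n+1}$, proving \eqref{h}. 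The argument for $Q_n$ is the mirror image: expand \eqref{OP22} along its last column to see monicity, then integrating $Q_n(\ze^{-2};r)\ze^{m-r}$ turns that column into the $m$-th column of the moment block for $m<n$ (determinant zero) and into the last column of $\boldsymbol{D}^{(r)}_{n+1}$ for $m=n$ (determinant $D^{(r)}_{n+1}$). In particular the norm extracted from the $Q$-relation is the same $D^{(r)}_{n+1}/D^{(r)}_{n}$ as from the $P$-relation, so \eqref{OP1} and \eqref{OP2} hold with a single common constant $h^{(r)}_{n}$ and hence, by the stated equivalence, \eqref{PQorth} holds with that $h^{(r)}_{n}$.

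Finally, \eqref{Dets from norms} follows by telescoping: with $D^{(r)}_{0}\equiv1$, if $h^{(r)}_{0},\dots,h^{(r)}_{n-1}$ all exist and are nonzero then the relation $h^{(r)}_{\ell}=D^{(r)}_{\ell+1}/D^{(r)}_{\ell}$ propagates $D^{(r)}_{\ell}\neq0$ inductively for $\ell=0,\dots,n$, and multiplying the ratios gives $D^{(r)}_{n}=\prod_{\ell=0}^{n-1}h^{(r)}_{\ell}$.

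I do not expect a genuine obstacle here; the proof is essentially bookkeeping. The only points requiring care are tracking the index shifts so that, after integrating against the prescribed test monomials, the bordering row (resp.\ column) duplicates the \emph{correct} existing row (resp.\ column) when $m<n$ and reassembles $D^{(r)}_{n+1}$ — rather than some other shifted determinant — when $m=n$, together with the observation that the constant produced by the $P$-relation at $m=n$ coincides with the one produced by the $Q$-relation, which is what legitimises reading off a single norm $h^{(r)}_{n}$ in \eqref{PQorth}.
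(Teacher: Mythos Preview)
Your proposal is correct and follows essentially the same approach as the paper: both set up the linear systems for the polynomial coefficients with coefficient matrix $\boldsymbol{D}^{(r)}_{n}$ (resp.\ its transpose) to get uniqueness, verify that the bordered determinants \eqref{OP11}--\eqref{OP22} satisfy the orthogonality conditions via the duplicate-row/column argument, and read off $h^{(r)}_{n}=D^{(r)}_{n+1}/D^{(r)}_{n}$ from the $m=n$ case. Your write-up is in fact more explicit than the paper's in spelling out the monicity check, the row-duplication mechanism, and the consistency of the norm obtained from the $P$- and $Q$-relations, but the underlying argument is the same.
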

\begin{proof}
The existence simply follows from the fact that, if $D_{n}^{(r)} \neq 0$, we can explicitly construct the system of monic bi-orthogonal  polynomials $P_n(z;r)$ and $Q_n(z;r)$ as in \eqref{OP11} and \eqref{OP22}, respectively satisfying \eqref{OP1} and \eqref{OP2}. Now, let us discuss the uniqueness if $D_{n}^{(r)} \neq 0$. Assume that $P_n(z;r)=z^n+ \di \sum^{n-1}_{\ell=0} \mathcal{p}^{(r)}_{n,\ell} z^{\ell}$ satisfies the orthogonality conditions \eqref{OP1}. One can write the orthogonality conditions \eqref{OP1} for $m=0,1,\cdots,n-1$ as the following linear system for solving the constants $\mathcal{p}^{(r)}_{n,\ell}$, $0\leq \ell \leq n-1$:
\begin{equation}
	\begin{pmatrix}
	w_{r} & w_{r-1}   & \cdots & w_{r-n+1} \\
	w_{r+2}  & w_{r+1}  & \cdots & w_{r-n+3} \\
	\vdots & \vdots &  \vdots & \vdots \\
	w_{r+2n-2} & w_{r+2n-3} &  \cdots & w_{r+n-1}
	\end{pmatrix} 
	\begin{pmatrix}
	\mathcal{p}^{(r)}_{n,0} \\[4pt] \mathcal{p}^{(r)}_{n,1} \\[4pt] \vdots \\[4pt] \mathcal{p}^{(r)}_{n,n-1}
	\end{pmatrix}=  
	\begin{pmatrix}
	-w_{r-n} \\ -w_{r-n+2} \\ \vdots \\ -w_{r+n-2}
	\end{pmatrix}. 
\end{equation}
So, if $D_{n}^{(r)} \neq 0$, the above linear system has a unique solution, and thus $P_{n}(z;r)$ is uniquely given by \eqref{OP11}. Now, assume that $Q_n(z;r)=z^n+ \di \sum^{n-1}_{\ell=0} \mathcal{q}^{(r)}_{n,\ell} z^{\ell}$ satisfies the orthogonality conditions \eqref{OP2}. We can write the orthogonality conditions \eqref{OP2} for $m=0,1,\cdots,n-1$ as the following linear system for solving the constants $\mathcal{q}^{(r)}_{n,\ell}$, $0\leq \ell \leq n-1$:
\begin{equation}
	\begin{pmatrix}
	w_{r} & w_{r-1}   & \cdots & w_{r-n+1} \\
	w_{r+2}  & w_{r+1}  & \cdots & w_{r-n+3} \\
	\vdots & \vdots &  \vdots & \vdots \\
	w_{r+2n-2} & w_{r+2n-3} &  \cdots & w_{r+n-1}
	\end{pmatrix}^T 
	\begin{pmatrix}
	\mathcal{q}^{(r)}_{n,0} \\ \mathcal{q}^{(r)}_{n,1} \\ \vdots \\ \mathcal{q}^{(r)}_{n,n-1}
	\end{pmatrix}=  
	\begin{pmatrix}
	-w_{2n+r} \\ -w_{2n+r-1} \\ \vdots \\ -w_{n+r+1}
	\end{pmatrix}. 
\end{equation}
So, again, if $D_{n}^{(r)} \neq 0$, the above linear system can be inverted, and thus has a unique solution. Therefore, $Q_{n}(z;r)$ is uniquely given by \eqref{OP22}. Finally, one can directly find \eqref{h} using \eqref{OP11} and \eqref{OP1}, or alternatively, using \eqref{OP22} and \eqref{OP2}.
\end{proof}

In an identical manner, we can prove the following Theorem about existence and uniqueness of $j-2k$ system of bi-orthogonal polynomials.

\begin{theorem}\label{R and S exist and are unique}
If $E_{n}^{(s)} \neq 0$, the polynomials $R_n(z;s)$ and $S_n(z;s)$ exist and are uniquely given by
\begin{equation}\label{OP11 R}
	R_n(z;s) = \frac{1}{E_{n}^{(s)}} 
	\det \begin{pmatrix}
	w_{s} & w_{s-2}  & \cdots & w_{s-2n} \\
	w_{s+1} & w_{s-1} &  \cdots & w_{s-2n+1} \\
	\vdots & \vdots  & \vdots & \vdots \\
	w_{s+n-1} & w_{s+n-3} &  \cdots & w_{s-n-1} \\
	1 & z & \cdots  & z^n
	\end{pmatrix},
\end{equation}
and
\begin{equation}\label{OP22 S}
	S_n(z;s) = \frac{1}{E_{n}^{(s)}} 
	\det \begin{pmatrix}
	w_{s} & w_{s-2}  & \cdots & w_{s-2n+2} & 1 \\
	w_{s+1} & w_{s-1}  & \cdots & w_{s-2n+3} & z \\
	\vdots & \vdots  & \vdots & \vdots & \vdots \\
	w_{s+n} & w_{s+n-2} &  \cdots & w_{s-n+2} & z^n
	\end{pmatrix},
\end{equation}
from which one can observe that $	g^{(s)}_{n}$ exists and can be written as
\begin{equation}\label{H}
	g^{(s)}_{n} = \frac{E_{n+1}^{(s)}}{E_{n}^{(s)}}, \qquad n \in \N \cup \{0\}, \qquad E^{(s)}_0 \equiv 1.
\end{equation} 
Therefore if all $g^{(s)}_{\ell}$ exist and are non-zero for $\ell=0, \cdots, n-1$, then
\begin{equation}\label{Dets from norms 2}
 	E^{(s)}_{n}=\prod_{\ell=0}^{n-1} g^{(s)}_{\ell}.
\end{equation}
\end{theorem}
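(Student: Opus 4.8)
The plan is to mirror the proof of Theorem \ref{P and Q exist and are unique} essentially verbatim, since the $j-2k$ system is structurally identical after swapping the roles of $2j-k$ and $j-2k$. First I would establish existence: assuming $E_n^{(s)} \neq 0$, I would verify directly that the polynomials defined by the bordered determinants \eqref{OP11 R} and \eqref{OP22 S} are monic of degree $n$ — for $R_n(z;s)$ the coefficient of $z^n$ is obtained by Laplace expansion along the last row, giving the cofactor $E_n^{(s)}$, which cancels the prefactor $1/E_n^{(s)}$; similarly for $S_n(z;s)$ expanding along the last column — and that they satisfy \eqref{OP1 R} and \eqref{OP2 S} respectively. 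The orthogonality check uses the same mechanism as in the $2j-k$ case: integrating $R_n(\ze^2;s)\ze^{-m-s}$ term-by-term against $\dd\mu$ produces, via \eqref{Fourier Coeff}, a determinant whose last row is $(w_{s-m}, w_{s-m-2}, \dots, w_{s-m-2n})$; for $m=0,\dots,n-1$ this row coincides with one of the first $n$ rows of the matrix, so the determinant vanishes, while for $m=n$ it reproduces $E_{n+1}^{(s)}$ up to sign, yielding $g_n^{(s)} = E_{n+1}^{(s)}/E_n^{(s)}$.

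Next I would prove uniqueness by the linear-algebra argument: writing $R_n(z;s) = z^n + \sum_{\ell=0}^{n-1} \mathcal{r}^{(s)}_{n,\ell} z^\ell$, the conditions \eqref{OP1 R} for $m=0,1,\dots,n-1$ become an $n\times n$ linear system for the unknown coefficients $\mathcal{r}^{(s)}_{n,\ell}$ whose coefficient matrix is $\boldsymbol{E}^{(s)}_n$; since $E_n^{(s)}\neq 0$ this system has a unique solution, hence $R_n(z;s)$ is uniquely determined and must equal \eqref{OP11 R} by Cramer's rule. The analogous system for $S_n(z;s)$ arising from \eqref{OP2 S} has coefficient matrix $\left[\boldsymbol{E}^{(s)}_n\right]^T$, again invertible, giving uniqueness of $S_n(z;s)$.

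Finally, \eqref{H} follows by computing the norm $g_n^{(s)} = \int_{\T} R_n(\ze^2;s) S_n(\ze^{-1};s)\ze^{-s}\,\dd\mu(\ze)/(2\pi\ic\ze)$ using either orthogonality relation with $m=n$ and the bordered formula — this is exactly the $m=n$ evaluation from the existence step — and \eqref{Dets from norms 2} is then immediate by telescoping the product $\prod_{\ell=0}^{n-1} g_\ell^{(s)} = \prod_{\ell=0}^{n-1} E_{\ell+1}^{(s)}/E_\ell^{(s)} = E_n^{(s)}/E_0^{(s)} = E_n^{(s)}$, using the convention $E_0^{(s)}\equiv 1$.

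Since the argument is a transcription of the already-proved $2j-k$ case, there is no genuine obstacle; the only point requiring a moment's care is bookkeeping the index shifts in the Fourier coefficients — verifying that $\int_{\T}\ze^{2m-\ell-s}w(\ze)\,\dd\ze/(2\pi\ic\ze) = w_{\ell-2m+s}$ and tracking which entries of the bordered matrix reproduce rows of $\boldsymbol{E}^{(s)}_n$ versus $\boldsymbol{E}^{(s)}_{n+1}$ — so that the vanishing pattern $\delta_{mn}$ comes out correctly. Because the excerpt explicitly says this theorem is proved "in an identical manner," I would keep the write-up brief, referring back to the proof of Theorem \ref{P and Q exist and are unique} for the details that are unchanged and only exhibiting the two linear systems (with matrix $\boldsymbol{E}^{(s)}_n$ and its transpose) that drive the uniqueness.
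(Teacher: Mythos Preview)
Your proposal is correct and follows exactly the approach the paper takes: the paper simply states that Theorem \ref{R and S exist and are unique} is proved ``in an identical manner'' to Theorem \ref{P and Q exist and are unique}, and your write-up reconstructs that identical argument with the appropriate index changes (coefficient matrix $\boldsymbol{E}^{(s)}_n$ for $R_n$ and its transpose for $S_n$). There is nothing to add.
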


\subsection{Connection of $2j-k$ and $j-2k$ polynomials}
The $2j-k$ and $j-2k$ systems are intimately related by a duality through exploiting the freedom to choose suitable offsets.
This equivalence is only exhibited in a formal algebraic rather than an analytical sense and reflects a mapping of the interior of the unit circle to the exterior and vice-versa.
In the Toeplitz case one has a system of self-duality.
Consequently the first of such duality relations involves the determinants, which are related to one another if one selects the appropriate offset values. More precisely, we have
\begin{equation}\label{E&D rel}
D^{(r)}_n = E^{(r+n-1)}_{n},
\end{equation}
as these are reflections of each other across the main anti-diagonal. Indeed, for any $n \times n$ matrix $\boldsymbol{\mathscr{M}}$ we have $\det \boldsymbol{\mathscr{M}}^{\perp} = \det \boldsymbol{\mathscr{M}}$, which is due to the identity $\boldsymbol{\mathscr{M}}^{\perp} = \boldsymbol{\mathscr{A}}_{n} \boldsymbol{\mathscr{M}}^{T} \boldsymbol{\mathscr{A}}_n,$
 where $\boldsymbol{\mathscr{A}}_n$ is the $n \times n$ matrix with ones on the anti-diagonal and zeros everywhere else, $\boldsymbol{\mathscr{M}}^T$ is the transpose of the $n \times n$ matrix $\boldsymbol{\mathscr{M}}$, and $\boldsymbol{\mathscr{M}}^{\perp}$ is the reflection of the $n \times n$ matrix $\boldsymbol{\mathscr{M}}$ across its main anti-diagonal. Continuing the development of the duality theme our next result furnishes further details in regard to the polynomials themselves. But first let us recall a standard notation:  
 for any polynomial $p$ of degree $n$, we denote the reciprocal polynomial by $p^*$, that is
 \begin{equation}\label{star}
	 p^*(z):=z^n p(z^{-1}).
 \end{equation}
\begin{theorem}\label{Thm S*-P & Q*-R}
	The following identities hold between $2j-k$ and $j-2k$ polynomials
	
\noindent\begin{minipage}{.5\linewidth}
		\begin{alignat}{2}
		&S^*_n(z;s) &&= \frac{(-1)^nD^{(s-n+2)}_n}{E_{n}^{(s)}}P_n(z;s-n+2), \label{S* and P}
		\end{alignat}	
	\end{minipage}	
	\begin{minipage}{.5\linewidth}
		\begin{alignat}{2}
		&R_n(z;s) &&= \frac{(-1)^nD_{n}^{(s-n-1)}}{E_{n}^{(s)}}Q^*_n(z;s-n-1). \label{Q* and R} 
		\end{alignat}	
	\end{minipage}
\end{theorem}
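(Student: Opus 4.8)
The plan is to prove both identities entirely at the level of the bordered determinant representations of Theorems~\ref{P and Q exist and are unique} and~\ref{R and S exist and are unique}, the only external input being that $\det\boldsymbol{\mathscr{M}}^{\perp}=\det\boldsymbol{\mathscr{M}}$ for reflection across the main anti-diagonal, exactly as used in the derivation of \eqref{E&D rel}. The mechanism is that this reflection interchanges the $2j-k$ structure of offset $r$ with a $j-2k$ structure of a shifted offset and conversely, while the passage to the reciprocal polynomial $p\mapsto p^{*}$ in \eqref{star} merely reverses the monomial column (or row) of the relevant bordered matrix. After the reflection the monomial vector ends up on the opposite side of the matrix from where \eqref{OP11}, \eqref{OP22}, \eqref{OP11 R} or \eqref{OP22 S} require it, so one cyclic permutation of the $n+1$ rows — contributing a factor $(-1)^{n}$ — completes the matching.

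For \eqref{S* and P} I would start from \eqref{OP22 S}, reading $E^{(s)}_n S_n(z;s)$ as the $(n+1)\times(n+1)$ determinant whose $(j,k)$-entry is $w_{s+j-2k}$ for $0\leq k\leq n-1$ and whose last column is $(1,z,\dots,z^{n})^{T}$. Substituting $z\mapsto z^{-1}$ and scaling that column by $z^{n}$ (which multiplies the determinant by $z^{n}$) shows that $E^{(s)}_n S^{*}_n(z;s)$ is the same determinant with last column reversed to $(z^{n},\dots,z,1)^{T}$. Reflecting across the main anti-diagonal and using $\det\boldsymbol{\mathscr{M}}^{\perp}=\det\boldsymbol{\mathscr{M}}$ turns the body into a $2j-k$ array with entries $w_{s-n+2j-k}$ for $j=1,\dots,n$ and places the monomial vector in the \emph{top} row $(1,z,\dots,z^{n})$. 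Cycling that row to the bottom (sign $(-1)^{n}$) and relabelling $j\mapsto j-1$ reproduces verbatim the bordered matrix of \eqref{OP11} with offset $r=s-n+2$, which equals $D^{(s-n+2)}_n P_n(z;s-n+2)$; collecting constants gives \eqref{S* and P}. Note the statement still makes sense if $D^{(s-n+2)}_n=0$, since it really asserts $E^{(s)}_n S^{*}_n(z;s)=(-1)^{n}\det(\text{the bordered }P\text{-matrix})$.

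Identity \eqref{Q* and R} follows by the same routine applied to \eqref{OP22}: forming $Q^{*}_n(z;r)$ reverses its monomial last column; reflecting across the anti-diagonal converts the $2j-k$ body of offset $r$ into a $j-2k$ body with entries $w_{r+n+j-2k}$ and pushes the monomial vector to the top row; and a cyclic shift of the rows (sign $(-1)^{n}$) together with $j\mapsto j-1$ matches the bordered matrix of \eqref{OP11 R} with $s=r+n+1$, i.e.\ $E^{(s)}_n R_n(z;s)$ with $r=s-n-1$. A useful internal check: comparing constant terms in \eqref{S* and P} forces $P_n(0;s-n+2)=(-1)^{n}E^{(s)}_n/D^{(s-n+2)}_n$, and since the cofactor expansion of \eqref{OP11} gives $P_n(0;r)=(-1)^{n}D^{(r-1)}_n/D^{(r)}_n$, this collapses to $D^{(s-n+1)}_n=E^{(s)}_n$, which is exactly \eqref{E&D rel}.

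The genuine difficulty is the index-and-sign bookkeeping: one must pin down precisely which offset the reflected $2j-k$ (resp.\ $j-2k$) body carries — this is where the shifts $s-n+2$ and $s-n-1$ originate — and verify that the single cyclic permutation of $n+1$ rows has sign $(-1)^{n}$ and not $(-1)^{n+1}$. Running the whole argument at $n=0$ and $n=1$, where each determinant is at most $2\times2$, fixes the signs and offsets beyond doubt. An alternative, slightly less robust, route is to observe from \eqref{OP1} and \eqref{OP2 S} that $S^{*}_n(\,\cdot\,;s)$ and $P_n(\,\cdot\,;s-n+2)$ obey the same $n$ homogeneous orthogonality conditions, whose solution space among polynomials of degree at most $n$ is one-dimensional precisely when $D^{(s-n+2)}_n\neq0$, and then to fix the proportionality constant by comparing leading coefficients; I would keep the determinantal proof as the main one since it needs no vanishing hypothesis beyond $E^{(s)}_n\neq0$.
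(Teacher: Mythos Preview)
Your proposal is correct and follows essentially the same route as the paper: start from the bordered determinant \eqref{OP22 S}, pass to the reciprocal polynomial via \eqref{star}, reflect across the anti-diagonal, and move the resulting top row to the bottom by $n$ adjacent transpositions (your ``cyclic permutation'') to match \eqref{OP11} at offset $s-n+2$; the paper then states that \eqref{Q* and R} is obtained identically. Your additional consistency check against \eqref{E&D rel} and the alternative orthogonality argument are nice embellishments but not needed.
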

\begin{proof}
	Computing $S^*_n(z;s)$ from \eqref{OP22 S} in view of \eqref{star} and reflection across the anti-diagonal gives
	\begin{equation}\label{OP22 S*}
	S^*_n(z;s) := \frac{1}{E_{n}^{(s)}} 
	\det \begin{pmatrix}
	1 & z &  \cdots  & z^n \\
	w_{s-n+2} & w_{s-n+1}  & \cdots  & w_{s-2n+2} \\
	\vdots & \vdots & \vdots & \vdots \\
	w_{s+n} & w_{s+n-1} &  \cdots &  w_{s}
	\end{pmatrix}.
	\end{equation}
	Now, we do $n$ consecutive adjacent row-swaps to move the first row to the last which results in \eqref{S* and P} in view of \eqref{OP11}. The relationship \eqref{Q* and R} can be found in an identical manner.
\end{proof}
\subsection{Biorthogonal polynomials in terms of the master determinants} For many choices of triples $\{a,b,c\}\in \Z^3$, one can express $z^{a}f_{n+b}(z;r+c)$, $f \in \{P,Q^*,R,S^*\}$,  in terms of the determinants of master matrices \eqref{DDD} and \eqref{EEE}. For each polynomial $P,Q^*,R$ and $S^*$, among the choices mentioned above, we select five which turn out to be useful in connection to the specific Dodgson condensation identities used in \S \ref{Sec Rec Rel}.

\noindent\begin{minipage}{.5\linewidth}
	\begin{alignat}{2}
	&	P_{n+1}(z;r) &&= \frac{\mathscr{D}_r \left\lbrace \begin{matrix} n+1 \\  n+2 \end{matrix} \right\rbrace}{\mathscr{D}_r \left\lbrace \begin{matrix}  n+1& n+2 \\   n+1& n+2 \end{matrix} \right\rbrace},\label{P_n+1 r}\\
	&P_n(z;r) &&= \frac{\mathscr{D}_r \left\lbrace \begin{matrix} n& n+1 \\  n+1& n+2 \end{matrix} \right\rbrace}{\mathscr{D}_r \left\lbrace \begin{matrix} n& n+1& n+2 \\  n& n+1& n+2 \end{matrix} \right\rbrace},\label{P_n r}\\
	& P_{n}(z;r+2) &&= \frac{\mathscr{D}_{r} \left\lbrace \begin{matrix} 0 & n+1 \\ n+1 & n+2 \end{matrix} \right\rbrace}{\mathscr{D}_{r+2} \left\lbrace \begin{matrix} n & n+1& n+2 \\  n & n+1& n+2 \end{matrix} \right\rbrace}, \label{P_n r+2} \\ & zP_{n}(z;r-1) &&= \frac{\mathscr{D}_{r} \left\lbrace \begin{matrix} n & n+1 \\ 0 & n+2 \end{matrix} \right\rbrace}{\mathscr{D}_{r-1} \left\lbrace \begin{matrix} n & n+1& n+2 \\  n & n+1& n+2 \end{matrix} \right\rbrace}, \label{zP_n r-1}
	\end{alignat}	
\end{minipage}%
\begin{minipage}{.5\linewidth}
	\begin{alignat}{2}
	& R_{n+1}(z;s) &&= \frac{\mathscr{E}_s \left\lbrace \begin{matrix} n+1 \\  n+2 \end{matrix} \right\rbrace}{\mathscr{E}_s \left\lbrace \begin{matrix}  n+1& n+2 \\   n+1& n+2 \end{matrix} \right\rbrace}, \label{R_n+1 s} \\
	& R_n(z;s)  &&= \frac{\mathscr{E}_s \left\lbrace \begin{matrix} n& n+1 \\  n+1& n+2 \end{matrix} \right\rbrace}{\mathscr{E}_s \left\lbrace \begin{matrix} n& n+1& n+2 \\  n& n+1& n+2 \end{matrix} \right\rbrace}, \label{R_n s}\\
	&R_{n}(z;s+1)  &&= \frac{\mathscr{E}_{s} \left\lbrace \begin{matrix} 0&n+1 \\  n+1&n+2 \end{matrix} \right\rbrace}{\mathscr{E}_{s+1} \left\lbrace \begin{matrix}  n & n+1& n+2 \\  n & n+1& n+2 \end{matrix} \right\rbrace}, \label{R_n s+1} \\ 	&zR_{n}(z;s-1) &&= \frac{\mathscr{E}_s \left\lbrace \begin{matrix} 0 & n+1 \\ 0& n+2 \end{matrix} \right\rbrace}{\mathscr{E}_{s-1} \left\lbrace \begin{matrix}  n& n+1& n+2 \\   n& n+1& n+2 \end{matrix} \right\rbrace},\label{zR_n s-1}
	\end{alignat}
\end{minipage}

\noindent\begin{minipage}{.5\linewidth}
	\begin{alignat}{2}
	&zP_{n}(z;r+1) &&= \frac{\mathscr{D}_{r} \left\lbrace \begin{matrix} 0 & n+1 \\ 0 & n+2 \end{matrix} \right\rbrace}{\mathscr{D}_{r+1} \left\lbrace \begin{matrix} n & n+1& n+2 \\  n & n+1& n+2 \end{matrix}, \label{zP_n r+1} \right\rbrace},
	\end{alignat}	
\end{minipage}%
\begin{minipage}{.5\linewidth}
	\begin{alignat}{2}
	&zR_{n}(z;s-2) &&= \frac{\mathscr{E}_s \left\lbrace \begin{matrix} n & n+1 \\ 0& n+2 \end{matrix} \right\rbrace}{\mathscr{E}_{s-2} \left\lbrace \begin{matrix}  n& n+1& n+2 \\   n& n+1& n+2 \end{matrix} \right\rbrace}, \label{zR_n s-2}
	\end{alignat}
\end{minipage}

\noindent\begin{minipage}{.5\linewidth}
	\begin{alignat}{2}
 & Q^*_{n+1}(z;r) &&= \frac{\mathscr{D}_r \left\lbrace \begin{matrix} n+2 \\  n+1 \end{matrix} \right\rbrace}{\mathscr{D}_r \left\lbrace \begin{matrix}  n+1& n+2 \\   n+1& n+2 \end{matrix} \right\rbrace}, \label{Q^*_n+1 r} \\ 
	& Q^*_n(z;r+1) &&= \frac{\mathscr{D}_r \left\lbrace \begin{matrix} 0 & n+2 \\  0& n+1 \end{matrix} \right\rbrace}{\mathscr{D}_{r+1} \left\lbrace \begin{matrix} n& n+1& n+2 \\  n& n+1& n+2 \end{matrix} \right\rbrace}, \label{Q^*_n r+1}\\
	& Q^*_n(z;r+2) &&= \frac{\mathscr{D}_r \left\lbrace \begin{matrix} 0 & n+2 \\  n& n+1 \end{matrix} \right\rbrace}{\mathscr{D}_{r+2} \left\lbrace \begin{matrix} n& n+1& n+2 \\  n& n+1& n+2 \end{matrix} \right\rbrace}, \label{Q^*_n r+2}\\
	& zQ^*_n(z;r)&&= \frac{\mathscr{D}_r \left\lbrace \begin{matrix} n+1& n+2 \\  n& n+1 \end{matrix} \right\rbrace}{\mathscr{D}_r \left\lbrace \begin{matrix} n& n+1& n+2 \\  n& n+1& n+2 \end{matrix} \right\rbrace}, \label{zQ^*_n r} \\
	& zQ^*_n(z;r-1)&&= \frac{\mathscr{D}_r \left\lbrace \begin{matrix} n+1& n+2 \\  0& n+1 \end{matrix} \right\rbrace}{\mathscr{D}_{r-1} \left\lbrace \begin{matrix} n& n+1& n+2 \\  n& n+1& n+2 \end{matrix} \right\rbrace}, \label{zQ^*_n r-1}
	\end{alignat}	
\end{minipage}%
\begin{minipage}{.5\linewidth}
	\begin{alignat}{2}
	&	S^*_{n+1}(z;s) &&= \frac{\mathscr{E}_s \left\lbrace \begin{matrix} n+2 \\  n+1 \end{matrix} \right\rbrace}{\mathscr{E}_s \left\lbrace \begin{matrix}  n+1& n+2 \\   n+1& n+2 \end{matrix} \right\rbrace},  \label{S^*_n+1 s}\\
	&	S^*_n(z;s-1) &&= \frac{\mathscr{E}_s \left\lbrace \begin{matrix} 0& n+2 \\  0& n+1 \end{matrix} \right\rbrace}{\mathscr{E}_{s-1} \left\lbrace \begin{matrix} n& n+1& n+2 \\  n& n+1& n+2 \end{matrix} \right\rbrace}, \label{S^*_n s-1}\\
	&	S^*_n(z;s+1)&&= \frac{\mathscr{E}_s \left\lbrace \begin{matrix} 0& n+2 \\  n& n+1 \end{matrix} \right\rbrace}{\mathscr{E}_{s+1} \left\lbrace \begin{matrix} n& n+1& n+2 \\  n& n+1& n+2 \end{matrix} \right\rbrace}, \label{S^*_n s+1}\\
	& 	zS^*_n(z;s-2)&&= \frac{\mathscr{E}_r \left\lbrace \begin{matrix} n+1& n+2 \\  0& n+1 \end{matrix} \right\rbrace}{\mathscr{E}_{s-2} \left\lbrace \begin{matrix} n& n+1& n+2 \\  n& n+1& n+2 \end{matrix} \right\rbrace}, \label{zS^*_n s-2}\\
	& 	zS^*_n(z;s)&&= \frac{\mathscr{E}_s \left\lbrace \begin{matrix} n+1& n+2 \\  n& n+1 \end{matrix} \right\rbrace}{\mathscr{E}_s \left\lbrace \begin{matrix} n& n+1& n+2 \\  n& n+1& n+2 \end{matrix} \right\rbrace}. \label{zS^*_n s}
	\end{alignat}
\end{minipage}

\subsection{The reproducing kernel in terms of the master determinants}

In the same spirit as the $j-k$ bi-orthogonal polynomials on the unit circle, we define the reproducing kernels for the $2j-k$ and $j-2k$ systems.
\begin{definition} \normalfont
The reproducing kernel for the $2j-k$ and $j-2k$ systems respectively are respectively defined as 

\noindent
	\begin{minipage}{.5\linewidth}
		\begin{alignat}{2}
		&K_{n}(z,\mathcal{z};r) && := \sum_{j=0}^{n} \frac{1}{h^{(r)}_{j}}Q_{j}(z;r)P_{j}(\mathcal{z};r), \label{RepKer3}
		\end{alignat}	
	\end{minipage}	
	\begin{minipage}{.5\linewidth}
		\begin{alignat}{2}
		&L_{n}(z,\mathcal{z};s) &&:= \sum_{j=0}^{n} \frac{1}{g^{(s)}_{j}}S_{j}(z;s)R_{j}(\mathcal{z};s). \label{RepKer3 j-2k}
		\end{alignat}	
	\end{minipage}
\end{definition}

It is easy to see that $K_{n}(z,\mathcal{z};r)$ and $L_{n}(z,\mathcal{z};s)$ has the \textit{reproducing properties} as described in the following equations:
\begin{equation}\label{RepKer1''}
\int_{\T} K_{n}(z,\zeta;r) Q_{\ell}(\zeta^{-2};r) \ze^{-r} w(\ze) \frac{\dd \zeta}{2\pi \ic \zeta} = 
\begin{cases}
Q_{\ell}(z;r), & 0 \leq \ell \leq n, \\
0, & \ell > n,
\end{cases} 
\end{equation}
\begin{equation}\label{ReKer2''}
\int_{\T}   K_{n}(\ze^{-2},\mathcal{z};r) P_{\ell}(\ze;r) \ze^{-r} w(\ze)  \frac{\dd \ze}{2\pi \ic \ze} =\begin{cases}
P_{\ell}(\mathcal{z};r), & 0 \leq \ell \leq n, \\
0, & \ell>n,
\end{cases} 
\end{equation}
\begin{equation}\label{RepKer1'' j-2k}
\int_{\T} L_{n}(z,\zeta^2;s) S_{\ell}(\zeta^{-1};s) \ze^{-s} w(\ze) \frac{\dd \zeta}{2\pi \ic \zeta} = 
\begin{cases}
S_{\ell}(z;s), & 0 \leq \ell \leq n, \\
0, & \ell > n,
\end{cases} 
\end{equation}
and
\begin{equation}\label{ReKer2'' j-2k}
\int_{\T}   L_{n}(\ze^{-1},\mathcal{z};r) R_{\ell}(\ze^2;s) \ze^{-s} w(\ze)  \frac{\dd \ze}{2\pi \ic \ze} =\begin{cases}
R_{\ell}(\mathcal{z};s), & 0 \leq \ell \leq n, \\
0, & \ell>n.
\end{cases} 
\end{equation}
Key properties flowing from these definitions are the normalisation relation
\begin{equation}
	\int_{\T} \frac{\dd \ze}{2 \pi \ic \ze} w(\ze) \ze^{-r} K_n(\ze^{-2},\ze;r) = n+1,
\label{kernel_norm}
\end{equation}
and the projection relation
\begin{equation}
	\int_{\T} \frac{\dd \ze}{2 \pi \ic \ze} w(\ze) \ze^{-r} K_n(z_{1},\ze;r) K_n(\ze^{-2},z_{2};r) = K_n(z_{1},z_{2};r).
\label{kernel_projection}
\end{equation}
Identical relations apply for the $ L_{n} $ kernel with the obvious modifications.

The above definitions of the reproducing kernels follows the pattern of the Toeplitz case and the following result linking them to 
bordered moment determinants reinforces this similarity. 
Although the Toeplitz case is not quoted often it is almost identical after making the obvious modifications.
\begin{theorem}
The reproducing kernels $K_{n}(z,\mathcal{z};r)$ and  $L_{n}(z,\mathcal{z};s)$ can be represented in terms of the master determinants $\mathscr{D}_r(z;\mathcal{z})$ and $\mathscr{E}_s(z;\mathcal{z})$ as
	
\noindent\begin{minipage}{.5\linewidth}
	\begin{alignat}{2}
	&z^{n+1}K_{n+1}(z^{-1},\mathcal{z};r) &&= - \frac{\mathscr{D}_r(z;\mathcal{z})}{D^{(r)}_{n+2}}, \label{Rep Ker Master 2j-k}
	\end{alignat}	
\end{minipage}	
\begin{minipage}{.5\linewidth}
	\begin{alignat}{2}
	&z^{n+1}L_{n+1}(z^{-1},\mathcal{z};s) &&= - \frac{\mathscr{E}_s(z;\mathcal{z})}{E^{(s)}_{n+2}}. \label{Rep Ker Master j-2k} 
	\end{alignat}	
\end{minipage}
\end{theorem}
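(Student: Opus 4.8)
The plan is to verify the identity \eqref{Rep Ker Master 2j-k} by expanding the master determinant $\mathscr{D}_r(z;\mathcal{z})$ along its last column (the one containing $z^{n+1},z^n,\dots,1,\star$) and along its last row (the one containing $1,\mathcal{z},\dots,\mathcal{z}^{n+1},\star$), and matching the result term-by-term against the sum defining $K_{n+1}$. First I would recall from \eqref{RepKer3} that $K_{n+1}(z^{-1},\mathcal{z};r)=\sum_{j=0}^{n+1}\tfrac1{h^{(r)}_j}Q_j(z^{-1};r)P_j(\mathcal{z};r)$, so multiplying by $z^{n+1}$ converts each $Q_j(z^{-1};r)$ into $z^{n+1-j}$ times a monic polynomial in $z$ of degree $j$ — this is exactly where the reciprocal-polynomial bookkeeping of \eqref{star} enters, and it matches the powers of $z$ appearing down the last column of $\mathscr{D}_r$. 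I would then use the determinantal formulas \eqref{OP11} for $P_j(\mathcal{z};r)$ and \eqref{OP22} for $Q_j(z;r)$ (equivalently the master-determinant quotients \eqref{P_n r}, \eqref{Q^*_n+1 r}, etc.), together with $h^{(r)}_j=D^{(r)}_{j+1}/D^{(r)}_j$ from \eqref{h}, so that each summand $\tfrac1{h^{(r)}_j}Q_j(z^{-1};r)P_j(\mathcal{z};r)$ becomes a product of two bordered determinants divided by $D^{(r)}_{j}D^{(r)}_{j+1}$.

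The heart of the argument is then a Dodgson/Laplace-type identity. Expanding $\mathscr{D}_r(z;\mathcal{z})$ (an $(n+3)\times(n+3)$ determinant) by the last row and the last column simultaneously — a $2\times2$ Laplace expansion in the style of \eqref{DODGSON} applied to the master matrix — produces a double sum over the positions $j,k$ of the chosen entries $\mathcal{z}^{k}$ and $z^{n+1-j}$, with coefficients that are exactly the minors $\mathscr{D}_r\{\text{row}\ j,\ \text{row}\ n+2 \,/\, \text{col}\ k,\ \text{col}\ n+2\}$ of the interior $(n+1)\times(n+1)$ Toeplitz-like block. The task is to recognise that, after dividing through by $D^{(r)}_{n+2}$, this double sum telescopes into $-z^{n+1}\sum_{j}\tfrac1{h^{(r)}_j}Q_j(z^{-1};r)P_j(\mathcal{z};r)$. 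The cleanest route is probably to proceed by \emph{induction on $n$}: the base case $n=-1$ (or $n=0$) is a short direct check, and the inductive step compares $\mathscr{D}_r$ built on size $n+3$ with the one built on size $n+2$, the difference being governed by a single Dodgson condensation of the form \eqref{DODGSON} with $\{j_1,j_2\}=\{n+1,n+2\}$, $\{k_1,k_2\}=\{n+1,n+2\}$, which introduces precisely one new term $\tfrac1{h^{(r)}_{n+1}}Q_{n+1}(z^{-1};r)P_{n+1}(\mathcal{z};r)$ via \eqref{P_n+1 r}, \eqref{Q^*_n+1 r} and \eqref{h}.

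The main obstacle I anticipate is purely combinatorial: keeping the signs, the index shifts in the Fourier symbols $w_{r+2j-k}$, and the row-swap parities (as in the proof of Theorem \ref{Thm S*-P & Q*-R}, where $n$ adjacent transpositions gave a factor $(-1)^n$) all consistent, so that the overall minus sign in \eqref{Rep Ker Master 2j-k} comes out correctly rather than as $(-1)^{n}$ or $(-1)^{n+1}$. A secondary check is that the arbitrary entry $\star$ genuinely drops out — it sits in the $(n+3,n+3)$ position, which is deleted in every minor that survives the expansion along the last row and last column, so this is immediate but worth stating. Finally, \eqref{Rep Ker Master j-2k} follows by the identical argument with $\mathscr{D}_r\rightsquigarrow\mathscr{E}_s$, $P\rightsquigarrow R$, $Q\rightsquigarrow S$, $h^{(r)}_n\rightsquigarrow g^{(s)}_n$ and \eqref{OP11}–\eqref{h} replaced by \eqref{OP11 R}–\eqref{H}; alternatively one can deduce it from the $2j-k$ case via the duality \eqref{E&D rel} and Theorem \ref{Thm S*-P & Q*-R}, though the direct repetition is likely shorter to write.
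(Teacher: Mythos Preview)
Your inductive Dodgson route is sound and genuinely different from the paper's argument. Taking $\star=0$ and applying \eqref{DODGSON} to the $(n{+}3)\times(n{+}3)$ master matrix with $\{j_1,j_2\}=\{n{+}1,n{+}2\}$, $\{k_1,k_2\}=\{n{+}1,n{+}2\}$ gives exactly the one-step recursion you want: the four single minors are $\mathscr{D}_r\{n{+}2/n{+}2\}=D^{(r)}_{n+2}$, $\mathscr{D}_r\{n{+}1/n{+}2\}=D^{(r)}_{n+1}P_{n+1}(\mathcal z;r)$ by \eqref{P_n+1 r}, $\mathscr{D}_r\{n{+}2/n{+}1\}=D^{(r)}_{n+1}Q^*_{n+1}(z;r)$ by \eqref{Q^*_n+1 r}, and $\mathscr{D}_r\{n{+}1/n{+}1\}=z\cdot\mathscr{D}_r^{[n+2]}$ since deleting row $n{+}1$ and column $n{+}1$ leaves the size-$(n{+}2)$ master matrix with its $z$-column multiplied by $z$. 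Dividing by $D^{(r)}_{n+1}D^{(r)}_{n+2}$ and using $h^{(r)}_{n+1}=D^{(r)}_{n+2}/D^{(r)}_{n+1}$ yields precisely $-\mathscr{D}_r^{[n+3]}/D^{(r)}_{n+2}=z\bigl(-\mathscr{D}_r^{[n+2]}/D^{(r)}_{n+1}\bigr)+\frac{z^{n+1}}{h^{(r)}_{n+1}}Q_{n+1}(z^{-1};r)P_{n+1}(\mathcal z;r)$, and the base case is a one-line check.

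The paper's proof is non-inductive and rests on the LDU decomposition \eqref{LDU D}. It conjugates the bordered moment matrix by the block-extended unipotent triangular matrices built from $\boldsymbol{\mathcal{Q}}_n^{(r)}$ and $(\boldsymbol{\mathcal{P}}_n^{(r)})^T$; since these have determinant one, the bordered determinant equals that of the resulting matrix, whose interior block is the diagonal $\boldsymbol{h}_n^{(r)}$ with borders $\boldsymbol{Q}_n(z;r)$ and $\boldsymbol{P}_n^T(\mathcal z;r)$ and a zero corner, which expands immediately to the sum \eqref{RepKer3}. Your approach stays entirely within the Dodgson machinery the paper uses elsewhere and never touches the LDU change of basis; the paper's approach is a one-shot identity with no induction. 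Both are short and clean.

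One caution: your middle paragraph about a ``$2\times2$ Laplace expansion \dots\ produces a double sum \dots\ that telescopes'' is not the same thing and would not work as stated --- the generic interior minors $\mathscr{D}_r\{j,n{+}2/k,n{+}2\}$ for arbitrary $j,k$ do not factor as products of $P$- and $Q$-type bordered determinants. Drop that and go straight to the inductive Dodgson argument, which is what actually carries the proof.
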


\begin{proof}
	We only give the proof for \eqref{Rep Ker Master 2j-k}, as \eqref{Rep Ker Master j-2k} can be obtained in a similar way. Define \begin{equation}\label{ReproducingKer}
	\widehat{K}_{n}(z,\mathcal{z};r) := -\frac{1}{D^{(r)}_{n+1}}\det 
	\begin{pmatrix}
	w_{r} & w_{r-1}  & \cdots & w_{r-n} & 1\\
	w_{r+2} & w_{r+1}  & \cdots & w_{r-n+2} & z\\
	\vdots & \vdots  & \vdots & \vdots & \vdots\\
	w_{r+2n} & w_{r+2n-1} &  \cdots & w_{r+n} & z^{n} \\
	1  & \mathcal{z} &  \cdots & \mathcal{z}^{n} & 0
	\end{pmatrix}.
	\end{equation}
	One can easily establish that $\widehat{K}$ has the same reproducing properties as $K$. To this end, its action on monomials is given by
	\begin{equation}\label{RepKer1}
	\int_{\T}  \widehat{K}_{n}(z,\zeta;r)  \zeta^{-2\ell} \ze^{-r} w(\ze) \frac{\dd \zeta}{2\pi \ic \zeta} = z^{\ell}, \quad 0 \leq \ell \leq n ,
	\end{equation}
	and
	\begin{equation}\label{ReKer2}
	\int_{\T}   \widehat{K}_{n}(\ze^{-2},\mathcal{z};r) \ze^{\ell} \ze^{-r} w(\ze)  \frac{\dd \ze}{2\pi \ic \ze} = \mathcal{z}^{\ell}, \quad 0 \leq \ell \leq n .
	\end{equation}
	Therefore, we immediately get
	\begin{equation}\label{RepKer1'}
	\int_{\T}  \widehat{K}_{n}(z,\zeta;r) Q_{\ell}(\zeta^{-2};r) \ze^{-r} w(\ze) \frac{\dd \zeta}{2\pi \ic \zeta} = Q_{\ell}(z;r), \quad 0 \leq \ell \leq n ,
	\end{equation}
	and
	\begin{equation}\label{ReKer2'}
	\int_{\T}   \widehat{K}_{n}(\ze^{-2},\mathcal{z};r) P_{\ell}(\ze;r)\ze^{-r} w(\ze)  \frac{\dd \ze}{2\pi \ic \ze} = P_{\ell}(\mathcal{z};r), \quad 0 \leq \ell \leq n .
	\end{equation}
	Now we show that 
	\begin{equation}\label{RepKerDetRep}
	\widehat{K}_{n}(z,\mathcal{z};r) = K_{n}(z,\mathcal{z};r),
	\end{equation}
	for all $z,\mathcal{z} \in \C$ and $r\in \Z$. 	
	Let us consider the following $(n+2)\times(n+2)$ matrices
	\begin{equation}
	\boldsymbol{\widehat{K}_{n}}(z,\ze;r) = 
	\begin{pmatrix}
	w_{r} & w_{r-1}  &  \cdots & w_{r-n} & 1\\
	w_{r+2} & w_{r+1}  & \cdots & w_{r-n+2} & z\\
	\vdots & \vdots  & \vdots & \vdots & \vdots\\
	w_{r+2n} & w_{r+2n-1} & \cdots & w_{r+n} & z^{n} \\
	1  & \ze  & \cdots & \ze^{n} & 0
	\end{pmatrix},
	\end{equation}
	\begin{equation}
	\boldsymbol{\widehat{\mathcal{P}}}^{(r)}_{n} := 
	\begin{pmatrix}
	1 & 0 & \cdots & 0 & 0\\
	p^{(r)}_{1,0} & 1 & \cdots & 0 & 0\\
	\vdots & \vdots & \ddots & \vdots & \vdots \\
	p^{(r)}_{n,0} & p^{(r)}_{n,1} & \cdots & 1 & 0 \\
	0 & 0 & \cdots & 0 & 1
	\end{pmatrix}, \qandq 	\boldsymbol{\widehat{\mathcal{Q}}}^{(r)}_{n} := 
	\begin{pmatrix}
	1 & 0 & \cdots & 0 & 0\\
	q^{(r)}_{1,0} & 1 & \cdots & 0 & 0\\
	\vdots & \vdots & \ddots & \vdots & \vdots \\
	q^{(r)}_{n,0} & q^{(r)}_{n,1} & \cdots & 1 & 0 \\
	0 & 0 & \cdots & 0 & 1
	\end{pmatrix},
	\end{equation}
	Now observe that
	\begin{equation}
	\begin{split}
	\boldsymbol{\widehat{\mathcal{Q}}}^{(r)}_{n}
	\boldsymbol{\widehat{K}_{n}}(z,\ze;r)
	\left(\boldsymbol{\widehat{\mathcal{P}}}^{(r)}_{n}\right)^T & = 
	\begin{pmatrix}
	\boldsymbol{\mathcal{Q}}^{(r)}_{n} & \boldsymbol{0}_{n+1} \\
	\boldsymbol{0}^T_{n+1} & 1
	\end{pmatrix} 
	\begin{pmatrix}
	\boldsymbol{D}_{n+1} & \boldsymbol{Z}_{n}(z) \\
	\boldsymbol{Z}^T_{n}(\ze) & 0
	\end{pmatrix} 
	\begin{pmatrix} 
	\left(\boldsymbol{\mathcal{P}}^{(r)}_{n}\right)^T & \boldsymbol{0}_{n+1} \\
	\boldsymbol{0}^T_{n+1} & 1
	\end{pmatrix} \\ & = 
	\begin{pmatrix}
\boldsymbol{\mathcal{Q}}^{(r)}_{n} \boldsymbol{D}_{n+1}  \left(\boldsymbol{\mathcal{P}}^{(r)}_{n}\right)^T & \boldsymbol{\mathcal{Q}}^{(r)}_{n} \boldsymbol{Z}_{n}(z) \\[5pt]
	\boldsymbol{Z}^T_{n}(\ze) \left(\boldsymbol{\mathcal{P}}^{(r)}_{n}\right)^T & 0
	\end{pmatrix} = 
	\begin{pmatrix}
	\boldsymbol{h}^{(r)}_{n}	 & \boldsymbol{Q}_n(z;r) \\
	\boldsymbol{P}^T_n(\ze;r) & 0
	\end{pmatrix},
	\end{split}
	\end{equation}
	where the objects in the above equation are defined in subsection \ref{LDU}.
	Taking the determinant of both sides of the above equation yields \eqref{RepKerDetRep}. Finally by combining \eqref{RepKerDetRep} and \eqref{ReproducingKer} and recalling \eqref{DDD} we obtain \eqref{Rep Ker Master 2j-k}.
\end{proof}

\section{Recurrence relations}\label{Sec Rec Rel}

At this point we turn to the recurrence $ n \mapsto n+1 $ structures in the $2j-k$ and $j-2k$ systems, focusing initially on the scalar forms.
Throughout this section we primarily use the Dodgson condensation identity \eqref{DODGSON} in order to prove the following results, 
which furnish the tools that are capable of extension to the more general $pj-qk$ systems. 

Our first result yields extensions of the second order scalar difference equations given in Eqs.(2.23,24) of \cite{FW_2006} for the $2j-k$ system.

\begin{theorem}\label{THM degree rec 2j-k}
The third order pure-degree recurrence relations for the $2j-k$ polynomials are given by
	\begin{equation}\label{P pure n rec}
	P_{n+3}(z;r) -(\de_{n+2}^{(r)}+\de_{n+1}^{(r-1)})P_{n+2}(z;r) + (\de_{n+1}^{(r-1)}\de_{n+1}^{(r)} - z^2)P_{n+1}(z;r)+ (\de_n^{(r)}+\eta^{(r-2)}_n)z^2P_{n}(z;r)=0,
	\end{equation}
and
	\begin{equation}\label{Q* pure n rec}
		\begin{split}
		Q^*_{n+3}(z;r) &  -  (1+\be^{(r)}_{n+2} z) Q^*_{n+2}(z;r) +  (\be^{(r)}_{n+1}+\al^{(r+1)}_{n+1}+\be^{(r+1)}_{n+1}+\al^{(r+2)}_{n+1})z Q^*_{n+1}(z;r) \\ & -(\be^{(r+1)}_{n+1}+\al^{(r+2)}_{n+1})  (\be^{(r)}_{n}+\al^{(r+1)}_{n})z^2 Q^*_{n}(z;r) = 0,
		\end{split}
	\end{equation}
where	
	
\noindent\begin{minipage}{.5\linewidth}
	\begin{alignat}{2}
		&\de^{(r)}_{n} &&= -\frac{h^{(r-1)}_n}{h^{(r)}_{n}}, \label{delta} \\
		& \eta^{(r)}_n &&= \frac{D_n^{(r+2)}D_{n+1}^{(r-1)}}{D^{(r)}_{n+1}D^{(r+1)}_n}, \label{eta}
	\end{alignat}	
	\end{minipage}	
	\begin{minipage}{.5\linewidth}
		\begin{alignat}{2}
		&\be^{(r)}_{n} &&= -\frac{h^{(r+2)}_n}{h^{(r)}_{n}}, \label{beta} \\
		& \al^{(r)}_n&&= \frac{D^{(r-1)}_n D^{(r+2)}_{n+1}}{D^{(r)}_{n+1}D^{(r+1)}_n}. \label{alpha}
		\end{alignat}	
	\end{minipage}
	
\end{theorem}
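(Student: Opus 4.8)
The plan is to derive both recurrences by applying the Dodgson condensation identity \eqref{DODGSON} to a suitably chosen master matrix, using the catalogue of determinantal representations in \eqref{P_n+1 r}--\eqref{zS^*_n s}. The key observation is that the polynomials $P_{n+3}(z;r)$, $P_{n+2}(z;r)$, $P_{n+1}(z;r)$, $z^2 P_n(z;r)$ all admit representations as cofactors of a common master matrix $\boldsymbol{\mathscr{D}}_r$ (taken at the appropriate size, with $n$ shifted so the indices align), differing only in which row/column is deleted. Concretely, from \eqref{P_n+1 r} the polynomial $P_{n+1}(z;r)$ (up to the normalising determinant in the denominator) is $\mathscr{D}_r\{{n+1 \atop n+2}\}$; the plan is to pick the four-index Dodgson identity with the rows/columns $\{0,n+1\}$ and $\{n+1,n+2\}$ (or the analogous choices appearing in \eqref{P_n r+2}, \eqref{zP_n r-1}, \eqref{zP_n r+1}) so that the two ``principal'' minors on the left of \eqref{DODGSON} are, respectively, a master determinant giving $P_{n+3}$ (or $P_{n+2}$) and a plain determinant $D^{(\cdot)}_{\cdot}$, while the four single-deletion minors on the right are exactly the objects in \eqref{P_n r}, \eqref{P_n r+2}, \eqref{zP_n r-1}, \eqref{zP_n r+1}, hence proportional to $P_n(z;r)$, $P_n(z;r+2)$, $zP_n(z;r-1)$, $zP_n(z;r+1)$. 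One then has to iterate: the resulting identity mixes offsets $r, r\pm 1, r\pm 2$, so I would combine two or three such Dodgson identities (at offsets $r$ and $r-1$, say) to eliminate the ``wrong-offset'' polynomials and collapse everything back to pure offset $r$. After dividing through by the appropriate product of determinants $D^{(\cdot)}_{\cdot}$ and rewriting all determinant ratios using \eqref{h}, \eqref{E&D rel}, and the definitions \eqref{delta}--\eqref{eta}, the coefficients should organise into exactly $\de^{(r)}_{n+2}+\de^{(r-1)}_{n+1}$, $\de^{(r-1)}_{n+1}\de^{(r)}_{n+1}-z^2$, and $(\de^{(r)}_n+\eta^{(r-2)}_n)z^2$, yielding \eqref{P pure n rec}.

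For \eqref{Q* pure n rec} the strategy is identical but run on the reciprocal polynomials: the representations \eqref{Q^*_n+1 r}--\eqref{zQ^*_n r-1} express $Q^*_{n+1}(z;r)$, $Q^*_n(z;r+1)$, $Q^*_n(z;r+2)$, $zQ^*_n(z;r)$, $zQ^*_n(z;r-1)$ as single-row/column-deleted minors of $\boldsymbol{\mathscr{D}}_r$ with the borrowed $z$-column now in column $n+1$ or $n+2$ and the borrowed $z$-row absent (or vice versa). Choosing the Dodgson deletion pattern $\{n+1,n+2\}$ against $\{n,n+1\}$ (the one underlying \eqref{zQ^*_n r}) and again chaining the identities across offsets $r$, $r+1$, $r+2$ to kill the off-offset terms, I expect the determinant ratios to collapse — via \eqref{h} and the duality \eqref{E&D rel} — into $\be^{(r)}_{n}, \al^{(r+1)}_{n}$, etc., producing the stated coefficients $1+\be^{(r)}_{n+2}z$, $(\be^{(r)}_{n+1}+\al^{(r+1)}_{n+1}+\be^{(r+1)}_{n+1}+\al^{(r+2)}_{n+1})z$, and $(\be^{(r+1)}_{n+1}+\al^{(r+2)}_{n+1})(\be^{(r)}_{n}+\al^{(r+1)}_{n})z^2$. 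Alternatively, once \eqref{P pure n rec} is established, one could try to obtain \eqref{Q* pure n rec} more cheaply by invoking Theorem \ref{Thm S*-P & Q*-R} to transfer a recurrence for $S^*_n$ (itself obtained from the $P$-recurrence via \eqref{S* and P}) over to $Q^*_n$ through \eqref{Q* and R}; this is worth checking but the bookkeeping of offset shifts may make it no shorter than the direct route.

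The main obstacle I anticipate is bookkeeping rather than conceptual: choosing, for each of the two recurrences, the \emph{single} Dodgson identity (or minimal chain of them) whose six minors are precisely the six representations available in the catalogue, with all index shifts $n\mapsto n+1$ etc. consistent, and then verifying that after clearing denominators the surviving determinant ratios really do telescope into the compact combinations of $\de,\eta,\be,\al$ claimed — in particular that the $-z^2$ term in \eqref{P pure n rec} arises from an exact cancellation between a $\de\de$ product and a $z^2$-weighted minor, and that the mixed offsets genuinely drop out rather than leaving residual $D^{(r\pm1)}$ factors. A secondary subtlety is that several of the needed representations (e.g. \eqref{zP_n r+1}) appear with the $z$-column in position $0$, so one must be careful that the master matrix actually \emph{has} the bordering column/row in the place the cofactor formula assumes; I would state up front which enlarged master matrix (size $n+4$ or $n+5$) is being condensed and fix the labelling convention of \eqref{matrix with removed col-rows} before computing, to avoid sign errors from the row-swaps of the type used in the proof of Theorem \ref{Thm S*-P & Q*-R}.
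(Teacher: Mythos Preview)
Your proposal is essentially correct and follows the same approach as the paper: apply Dodgson condensation to the master matrix $\boldsymbol{\mathscr{D}}_r$ to obtain mixed degree--offset relations, then eliminate the foreign offsets algebraically. The paper makes this concrete by first deriving two \emph{first-order} step-up relations
\[
P_{n+1}(z;r)=zP_n(z;r-1)+\de^{(r)}_nP_n(z;r), \qquad
P_{n+1}(z;r)=zP_n(z;r+1)-\eta^{(r)}_nP_n(z;r+2),
\]
from the Dodgson identities with row/column deletions $\{n,n+1,n+2\}/\{0,n+1,n+2\}$ and $\{0,n+1,n+2\}/\{0,n+1,n+2\}$ respectively, and then iterating these (shifting $r\mapsto r-1,r-2$) to kill the $r\pm1,r\pm2$ terms and land on \eqref{P pure n rec}. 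The $Q^*$ case is handled the same way via $Q^*_{n+1}(z;r)=Q^*_n(z;r+2)+\be^{(r)}_nzQ^*_n(z;r)$ and $Q^*_{n+1}(z;r)=Q^*_n(z;r+1)-\al^{(r)}_nzQ^*_n(z;r-1)$. So rather than hunting for a single Dodgson identity whose six minors give the four-term recurrence in one shot (which your ``main obstacle'' paragraph worries about), the cleaner path is exactly the one you also outline: get the two simplest mixed relations and do the offset elimination by hand---the $-z^2$ and the $\de\de$ terms then fall out of the substitution rather than from any delicate cancellation inside a condensation identity. Your alternative route via Theorem~\ref{Thm S*-P & Q*-R} is not used in the paper and, as you suspect, the offset bookkeeping makes it no shorter.
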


\begin{proof}
Let us consider the following Dodgson condensation identities (see \eqref{DODGSON}):
	\begin{equation}\label{Dodgson2}\begin{split}
	& \mathscr{D}_r\left\lbrace \begin{matrix}  n+1  \\  n+2 \end{matrix} \right\rbrace \cdot \mathscr{D}_r\left\lbrace \begin{matrix}  n & n+1 & n+2 \\  0 & n+1 & n+2 \end{matrix} \right\rbrace =  \mathscr{D}_r \left\lbrace \begin{matrix} n &  n+1   \\ 0 &  n+2  \end{matrix} \right\rbrace \cdot \mathscr{D}_r\left\lbrace \begin{matrix}   n+1  & n+2  \\  n+1 &  n+2  \end{matrix} \right\rbrace - \mathscr{D}_r\left\lbrace \begin{matrix} n &  n+1   \\  n+1 &  n+2  \end{matrix} \right\rbrace \cdot \mathscr{D}_r\left\lbrace \begin{matrix}   n+1  & n+2  \\ 0 &  n+2 \end{matrix} \right\rbrace,
	\end{split}
	\end{equation}
	
	\begin{equation}\label{Dodgson3}\begin{split}
	& \mathscr{D}_r\left\lbrace \begin{matrix}  n+1 \\ n+2 \end{matrix} \right\rbrace \cdot \mathscr{D}_r\left\lbrace \begin{matrix}  0 &  n+1  & n+2 \\  0 & n+1 &  n+2  \end{matrix} \right\rbrace =  \mathscr{D}_r \left\lbrace \begin{matrix} 0 &  n+1   \\ 0 &  n+2  \end{matrix} \right\rbrace \cdot \mathscr{D}_r\left\lbrace \begin{matrix}   n+1  & n+2  \\  n+1 & n+2  \end{matrix} \right\rbrace - \mathscr{D}_r\left\lbrace \begin{matrix} 0 &  n+1   \\  n+1 &  n+2  \end{matrix} \right\rbrace \cdot \mathscr{D}_r\left\lbrace \begin{matrix}   n+1  & n+2  \\ 0 &  n+2  \end{matrix} \right\rbrace.
	\end{split}
	\end{equation}
The identity \eqref{Dodgson2} can be written as
	\begin{equation}\label{11}
	\frac{\mathscr{D}_r\left\lbrace \begin{matrix}  n+1  \\  n+2 \end{matrix} \right\rbrace}{\mathscr{D}_r \left\lbrace \begin{matrix}  n+1& n+2 \\   n+1& n+2 \end{matrix} \right\rbrace}  \cdot \mathscr{D}_r\left\lbrace \begin{matrix}  n &  n+1  & n+2 \\  0 & n+1 &  n+2  \end{matrix} \right\rbrace =  \mathscr{D}_r \left\lbrace \begin{matrix} n &  n+1  \\ 0 & n+2 \end{matrix} \right\rbrace  - \frac{\mathscr{D}_r\left\lbrace \begin{matrix} n &  n+1  \\  n+1 &  n+2  \end{matrix} \right\rbrace}{\mathscr{D}_r \left\lbrace \begin{matrix}  n+1& n+2 \\   n+1& n+2 \end{matrix} \right\rbrace}  \cdot \mathscr{D}_r\left\lbrace \begin{matrix}   n+1 & n+2  \\ 0 &  n+2 \end{matrix} \right\rbrace.
	\end{equation}
So from \eqref{P_n+1 r}, \eqref{P_n r}, \eqref{zP_n r-1}, and \eqref{D DD E EE} we have
	\begin{equation}
	D^{(r-1)}_n P_{n+1}(z;r) = D^{(r-1)}_n zP_n(z;r-1)-\frac{D_n^{(r)}D_{n+1}^{(r-1)}}{D_{n+1}^{(r)}}P_n(z;r).
	\end{equation}
Therefore from \eqref{h} we have
	\begin{equation}\label{1st rec P}
		P_{n+1}(z;r) = zP_n(z;r-1)+\de^{(r)}_nP_n(z;r), \qquad \de^{(r)}_{n} = -\frac{h^{(r-1)}_n}{h^{(r)}_{n}}.
	\end{equation}
Let us also write \eqref{Dodgson3} as 
	\begin{equation}\label{22}
	\frac{\mathscr{D}_r\left\lbrace \begin{matrix}  n+1  \\  n+2 \end{matrix} \right\rbrace}{\mathscr{D}_r \left\lbrace \begin{matrix}  n+1& n+2 \\   n+1& n+2 \end{matrix} \right\rbrace}  \cdot \mathscr{D}_r\left\lbrace \begin{matrix}  0 &  n+1  & n+2 \\  0 & n+1 &  n+2  \end{matrix} \right\rbrace =  \mathscr{D}_r \left\lbrace \begin{matrix} 0 &  n+1  \\ 0 & n+2 \end{matrix} \right\rbrace  - \frac{\mathscr{D}_r\left\lbrace \begin{matrix} 0 &  n+1  \\  n+1 &  n+2  \end{matrix} \right\rbrace}{\mathscr{D}_r \left\lbrace \begin{matrix}  n+1& n+2 \\   n+1& n+2 \end{matrix} \right\rbrace}  \cdot \mathscr{D}_r\left\lbrace \begin{matrix}   n+1 & n+2  \\ 0 &  n+2 \end{matrix} \right\rbrace.
	\end{equation}
Therefore from \eqref{P_n+1 r}, \eqref{P_n r+2}, \eqref{zP_n r+1}, and \eqref{D DD E EE} we arrive at
	\begin{equation}
	D^{(r+1)}_nP_{n+1}(z;r)=D^{(r+1)}_nzP_n(z;r+1)-\frac{D_n^{(r+2)}D_{n+1}^{(r-1)}}{D^{(r)}_{n+1}}P_{n}(z;r+2),
	\end{equation}
and thus
	\begin{equation}\label{2nd rec P}
		P_{n+1}(z;r)=zP_n(z;r+1)-\eta^{(r)}_n P_{n}(z;r+2), \qquad \eta^{(r)}_n = \frac{D_n^{(r+2)}D_{n+1}^{(r-1)}}{D^{(r)}_{n+1}D^{(r+1)}_n}.
	\end{equation}	
From \eqref{1st rec P} we have
	\begin{equation}\label{00}
	zP_{n}(z;r-1)=P_{n+1}(z;r)-\de_n^{(r)}P_n(z;r).
	\end{equation}
From this, we immediately get
	\begin{equation}
	zP_{n}(z;r-2)=P_{n+1}(z;r-1)-\de_n^{(r-1)}P_n(z;r-1).
	\end{equation}
Using \eqref{00} again we get
	\begin{equation}
	zP_{n}(z;r-2)=z^{-1}[P_{n+2}(z;r)-\de_{n+1}^{(r)}P_{n+1}(z;r)]-\de_n^{(r-1)}z^{-1}[P_{n+1}(z;r)-\de_{n}^{(r)}P_{n}(z;r)],	
	\end{equation}
and therefore
	\begin{equation}
	zP_{n}(z;r-2) = z^{-1}P_{n+2}(z;r)-z^{-1}(\de_{n+1}^{(r)}+\de_n^{(r-1)})P_{n+1}(z;r)+\de_n^{(r-1)}\de_{n}^{(r)}z^{-1}P_{n}(z;r).
	\end{equation}
On the other hand, from \eqref{2nd rec P} we have
	\begin{equation}
	P_{n+1}(z;r-2)=zP_{n}(z;r-1)-\eta^{(r-2)}_n P_{n}(z;r).
	\end{equation}
Combining the last two equations with \eqref{00} and some simplification gives \eqref{P pure n rec}.
	
In order to prove \eqref{Q* pure n rec} we need to consider the following Dodgson condensation identities:
	\begin{equation}\label{Dodgson5}\begin{split}
	& \mathscr{D}_r\left\lbrace \begin{matrix}  n+2 \\  n+1  \end{matrix} \right\rbrace \cdot \mathscr{D}_r\left\lbrace \begin{matrix} 0 & n+1 & n+2  \\  n &  n+1 & n+2 \end{matrix} \right\rbrace = \mathscr{D}_r \left\lbrace \begin{matrix} 0 &  n+2   \\ n &  n+1  \end{matrix} \right\rbrace \cdot \mathscr{D}_r\left\lbrace \begin{matrix} n+1 &  n+2   \\   n+1 & n+2 \end{matrix} \right\rbrace - \mathscr{D}_r\left\lbrace \begin{matrix} 0 &  n+2   \\  n+1  & n+2 \end{matrix} \right\rbrace \cdot \mathscr{D}_r\left\lbrace \begin{matrix} n+1 &  n+2   \\  n &  n+1  \end{matrix} \right\rbrace,
	\end{split}
	\end{equation}
	\begin{equation}\label{Dodgson6}\begin{split}
	& \mathscr{D}_r\left\lbrace \begin{matrix}  n+2  \\  n+1  \end{matrix} \right\rbrace \cdot \mathscr{D}_r\left\lbrace \begin{matrix} 0 & n+1 & n+2 \\  0 & n+1 & n+2 \end{matrix} \right\rbrace = \mathscr{D}_r \left\lbrace \begin{matrix} 0 & n+2  \\ 0 & n+1  \end{matrix} \right\rbrace \cdot \mathscr{D}_r\left\lbrace \begin{matrix} n+1 & n+2  \\   n+1 & n+2 \end{matrix} \right\rbrace - \mathscr{D}_r\left\lbrace \begin{matrix} 0 & n+2  \\ n+1  & n+2 \end{matrix} \right\rbrace \cdot \mathscr{D}_r\left\lbrace \begin{matrix} n+1 &  n+2  \\  0 &  n+1 \end{matrix} \right\rbrace.
	\end{split}
	\end{equation}
Let us write \eqref{Dodgson5} as 
	\begin{equation}\label{1}
	\frac{\mathscr{D}_r \left\lbrace \begin{matrix} n+2 \\  n+1 \end{matrix} \right\rbrace}{\mathscr{D}_r \left\lbrace \begin{matrix}  n+1& n+2 \\   n+1& n+2 \end{matrix} \right\rbrace} \cdot \mathscr{D}_r\left\lbrace \begin{matrix} 0 & n+1 &  n+2  \\  n &  n+1  & n+2 \end{matrix} \right\rbrace = \mathscr{D}_r \left\lbrace \begin{matrix} 0 & n+2  \\ n &  n+1  \end{matrix} \right\rbrace - \frac{\mathscr{D}_r\left\lbrace \begin{matrix} n+1 & n+2  \\  n &  n+1 \end{matrix} \right\rbrace}{\mathscr{D}_r \left\lbrace \begin{matrix}  n+1& n+2 \\   n+1& n+2 \end{matrix} \right\rbrace} \cdot \mathscr{D}_r \left\lbrace \begin{matrix} 0 & n+2 \\  n+1 & n+2 \end{matrix} \right\rbrace.  
	\end{equation}
So from  \eqref{Q^*_n+1 r}, \eqref{Q^*_n r+2}, \eqref{zQ^*_n r}, and \eqref{D DD E EE} we have 
	\begin{equation}
	D^{(r+2)}_{n}Q^*_{n+1}(z;r)= D^{(r+2)}_n Q^*_n(z;r+2)-\frac{D^{(r)}_n}{D^{(r)}_{n+1}} D^{(r+2)}_{n+1} z Q^*_{n}(z;r),
	\end{equation}
and thus from \eqref{h} we get 
	\begin{equation}\label{1st rec Q^*}
		Q^*_{n+1}(z;r)=  Q^*_n(z;r+2)+ \be^{(r)}_n z Q^*_{n}(z;r), \qquad \be^{(r)}_{n} = -\frac{h^{(r+2)}_n}{h^{(r)}_{n}}.
	\end{equation}	 
Now, let us write \eqref{Dodgson6} as
\begin{equation}\label{2}
\frac{\mathscr{D}_r \left\lbrace \begin{matrix} n+2 \\  n+1 \end{matrix} \right\rbrace}{\mathscr{D}_r \left\lbrace \begin{matrix}  n+1& n+2 \\   n+1& n+2 \end{matrix} \right\rbrace} \cdot \mathscr{D}_r\left\lbrace \begin{matrix} 0 & n+1 &  n+2  \\  0 &  n+1  & n+2 \end{matrix} \right\rbrace = \mathscr{D}_r \left\lbrace \begin{matrix} 0 & n+2  \\ 0 &  n+1  \end{matrix} \right\rbrace - \frac{\mathscr{D}_r\left\lbrace \begin{matrix} n+1 & n+2  \\  0 &  n+1 \end{matrix} \right\rbrace}{\mathscr{D}_r \left\lbrace \begin{matrix}  n+1& n+2 \\   n+1& n+2 \end{matrix} \right\rbrace} \cdot \mathscr{D}_r \left\lbrace \begin{matrix} 0 & n+2 \\  n+1 & n+2 \end{matrix} \right\rbrace.  
\end{equation}
So from  \eqref{Q^*_n+1 r}, \eqref{Q^*_n r+1}, \eqref{zQ^*_n r-1}, and \eqref{D DD E EE} we obtain 
\begin{equation}
D^{(r+1)}_n Q^*_{n+1}(z;r)= D^{(r+1)}_nQ^*_{n}(z;r+1) - \frac{D^{(r-1)}_n D^{(r+2)}_{n+1}}{D^{(r)}_{n+1}}zQ^*_{n}(z;r-1),
\end{equation}
and thus
\begin{equation}\label{2nd rec Q^*}
	Q^*_{n+1}(z;r)= Q^*_{n}(z;r+1) -\al^{(r)}_n zQ^*_{n}(z;r-1), \qquad \al^{(r)}_n= \frac{D^{(r-1)}_n D^{(r+2)}_{n+1}}{D^{(r)}_{n+1}D^{(r+1)}_n}.
\end{equation}
Now, let us find the pure-$n$ recurrence relation for $Q^*$s. From \eqref{1st rec Q^*} we have
\begin{equation}\label{Q^* r+1tor-1}
Q^*_n(z;r+1)=    Q^*_{n+1}(z;r-1)- \be^{(r-1)}_n z Q^*_{n}(z;r-1).
\end{equation}
From \eqref{2nd rec Q^*} and \eqref{Q^* r+1tor-1} we get
\begin{equation}\label{Q^*n+1 r}
Q^*_{n+1}(z;r) = Q^*_{n+1}(z;r-1)-  (\be^{(r-1)}_n+\al^{(r)}_n)z Q^*_{n}(z;r-1), 
\end{equation}
and immediately
\begin{equation}\label{Q^*n r+1}
Q^*_{n}(z;r+1) = Q^*_{n}(z;r)-  (\be^{(r)}_{n-1}+\al^{(r+1)}_{n-1})z Q^*_{n-1}(z;r), 
\end{equation}
and
\begin{equation}\label{Q^*n r+2}
Q^*_{n}(z;r+2) = Q^*_{n}(z;r+1)-  (\be^{(r+1)}_{n-1}+\al^{(r+2)}_{n-1})z Q^*_{n-1}(z;r+1). 
\end{equation}
Using \eqref{Q^*n r+1} we have
\begin{equation}
\begin{split}
Q^*_{n}(z;r+2) & = Q^*_{n}(z;r)-  (\be^{(r)}_{n-1}+\al^{(r+1)}_{n-1})z Q^*_{n-1}(z;r)  - (\be^{(r+1)}_{n-1}+\al^{(r+2)}_{n-1})z \left[Q^*_{n-1}(z;r)-  (\be^{(r)}_{n-2}+\al^{(r+1)}_{n-2})z Q^*_{n-2}(z;r) \right].
\end{split}
\end{equation}
Plugging this into \eqref{1st rec Q^*} yields
\begin{equation}
\begin{split}
Q^*_{n+1}(z;r) & =  \be^{(r)}_n z Q^*_{n}(z;r)+ Q^*_{n}(z;r)-  (\be^{(r)}_{n-1}+\al^{(r+1)}_{n-1})z Q^*_{n-1}(z;r) \\ & - (\be^{(r+1)}_{n-1}+\al^{(r+2)}_{n-1})z \left[Q^*_{n-1}(z;r)-  (\be^{(r)}_{n-2}+\al^{(r+1)}_{n-2})z Q^*_{n-2}(z;r) \right].
\end{split}
\end{equation}
Replacing $n$ by $n+2$ and rearranging terms gives the desired recurrence relation \eqref{Q* pure n rec}. 
\end{proof}

Our second result gives extensions of the second order scalar difference equations given in Eqs.(2.23,24) of \cite{FW_2006} for the $j-2k$ system.

\begin{theorem}\label{THM degree rec j-2k}
The third order pure-degree recurrence relations for the $j-2k$ polynomials are given by
\begin{equation}\label{R pure n rec}
	\begin{split}
R_{n+3}(z;s)
& - \left(z+\varkappa^{(s)}_{n+2}\right)R_{n+2}(z;s)
+ \left( \varkappa^{(s)}_{n+1}+\rho^{(s-1)}_{n+1}+\rho^{(s)}_{n+2}+\varkappa^{(s)}_{n+2}\right)z R_{n+1}(z;s)
\\ & - \left( \rho^{(s)}_{n+2}+\varkappa^{(s)}_{n+2}\right)\left( \varkappa^{(s)}_{n}+ \rho^{(s-1)}_{n}  \right)zR_{n}(z;s) = 0,	
	\end{split}
\end{equation}
and
\begin{equation}\label{S* pure n rec}
	S^*_{n+3}(z;s)
	- \left( \ga^{(s)}_{n+2}+ \ga^{(s+1)}_{n+1}   \right) z S^*_{n+2}(z;s)
	- \left(1-\ga^{(s+1)}_{n+1}\ga^{(s)}_{n+1} z^2 \right)S^*_{n+1}(z;s)
	+ \left(\theta^{(s+1)}_{n+1}+\ga^{(s+1)}_{n+1}  \right)zS^*_n(z;s) = 0 .
\end{equation}
where
	
\noindent
	\begin{minipage}{.5\linewidth}
		\begin{alignat}{2}
		&\varkappa^{(s)}_{n} &&= -\frac{g^{(s-2)}_n}{g^{(s)}_{n}}, \label{kappa} \\
		& \rho^{(s)}_n &&= \frac{E_n^{(s+1)}E_{n+1}^{(s-2)}}{E^{(s)}_{n+1}E^{(s-1)}_n}, \label{rho}
		\end{alignat}	
	\end{minipage}	
	\begin{minipage}{.5\linewidth}
		\begin{alignat}{2}
		&\ga^{(s)}_{n} &&= -\frac{g^{(s+1)}_n}{g^{(s)}_{n}}, \label{gamma} \\
		& \theta^{(s)}_n&&= \frac{E^{(s-2)}_n E^{(s+1)}_{n+1}}{E^{(s)}_{n+1}E^{(s-1)}_n}. \label{theta}
		\end{alignat}	
	\end{minipage}
\end{theorem}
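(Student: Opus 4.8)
The plan is to follow the proof of Theorem~\ref{THM degree rec 2j-k} essentially verbatim, with the master matrices $\boldsymbol{\mathscr{D}}_r$ everywhere replaced by $\boldsymbol{\mathscr{E}}_s$ and with the polynomial ratios read off from \eqref{R_n+1 s}--\eqref{zR_n s-2} (for the $R$-system) and \eqref{S^*_n+1 s}--\eqref{zS^*_n s} (for the $S^*$-system). First I would select, for the $R$-polynomials, the two Dodgson condensation identities \eqref{DODGSON} applied to suitable $(n+2)\times(n+2)$ submatrices of $\boldsymbol{\mathscr{E}}_s$ that are the $j-2k$ counterparts of \eqref{Dodgson2} and \eqref{Dodgson3} --- the pivot rows and columns being chosen so that the six minors that appear are exactly $\mathscr{E}_s\{n+1/n+2\}$, $\mathscr{E}_s\{n+1,n+2/n+1,n+2\}$, $\mathscr{E}_s\{n,n+1/n+1,n+2\}$, $\mathscr{E}_s\{0,n+1/n+1,n+2\}$, $\mathscr{E}_s\{0,n+1/0,n+2\}$ and $\mathscr{E}_s\{n,n+1/0,n+2\}$, together with the full minors $\mathscr{E}_\bullet\{n,n+1,n+2/n,n+1,n+2\}=E^{(\bullet)}_n$ supplied by \eqref{D DD E EE}. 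Dividing each identity by $\mathscr{E}_s\{n+1,n+2/n+1,n+2\}$ and substituting \eqref{R_n+1 s}, \eqref{R_n s}, \eqref{R_n s+1}, \eqref{zR_n s-1}, \eqref{zR_n s-2}, then collapsing the resulting determinant ratios by means of \eqref{H}, yields two first-order mixed degree-offset recurrences --- the $j-2k$ analogues of \eqref{1st rec P} and \eqref{2nd rec P} --- whose coefficients are exactly $\varkappa^{(s)}_n$ of \eqref{kappa} (which reduces to a ratio of norms via \eqref{H}) and $\rho^{(s)}_n$ of \eqref{rho} (which remains a ratio of $E$-determinants).

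The pure-degree recurrence \eqref{R pure n rec} then follows by eliminating the offset exactly as \eqref{P pure n rec} was obtained from \eqref{1st rec P}--\eqref{2nd rec P}: iterate the two mixed recurrences with $s$ shifted by $\pm1$ and $\pm2$, re-substitute, clear the powers of $z$ that appear, and collect terms at the common offset $s$. The fact that \eqref{R pure n rec} carries only single powers of $z$, whereas \eqref{P pure n rec} carries $z^2$, is a direct reflection of the $j-2k$ structure: the column index of $\boldsymbol{\mathscr{E}}_s$ advances in steps of $2$ and the row index in steps of $1$, the mirror image of the $2j-k$ case, which also explains why the offset shifts entering $\varkappa,\rho$ are by $2$ and $1$ here as opposed to $1$ and $2$ in Theorem~\ref{THM degree rec 2j-k}.

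For \eqref{S* pure n rec} I would repeat the procedure with the $\boldsymbol{\mathscr{E}}_s$-analogues of \eqref{Dodgson5} and \eqref{Dodgson6}, now reading off with \eqref{S^*_n+1 s}, \eqref{S^*_n s-1}, \eqref{S^*_n s+1}, \eqref{zS^*_n s-2}, \eqref{zS^*_n s} and \eqref{H}: this produces first-order mixed recurrences with coefficients $\gamma^{(s)}_n$ of \eqref{gamma} and $\theta^{(s)}_n$ of \eqref{theta}, from which \eqref{S* pure n rec} emerges after the same offset-elimination manoeuvre used for \eqref{Q* pure n rec}. As an alternative one could try to transport \eqref{P pure n rec} and \eqref{Q* pure n rec} across the duality \eqref{S* and P}, \eqref{Q* and R} of Theorem~\ref{Thm S*-P & Q*-R} together with \eqref{E&D rel}; I would not pursue this, however, since those identities carry an $n$-dependent offset shift ($s\mapsto s-n+2$ and $s\mapsto s-n-1$), so replacing $n$ by $n+1,n+2,n+3$ also displaces the offset and the bookkeeping becomes more awkward than the direct Dodgson route.

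The one genuine difficulty is combinatorial rather than conceptual: choosing, for each of the $R$- and $S^*$-systems, the precise pair of pivot-row/pivot-column selections so that every minor produced by \eqref{DODGSON} coincides with one of the tabulated representations, and then carrying the offset elimination through without an index or sign slip. One must also verify that the determinant ratios collapse as claimed --- that the norm-type coefficients $\varkappa,\gamma$ and the surviving $E$-determinant coefficients $\rho,\theta$ come out exactly as in \eqref{kappa}--\eqref{theta} --- but this is the same verification already performed for $\delta,\eta,\beta,\alpha$ in Theorem~\ref{THM degree rec 2j-k}, and no new idea is required.
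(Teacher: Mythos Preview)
Your proposal is correct and matches the paper's proof essentially step for step: the same Dodgson identities on $\boldsymbol{\mathscr{E}}_s$, the same substitutions from the tabulated minor representations, and the same offset-elimination to reach the pure-degree recurrences. One small correction of expectation: because the first $R$-recurrence shifts the offset by $2$ and the first $S^*$-recurrence by $1$, the elimination bookkeeping for $R$ actually parallels that of $Q^*$ (not $P$), and the $S^*$ elimination parallels $P$ (not $Q^*$) --- consistent with your own observation that \eqref{R pure n rec} carries single powers of $z$ while \eqref{S* pure n rec} carries $z^2$.
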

\begin{proof}
In order to prove \eqref{R pure n rec} we use the following Dodgson condensation identities
	\begin{equation}\label{RSDodgson2}\begin{split}
	& \mathscr{E}_s\left\lbrace \begin{matrix} n+1 \\ n+2 \end{matrix} \right\rbrace \cdot \mathscr{E}_s\left\lbrace \begin{matrix}  n & n+1 & n+2 \\  0 & n+1 & n+2 \end{matrix} \right\rbrace =  \mathscr{E}_s \left\lbrace \begin{matrix} n & n+1  \\ 0 & n+2 \end{matrix} \right\rbrace \cdot \mathscr{E}_s\left\lbrace \begin{matrix}  n+1 & n+2  \\  n+1 & n+2 \end{matrix} \right\rbrace - \mathscr{E}_s\left\lbrace \begin{matrix} n & n+1  \\  n+1 & n+2 \end{matrix} \right\rbrace \cdot \mathscr{E}_s\left\lbrace \begin{matrix}  n+1 & n+2  \\ 0 & n+2  \end{matrix} \right\rbrace.
	\end{split}
	\end{equation}
		\begin{equation}\label{RSDodgson3}\begin{split}
	& \mathscr{E}_s\left\lbrace \begin{matrix}  n+1  \\  n+2 \end{matrix} \right\rbrace \cdot \mathscr{E}_s\left\lbrace \begin{matrix}  0 &  n+1 & n+2 \\  0 & n+1 & n+2 \end{matrix} \right\rbrace =  \mathscr{E}_s \left\lbrace \begin{matrix} 0 & n+1  \\ 0 & n+2 \end{matrix} \right\rbrace \cdot \mathscr{E}_s\left\lbrace \begin{matrix} n+1  & n+2  \\  n+1 & n+2 \end{matrix} \right\rbrace - \mathscr{E}_s\left\lbrace \begin{matrix} 0 & n+1  \\  n+1 & n+2 \end{matrix} \right\rbrace \cdot \mathscr{E}_s\left\lbrace \begin{matrix}  n+1 & n+2  \\ 0 & n+2 \end{matrix} \right\rbrace.
	\end{split}
	\end{equation}
Write \eqref{RSDodgson2} as
	\begin{equation}\label{RS11}
	\frac{\mathscr{E}_s\left\lbrace \begin{matrix}  n+1  \\  n+2 \end{matrix} \right\rbrace}{\mathscr{E}_s \left\lbrace \begin{matrix}  n+1& n+2 \\   n+1& n+2 \end{matrix} \right\rbrace}  \cdot \mathscr{E}_s\left\lbrace \begin{matrix}  n &  n+1  & n+2 \\  0 & n+1 &  n+2  \end{matrix} \right\rbrace =  \mathscr{E}_s \left\lbrace \begin{matrix} n &  n+1  \\ 0 & n+2 \end{matrix} \right\rbrace  - \frac{\mathscr{E}_s\left\lbrace \begin{matrix} n &  n+1  \\  n+1 &  n+2  \end{matrix} \right\rbrace}{\mathscr{E}_s \left\lbrace \begin{matrix}  n+1& n+2 \\   n+1& n+2 \end{matrix} \right\rbrace}  \cdot \mathscr{E}_s\left\lbrace \begin{matrix}   n+1 & n+2  \\ 0 &  n+2 \end{matrix} \right\rbrace.
	\end{equation}
So using \eqref{R_n+1 s}, \eqref{R_n s}, \eqref{zR_n s-2}, and \eqref{D DD E EE} we have
	\begin{equation}
	E^{(s-2)}_n R_{n+1}(z;s) = E^{(s-2)}_n zR_n(z;s-2)-\frac{E_n^{(s)}E_{n+1}^{(s-2)}}{E_{n+1}^{(s)}}R_n(z;s).
	\end{equation}
Therefore from \eqref{H} we have
	\begin{equation}\label{1st rec R}
		R_{n+1}(z;s) = zR_n(z;s-2)+\varkappa^{(s)}_nR_n(z;s), \qquad \varkappa^{(s)}_{n} = -\frac{g^{(s-2)}_n}{g^{(s)}_{n}}.
	\end{equation}
Now, lets us write \eqref{RSDodgson3} as
	\begin{equation}\label{RS22}
	\frac{\mathscr{E}_s\left\lbrace \begin{matrix}  n+1  \\  n+2 \end{matrix} \right\rbrace}{\mathscr{E}_s \left\lbrace \begin{matrix}  n+1& n+2 \\   n+1& n+2 \end{matrix} \right\rbrace}  \cdot \mathscr{E}_s\left\lbrace \begin{matrix}  0 &  n+1  & n+2 \\  0 & n+1 &  n+2  \end{matrix} \right\rbrace =  \mathscr{E}_s \left\lbrace \begin{matrix} 0 &  n+1  \\ 0 & n+2 \end{matrix} \right\rbrace  - \frac{\mathscr{E}_s\left\lbrace \begin{matrix} 0 &  n+1  \\  n+1 &  n+2  \end{matrix} \right\rbrace}{\mathscr{E}_s \left\lbrace \begin{matrix}  n+1& n+2 \\   n+1& n+2 \end{matrix} \right\rbrace}  \cdot \mathscr{E}_s\left\lbrace \begin{matrix}   n+1 & n+2  \\ 0 &  n+2 \end{matrix} \right\rbrace.
	\end{equation}
Therefore from \eqref{R_n+1 s}, \eqref{R_n s+1}, \eqref{zR_n s-1}, and \eqref{D DD E EE} we arrive at
	\begin{equation}
	E^{(s-1)}_nR_{n+1}(z;s)=E^{(s-1)}_nzR_n(z;s-1)-\frac{E_n^{(s+1)}E_{n+1}^{(s-2)}}{E^{(s)}_{n+1}}R_{n}(z;s+1),
	\end{equation}
and thus
	\begin{equation}\label{2nd rec R}
		R_{n+1}(z;s)=zR_n(z;s-1)-\rho^{(s)}_n R_{n}(z;s+1), \qquad \rho^{(s)}_n = \frac{E_n^{(s+1)}E_{n+1}^{(s-2)}}{E^{(s)}_{n+1}E^{(s-1)}_n}.
	\end{equation}	
From \eqref{2nd rec R} we have
	\begin{equation}\label{99}
	R_{n+2}(z;s)=zR_{n+1}(z;s-1)-\rho^{(s)}_{n+1} R_{n+1}(z;s+1),
	\end{equation}
and by writing \eqref{2nd rec R} for $s-1$ we obtain
	\begin{equation}
	zR_n(z;s-2)=R_{n+1}(z;s-1)+\rho^{(s-1)}_n R_{n}(z;s).
	\end{equation}
Combining this with \eqref{1st rec R} yields
	\begin{equation}\label{88}
	R_{n+1}(z;s-1) =R_{n+1}(z;s) -( \varkappa^{(s)}_n+ \rho^{(s-1)}_n) R_{n}(z;s). 
	\end{equation}
Plugging this into \eqref{99} gives
	\begin{equation}
	R_{n+2}(z;s)=z(R_{n+1}(z;s) -( \varkappa^{(s)}_n+ \rho^{(s-1)}_n) R_{n}(z;s) ) -\rho^{(s)}_{n+1} R_{n+1}(z;s+1).
	\end{equation}
Now, write \eqref{1st rec R} for $s+1$ to get
	\begin{equation}
	R_{n+1}(z;s+1) = zR_n(z;s-1)+\varkappa^{(s+1)}_nR_n(z;s+1).
	\end{equation}
After combining the last two equations we arrive at
	\begin{equation}
	R_{n+2}(z;s)=z(R_{n+1}(z;s) -( \varkappa^{(s)}_n+ \rho^{(s-1)}_n) R_{n}(z;s) ) -\rho^{(s)}_{n+1} zR_n(z;s-1)-\rho^{(s)}_{n+1}\varkappa^{(s+1)}_nR_n(z;s+1).
	\end{equation}
Note that from \eqref{2nd rec R} we have
	\begin{equation}
	R_{n}(z;s+1)=\frac{1}{\rho^{(s)}_n}\left[zR_n(z;s-1)-R_{n+1}(z;s)\right].
	\end{equation}
Combine the last two equations to get
	\begin{equation}
	\begin{split}
	R_{n+2}(z;s)&=z(R_{n+1}(z;s) -( \varkappa^{(s)}_n+ \rho^{(s-1)}_n) R_{n}(z;s) ) -\rho^{(s)}_{n+1} zR_n(z;s-1) - \frac{\rho^{(s)}_{n+1}\varkappa^{(s+1)}_n}{\rho^{(s)}_n}\left[zR_n(z;s-1)-R_{n+1}(z;s)\right].
	\end{split}
	\end{equation}
Rearranging terms and using $\rho^{(s)}_{n+1}\varkappa^{(s+1)}_n = \rho^{(s)}_n \varkappa^{(s)}_{n+1} $ yields
	\begin{equation}
	\begin{split}
	R_{n+2}(z;s)&=\left(z+\varkappa^{(s)}_{n+1}\right)R_{n+1}(z;s) -( \varkappa^{(s)}_n+ \rho^{(s-1)}_n)z R_{n}(z;s)  -\left( \rho^{(s)}_{n+1}+\varkappa^{(s)}_{n+1}\right)zR_n(z;s-1).
	\end{split}
	\end{equation}
By writing \eqref{88} for $n-1$ we obtain
	\begin{equation}
	R_{n}(z;s-1) =R_{n}(z;s) -( \varkappa^{(s)}_{n-1}+ \rho^{(s-1)}_{n-1}) R_{n-1}(z;s). 
	\end{equation}
Now, we combine the last two equations to arrive at
	\begin{equation}
	\begin{split}
	R_{n+2}(z;s)&=\left(z+\varkappa^{(s)}_{n+1}\right)R_{n+1}(z;s) -( \varkappa^{(s)}_n+ \rho^{(s-1)}_n)z R_{n}(z;s)  -\left( \rho^{(s)}_{n+1}+\varkappa^{(s)}_{n+1}\right)\left(zR_{n}(z;s) -( \varkappa^{(s)}_{n-1}+ \rho^{(s-1)}_{n-1}) zR_{n-1}(z;s) \right).
	\end{split}
	\end{equation}
regroup terms and replace $n$ by $n+1$ to arrive at the desired recurrence relation \eqref{R pure n rec}.
	
Finally, proving \eqref{S* pure n rec} demands using the following Dodgson condensation identities
\begin{equation}\label{RSDodgson5}
\begin{split}
& \mathscr{E}_s\left\lbrace \begin{matrix} n+2 \\ n+1 \end{matrix} \right\rbrace \cdot \mathscr{E}_s\left\lbrace \begin{matrix} 0 & n+1 & n+2 \\  n & n+1 & n+2 \end{matrix} \right\rbrace =  \mathscr{E}_s \left\lbrace \begin{matrix} 0 & n+2  \\ n & n+1 \end{matrix} \right\rbrace \cdot \mathscr{E}_s\left\lbrace \begin{matrix} n+1 & n+2  \\   n+1  & n+2 \end{matrix} \right\rbrace - \mathscr{E}_s\left\lbrace \begin{matrix} 0 & n+2  \\ n+1 & n+2 \end{matrix} \right\rbrace \cdot \mathscr{E}_s\left\lbrace \begin{matrix} n+1 & n+2  \\  n & n+1 \end{matrix} \right\rbrace.
\end{split}
\end{equation}

\begin{equation}\label{RSDodgson6}
\begin{split}
& \mathscr{E}_s\left\lbrace \begin{matrix} n+2 \\ n+1 \end{matrix} \right\rbrace \cdot \mathscr{E}_s\left\lbrace \begin{matrix} 0 & n+1 & n+2 \\  0 & n+1 & n+2 \end{matrix} \right\rbrace =  \mathscr{E}_s \left\lbrace \begin{matrix} 0 & n+2  \\ 0 & n+1 \end{matrix} \right\rbrace \cdot \mathscr{E}_s\left\lbrace \begin{matrix} n+1 & n+2  \\  n+1 & n+2 \end{matrix} \right\rbrace - \mathscr{E}_s\left\lbrace \begin{matrix} 0 & n+2  \\  n+1 & n+2 \end{matrix} \right\rbrace \cdot \mathscr{E}_s\left\lbrace \begin{matrix} n+1 & n+2  \\  0 & n+1 \end{matrix} \right\rbrace.
\end{split}
\end{equation}
Write \eqref{RSDodgson5} as
\begin{equation}\label{RS1}
\frac{\mathscr{E}_s \left\lbrace \begin{matrix} n+2 \\  n+1 \end{matrix} \right\rbrace}{\mathscr{E}_s \left\lbrace \begin{matrix}  n+1& n+2 \\   n+1& n+2 \end{matrix} \right\rbrace} \cdot \mathscr{E}_s\left\lbrace \begin{matrix} 0 & n+1 &  n+2  \\  n &  n+1  & n+2 \end{matrix} \right\rbrace = \mathscr{E}_s \left\lbrace \begin{matrix} 0 & n+2  \\ n &  n+1  \end{matrix} \right\rbrace - \frac{\mathscr{E}_s\left\lbrace \begin{matrix} n+1 & n+2  \\  n &  n+1 \end{matrix} \right\rbrace}{\mathscr{E}_s \left\lbrace \begin{matrix}  n+1& n+2 \\   n+1& n+2 \end{matrix} \right\rbrace} \cdot \mathscr{E}_s \left\lbrace \begin{matrix} 0 & n+2 \\  n+1 & n+2 \end{matrix} \right\rbrace.  
\end{equation}
So we have
\begin{equation}
E^{(s+1)}_{n}S^*_{n+1}(z;s)= E^{(s+1)}_n S^*_n(z;s+1)-\frac{E^{(s)}_n}{E^{(s)}_{n+1}} E^{(s+1)}_{n+1} z S^*_{n}(z;s).
\end{equation}
and thus from \eqref{H} we have 
\begin{equation}\label{1st rec S^*}
	S^*_{n+1}(z;s)=  S^*_n(z;s+1)+ \ga^{(s)}_n z S^*_{n}(z;s), \qquad \ga^{(s)}_{n} = -\frac{g^{(s+1)}_n}{g^{(s)}_{n}}.
\end{equation}Now, rewrite \eqref{RSDodgson6} as
\begin{equation}\label{RS2}
\frac{\mathscr{E}_s \left\lbrace \begin{matrix} n+2 \\  n+1 \end{matrix} \right\rbrace}{\mathscr{E}_s \left\lbrace \begin{matrix}  n+1& n+2 \\   n+1& n+2 \end{matrix} \right\rbrace} \cdot \mathscr{E}_s\left\lbrace \begin{matrix} 0 & n+1 &  n+2  \\  0 &  n+1  & n+2 \end{matrix} \right\rbrace = \mathscr{E}_s \left\lbrace \begin{matrix} 0 & n+2  \\ 0 &  n+1  \end{matrix} \right\rbrace - \frac{\mathscr{E}_s\left\lbrace \begin{matrix} n+1 & n+2  \\  0 &  n+1 \end{matrix} \right\rbrace}{\mathscr{E}_s \left\lbrace \begin{matrix}  n+1& n+2 \\   n+1& n+2 \end{matrix} \right\rbrace} \cdot \mathscr{E}_s \left\lbrace \begin{matrix} 0 & n+2 \\  n+1 & n+2 \end{matrix} \right\rbrace .
\end{equation}
So
\begin{equation}
E^{(s-1)}_n S^*_{n+1}(z;s)= E^{(s-1)}_nS^*_{n}(z;s-1) - \frac{E^{(s-2)}_n E^{(s+1)}_{n+1}}{E^{(s)}_{n+1}}zS^*_{n}(z;s-2),
\end{equation}
and thus
\begin{equation}\label{2nd rec S^*}
	S^*_{n+1}(z;s)= S^*_{n}(z;s-1) -\theta^{(s)}_n zS^*_{n}(z;s-2),
\qquad \theta^{(s)}_n= \frac{E^{(s-2)}_n E^{(s+1)}_{n+1}}{E^{(s)}_{n+1}E^{(s-1)}_n}.
\end{equation}
Now let us find the pure degree-recurrence relations for $S^*$'s. Writing \eqref{1st rec S^*} for $n+1$ we get 
	\begin{equation}
	S^*_{n+2}(z;s)=  S^*_{n+1}(z;s+1)+ \ga^{(s)}_{n+1} z S^*_{n+1}(z;s),
	\end{equation}
Now we write \eqref{2nd rec S^*} for $s+1$:
	\begin{equation}
	S^*_{n+1}(z;s+1)= S^*_{n}(z;s) -\theta^{(s+1)}_n zS^*_{n}(z;s-1).
	\end{equation}
Combining the last two equations yields
	\begin{equation}
	S^*_{n+2}(z;s)=  S^*_{n}(z;s) -\theta^{(s+1)}_n zS^*_{n}(z;s-1)+ \ga^{(s)}_{n+1} z S^*_{n+1}(z;s).
	\end{equation}
Write this for $n\mapsto n+1$:
	\begin{equation}
	S^*_{n+3}(z;s)=  S^*_{n+1}(z;s) -\theta^{(s+1)}_{n+1} zS^*_{n+1}(z;s-1)+ \ga^{(s)}_{n+2} z S^*_{n+2}(z;s),
	\end{equation}
and write \eqref{1st rec S^*} for $r\mapsto r-1$ to obtain
	\begin{equation}
	S^*_{n+1}(z;s-1)=  S^*_n(z;s)+ \ga^{(s-1)}_n z S^*_{n}(z;s-1).
	\end{equation}
Combining the last two equations gives
	\begin{equation}
	S^*_{n+3}(z;s)=  S^*_{n+1}(z;s) -\theta^{(s+1)}_{n+1} zS^*_n(z;s) -\theta^{(s+1)}_{n+1} \ga^{(s-1)}_n z^2 S^*_{n}(z;s-1)+ \ga^{(s)}_{n+2} z S^*_{n+2}(z;s).
	\end{equation}
Now write \eqref{2nd rec S^*} for $s\mapsto s+1$ and solve for $zS_{n}(z;s-1)$:
	\begin{equation}
	zS^*_{n}(z;s-1)=\frac{1}{\theta^{(s+1)}_n} \left( S^*_{n}(z;s) - S^*_{n+1}(z;s+1) \right).
	\end{equation}
Combine the last two equations to get
	\begin{equation}
	S^*_{n+3}(z;s)=  S^*_{n+1}(z;s) -\theta^{(s+1)}_{n+1} zS^*_n(z;s) - \frac{\theta^{(s+1)}_{n+1} \ga^{(s-1)}_n z}{\theta^{(s+1)}_n} \left( S^*_{n}(z;s) - S^*_{n+1}(z;s+1) \right)+ \ga^{(s)}_{n+2} z S^*_{n+2}(z;s).
	\end{equation}
By using $\theta^{(s+1)}_{n+1} \ga^{(s-1)}_n = \theta^{(s+1)}_n \ga^{(s+1)}_{n+1}  $ and rearranging terms we obtain
	\begin{equation}
	S^*_{n+3}(z;s)= \ga^{(s)}_{n+2} z S^*_{n+2}(z;s)+  S^*_{n+1}(z;s) -\left(\theta^{(s+1)}_{n+1}+\ga^{(s+1)}_{n+1}  \right)zS^*_n(z;s) + \ga^{(s+1)}_{n+1}  zS^*_{n+1}(z;s+1).
	\end{equation}
Write \eqref{1st rec S^*} for $n \mapsto n+1$ and solve for $S^*_{n+1}(z;s+1)$:
	\begin{equation}
	S^*_{n+1}(z;s+1)=    S^*_{n+2}(z;s)- \ga^{(s)}_{n+1} z S^*_{n+1}(z;s).
	\end{equation}
Now, we combine the last two equations and regroup the terms to obtain \eqref{S* pure n rec}. 
\end{proof}

\begin{remark}[Mixed Recurrence Relations] \normalfont
Recall that we can replace a $2j-k$ polynomial in terms of a $j-2k$ polynomial using Theorem \ref{Thm S*-P & Q*-R}.  Using such replacements in several identities found in this section, we can arrive at recurrence relations involving both $2j-k$ and $j-2k$ polynomials.  To that end, \eqref{1st rec P} can be written as
	\begin{equation}
		P_{n+1}(z;r) = \frac{(-1)^nE_{n}^{(r+n-3)}}{D_{n}^{(r-1)}}zS^*_n(z;r+n-3)+\de^{(r)}_nP_n(z;r).
	\end{equation}
Also \eqref{2nd rec P} can be written as one of the following three identities
	\begin{align}
		P_{n+1}(z;r) & =\frac{(-1)^nE_{n}^{(r+n-1)}}{D_{n}^{(r+1)}}zS^*_n(z;r+n-1)-\eta^{(r)}_n P_{n}(z;r+2),\\
		P_{n+1}(z;r)&=zP_n(z;r+1)-\eta^{(r)}_n \frac{(-1)^nE_{n}^{(r+n)}}{D_{n}^{(r+2)}}S^*_n(z;r+n),\\
		P_{n+1}(z;r)&=\frac{(-1)^nE_{n}^{(r+n-1)}}{D_{n}^{(r+1)}}zS^*_n(z;r+n-1)-\eta^{(r)}_n \frac{(-1)^nE_{n}^{(r+n)}}{D_{n}^{(r+2)}}S^*_n(z;r+n).
	\end{align}
In an identical way we can obtain the following mixed recurrence relations listed below
	\begin{equation}\label{1st rec Q^* eq}
		Q^*_{n+1}(z;r)=  \frac{(-1)^nE_n^{(r+n+3)}}{D_{n}^{(r+2)}} R_n(z;r+n+3)+ \be^{(r)}_n z Q^*_{n}(z;r),
	\end{equation}
	\begin{equation}\label{2nd rec Q^* eq1}
	Q^*_{n+1}(z;r)= \frac{(-1)^nE_n^{(r+n+2)}}{D_{n}^{(r+1)}} R_n(z;r+n+2) -\al^{(r)}_n zQ^*_{n}(z;r-1),
    \end{equation}
    \begin{equation}\label{2nd rec Q^* eq2}
    	Q^*_{n+1}(z;r)= Q^*_{n}(z;r+1) -\al^{(r)}_n \frac{(-1)^nE_n^{(r+n)}}{D_{n}^{(r-1)}} zR_n(z;r+n),
    \end{equation}
    \begin{equation}\label{2nd rec Q^* eq3}
Q^*_{n+1}(z;r)= \frac{(-1)^nE_n^{(r+n+2)}}{D_{n}^{(r+1)}} R_n(z;r+n+2) -\al^{(r)}_n \frac{(-1)^nE_n^{(r+n)}}{D_{n}^{(r-1)}} zR_n(z;r+n),
\end{equation}	
\begin{equation}\label{1st rec R eq}
	R_{n+1}(z;s) = \frac{(-1)^nD_{n}^{(s-n-3)}}{E_{n}^{(s-2)}}zQ^*_n(z;s-n-3)+\varkappa^{(s)}_nR_n(z;s),
\end{equation}
\begin{equation}\label{2nd rec R eq1}
	R_{n+1}(z;s)=\frac{(-1)^nD_{n}^{(s-n-2)}}{E_{n}^{(s-1)}}zQ^*_n(z;s-n-2)-\rho^{(s)}_n R_{n}(z;s+1),
\end{equation}
\begin{equation}\label{2nd rec R eq2}
R_{n+1}(z;s)=zR_n(z;s-1)-\rho^{(s)}_n \frac{(-1)^nD_{n}^{(s-n)}}{E_{n}^{(s+1)}}Q^*_n(z;s-n),
\end{equation}
\begin{equation}\label{2nd rec R eq3}
	R_{n+1}(z;s)=\frac{(-1)^nD_{n}^{(s-n-2)}}{E_{n}^{(s-1)}}zQ^*_n(z;s-n-2)-\rho^{(s)}_n \frac{(-1)^nD_{n}^{(s-n)}}{E_{n}^{(s+1)}}Q^*_n(z;s-n),
\end{equation}

\begin{equation}\label{1st rec S^* eq}
	S^*_{n+1}(z;s)=  \frac{(-1)^nD^{(s-n+3)}_n}{E_{n}^{(s+1)}}P_n(z;s-n+3)+ \ga^{(s)}_n z S^*_{n}(z;s),
\end{equation}

\begin{equation}\label{2nd rec S^* eq1}
			S^*_{n+1}(z;s)= \frac{(-1)^nD^{(s-n+1)}_n}{E_{n}^{(s-1)}}P_n(z;s-n+1) -\theta^{(s)}_n zS^*_{n}(z;s-2),
\end{equation}
		
\begin{equation}\label{2nd rec S^* eq2}
			S^*_{n+1}(z;s)= S^*_{n}(z;s-1) -\theta^{(s)}_n \frac{(-1)^nD^{(s-n)}_n}{E_{n}^{(s-2)}}zP_n(z;s-n),
\end{equation}
and
\begin{equation}\label{2nd rec S^* eq3}
			S^*_{n+1}(z;s)= \frac{(-1)^nD^{(s-n+1)}_n}{E_{n}^{(s-1)}}P_n(z;s-n+1) -\theta^{(s)}_n \frac{(-1)^nD^{(s-n)}_n}{E_{n}^{(s-2)}}zP_n(z;s-n).
\end{equation}
Here the identities \eqref{1st rec Q^* eq}, \eqref{1st rec R eq}, and \eqref{1st rec S^* eq} are respectively equivalent to \eqref{1st rec Q^*},  \eqref{1st rec R}, and \eqref{1st rec S^*} . Identities \eqref{2nd rec Q^* eq1}, \eqref{2nd rec Q^* eq2} and \eqref{2nd rec Q^* eq3} are equivalent to \eqref{2nd rec Q^*}, identities \eqref{2nd rec R eq1}, \eqref{2nd rec R eq2} and \eqref{2nd rec R eq3} are equivalent to \eqref{2nd rec R}, and  identities \eqref{2nd rec S^* eq1}, \eqref{2nd rec S^* eq2} and \eqref{2nd rec S^* eq3} are equivalent to \eqref{2nd rec S^*}.
\end{remark}

\begin{theorem}\label{offset-RR}
The third order pure-offset recurrence relations for the $2j-k$ and $j-2k$ polynomials are given by
	\begin{equation}\label{P pure r rec}
	\eta^{(r+1)}_n P_{n}(z;r+3) - zP_n(z;r+2) + \de^{(r+1)}_nP_n(z;r+1) + zP_n(z;r) =0,
	\end{equation}
	\begin{equation}\label{Q pure r rec}
	Q^*_n(z;r+3) - Q^*_{n}(z;r+2) + \be^{(r+1)}_n z Q^*_{n}(z;r+1)  +\al^{(r+1)}_n zQ^*_{n}(z;r)=0,
	\end{equation}
\begin{equation}\label{R pure s rec}
\rho^{(s+2)}_n R_{n}(z;s+3)+\varkappa^{(s+2)}_nR_n(z;s+2)- zR_n(z;s+1)+ zR_n(z;s) =0,
\end{equation}	and
\begin{equation}\label{S* pure s rec}
S^*_n(z;s+3)+ \ga^{(s+2)}_n z S^*_{n}(z;s+2) - S^*_{n}(z;s+1) +\theta^{(s+2)}_n zS^*_{n}(z;s)=0.
\end{equation}
\end{theorem}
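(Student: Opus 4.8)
The plan is to obtain each of the four pure-offset recurrences simply by eliminating the degree-$(n+1)$ polynomial between the two mixed (offset-shifting) recurrences that were already established in the course of proving Theorems \ref{THM degree rec 2j-k} and \ref{THM degree rec j-2k}. Recall that for the $2j-k$ system the proof of \eqref{P pure n rec} produced the two relations \eqref{1st rec P} and \eqref{2nd rec P}, each of which expresses $P_{n+1}(z;r)$ as a combination of $P_n(z;\cdot)$ at neighbouring offsets; similarly \eqref{1st rec Q^*} and \eqref{2nd rec Q^*} each express $Q^*_{n+1}(z;r)$ in terms of $Q^*_n(z;\cdot)$. For the $j-2k$ system the analogous pairs are \eqref{1st rec R}/\eqref{2nd rec R} and \eqref{1st rec S^*}/\eqref{2nd rec S^*}.

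First I would equate the right-hand sides of \eqref{1st rec P} and \eqref{2nd rec P}. The term $P_{n+1}(z;r)$ cancels, leaving the four-term relation $\eta^{(r)}_nP_n(z;r+2)-zP_n(z;r+1)+\de^{(r)}_nP_n(z;r)+zP_n(z;r-1)=0$ among $P_n(z;\cdot)$ at offsets $r-1,r,r+1,r+2$. Performing the relabelling $r\mapsto r+1$ puts this in the normalised form \eqref{P pure r rec}. The $Q^*$ case is identical: equating the two expressions for $Q^*_{n+1}(z;r)$ from \eqref{1st rec Q^*} and \eqref{2nd rec Q^*} kills that term and produces a four-term relation among $Q^*_n(z;r-1),\dots,Q^*_n(z;r+2)$ with coefficients $\be^{(r)}_n$, $\al^{(r)}_n$; the shift $r\mapsto r+1$ then gives \eqref{Q pure r rec}.

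For the $j-2k$ system I would repeat the argument with the pairs \eqref{1st rec R}/\eqref{2nd rec R} and \eqref{1st rec S^*}/\eqref{2nd rec S^*}. Equating the two formulas for $R_{n+1}(z;s)$ removes that term and leaves a four-term relation among $R_n(z;s-2),R_n(z;s-1),R_n(z;s),R_n(z;s+1)$; here the natural normalising shift is $s\mapsto s+2$, which yields \eqref{R pure s rec}. Likewise, equating the two formulas for $S^*_{n+1}(z;s)$ and then applying $s\mapsto s+2$ and rearranging gives \eqref{S* pure s rec}. One could alternatively derive these four relations directly from suitably chosen Dodgson condensation identities of the type used above, bypassing the $n$-mixed relations entirely, but the elimination route is shorter.

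There is essentially no analytic difficulty here; the only point requiring care is the bookkeeping of the offset shifts and of the signs, so that the coefficients $\de,\eta,\be,\al$ (resp.\ $\varkappa,\rho,\ga,\theta$) defined in Theorems \ref{THM degree rec 2j-k} and \ref{THM degree rec j-2k} appear in the statement with exactly the indices $r+1$ (resp.\ $s+2$) after relabelling. I would simply verify that the four displayed identities match term-by-term once the shift is carried out, which is the most this proof needs.
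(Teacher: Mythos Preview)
Your proposal is correct and follows essentially the same approach as the paper: eliminate the degree-$(n+1)$ term between the two mixed recurrences \eqref{1st rec P}/\eqref{2nd rec P}, \eqref{1st rec Q^*}/\eqref{2nd rec Q^*}, \eqref{1st rec R}/\eqref{2nd rec R}, \eqref{1st rec S^*}/\eqref{2nd rec S^*}, and then apply the offset shift $r\mapsto r+1$ (respectively $s\mapsto s+2$) to obtain the stated forms. The paper's proof is exactly this elimination-and-relabelling argument, with no additional ingredients.
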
	
\begin{proof}
	Combining \eqref{1st rec P} and \eqref{2nd rec P} we will get
	\begin{equation}\label{P r}
	\eta^{(r)}_n P_{n}(z;r+2) - zP_n(z;r+1) + zP_n(z;r-1) + \de^{(r)}_nP_n(z;r)=0.
	\end{equation}
	Also from \eqref{1st rec Q^*} and \eqref{2nd rec Q^*} we obtain
	\begin{equation}\label{Q r}
	Q^*_n(z;r+2) - Q^*_{n}(z;r+1) + \be^{(r)}_n z Q^*_{n}(z;r)  +\al^{(r)}_n zQ^*_{n}(z;r-1)=0.
	\end{equation}
	writing \eqref{P r} and \eqref{Q r} under the replacement $r \mapsto r+1$ respectively gives \eqref{P pure r rec} and \eqref{Q pure r rec}. From \eqref{1st rec R} and \eqref{2nd rec R} we have
	\begin{equation}\label{R s}
	zR_n(z;s-2)+\varkappa^{(s)}_nR_n(z;s) - zR_n(z;s-1)+\rho^{(s)}_n R_{n}(z;s+1)=0.
	\end{equation}
	From \eqref{1st rec S^*} and \eqref{2nd rec S^*} we find 
	\begin{equation}\label{S s}
	S^*_n(z;s+1)+ \ga^{(s)}_n z S^*_{n}(z;s) - S^*_{n}(z;s-1) +\theta^{(s)}_n zS^*_{n}(z;s-2)=0.
	\end{equation}
	 The replacement $s \mapsto s+2$ in \eqref{R s} and \eqref{S s} respectively gives \eqref{R pure s rec} and \eqref{S* pure s rec}.
\end{proof}

\subsection{Equivalent Dodgson Condensation identities} 	If one starts with 
\begin{equation}\label{Dodgson1'}
\begin{split}
& \mathscr{D}_r\left\lbrace \begin{matrix}  0  \\  n+2  \end{matrix} \right\rbrace \cdot \mathscr{D}_r\left\lbrace \begin{matrix} 0 &  n+1  & n+2 \\  0 & n+1 &  n+2  \end{matrix} \right\rbrace =  \mathscr{D}_r\left\lbrace \begin{matrix} 0 &  n+1  \\ 0 & n+2  \end{matrix} \right\rbrace \cdot \mathscr{D}_r\left\lbrace \begin{matrix}  0  & n+2  \\  n+1 &  n+2  \end{matrix} \right\rbrace - \mathscr{D}_r\left\lbrace \begin{matrix} 0 &  n+1   \\  n+1 &  n+2  \end{matrix} \right\rbrace \cdot \mathscr{D}_r\left\lbrace \begin{matrix}   0  & n+2  \\ 0 &  n+2  \end{matrix} \right\rbrace,
\end{split}
\end{equation}
after similar simplifications one arrives at 
\begin{equation}\label{Rec 1'}
P_{n+1}(z;r+2)  = zP_n(z;r+1) - \frac{ h^{(r+1)}_{n}}{ h^{(r+2)}_{n}} P_n(z;r+2).
\end{equation}
But this is equivalent to \eqref{1st rec P} as one gets the above equation from \eqref{1st rec P} under the replacement $r\mapsto r+2$. Similarly, If we start with
\begin{equation}\label{Dodgson2'}
\begin{split}
& \mathscr{D}_r\left\lbrace \begin{matrix}  n+2  \\  0  \end{matrix} \right\rbrace \cdot \mathscr{D}_r\left\lbrace \begin{matrix} 0 &  n+1  & n+2 \\  0 & n+1 &  n+2  \end{matrix} \right\rbrace = \mathscr{D}_r\left\lbrace \begin{matrix} 0 &  n+2  \\ 0 & n+1  \end{matrix} \right\rbrace \cdot \mathscr{D}_r\left\lbrace \begin{matrix}  n+1  & n+2  \\  0 &  n+2  \end{matrix} \right\rbrace - \mathscr{D}_r\left\lbrace \begin{matrix} n+1 &  n+2   \\  0 &  n+1  \end{matrix} \right\rbrace \cdot \mathscr{D}_r\left\lbrace \begin{matrix}   0  & n+2  \\ 0 &  n+2  \end{matrix} \right\rbrace.
\end{split}
\end{equation}
After simplifications, this translates to 
\begin{equation}
Q^*_{n+1}(z;r-1)=  Q^*_n(z;r+1)-\frac{h^{(r+1)}_n}{h^{(r-1)}_{n}} z Q^*_{n}(z;r-1).
\end{equation}
However, this is equivalent to \eqref{1st rec Q^*} if we replace $r$ by $r-1$ in \eqref{1st rec Q^*}. Also note that 
\begin{equation}\label{Dodgson3'}
\begin{split}
& \mathscr{E}_s\left\lbrace \begin{matrix}  0  \\  n+2  \end{matrix} \right\rbrace \cdot \mathscr{E}_s\left\lbrace \begin{matrix} 0 &  n+1  & n+2 \\  0 & n+1 &  n+2  \end{matrix} \right\rbrace =  \mathscr{E}_s\left\lbrace \begin{matrix} 0 &  n+1  \\ 0 & n+2  \end{matrix} \right\rbrace \cdot \mathscr{E}_s\left\lbrace \begin{matrix}  0  & n+2  \\  n+1 &  n+2  \end{matrix} \right\rbrace - \mathscr{E}_s\left\lbrace \begin{matrix} 0 &  n+1   \\  n+1 &  n+2  \end{matrix} \right\rbrace \cdot \mathscr{E}_s\left\lbrace \begin{matrix}   0  & n+2  \\ 0 &  n+2  \end{matrix} \right\rbrace.
\end{split}
\end{equation}
This reduces after simplifications to
\begin{equation}
R_{n+1}(z;s+1)=zR_n(z;s-1)-\frac{g^{(s-1)}_n}{g^{(s+1)}_n}R_n(z;s+1),
\end{equation}
which is equivalent to \eqref{1st rec R} if one replaces $s$ with $s+1$ in \eqref{1st rec R}. And finally
\begin{equation}\label{Dodgson4'}
\begin{split}
& \mathscr{E}_s\left\lbrace \begin{matrix}  n+2  \\  0  \end{matrix} \right\rbrace \cdot \mathscr{E}_s\left\lbrace \begin{matrix} 0 &  n+1  & n+2 \\  0 & n+1 &  n+2  \end{matrix} \right\rbrace =  \mathscr{E}_s\left\lbrace \begin{matrix} 0 &  n+2  \\ 0 & n+1  \end{matrix} \right\rbrace \cdot \mathscr{E}_s\left\lbrace \begin{matrix}  n+1  & n+2  \\  0 &  n+2  \end{matrix} \right\rbrace - \mathscr{E}_s\left\lbrace \begin{matrix} n+1 &  n+2   \\  0 &  n+1  \end{matrix} \right\rbrace \cdot \mathscr{E}_s\left\lbrace \begin{matrix}   0  & n+2  \\ 0 &  n+2  \end{matrix} \right\rbrace
\end{split}
\end{equation}
reduces after simplifications to
\begin{equation}
S^*_{n+1}(z;s-2)=S^*_n(z;s-1)-\frac{g^{(s-1)}_n}{g^{(s-2)}_n}zS^*_n(z;s-2),
\end{equation}
which is equivalent to \eqref{1st rec S^*} if one replaces $s$ with $s-2$ in \eqref{1st rec S^*}. In a similar fashion one can find four other Dodgson Condensation identities which give rise to  recurrence relations equivalent to \eqref{2nd rec P}, \eqref{2nd rec Q^*}, \eqref{2nd rec R}, and \eqref{2nd rec S^*}, these Dodgson Condensation identities are respectively given by
\begin{alignat}{3}
	& \mathscr{D}_r\left\lbrace \begin{matrix}  0  \\  n+2  \end{matrix} \right\rbrace \cdot \mathscr{D}_r\left\lbrace \begin{matrix} 0 &  1  & n+2 \\  0 & n+1 &  n+2  \end{matrix} \right\rbrace &&=  \mathscr{D}_r\left\lbrace \begin{matrix} 0 &  1  \\ 0 & n+2  \end{matrix} \right\rbrace \cdot \mathscr{D}_r\left\lbrace \begin{matrix}  0  & n+2  \\  n+1 &  n+2  \end{matrix} \right\rbrace &&- \mathscr{D}_r\left\lbrace \begin{matrix} 0 &  1   \\  n+1 &  n+2  \end{matrix} \right\rbrace \cdot \mathscr{D}_r\left\lbrace \begin{matrix}   0  & n+2  \\ 0 &  n+2  \end{matrix} \right\rbrace,
	\\ & \mathscr{D}_r\left\lbrace \begin{matrix}  n+2  \\  0  \end{matrix} \right\rbrace \cdot \mathscr{D}_r\left\lbrace \begin{matrix} 0 &  n+1  & n+2 \\  0 & 1 &  n+2  \end{matrix} \right\rbrace &&=  \mathscr{D}_r\left\lbrace \begin{matrix}  0 & n+2 \\ 0 &  1  \end{matrix} \right\rbrace \cdot \mathscr{D}_r\left\lbrace \begin{matrix}   n+1 &  n+2 \\  0  & n+2 \end{matrix} \right\rbrace &&- \mathscr{D}_r\left\lbrace \begin{matrix}  n+1 &  n+2 \\  0 &  1   \end{matrix} \right\rbrace \cdot \mathscr{D}_r\left\lbrace \begin{matrix}  0 &  n+2 \\   0  & n+2   \end{matrix} \right\rbrace,
	\\ & \mathscr{E}_s\left\lbrace \begin{matrix}  0  \\  n+2  \end{matrix} \right\rbrace \cdot \mathscr{E}_s\left\lbrace \begin{matrix} 0 &  1  & n+2 \\  0 & n+1 &  n+2  \end{matrix} \right\rbrace &&=  \mathscr{E}_s\left\lbrace \begin{matrix} 0 &  1  \\ 0 & n+2  \end{matrix} \right\rbrace \cdot \mathscr{E}_s\left\lbrace \begin{matrix}  0  & n+2  \\  n+1 &  n+2  \end{matrix} \right\rbrace &&- \mathscr{E}_s\left\lbrace \begin{matrix} 0 &  1   \\  n+1 &  n+2  \end{matrix} \right\rbrace \cdot \mathscr{E}_s\left\lbrace \begin{matrix}   0  & n+2  \\ 0 &  n+2  \end{matrix} \right\rbrace,
	\\ & \mathscr{E}_s\left\lbrace \begin{matrix}  n+2  \\  0  \end{matrix} \right\rbrace \cdot \mathscr{E}_s\left\lbrace \begin{matrix} 0 &  n+1  & n+2 \\  0 & 1 &  n+2  \end{matrix} \right\rbrace &&=  \mathscr{E}_s\left\lbrace \begin{matrix}  0 & n+2 \\ 0 &  1  \end{matrix} \right\rbrace \cdot \mathscr{E}_s\left\lbrace \begin{matrix}   n+1 &  n+2 \\  0  & n+2 \end{matrix} \right\rbrace &&- \mathscr{E}_s\left\lbrace \begin{matrix}  n+1 &  n+2 \\  0 &  1   \end{matrix} \right\rbrace \cdot \mathscr{E}_s\left\lbrace \begin{matrix}  0 &  n+2 \\   0  & n+2   \end{matrix} \right\rbrace.
\end{alignat}

\subsection{Relationships of polynomial tails, recurrence coefficients and determinants}\label{subsec poly tails, rec rels and dets}

There are many redundancies amongst the various coefficients that have been defined thus far. 
In this subsection we provide the necessary relationships between these coefficients to remove the redundancy, 
but also to link these to certain polynomial coefficients, in particular the tail coefficients,
and to the determinants and thus the norms of the polynomials. Such relations form an algorithmic path to compute these norms.

 
 \begin{lemma}
 	There exist interrelationships between the recurrence coefficients $\al^{(r)}_n$, $\be^{(r)}_n$, $\de^{(r)}_n$, and $\eta^{(r)}_n$ and also between $\varkappa^{(s)}_n$, $\rho^{(s)}_n$, $\ga^{(s)}_n$, and $\theta^{(s)}_n$ as described by
 	
 	 \noindent\begin{minipage}{.5\linewidth}
 		\begin{alignat}{2}
 		&\de^{(r)}_n \al^{(r)}_n &&= \be^{(r)}_n \eta^{(r)}_n, \label{2j-k interrels}
 		\end{alignat}	
 	\end{minipage}	
 	\begin{minipage}{.5\linewidth}
 		\begin{alignat}{2}
 		&\varkappa^{(s)}_n \theta^{(s)}_n &&= \ga^{(s)}_n \rho^{(s)}_n. \label{j-2k interrels}
 		\end{alignat}	
 	\end{minipage}
 \end{lemma}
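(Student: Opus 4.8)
The plan is to prove both identities by direct substitution of the defining ratios, reducing each claim to a trivial algebraic cancellation via the norm formulae \eqref{h} and \eqref{H}. No genuine obstacle arises; the content of the lemma is precisely that the offsets in \eqref{delta}--\eqref{alpha} and \eqref{kappa}--\eqref{theta} are arranged so that the $h$-ratio (resp.\ $g$-ratio) exactly compensates the mismatch between the two products of moment determinants.

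For \eqref{2j-k interrels} I would first substitute \eqref{delta}, \eqref{eta}, \eqref{beta}, \eqref{alpha}. Both $\de^{(r)}_n\al^{(r)}_n$ and $\be^{(r)}_n\eta^{(r)}_n$ carry the common prefactor $-\dfrac{1}{h^{(r)}_n}\cdot\dfrac{1}{D^{(r)}_{n+1}D^{(r+1)}_n}$, so after cancelling it the assertion becomes
\[
h^{(r-1)}_n\,D^{(r-1)}_n\,D^{(r+2)}_{n+1}=h^{(r+2)}_n\,D^{(r+2)}_n\,D^{(r-1)}_{n+1}.
\]
Now I would invoke \eqref{h}, which gives $h^{(r-1)}_n=D^{(r-1)}_{n+1}/D^{(r-1)}_n$ and $h^{(r+2)}_n=D^{(r+2)}_{n+1}/D^{(r+2)}_n$; inserting these, both sides collapse to $D^{(r-1)}_{n+1}D^{(r+2)}_{n+1}$, proving \eqref{2j-k interrels}.

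For \eqref{j-2k interrels} the argument is verbatim the same under $h\mapsto g$, $D\mapsto E$: substituting \eqref{kappa}, \eqref{rho}, \eqref{gamma}, \eqref{theta} cancels the common prefactor $-\dfrac{1}{g^{(s)}_n}\cdot\dfrac{1}{E^{(s)}_{n+1}E^{(s-1)}_n}$ and reduces the claim to $g^{(s-2)}_n E^{(s-2)}_n E^{(s+1)}_{n+1}=g^{(s+1)}_n E^{(s+1)}_n E^{(s-2)}_{n+1}$, which becomes an identity once \eqref{H} is used to express $g^{(s-2)}_n$ and $g^{(s+1)}_n$ as ratios of $E$-determinants; both sides equal $E^{(s-2)}_{n+1}E^{(s+1)}_{n+1}$.

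The only thing to watch is the bookkeeping of the offset shifts, so I would double-check each index carefully when writing out the substitution. If a more conceptual formulation is wanted, one can alternatively read \eqref{2j-k interrels} and \eqref{j-2k interrels} as the compatibility conditions linking the two pairs of first-order recurrences \eqref{1st rec P}--\eqref{2nd rec P} and \eqref{1st rec Q^*}--\eqref{2nd rec Q^*} (resp.\ \eqref{1st rec R}--\eqref{2nd rec R} and \eqref{1st rec S^*}--\eqref{2nd rec S^*}), but the direct substitution above is the shortest route and I expect to present that.
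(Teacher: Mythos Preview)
Your proof is correct and follows exactly the approach the paper takes: the paper's proof consists of the single sentence ``These relationships simply follow from the definitions of the recurrence relations in terms of the $2j-k$ and $j-2k$ determinants,'' and your argument is precisely the detailed unpacking of that sentence via \eqref{delta}--\eqref{alpha}, \eqref{kappa}--\eqref{theta}, and the norm formulae \eqref{h}, \eqref{H}.
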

\begin{proof}
	These relationships simply follow from the definitions of the recurrence relations in terms of the $2j-k$ and $j-2k$ determinants.
\end{proof}
 \begin{lemma}
 The $2j-k$ and the $j-2k$ systems each have only two independent recurrence coefficients due to the following interrelationships:
 	
 \noindent\begin{minipage}{.5\linewidth}
 		\begin{alignat}{2}
 		&\beta^{(r)}_{n} &&= -\frac{1}{\delta^{(r+1)}_{n}\delta^{(r+2)}_{n}}, \label{abcd} \\
 		& \alpha^{(r)}_{n} && = -\frac{\eta^{(r)}_{n}}{\delta^{(r)}_{n}\delta^{(r+1)}_{n}\delta^{(r+2)}_{n}}, \label{ABCD}
 		\end{alignat}	
 	\end{minipage}	
 	\begin{minipage}{.5\linewidth}
 		\begin{alignat}{2}
 		&\varkappa^{(s)}_n &&= -\frac{1}{\gamma^{(s-1)}_{n}\gamma^{(s-2)}_{n}}, \label{EFGH} \\
 		& \rho^{(s)}_{n}  &&= - \frac{\theta^{(s)}_{n}}{\gamma^{(s)}_{n}\gamma^{(s-1)}_{n}\gamma^{(s-2)}_{n}}  . \label{efgh}
 		\end{alignat}	
 	\end{minipage}
 \end{lemma}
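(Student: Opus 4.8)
The plan is to reduce all four identities to elementary telescoping manipulations of the norms $h^{(r)}_n$ (for the $2j-k$ column) and $g^{(s)}_n$ (for the $j-2k$ column), invoking the ratio formulae \eqref{h} and \eqref{H} only at the single point where $\eta^{(r)}_n$ has to be compared with $\alpha^{(r)}_n$ (respectively $\theta^{(s)}_n$ with $\rho^{(s)}_n$). Throughout, we work in the regime where the determinants involved are non-zero, so that every recurrence coefficient that appears is well defined.

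I would first prove \eqref{abcd}. By \eqref{delta}, $\delta^{(r+1)}_n = -h^{(r)}_n/h^{(r+1)}_n$ and $\delta^{(r+2)}_n = -h^{(r+1)}_n/h^{(r+2)}_n$, so the product telescopes to $\delta^{(r+1)}_n \delta^{(r+2)}_n = h^{(r)}_n/h^{(r+2)}_n$, whence $-1/\bigl(\delta^{(r+1)}_n \delta^{(r+2)}_n\bigr) = -h^{(r+2)}_n/h^{(r)}_n = \beta^{(r)}_n$ by \eqref{beta}. Running the identical computation with $h \mapsto g$, $\delta \mapsto \gamma$, and the offsets decreasing — using $\gamma^{(s-1)}_n = -g^{(s)}_n/g^{(s-1)}_n$ and $\gamma^{(s-2)}_n = -g^{(s-1)}_n/g^{(s-2)}_n$ from \eqref{gamma} — gives $\gamma^{(s-1)}_n\gamma^{(s-2)}_n = g^{(s)}_n/g^{(s-2)}_n$, hence $-1/\bigl(\gamma^{(s-1)}_n\gamma^{(s-2)}_n\bigr) = -g^{(s-2)}_n/g^{(s)}_n = \varkappa^{(s)}_n$ by \eqref{kappa}, which is \eqref{EFGH}.

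For \eqref{ABCD} I would invoke the preceding lemma: from \eqref{2j-k interrels} we have $\delta^{(r)}_n\alpha^{(r)}_n = \beta^{(r)}_n\eta^{(r)}_n$, and substituting the value of $\beta^{(r)}_n$ just obtained and dividing through by $\delta^{(r)}_n$ gives $\alpha^{(r)}_n = -\eta^{(r)}_n/\bigl(\delta^{(r)}_n\delta^{(r+1)}_n\delta^{(r+2)}_n\bigr)$, which is \eqref{ABCD}; identically, \eqref{j-2k interrels} together with \eqref{EFGH} yields \eqref{efgh}. To keep the lemma self-contained one may instead check \eqref{ABCD} directly: the triple product telescopes to $\delta^{(r)}_n\delta^{(r+1)}_n\delta^{(r+2)}_n = -h^{(r-1)}_n/h^{(r+2)}_n$, while \eqref{eta}, \eqref{alpha} and \eqref{h} give $\alpha^{(r)}_n/\eta^{(r)}_n = \bigl(D^{(r+2)}_{n+1}/D^{(r+2)}_n\bigr)\bigl(D^{(r-1)}_n/D^{(r-1)}_{n+1}\bigr) = h^{(r+2)}_n/h^{(r-1)}_n$, so the two sides of \eqref{ABCD} agree up to the explicit sign; the $j-2k$ case is the same after $D \mapsto E$, $h \mapsto g$, $r \mapsto s$ with the offsets reflected.

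No genuine difficulty is expected in any of this — every step is a one-line cancellation. The only point demanding care is the bookkeeping of offsets: the $2j-k$ telescoping runs through $r, r+1, r+2$ whereas the $j-2k$ telescoping runs through $s, s-1, s-2$, a mirroring that reflects the opposite offset-orientation of the two systems already visible in \eqref{1st rec P} versus \eqref{1st rec S^*} (and in the duality \eqref{E&D rel}).
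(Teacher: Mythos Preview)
Your proof is correct and essentially identical to the paper's: both proceed by telescoping the products of $\delta$'s (resp.\ $\gamma$'s) into ratios of norms and comparing with the definitions of $\beta,\alpha$ (resp.\ $\varkappa,\rho$). The one cosmetic difference is that for \eqref{ABCD} you first invoke the preceding lemma's relation $\delta^{(r)}_n\alpha^{(r)}_n=\beta^{(r)}_n\eta^{(r)}_n$, whereas the paper goes directly to the ratio $\alpha^{(r)}_n/\eta^{(r)}_n=h^{(r+2)}_n/h^{(r-1)}_n$ --- but your ``self-contained'' alternative is exactly that computation.
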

 \begin{proof}
 	From \eqref{1st rec Q^*} we have
 	\begin{equation}
 	\be^{(r)}_{n} = -\frac{h^{(r+2)}_n}{h^{(r)}_{n}} = - \frac{h^{(r+1)}_n}{h^{(r)}_{n}} \frac{h^{(r+2)}_n}{h^{(r+1)}_{n}} = -\frac{1}{\delta^{(r+1)}_{n}\delta^{(r+2)}_{n}},
 	\end{equation}
 	by the definition of $\delta^{(r)}_{n}$ in \eqref{1st rec P}. From \eqref{2nd rec Q^*} we have
 	\begin{equation}
 	\al^{(r)}_n= \frac{D^{(r-1)}_n D^{(r+2)}_{n+1}}{D^{(r)}_{n+1}D^{(r+1)}_n} = \frac{D^{(r-1)}_n D^{(r+2)}_{n}}{D^{(r)}_{n}D^{(r+1)}_n} \frac{h^{(r+2)}_n}{h^{(r)}_n}. 
 	\end{equation}
 	Also from \eqref{2nd rec P} we have
 	\begin{equation}
 	\eta^{(r)}_n = \frac{D_n^{(r+2)}D_{n+1}^{(r-1)}}{D^{(r)}_{n+1}D^{(r+1)}_n}= \frac{D_n^{(r+2)}D_{n}^{(r-1)}}{D^{(r)}_{n}D^{(r+1)}_n}\frac{h^{(r-1)}_n}{h^{(r)}_n}.
 	\end{equation}
 	Combining the last two equations gives
 	\begin{equation}
 	\al^{(r)}_n = \frac{h^{(r+2)}_n}{h^{(r-1)}_n} 	\eta^{(r)}_n = \frac{h^{(r+2)}_n}{h^{(r+1)}_n} \frac{h^{(r+1)}_n}{h^{(r)}_n} \frac{h^{(r)}_n}{h^{(r-1)}_n} \eta^{(r)}_n = -\frac{\eta^{(r)}_n}{\delta^{(r)}_{n}\delta^{(r+1)}_{n}\delta^{(r+2)}_{n}},
 	\end{equation}
 	again by the definition of $\delta^{(r)}_{n}$ in \eqref{1st rec P}. The relations \eqref{EFGH} and \eqref{efgh} can be proven similarly.
 \end{proof}
 
 The following result relates the tail coefficients of the polynomials to products of certain recurrence relation coefficients.
 
\begin{lemma}\label{polytails-reccoeff}
The polynomial tails can be expressed in terms of the recurrence coefficients as follows
	
\noindent
	 \begin{minipage}{.5\linewidth}
	 	\begin{alignat}{2}
	 	&P_n(0;r) &&= \prod^{n-1}_{\ell=0}\de^{(r)}_{\ell}, \label{Pn0r} \\
	 	& Q_n(0;r)  &&= \prod^{n-1}_{\ell=0}\be^{(r)}_{\ell}, \label{Qn0r}
	 	\end{alignat}	
	 \end{minipage}	
	 \begin{minipage}{.5\linewidth}
	 	\begin{alignat}{2}
	 	&R_n(0;s) &&= \prod^{n-1}_{\ell=0}\varkappa^{(s)}_{\ell}, \label{Rn0s} \\
	 	& S_n(0;s)  &&= \prod^{n-1}_{\ell=0}\ga^{(s)}_{\ell}. \label{Sn0s}
	 	\end{alignat}	
	 \end{minipage}
\end{lemma}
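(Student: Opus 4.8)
The plan is to read off each tail coefficient directly from the relevant first-order (in $n$) recurrence established in \S\ref{Sec Rec Rel}, evaluated either at $z=0$ or at the top-degree coefficient, and then telescope.

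First I would treat $P_n(0;r)$. Setting $z=0$ in \eqref{1st rec P}, i.e. in $P_{n+1}(z;r)=zP_n(z;r-1)+\de^{(r)}_nP_n(z;r)$, annihilates the first term and leaves the one-step relation $P_{n+1}(0;r)=\de^{(r)}_nP_n(0;r)$. Since $P_0(z;r)$ is monic of degree $0$, $P_0(0;r)=1$, so iterating telescopes to \eqref{Pn0r}. The identical computation applied to \eqref{1st rec R}, namely $R_{n+1}(z;s)=zR_n(z;s-2)+\varkappa^{(s)}_nR_n(z;s)$ with $R_0(0;s)=1$, yields \eqref{Rn0s}.

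For $Q_n(0;r)$ and $S_n(0;s)$ the available recurrences \eqref{1st rec Q^*} and \eqref{1st rec S^*} are written for the reciprocal polynomials $Q^*_n$ and $S^*_n$, so rather than evaluating at $z=0$ I would extract the coefficient of the top power $z^{n}$. By the definition \eqref{star} and monicity of $Q_n$ one has $Q^*_n(z;r)=\sum_{j=0}^{n}\mathcal{q}^{(r)}_{n,n-j}z^j$, whence the coefficient of $z^n$ in $Q^*_n(z;r)$ equals $\mathcal{q}^{(r)}_{n,0}=Q_n(0;r)$. Comparing the coefficient of $z^{n+1}$ on the two sides of $Q^*_{n+1}(z;r)=Q^*_n(z;r+2)+\be^{(r)}_n z\,Q^*_n(z;r)$: the term $Q^*_n(z;r+2)$ has degree $\le n$ and contributes nothing, while the second term contributes $\be^{(r)}_n$ times the coefficient of $z^n$ in $Q^*_n(z;r)$. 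Hence $Q_{n+1}(0;r)=\be^{(r)}_nQ_n(0;r)$, and telescoping from $Q_0(0;r)=1$ gives \eqref{Qn0r}. The same top-coefficient extraction applied to $S^*_{n+1}(z;s)=S^*_n(z;s+1)+\ga^{(s)}_n z\,S^*_n(z;s)$ produces \eqref{Sn0s}.

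There is no genuine obstacle; the only point requiring a moment's care is the bookkeeping in the $Q$- and $S$-cases, where the tail coefficient of $Q_n$ (resp. $S_n$) appears as the leading coefficient of $Q^*_n$ (resp. $S^*_n$) rather than as a value at a point, so one must compare coefficients of $z^{n+1}$ instead of simply setting $z=0$.
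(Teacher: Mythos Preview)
Your proposal is correct and follows essentially the same approach as the paper: evaluate \eqref{1st rec P} and \eqref{1st rec R} at $z=0$, and match the coefficient of $z^{n+1}$ in \eqref{1st rec Q^*} and \eqref{1st rec S^*}, then telescope from the degree-zero initial condition. Your added remark that the tail of $Q_n$ (resp.\ $S_n$) is precisely the leading coefficient of $Q^*_n$ (resp.\ $S^*_n$) makes the coefficient-matching step explicit, which the paper leaves implicit.
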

\begin{proof}
	These identities are immediate consequences of the recurrence relations \eqref{1st rec P}, \eqref{1st rec Q^*}, \eqref{1st rec R}, and \eqref{1st rec S^*}. For example evaluating \eqref{1st rec P} at $z=0$ gives $P_{n+1}(0;r)=\de^{(r)}_nP_n(0;r)$ and thus \eqref{Pn0r} follows because $P_0(0;r)=1$. Now, if we match the coefficients of $z^{n+1}$ in \eqref{1st rec Q^*} we obtain $Q_{n+1}(0;r)=\be^{(r)}_{n}Q_{n}(0;r)$, which yields \eqref{Qn0r} because  $Q_0(0;r)=1$. The identities \eqref{Rn0s} and \eqref{Sn0s} can be shown similarly using respectively \eqref{1st rec R}, and \eqref{1st rec S^*}.
\end{proof}
The next set of relations are just the inverse relations to the ones in the preceding Lemma.
\begin{corollary}
The polynomial tails satisfy the following pure-$n$ recurrence relations
	
\noindent\begin{minipage}{.5\linewidth}
		\begin{alignat}{2}
		&P_{n+1}(0;r) &&= \de^{(r)}_{n}P_{n}(0;r), \label{Pn+10rPn0r} \\
		&Q_{n+1}(0;r) &&= \be^{(r)}_{n}Q_{n}(0;r), \label{Qn+10rQn0r}
		\end{alignat}	
	\end{minipage}	
	\begin{minipage}{.5\linewidth}
		\begin{alignat}{2}
		&R_{n+1}(0;s) &&= \varkappa^{(s)}_{n}R_{n}(0;s), \label{Rn+10sRn0s} \\
		&S_{n+1}(0;s) &&= \ga^{(r)}_{n}S_{n}(0;s). \label{Sn+10sSn0s}
		\end{alignat}	
	\end{minipage}
\end{corollary}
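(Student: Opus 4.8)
The plan is to read these four identities directly off the first-order recurrence relations already established in Theorems \ref{THM degree rec 2j-k} and \ref{THM degree rec j-2k}, exactly as was done inside the proof of Lemma \ref{polytails-reccoeff}. For \eqref{Pn+10rPn0r} I would simply evaluate \eqref{1st rec P} at $z=0$: the term $zP_n(z;r-1)$ vanishes, leaving $P_{n+1}(0;r)=\de^{(r)}_n P_n(0;r)$. The relation \eqref{Rn+10sRn0s} follows in the same way from \eqref{1st rec R}, since $zR_n(z;s-2)$ again drops out at $z=0$.

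For \eqref{Qn+10rQn0r} and \eqref{Sn+10sSn0s} the one extra point is the dictionary between $Q_n,S_n$ and their reciprocals $Q^*_n,S^*_n$, since the recurrences \eqref{1st rec Q^*} and \eqref{1st rec S^*} are stated for the starred polynomials. By \eqref{star} one has $Q^*_n(z;r)=z^n Q_n(z^{-1};r)$, so the constant term $Q_n(0;r)$ equals the coefficient of $z^n$ in $Q^*_n(z;r)$, and likewise $S_n(0;s)$ is the coefficient of $z^n$ in $S^*_n(z;s)$. Matching coefficients of $z^{n+1}$ on both sides of \eqref{1st rec Q^*} — where $Q^*_n(z;r+2)$ has degree $n$ and contributes nothing, while $zQ^*_n(z;r)$ contributes its $z^n$-coefficient — yields $Q_{n+1}(0;r)=\be^{(r)}_n Q_n(0;r)$; the identical argument applied to \eqref{1st rec S^*} gives $S_{n+1}(0;s)=\ga^{(s)}_n S_n(0;s)$.

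Alternatively, in the spirit of the remark preceding the statement, all four identities are nothing but the ratios of consecutive terms in the telescoping products of Lemma \ref{polytails-reccoeff}: dividing $P_{n+1}(0;r)=\prod_{\ell=0}^{n}\de^{(r)}_{\ell}$ by $P_n(0;r)=\prod_{\ell=0}^{n-1}\de^{(r)}_{\ell}$ isolates $\de^{(r)}_n$, and similarly for $Q$, $R$ and $S$. There is essentially no obstacle here; the only care needed is the $Q^*/Q$ and $S^*/S$ bookkeeping described above.
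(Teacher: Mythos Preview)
Your proposal is correct and matches the paper's approach exactly: the paper presents this corollary with no separate proof, describing these relations simply as the inverse relations to Lemma~\ref{polytails-reccoeff}, and indeed the recurrences $P_{n+1}(0;r)=\de^{(r)}_n P_n(0;r)$ and $Q_{n+1}(0;r)=\be^{(r)}_n Q_n(0;r)$ were already derived as intermediate steps inside that lemma's proof by precisely the evaluation-at-zero and coefficient-matching arguments you describe.
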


\begin{corollary}
The following relationships hold between the  $2j-k$ and $j-2k$ polynomial tails
	
\noindent\begin{minipage}{.5\linewidth}
		\begin{equation}
		Q_n(0;r) = \frac{(-1)^n}{P_n(0;r+1)P_n(0;r+2)},\label{Qn0rPn0}
		\end{equation}	
	\end{minipage}	
	\begin{minipage}{.5\linewidth}
		\begin{equation}
		R_n(0;s) = \frac{(-1)^n}{S_n(0;s-1)S_n(0;s-2)}.\label{Rn0rSn0}
		\end{equation}	
	\end{minipage}
\end{corollary}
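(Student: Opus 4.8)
The plan is to read both identities off directly from Lemma~\ref{polytails-reccoeff} combined with the factorisations \eqref{abcd} and \eqref{EFGH}; once those are in hand, only the reorganisation of a finite product remains, so no substantive new work is needed.

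For \eqref{Qn0rPn0} I would begin with \eqref{Qn0r}, which gives $Q_n(0;r) = \prod_{\ell=0}^{n-1}\be^{(r)}_{\ell}$, and substitute the factorisation \eqref{abcd}, namely $\be^{(r)}_{\ell} = -1/\bigl(\de^{(r+1)}_{\ell}\de^{(r+2)}_{\ell}\bigr)$. Separating the three factors of the product gives
\[
Q_n(0;r) = \prod_{\ell=0}^{n-1}\left(-\frac{1}{\de^{(r+1)}_{\ell}\,\de^{(r+2)}_{\ell}}\right) = (-1)^n\left(\prod_{\ell=0}^{n-1}\de^{(r+1)}_{\ell}\right)^{-1}\left(\prod_{\ell=0}^{n-1}\de^{(r+2)}_{\ell}\right)^{-1},
\]
and the two remaining products are exactly $P_n(0;r+1)$ and $P_n(0;r+2)$ by \eqref{Pn0r}, which is precisely \eqref{Qn0rPn0}.

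The identity \eqref{Rn0rSn0} is proved in the same way, now starting from \eqref{Rn0s}, $R_n(0;s) = \prod_{\ell=0}^{n-1}\varkappa^{(s)}_{\ell}$, substituting \eqref{EFGH}, $\varkappa^{(s)}_{\ell} = -1/\bigl(\ga^{(s-1)}_{\ell}\ga^{(s-2)}_{\ell}\bigr)$, and recognising the two resulting products as $S_n(0;s-1)$ and $S_n(0;s-2)$ via \eqref{Sn0s}. I do not foresee any genuine obstacle: the argument is a one-line product manipulation, and the only thing to watch is the offset bookkeeping — that the $\be^{(r)}_n$ and $\varkappa^{(s)}_n$ factorisations couple the offsets $(r+1,r+2)$ and $(s-1,s-2)$ respectively, which is the numerical shadow of the opposite orientations of the $2j-k$ and $j-2k$ dualities, and that the sign $(-1)^n$ arises solely from the $n$ copies of the minus sign. (Alternatively, one could evaluate \eqref{Q* and R} at $z=0$, where $Q^*_n(0;\cdot)=1$ because $Q_n$ is monic, to obtain $R_n(0;s) = (-1)^n D^{(s-n-1)}_n/E^{(s)}_n$, and then use \eqref{E&D rel} together with the telescoped norm product \eqref{Dets from norms 2} to recover \eqref{Rn0rSn0}; but the product computation above is the most direct.)
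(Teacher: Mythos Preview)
Your proof is correct and follows essentially the same approach as the paper: use Lemma~\ref{polytails-reccoeff} to express the tails as products of recurrence coefficients, apply the factorisations \eqref{abcd} and \eqref{EFGH}, and regroup the products. The paper's proof is equally brief and uses exactly these ingredients in the same order.
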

\begin{proof}
	From \eqref{abcd} and \eqref{EFGH} we have
	\begin{equation}
	\prod_{\ell=0}^{n-1} \be^{(r)}_{\ell} = \frac{(-1)^n}{\prod_{\ell=0}^{n-1}\delta^{(r+1)}_{\ell}\prod_{\ell=0}^{n-1}\delta^{(r+2)}_{\ell}}, \qandq 		\prod_{\ell=0}^{n-1} \varkappa^{(s)}_{\ell} = \frac{(-1)^n}{\prod_{\ell=0}^{n-1}\ga^{(s-1)}_{\ell}\prod_{\ell=0}^{n-1}\ga^{(s-2)}_{\ell}}.
	\end{equation}
	Now \eqref{Qn0rPn0} and \eqref{Rn0rSn0} follow from \eqref{Pn0r},  \eqref{Qn0r}, \eqref{Rn0s}, and \eqref{Sn0s}.
\end{proof}
The following result relates the tail coefficients of the polynomials to certain ratios of determinants. This is the extension of the Toeplitz relations for the reflection or Verblunsky coefficients given by Eqs. (2.19) of \cite{FW_2006}.
\begin{lemma}\label{Lem tails det ratios}
The polynomial tails can be expressed in terms of the ratios of $2j-k$ and $j-2k$ determinants as follows
	
\noindent\begin{minipage}{.5\linewidth}
		\begin{alignat}{2}
		&P_n(0;r) &&= (-1)^n \frac{D^{(r-1)}_n}{D^{(r)}_n}, \label{Pn0r1} \\
		& Q_n(0;r)  &&= (-1)^n \frac{D^{(r+2)}_n}{D^{(r)}_n}, \label{Qn0r1}
		\end{alignat}	
	\end{minipage}	
	\begin{minipage}{.5\linewidth}
		\begin{alignat}{2}
		&R_n(0;s) &&= (-1)^n \frac{E^{(s-2)}_n}{E^{(s)}_n}, \label{Rn0s1} \\
		& S_n(0;s)  &&= (-1)^n \frac{E^{(s+1)}_n}{E^{(s)}_n}. \label{Sn0s1}
		\end{alignat}	
	\end{minipage}
\end{lemma}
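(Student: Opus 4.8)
The plan is to read off each of the four tail coefficients directly from the bordered determinant representations established in Theorems \ref{P and Q exist and are unique} and \ref{R and S exist and are unique}, by setting the polynomial variable $z$ to zero.

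First I would prove \eqref{Pn0r1}. Putting $z=0$ in \eqref{OP11}, the last row of the $(n+1)\times(n+1)$ bordered matrix becomes $(1,0,\dots,0)$, so a cofactor expansion along that row produces $(-1)^{n+2}=(-1)^n$ times the $n\times n$ minor obtained by deleting the last row and the first column. Comparing the entries of this minor with the definition \eqref{Det} one identifies it as $D^{(r-1)}_{n}$ --- deleting the leading column simply shifts every entry $w_{r+2j-k}$ to $w_{r-1+2j-k}$ --- and together with the overall factor $1/D^{(r)}_{n}$ this gives $P_n(0;r)=(-1)^n D^{(r-1)}_n/D^{(r)}_n$. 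The three remaining identities follow the same pattern: setting $z=0$ in \eqref{OP22} turns the last \emph{column} into $(1,0,\dots,0)^T$, and expanding along it leaves the minor obtained by deleting the first row, whose entries are those of \eqref{Det} with offset raised to $r+2$, yielding \eqref{Qn0r1}; while the $j-2k$ statements \eqref{Rn0s1} and \eqref{Sn0s1} come from the identical manipulation applied to \eqref{OP11 R} and \eqref{OP22 S}, where deleting the leading column lowers the offset by $2$ and deleting the leading row raises it by $1$, in accordance with \eqref{Det E}. In every case the sign works out to $(-1)^n$.

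An alternative, essentially one-line derivation is available by combining Lemma \ref{polytails-reccoeff} with the determinant--norm products \eqref{Dets from norms} and \eqref{Dets from norms 2}: for instance $P_n(0;r)=\prod_{\ell=0}^{n-1}\de^{(r)}_\ell=(-1)^n\prod_{\ell=0}^{n-1}h^{(r-1)}_\ell\big/\prod_{\ell=0}^{n-1}h^{(r)}_\ell=(-1)^n D^{(r-1)}_n/D^{(r)}_n$ using \eqref{delta}, and likewise for $Q_n(0;r)$, $R_n(0;s)$, $S_n(0;s)$ via \eqref{Qn0r}, \eqref{Rn0s}, \eqref{Sn0s} together with \eqref{beta}, \eqref{kappa}, \eqref{gamma}. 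The computation is entirely routine; the only place demanding care --- and where I would slow down --- is the index bookkeeping in the cofactor expansion, namely verifying the sign $(-1)^n$ and confirming exactly how deletion of a border row or column shifts the offset of the surviving $2j-k$ or $j-2k$ moment block. I foresee no genuine obstacle beyond this.
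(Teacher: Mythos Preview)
Your proposal is correct and follows exactly the approach indicated in the paper, which simply states that the identities are immediate consequences of the bordered determinant representations \eqref{OP11}, \eqref{OP22}, \eqref{OP11 R}, and \eqref{OP22 S}. Your detailed cofactor expansion is precisely the computation the paper leaves implicit, and your alternative route via Lemma~\ref{polytails-reccoeff} and the telescoping products \eqref{Dets from norms}, \eqref{Dets from norms 2} is also valid.
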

\begin{proof}
	These are immediate consequences of the identities \eqref{OP11}, \eqref{OP22}, \eqref{OP11 R}, and \eqref{OP22 S}.
\end{proof}
\begin{lemma}
The polynomial tails satisfy the following pure offset recurrence relations
	
\noindent\begin{minipage}{.5\linewidth}
		\begin{alignat}{2}
		&P_n(0;r+2) &&= - \frac{\de^{(r)}_n}{\eta^{(r)}_n}P_n(0;r), \label{Pn0r+2} \\
		& Q_n(0;r-1)  &&= - \frac{\be^{(r)}_n}{\al^{(r)}_n}Q_n(0;r), \label{Qn0r-1}
		\end{alignat}	
	\end{minipage}	
	\begin{minipage}{.5\linewidth}
		\begin{alignat}{2}
		&R_n(0;s+1) &&= - \frac{\varkappa^{(s)}_n}{\rho^{(s)}_n}R_n(0;s), \label{Rn0s+1} \\
		& S_n(0;s-2)  &&= - \frac{\ga^{(s)}_n}{\theta^{(s)}_n}S_n(0;s). \label{Sn0s-2}
		\end{alignat}	
	\end{minipage}
\end{lemma}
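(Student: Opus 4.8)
The plan is to read off the first and third identities directly from the pure-offset recurrences obtained in the proof of Theorem~\ref{offset-RR}, and then deduce the second and fourth from the $2j-k$/$j-2k$ tail duality.

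First I would set $z=0$ in the relation \eqref{P r}, i.e. $\eta^{(r)}_n P_{n}(z;r+2) - zP_n(z;r+1) + zP_n(z;r-1) + \de^{(r)}_nP_n(z;r)=0$: the two terms carrying an explicit factor $z$ drop out, leaving $\eta^{(r)}_n P_n(0;r+2) + \de^{(r)}_n P_n(0;r)=0$, which is exactly \eqref{Pn0r+2} (equivalently one may evaluate \eqref{P pure r rec} at $z=0$). In the same way, evaluating \eqref{R s}, namely $zR_n(z;s-2)+\varkappa^{(s)}_nR_n(z;s) - zR_n(z;s-1)+\rho^{(s)}_n R_{n}(z;s+1)=0$, at $z=0$ removes the two terms with a factor $z$ and yields $\varkappa^{(s)}_n R_n(0;s) + \rho^{(s)}_n R_n(0;s+1)=0$, which is \eqref{Rn0s+1}.

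For the $Q$-identity I would first use \eqref{2j-k interrels}, $\de^{(r)}_n\al^{(r)}_n=\be^{(r)}_n\eta^{(r)}_n$, to rewrite $-\be^{(r)}_n/\al^{(r)}_n = -\de^{(r)}_n/\eta^{(r)}_n$, so that \eqref{Qn0r-1} is equivalent to $Q_n(0;r-1)=-(\de^{(r)}_n/\eta^{(r)}_n)\,Q_n(0;r)$. Now \eqref{Qn0rPn0} gives $Q_n(0;r)=(-1)^n/(P_n(0;r+1)P_n(0;r+2))$ and $Q_n(0;r-1)=(-1)^n/(P_n(0;r)P_n(0;r+1))$, whose ratio is $P_n(0;r+2)/P_n(0;r)$; by the already-established \eqref{Pn0r+2} this equals $-\de^{(r)}_n/\eta^{(r)}_n$, completing the proof. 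The $S$-identity is identical with the roles swapped: \eqref{j-2k interrels} turns \eqref{Sn0s-2} into $S_n(0;s-2)=-(\varkappa^{(s)}_n/\rho^{(s)}_n)\,S_n(0;s)$, then \eqref{Rn0rSn0} gives $S_n(0;s-2)/S_n(0;s)=R_n(0;s+1)/R_n(0;s)$, which by \eqref{Rn0s+1} equals $-\varkappa^{(s)}_n/\rho^{(s)}_n$.

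Alternatively, all four identities can be verified in one stroke, purely algebraically, by inserting the determinant-ratio formulas of Lemma~\ref{Lem tails det ratios} together with the definitions of $\de^{(r)}_n,\eta^{(r)}_n,\be^{(r)}_n,\al^{(r)}_n$ and $\varkappa^{(s)}_n,\rho^{(s)}_n,\ga^{(s)}_n,\theta^{(s)}_n$ from \eqref{delta}--\eqref{alpha} and \eqref{kappa}--\eqref{theta} and the norm ratios \eqref{h}, \eqref{H}; every $D^{(\cdot)}_{n+1}$ and $E^{(\cdot)}_{n+1}$ factor then cancels. I expect no real obstacle here; the only point that needs care is that the $Q$- and $S$-versions cannot be obtained by naively setting $z=0$ in their pure-offset recurrences, since those are stated for $Q^*$ and $S^*$ and there the constant term is trivial — which is precisely why the detour through the duality relations \eqref{Qn0rPn0}, \eqref{Rn0rSn0} (or through Lemma~\ref{Lem tails det ratios}) is needed.
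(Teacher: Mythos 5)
Your proof is correct, and for \eqref{Pn0r+2} and \eqref{Rn0s+1} it coincides with the paper's: both set $z=0$ in the pure-offset recurrences \eqref{P pure r rec} and \eqref{R pure s rec} (equivalently \eqref{P r}, \eqref{R s}) and kill the terms carrying an explicit factor of $z$. For \eqref{Qn0r-1} and \eqref{Sn0s-2} you diverge. You are right that setting $z=0$ in \eqref{Q pure r rec} and \eqref{S* pure s rec} gives nothing, since the constant term of a reciprocal polynomial is $1$; but the paper's device is not to look at the constant term there — it matches the coefficient of $z^{n+1}$ instead. Because $Q^*_n(z;r)=z^nQ_n(z^{-1};r)$ has leading coefficient $Q_n(0;r)$, the degree-$(n+1)$ part of $zQ^*_n(z;\cdot)$ is exactly $Q_n(0;\cdot)z^{n+1}$ while the terms without the factor $z$ have degree only $n$, so \eqref{Q pure r rec} immediately yields $\be^{(r+1)}_nQ_n(0;r+1)+\al^{(r+1)}_nQ_n(0;r)=0$, and similarly for $S^*$; this is the same trick already used in the proof of Lemma \ref{polytails-reccoeff}. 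Your detour through the duality relations \eqref{Qn0rPn0}, \eqref{Rn0rSn0} combined with the interrelations \eqref{2j-k interrels}, \eqref{j-2k interrels} is a valid alternative, and your ratio computations check out; the trade-off is that it makes \eqref{Qn0r-1} and \eqref{Sn0s-2} logically dependent on the already-proved \eqref{Pn0r+2}, \eqref{Rn0s+1} and on two earlier corollaries, whereas the paper's coefficient-matching treats all four identities uniformly and independently from the single Theorem \ref{offset-RR}. Your purely algebraic fallback via Lemma \ref{Lem tails det ratios} and the determinant definitions of the recurrence coefficients also works and is a useful sanity check.
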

\begin{proof}
	These are immediate consequences of the identities \eqref{P pure r rec}, \eqref{Q pure r rec}, \eqref{R pure s rec}, and \eqref{S* pure s rec} by matching the constant terms and coefficients of $z^{n+1}$.
\end{proof}
Lemma \ref{polytails-reccoeff} establishes the construction of the tails of the orthogonal polynomials, provided the data of $2j-k$ and $j-2k$ determinants (and thus the constants $\de_n^{(r)}$,$\be_n^{(r)}$,$\varkappa_n^{(r)}$, and $\ga_n^{(r)}$) are given. The following theorem establishes this in the reverse direction: how to construct the $2j-k$ and $j-2k$ determinants from the data given as the tails of the corresponding orthogonal polynomials. This result is the extension of the Toeplitz result given by Eq.(2.20) of \cite{FW_2006}.
\begin{theorem}\label{construct_DE}
The $2j-k$ and $j-2k$ determinants can be constructed from the tails of the orthogonal polynomials as
	\begin{equation}\label{DnfromtailsofP}
	D^{(r)}_n = w^n_r \prod_{\ell=0}^{n-1} \frac{1}{P_{\ell}(0;r+1)} \prod_{\nu=1}^{\ell} \left(\frac{P_{\nu}(0;r)}{P_{\nu}(0;r+2)}-1\right),
	\end{equation} 
and
	\begin{equation}\label{EnfromtailsofS}
	E^{(s)}_n = w^n_s \prod_{\ell=0}^{n-1} \frac{1}{S_{\ell}(0;s-1)} \prod_{\nu=1}^{\ell} \left(\frac{S_{\nu}(0;s)}{S_{\nu}(0;s-2)}-1\right).
	\end{equation}	 
\end{theorem}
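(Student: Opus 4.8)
The plan is to peel off the product structure and reduce \eqref{DnfromtailsofP} to a single Desnanot--Jacobi identity among the moment determinants. By \eqref{Dets from norms} and \eqref{h} one has $D^{(r)}_{n}=\prod_{\ell=0}^{n-1}h^{(r)}_{\ell}$ with $h^{(r)}_{0}=D^{(r)}_{1}/D^{(r)}_{0}=w_{r}$, so it suffices to establish, for every $n\ge 1$, the single-step identity
\begin{equation}\label{ratio-plan}
\frac{h^{(r)}_{n}}{h^{(r)}_{n-1}}=\frac{P_{n-1}(0;r+1)}{P_{n}(0;r+1)}\left(\frac{P_{n}(0;r)}{P_{n}(0;r+2)}-1\right).
\end{equation}
Granting this, I telescope the first factor on the right (using $P_{0}(z;r+1)\equiv 1$) to get $h^{(r)}_{n}=w_{r}\,P_{n}(0;r+1)^{-1}\prod_{\nu=1}^{n}\big(P_{\nu}(0;r)/P_{\nu}(0;r+2)-1\big)$, and then multiply over $\ell=0,\dots,n-1$ to arrive at exactly \eqref{DnfromtailsofP}.

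To prove \eqref{ratio-plan}, I first rewrite its left-hand side via \eqref{h} as $h^{(r)}_{n}/h^{(r)}_{n-1}=D^{(r)}_{n+1}D^{(r)}_{n-1}/(D^{(r)}_{n})^{2}$, and its right-hand side entirely in terms of $2j-k$ determinants using the tail formulas of Lemma~\ref{Lem tails det ratios} (that is, \eqref{Pn0r1}). A short computation, in which all the $(-1)$ factors cancel, turns the right-hand side into
\begin{equation}\label{RHS-plan}
\frac{D^{(r)}_{n-1}\big(D^{(r)}_{n}D^{(r+1)}_{n}-D^{(r-1)}_{n}D^{(r+2)}_{n}\big)}{D^{(r+1)}_{n-1}\,(D^{(r)}_{n})^{2}}.
\end{equation}
Cancelling the common factor $D^{(r)}_{n-1}/(D^{(r)}_{n})^{2}$ shows that \eqref{ratio-plan} is equivalent to
\begin{equation}\label{DJ-plan}
D^{(r)}_{n+1}\,D^{(r+1)}_{n-1}=D^{(r)}_{n}\,D^{(r+1)}_{n}-D^{(r-1)}_{n}\,D^{(r+2)}_{n}.
\end{equation}

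Identity \eqref{DJ-plan} I obtain by applying the Dodgson condensation identity \eqref{DODGSON} to the $(n+1)\times(n+1)$ matrix $\boldsymbol{D}^{(r)}_{n+1}$ of $2j-k$ structure \eqref{Det} (rows and columns indexed $0,\dots,n$), with $j_{1}=k_{1}=0$ and $j_{2}=k_{2}=n$. The only thing to verify is that, after the obvious reindexing, the five minors carry the correct shifted offsets: deleting row $n$ and column $n$ leaves $D^{(r)}_{n}$; deleting row $0$ and column $0$ leaves $D^{(r+1)}_{n}$; deleting row $0$ and column $n$ leaves $D^{(r+2)}_{n}$; deleting row $n$ and column $0$ leaves $D^{(r-1)}_{n}$; and deleting rows $0,n$ together with columns $0,n$ leaves $D^{(r+1)}_{n-1}$. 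Substituting these into \eqref{DODGSON} yields \eqref{DJ-plan}. The formula \eqref{EnfromtailsofS} follows by the identical argument with the $j-2k$ objects: $E^{(s)}_{n}=\prod_{\ell=0}^{n-1}g^{(s)}_{\ell}$ and $g^{(s)}_{0}=w_{s}$ by \eqref{Dets from norms 2} and \eqref{H}, Lemma~\ref{Lem tails det ratios} (that is, \eqref{Sn0s1}) expresses the tails $S_{n}(0;\cdot)$ as ratios of $j-2k$ determinants, and Dodgson condensation applied to $\boldsymbol{E}^{(s)}_{n+1}$ gives $E^{(s)}_{n+1}E^{(s-1)}_{n-1}=E^{(s)}_{n}E^{(s-1)}_{n}-E^{(s+1)}_{n}E^{(s-2)}_{n}$, which is exactly what the telescoping needs. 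The proof is essentially routine; the one place demanding care is the bookkeeping of the offset shifts in the five minors and of the signs coming from the $(-1)^{n}$ in Lemma~\ref{Lem tails det ratios}, but there is no genuine obstacle.
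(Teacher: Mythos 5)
Your proof is correct and follows essentially the same route as the paper: both rest on the single Dodgson condensation identity $D^{(r)}_{n+1}D^{(r+1)}_{n-1}=D^{(r)}_{n}D^{(r+1)}_{n}-D^{(r-1)}_{n}D^{(r+2)}_{n}$ (the paper obtains it from the master matrix $\boldsymbol{\mathscr{D}}_{r}$ after first deleting the bordering row and column, which reduces to exactly your corner condensation of $\boldsymbol{D}^{(r)}_{n+1}$), combined with Lemma~\ref{Lem tails det ratios} and \eqref{Dets from norms}. The only cosmetic difference is that the paper routes the algebra through the product $Q_{\nu}(0;r)P_{\nu}(0;r)$ and the relation \eqref{Qn0rPn0} before eliminating $Q$, whereas you work directly with determinant ratios from the outset; your offset bookkeeping for the five minors, the sign cancellations, and the telescoping all check out.
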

\begin{proof}
Let $1\leq \nu \leq n-1$ be a fixed integer and let $\boldsymbol{\mathscr{D}}_r$ be the $(\nu+3)\times(\nu+3)$ matrix given by \eqref{DDD}. Now consider the following Dodgson condensation identity:
	\begin{equation}\label{DodgsonPQtails}
	\begin{split}
	& \mathscr{D}_r\left\lbrace \begin{matrix}  \nu+2  \\  \nu+2 \end{matrix} \right\rbrace \cdot \mathscr{D}_r\left\lbrace \begin{matrix}  0 & \nu+1 & \nu+2 \\  0 & \nu+1 & \nu+2 \end{matrix} \right\rbrace =  \mathscr{D}_r \left\lbrace \begin{matrix} 0 &  \nu+2   \\ 0 &  \nu+2  \end{matrix} \right\rbrace \cdot \mathscr{D}_r\left\lbrace \begin{matrix}   \nu+1  & \nu+2  \\  \nu+1 &  \nu+2  \end{matrix} \right\rbrace - \mathscr{D}_r\left\lbrace \begin{matrix} 0 &  \nu+2   \\  \nu+1 &  \nu+2  \end{matrix} \right\rbrace \cdot \mathscr{D}_r\left\lbrace \begin{matrix}   \nu+1  & \nu+2  \\ 0 &  \nu+2 \end{matrix} \right\rbrace.
	\end{split}
	\end{equation}
In view of \eqref{DDD}, \eqref{h}, \eqref{Pn0r1} and \eqref{Qn0r1}, the above identity can be written as 
	\begin{equation}
	h^{(r)}_{\nu+1}\frac{D^{(r+1)}_{\nu}}{D^{(r)}_{\nu+1}}=\frac{D^{(r+1)}_{\nu+1}}{D^{(r)}_{\nu+1}}-Q_{\nu+1}(0;r)P_{\nu+1}(0;r). 		
	\end{equation}
Using \eqref{h} again and replacing $\nu \mapsto \nu-1$ yields
	\begin{equation}
		\left(\frac{h^{(r)}_{\nu}}{h^{(r+1)}_{\nu-1}}-1\right)\frac{D^{(r+1)}_{\nu}}{D^{(r)}_{\nu}}=-Q_{\nu}(0;r)P_{\nu}(0;r).
	\end{equation}
Now we use \eqref{Pn0r1} to rewrite this as 
		\begin{equation}
	\frac{h^{(r)}_{\nu}}{h^{(r+1)}_{\nu-1}}=1-(-1)^{\nu}Q_{\nu}(0;r)P_{\nu}(0;r)P_{\nu}(0;r+1) = 1- \frac{P_{\nu}(0;r)}{P_{\nu}(0;r+2)},
	\end{equation}
where we have used \eqref{Qn0rPn0}. Therefore
	\begin{equation}
		 \frac{\prod_{\nu=1}^{\ell}h^{(r)}_{\nu}}{\prod_{\nu=1}^{\ell}h^{(r+1)}_{\nu-1}} = \prod_{\nu=1}^{\ell} \left(1- \frac{P_{\nu}(0;r)}{P_{\nu}(0;r+2)}\right).
	\end{equation}
From \eqref{Dets from norms} we will get
	\begin{equation}
	\frac{h^{(r)}_\ell}{h^{(r)}_0}\frac{D^{(r)}_{\ell}}{D^{(r+1)}_{\ell}} = \prod_{\nu=1}^{\ell} \left(1- \frac{P_{\nu}(0;r)}{P_{\nu}(0;r+2)}\right).
	\end{equation}
Using \eqref{Pn0r1} and noticing that $h^{(r)}_0=D^{(r)}_1=w_r$, we have 
	\begin{equation}
	h^{(r)}_\ell = w_r \frac{(-1)^\ell}{P_{\ell}(0;r+1)} \prod_{\nu=1}^{\ell} \left(1- \frac{P_{\nu}(0;r)}{P_{\nu}(0;r+2)}\right).
	\end{equation}
Finally, using \eqref{Dets from norms} we obtain \eqref{DnfromtailsofP}. The identity \eqref{EnfromtailsofS} can be proven identically if one starts with the following  Dodgson condensation identity:
	\begin{equation}\label{DodgsonRStails}
	\begin{split}
	& \mathscr{E}_s\left\lbrace \begin{matrix}  \nu+2  \\  \nu+2 \end{matrix} \right\rbrace \cdot \mathscr{E}_s\left\lbrace \begin{matrix}  0 & \nu+1 & \nu+2 \\  0 & \nu+1 & \nu+2 \end{matrix} \right\rbrace =  \mathscr{E}_s \left\lbrace \begin{matrix} 0 &  \nu+2   \\ 0 &  \nu+2  \end{matrix} \right\rbrace \cdot \mathscr{E}_s\left\lbrace \begin{matrix}   \nu+1  & \nu+2  \\  \nu+1 &  \nu+2  \end{matrix} \right\rbrace - \mathscr{E}_s\left\lbrace \begin{matrix} 0 &  \nu+2   \\  \nu+1 &  \nu+2  \end{matrix} \right\rbrace \cdot \mathscr{E}_s\left\lbrace \begin{matrix}   \nu+1  & \nu+2  \\ 0 &  \nu+2 \end{matrix} \right\rbrace.
	\end{split}
	\end{equation}
\end{proof}

\begin{remark} \normalfont
So far we have treated the offsets as arbitrary integers, 
however from the pure $r,s$ recurrences of Theorem \ref{offset-RR} it is clear that only those offsets that are distinct modulo 3 are independent.
Thus we can use any three consecutive values of $ r,s$. 
This is the analogue to the Toeplitz case where the reflection or Verblunsky coefficients corresponded to the two offsets of $ \pm 1 $.
This fact is reinforced by the results of Theorem \ref{construct_DE} which show that the moment determinants $ D_{n}^{(0)}, E_{n}^{(0)} $ can be constructed from polynomial tails
with $ r=0, 1, 2 $ and $ s=0, -1, -2 $ respectively.   
\end{remark}

\section{Multiple integral representations}\label{Sec MultInt}
The joint eigenvalue PDF of \eqref{1m2_JPDF} is fundamental from our viewpoint and it is from this that we construct various marginal distributions,
otherwise known as $n$-particle correlation functions, through integration over a set of "internal" variables, 
the complement of $n$ "external" variables. 
All aspects of the theory will have representations of this form: the determinants, the bi-orthogonal polynomials and the reproducing kernels. 
Thus our first result of this nature is the $0$-point correlation or normalisation integral.
This is the extension of the Toeplitz result, Eq.(2.2), of \cite{FW_2006}.
\begin{theorem}
	The moment determinants, and therefore the normalisation of the bi-orthogonal polynomials, possess the multiple integral representations
	\begin{equation}\label{MultIntD}
	D_{n}^{(r)} = \frac{1}{n!} \int_{\T} \frac{\dd \ze_1}{2 \pi \ic \ze_1}\int_{\T} \frac{\dd \ze_2}{2 \pi \ic \ze_2} \cdots \int_{\T} \frac{\dd \ze_n}{2 \pi \ic \ze_n} \prod_{j=1}^{n}w(\ze_j) \ze^{-r}_j \prod_{1\leq j<k\leq n} (\ze_k-\ze_j)(\ze^{-2}_k-\ze^{-2}_j),
	\end{equation}
	and
	\begin{equation}\label{MultIntE}
	E_{n}^{(s)} = \frac{1}{n!} \int_{\T} \frac{\dd \ze_1}{2 \pi \ic \ze_1}\int_{\T} \frac{\dd \ze_2}{2 \pi \ic \ze_2} \cdots \int_{\T} \frac{\dd \ze_n}{2 \pi \ic \ze_n} \prod_{j=1}^{n}w(\ze_j) \ze^{-s}_j \prod_{1\leq j<k\leq n} (\ze^2_k-\ze^2_j)(\ze^{-1}_k-\ze^{-1}_j).
	\end{equation}
\end{theorem}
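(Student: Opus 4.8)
The plan is to prove \eqref{MultIntD} by the standard Andréief/Heine-type expansion of the multiple integral, and then deduce \eqref{MultIntE} as a corollary using the duality \eqref{E&D rel} rather than repeating the computation. First I would expand the Vandermonde-type product in the integrand of \eqref{MultIntD}. Writing $\prod_{1\leq j<k\leq n}(\ze_k-\ze_j) = \det(\ze_k^{j-1})_{1\le j,k\le n}$ (up to the usual sign/orientation bookkeeping) and $\prod_{1\leq j<k\leq n}(\ze^{-2}_k-\ze^{-2}_j) = \det(\ze_k^{-2(j-1)})_{1\le j,k\le n}$, the integrand becomes a product of two $n\times n$ determinants times $\prod_j w(\ze_j)\ze_j^{-r}$. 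The key step is then the Andréief (Heine) identity: for the product of two determinants $\det(\phi_j(\ze_k))\det(\psi_j(\ze_k))$ integrated against $\prod_j \dd\mu(\ze_j)$, one gets $n!\,\det\big(\int \phi_j(\ze)\psi_k(\ze)\,\dd\mu(\ze)\big)$. With $\phi_j(\ze)=\ze^{j-1}$, $\psi_k(\ze)=\ze^{-2(k-1)}$ and $\dd\mu(\ze) = w(\ze)\ze^{-r}\,\dfrac{\dd\ze}{2\pi\ic\ze}$, the $(j,k)$ entry is $\int_{\T}\ze^{j-1-2(k-1)-r}w(\ze)\,\dfrac{\dd\ze}{2\pi\ic\ze} = w_{r+2(k-1)-(j-1)}$ by \eqref{Fourier Coeff}. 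The $1/n!$ prefactor cancels the $n!$ from Andréief, leaving $\det(w_{r+2(k-1)-(j-1)})_{1\le j,k\le n}$, which is the transpose of the matrix in \eqref{Det} (and hence has the same determinant $D_n^{(r)}$, after re-indexing $j,k$ to run over $0,\dots,n-1$).

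The one point requiring a little care — and the main (mild) obstacle — is the orientation and sign conventions: the antisymmetrization producing $\prod_{j<k}(\ze_k-\ze_j)$ from a determinant introduces a definite ordering, and I should check that the two Vandermonde factors are expanded with compatible orderings of the rows so that no spurious sign $(-1)^{\binom{n}{2}}$ survives. Since both factors $\prod_{j<k}(\ze_k-\ze_j)$ and $\prod_{j<k}(\ze^{-2}_k-\ze^{-2}_j)$ use the \emph{same} ordering $j<k$, any sign incurred in rewriting one as a determinant is incurred identically in the other, so the product of the two signs is $+1$; this is exactly why the construction is set up with matching orientations. I would also note that the symmetry of the integrand under permutations of $\ze_1,\dots,\ze_n$ is what legitimizes the Andréief step and the $1/n!$ bookkeeping. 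All integrals converge since $w$ is integrable on $\T$ and the integrands are products of $w(\ze_j)$ with Laurent monomials.

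Finally, for \eqref{MultIntE}: rather than redo the Andréief computation with the roles of $2$ and $1$ in the exponents swapped, I would observe that $\mathcal{E}_n[w(\ze)\ze^{-s}]$ is precisely $\mathcal{D}_n$ with the Vandermonde pair $(\ze_k-\ze_j)(\ze^{-2}_k-\ze^{-2}_j)$ replaced by $(\ze^2_k-\ze^2_j)(\ze^{-1}_k-\ze^{-1}_j)$; running the identical expansion gives $\det\big(w_{s+(j-1)-2(k-1)}\big)_{1\le j,k\le n}$, which is the matrix in \eqref{Det E} (up to transpose). Alternatively, and more in the spirit of the duality subsection, one can substitute $\ze_j\mapsto \ze_j^{-1}$ in one of the two integrals and use $\det \boldsymbol{\mathscr{M}}^\perp=\det\boldsymbol{\mathscr{M}}$ together with \eqref{E&D rel} to pass between the two; I would present the direct Andréief computation as the primary argument and mention the duality as a consistency check. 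This mirrors, and slightly generalizes, the Toeplitz proof of Eq.(2.2) in \cite{FW_2006}, the only new feature being the two different powers $\ze^{1}$ and $\ze^{-2}$ (resp.\ $\ze^{2}$ and $\ze^{-1}$) in the two Vandermonde factors.
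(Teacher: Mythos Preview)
Your proof is correct, but it differs from the paper's in an instructive way. The paper expands the two Vandermonde factors not in the monomial basis but in the bi-orthogonal bases $\{P_{k-1}(\ze_j;r)\}$ and $\{Q_{k-1}(\ze_j^{-2};r)\}$; after the same Andr\'eief-type symmetrisation the resulting Gram matrix is diagonal by bi-orthogonality, with entries $h^{(r)}_j$, and the product $\prod_{j=0}^{n-1} h^{(r)}_j = D^{(r)}_n$ follows from \eqref{Dets from norms}. Your monomial-basis route is more elementary and has the advantage of not presupposing the existence of the bi-orthogonal system (hence no implicit non-vanishing hypotheses on the intermediate $D^{(r)}_k$); the paper's route, on the other hand, makes the connection to the norms $h^{(r)}_j$ transparent and sets the template reused later for the multiple-integral formulae of the polynomials and the reproducing kernel (Theorems \ref{Thm Mult Int Polys} and the kernel theorem), where the orthogonal basis is what makes the determinants collapse. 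Your sign and transpose bookkeeping is fine, and deducing \eqref{MultIntE} either by the same Andr\'eief computation or via the duality \eqref{E&D rel} is exactly right.
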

\begin{proof}
We have
	\begin{equation}
	\prod_{1\leq j<k\leq n} (\ze_k-\ze_j) = \underset{1\leq j,k\leq n}{\det} \{\ze^{k-1}_j\}= \underset{1\leq j,k\leq n}{\det} \{P_{k-1}(\ze_j)\},
	\end{equation}
and
	\begin{equation}
	\prod_{1\leq j<k\leq n} (\ze^{-2}_k-\ze^{-2}_j)= \underset{1\leq j,k\leq n}{\det} \{\ze^{-2(k-1)}_j\}= \underset{1\leq j,k\leq n}{\det} \{Q_{k-1}(\ze^{-2}_j)\}.
	\end{equation}
So
	\begin{equation}
	\begin{split}
	& \frac{1}{n!} \int_{\T} \frac{\dd \ze_1}{2 \pi \ic \ze_1}\int_{\T} \frac{\dd \ze_2}{2 \pi \ic \ze_2} \cdots \int_{\T} \frac{\dd \ze_n}{2 \pi \ic \ze_n} \prod_{j=1}^{n}w(\ze_j) \ze^{-r}_j \prod_{1\leq j<k\leq n} (\ze_k-\ze_j)(\ze^{-2}_k-\ze^{-2}_j) \\
	& = \frac{1}{n!} \int_{\T} \frac{\dd \ze_1}{2 \pi \ic \ze_1}\int_{\T} \frac{\dd \ze_2}{2 \pi \ic \ze_2} \cdots \int_{\T} \frac{\dd \ze_n}{2 \pi \ic \ze_n} \prod_{j=1}^{n}w(\ze_j) \ze^{-r}_j \underset{1\leq j,k\leq n}{\det} \{P_{k-1}(\ze_j)\} \underset{1\leq j,k\leq n}{\det} \{Q_{k-1}(\ze^{-2}_j)\}  \\ & =  \int_{\T} \frac{\dd \ze_1}{2 \pi \ic \ze_1}\int_{\T} \frac{\dd \ze_2}{2 \pi \ic \ze_2} \cdots \int_{\T} \frac{\dd \ze_n}{2 \pi \ic \ze_n} \prod_{j=1}^{n}w(\ze_j) \ze^{-r}_j  \underset{1\leq j,k\leq n}{\det} \{P_{k-1}(\ze_j)\}  \prod_{j=1}^n Q_{j-1}(\ze^{-2}_j)  \\ & =  \int_{\T} \frac{\dd \ze_1}{2 \pi \ic \ze_1}\int_{\T} \frac{\dd \ze_2}{2 \pi \ic \ze_2} \cdots \int_{\T} \frac{\dd \ze_n}{2 \pi \ic \ze_n} \prod_{j=1}^{n}w(\ze_j) \ze^{-r}_j  \underset{1\leq j,k\leq n}{\det} \{P_{k-1}(\ze_j) Q_{j-1}(\ze^{-2}_j)\}  \\ & = \underset{1\leq j,k\leq n}{\det} \left\{ \int_{\T} \frac{\dd \ze}{2 \pi \ic \ze} w(\ze)\ze^{-r} P_{k-1}(\ze) Q_{j-1}(\ze^{-2})\right\} = \underset{0\leq j,k\leq n-1}{\det} \left\{ h^{(r)}_j \de_{j,k} \right\} = \prod_{j=0}^{n-1} h^{(r)}_j = D^{(r)}_n,
	\end{split}
	\end{equation} 
where we have used \eqref{PQorth} and \eqref{h}. The formula \eqref{MultIntE} can be established similarly using \eqref{RSorth} and \eqref{H}.
\end{proof} 
So, in particular we obtain \eqref{Dd} and \eqref{Ee} in view of \eqref{DD}, \eqref{EE}, \eqref{MultIntD} and \eqref{MultIntE}. The relationship \eqref{E&D rel} can be also found using the multiple integral formulae above. To that end notice that
\begin{equation}
(\ze_k-\ze_j)(\ze^{-2}_k-\ze^{-2}_j) = \frac{1}{\ze_k\ze_j}(\ze^{-1}_k-\ze^{-1}_j)(\ze^{2}_k-\ze^{2}_j).
\end{equation}
Therefore
\begin{equation}
\prod_{1\leq j<k\leq n} (\ze_k-\ze_j)(\ze^{-2}_k-\ze^{-2}_j)  = \psi \prod_{1\leq j<k\leq n} (\ze^{-1}_k-\ze^{-1}_j)(\ze^{2}_k-\ze^{2}_j),
\end{equation}
where 
\begin{equation}
\psi = \prod_{1\leq j<k\leq n} \frac{1}{\ze_k\ze_j} = \prod_{j=1}^{n} \ze^{1-n}_j,
\end{equation}
because 
\begin{equation}
\psi^2 = \prod_{1 \leq j \neq k \leq n} \frac{1}{\ze_k\ze_j} = \frac{\di  \prod_{1 \leq j , k \leq n} \frac{1}{\ze_k\ze_j}}{\di  \prod_{1 \leq j = k \leq n} \frac{1}{\ze_k\ze_j}} = \prod_{j=1}^{n} \ze^2_j \prod_{1 \leq j , k \leq n} \ze^{-1}_k\ze^{-1}_j = \prod_{j=1}^{n} \ze^{2-2n}_j.
\end{equation}
Note that 
\begin{equation}
\begin{split}
\mathcal{D}_n[w(\ze)\ze^{-r}]  & =\frac{1}{n!} \int_{\T} \frac{\dd \ze_1}{2 \pi \ic \ze_1}\int_{\T} \frac{\dd \ze_2}{2 \pi \ic \ze_2} \cdots \int_{\T} \frac{\dd \ze_n}{2 \pi \ic \ze_n} \prod_{j=1}^{n}w(\ze_j) \ze^{-r}_j \prod_{1\leq j<k\leq n} (\ze_k-\ze_j)(\ze^{-2}_k-\ze^{-2}_j) \\ & = \frac{1}{n!} \int_{\T} \frac{\dd \ze_1}{2 \pi \ic \ze_1}\int_{\T} \frac{\dd \ze_2}{2 \pi \ic \ze_2} \cdots \int_{\T} \frac{\dd \ze_n}{2 \pi \ic \ze_n} \prod_{j=1}^{n}w(\ze_j) \ze^{1-n-r}_j \prod_{1\leq j<k\leq n} (\ze^{-1}_k-\ze^{-1}_j)(\ze^{2}_k-\ze^{2}_j) \\ & = \mathcal{E}_n[w(\ze)\ze^{-(n+r-1)}],
\end{split}
\end{equation}
which is equivalent to \eqref{E&D rel}. More generally we have the following result.

\begin{theorem}
For any integrable function $f$ we have:
	\begin{equation}
	\mathcal{D}_n[w(\ze)\ze^{-r} f(\ze)] = 	\mathcal{E}_n[w(\ze)\ze^{-n-r+1} f(\ze)],
	\end{equation}
or equivalently 
	\begin{equation}
	\mathcal{E}_n[w(\ze)\ze^{-s} f(\ze)]= \mathcal{D}_n[w(\ze)\ze^{-s-1+n} f(\ze)].
	\end{equation}
\end{theorem}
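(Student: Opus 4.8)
The plan is to repeat, essentially verbatim, the computation carried out in the lines immediately preceding the statement, now carrying along the extra symmetric factor $\prod_{j=1}^{n} f(\ze_j)$. The key observation is that $f$ enters the multiple integral $\mathcal{D}_n[\,\cdot\,]$ only through the fully symmetric product $\prod_{j=1}^{n} f(\ze_j)$, so it combines with the symmetric factor $\prod_{j=1}^{n} w(\ze_j)\ze_j^{-r}$ that is already there, without affecting any manipulation of the Vandermonde-type products, of the integration measure, or of the normalising $1/n!$.

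Concretely, I would start from the elementary identity
\[
(\ze_k-\ze_j)(\ze^{-2}_k-\ze^{-2}_j) = \frac{1}{\ze_k\ze_j}(\ze^{-1}_k-\ze^{-1}_j)(\ze^{2}_k-\ze^{2}_j),
\]
and take the product over $1\leq j<k\leq n$; as already shown in the discussion preceding the theorem, this produces the prefactor $\prod_{j=1}^{n}\ze_j^{1-n}$, so that
\[
\prod_{1\leq j<k\leq n}(\ze_k-\ze_j)(\ze^{-2}_k-\ze^{-2}_j) = \Big(\prod_{j=1}^{n}\ze_j^{1-n}\Big)\prod_{1\leq j<k\leq n}(\ze^{-1}_k-\ze^{-1}_j)(\ze^{2}_k-\ze^{2}_j).
\]
Inserting this into the definition \eqref{DD} of $\mathcal{D}_n[w(\ze)\ze^{-r}f(\ze)]$, the prefactor merges with $\prod_{j=1}^{n} w(\ze_j)\ze_j^{-r}f(\ze_j)$ to give $\prod_{j=1}^{n} w(\ze_j)\ze_j^{1-n-r}f(\ze_j)$, and comparison with the definition \eqref{EE} of $\mathcal{E}_n$ shows that the resulting integral is exactly $\mathcal{E}_n[w(\ze)\ze^{-(n+r-1)}f(\ze)]$. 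This is the first displayed identity; the second follows from it by setting $s=n+r-1$ (equivalently $r=s-n+1$) and reading off the exponent.

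I do not expect a genuine obstacle here — the content is purely bookkeeping. The only point requiring a moment's care is the exponent arithmetic $\ze_j^{-r}\cdot\ze_j^{1-n}=\ze_j^{-(n+r-1)}$, together with the observation that the measures $\dd\ze_j/(2\pi\ic\ze_j)$ and the domain $\T^n$ are untouched by the rewriting. Integrability of $f$ (on top of whatever hypotheses already make the integrals in \eqref{DD} and \eqref{EE} meaningful) is precisely what is needed so that both sides are well-defined convergent iterated integrals, so no new analytic ingredient is required. Indeed the special case $f\equiv 1$ recovers \eqref{E&D rel}, which was already established in exactly this way, so the general statement is a cosmetic extension of that computation.
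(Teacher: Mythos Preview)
Your proposal is correct and is exactly the approach the paper takes: the theorem is stated immediately after the $f\equiv 1$ computation with the phrase ``More generally we have the following result,'' and no separate proof is written out, precisely because the symmetric factor $\prod_j f(\ze_j)$ rides along passively through the Vandermonde identity $\prod_{j<k}(\ze_k-\ze_j)(\ze_k^{-2}-\ze_j^{-2})=\prod_j\ze_j^{1-n}\cdot\prod_{j<k}(\ze_k^{-1}-\ze_j^{-1})(\ze_k^{2}-\ze_j^{2})$.
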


\subsection{Multiple integral formulae for the bi-orthogonal polynomials}
It is well known that the bi-orthogonal polynomials in the Toeplitz system can be written as averages of characteristic polynomials over the unitary group,
see Eq.(2.16,18) of \cite{FW_2006}. In the following Theorem we give the analogous results for the $2j-k$ and $j-2k$ polynomials.
Another interpretation of such integrals is as rational modifications of the weight, known as Christoffel-Darboux-Uvarov transformations of the underlying 
bi-orthogonal system in the Toeplitz case, which was exhaustively treated in \cite{W_2009}.
\begin{theorem}\label{Thm Mult Int Polys}
We have the following multiple integral representations for $2j-k$ and $j-2k$  polynomials:

	\noindent
	\begin{minipage}{.5\linewidth}
		\begin{alignat}{2}
		&P_n(z;r) &&= \frac{1}{D^{(r)}_n} \mathcal{D}_n[w(\ze)\ze^{-r}(z-\ze)], \label{MultIntPolysP} \\
		& Q_n(z;r)  &&= \frac{1}{D^{(r)}_n} \mathcal{D}_n[w(\ze)\ze^{-r}(z-\ze^{-2})], \label{MultIntPolysQ}
		\end{alignat}	
	\end{minipage}	
	\begin{minipage}{.5\linewidth}
		\begin{alignat}{2}
		&R_n(z;s) &&= \frac{1}{E^{(s)}_n} \mathcal{E}_n[w(\ze)\ze^{-s}(z-\ze^2)], \label{MultIntPolysR} \\
		& S_n(z;s)  &&= \frac{1}{E^{(s)}_n} \mathcal{E}_n[w(\ze)\ze^{-s}(z-\ze^{-1})]. \label{MultIntPolysS}
		\end{alignat}	
	\end{minipage}
\end{theorem}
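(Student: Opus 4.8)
The plan is to follow the same strategy by which the $0$-point formulae \eqref{MultIntD}--\eqref{MultIntE} were obtained, the only new ingredient being the way the extra linear factor in the integrand (namely $z-\ze$ for $P$, $z-\ze^{-2}$ for $Q$, $z-\ze^2$ for $R$, $z-\ze^{-1}$ for $S$) is absorbed. I will carry out the argument for \eqref{MultIntPolysP} in detail; \eqref{MultIntPolysQ}, \eqref{MultIntPolysR}, \eqref{MultIntPolysS} follow by the same steps after the obvious substitutions, using \eqref{RSorth} in place of \eqref{PQorth} and \eqref{Dets from norms 2} in place of \eqref{Dets from norms} where appropriate. Throughout we use that $D^{(r)}_n\neq0$ (Theorem \ref{P and Q exist and are unique}), so that $P_n(\,\cdot\,;r),Q_n(\,\cdot\,;r)$ exist.

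As in the proof of \eqref{MultIntD}, since $P_k(\,\cdot\,;r)$ and $Q_k(\,\cdot\,;r)$ are monic of degree $k$, elementary column operations turn the relevant Vandermonde products into determinants of these polynomials; the key point is that the factor $\prod_{j}(z-\ze_j)$ is exactly what is produced by adjoining one extra row, evaluated at $z$, to the $\ze$-Vandermonde:
\[
\prod_{1\le j<k\le n}(\ze_k-\ze_j)\cdot\prod_{j=1}^{n}(z-\ze_j)=\det\big(P_{k-1}(y_j;r)\big)_{1\le j,k\le n+1},\qquad
\prod_{1\le j<k\le n}(\ze_k^{-2}-\ze_j^{-2})=\det\big(Q_{k-1}(\ze_j^{-2};r)\big)_{1\le j,k\le n},
\]
with $y_j:=\ze_j$ for $1\le j\le n$ and $y_{n+1}:=z$. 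Substituting both into $n!\,\mathcal{D}_n[w(\ze)\ze^{-r}(z-\ze)]$ and Laplace-expanding the $(n+1)\times(n+1)$ determinant along its last row I would obtain $\sum_{\ell=1}^{n+1}(-1)^{n+1+\ell}P_{\ell-1}(z;r)$ times the $n$-fold integral over $\T^n$ of $\det\big(P_{k-1}(\ze_j;r)\big)_{1\le j\le n,\,k\neq\ell}\,\det\big(Q_{k-1}(\ze_j^{-2};r)\big)_{1\le j,k\le n}$ against $\prod_{j}w(\ze_j)\ze_j^{-r}\,\dd\ze_j/(2\pi\ic\ze_j)$.

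The next step is the Andr\'eief (Heine) identity — already the engine of the proof of \eqref{MultIntD} — applied to each $\ell$-term: it collapses the $n$-fold integral to $n!$ times the $n\times n$ determinant with $(k',k)$-entry $\int_{\T}P_{k-1}(\ze;r)Q_{k'-1}(\ze^{-2};r)w(\ze)\ze^{-r}\,\dd\ze/(2\pi\ic\ze)$, the row index $k'$ running over $\{1,\dots,n\}$ and the column index $k$ over $\{1,\dots,n+1\}\setminus\{\ell\}$. By the bi-orthogonality \eqref{PQorth} this entry equals $h^{(r)}_{k'-1}\delta_{k,k'}$, so the determinant vanishes unless the column index set is $\{1,\dots,n\}$, i.e.\ unless $\ell=n+1$, in which case it equals $\prod_{k'=1}^{n}h^{(r)}_{k'-1}=D^{(r)}_n$ by \eqref{Dets from norms}. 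The surviving term carries the sign $(-1)^{2n+2}=1$, so $n!\,\mathcal{D}_n[w(\ze)\ze^{-r}(z-\ze)]=n!\,D^{(r)}_nP_n(z;r)$; dividing by $n!\,D^{(r)}_n$ gives \eqref{MultIntPolysP}. (As a consistency check, the coefficient of $z^n$ on the right of \eqref{MultIntPolysP} is $\mathcal{D}_n[w(\ze)\ze^{-r}]/D^{(r)}_n=1$ by \eqref{Dd}, matching the monic normalisation.)

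For \eqref{MultIntPolysQ} one adjoins the $z$-row to the $Q$-Vandermonde instead, writing $\prod_{1\le j<k\le n}(\ze_k^{-2}-\ze_j^{-2})\prod_{j}(z-\ze_j^{-2})=\det\big(Q_{k-1}(y_j;r)\big)_{1\le j,k\le n+1}$ with $y_j=\ze_j^{-2}$, $y_{n+1}=z$, keeping $\prod_{1\le j<k\le n}(\ze_k-\ze_j)=\det\big(P_{k-1}(\ze_j;r)\big)_{1\le j,k\le n}$ as the size-$n$ factor, and rerunning the Laplace/Andr\'eief/bi-orthogonality argument; \eqref{MultIntPolysR} and \eqref{MultIntPolysS} are identical after replacing $\mathcal{D}_n$ by $\mathcal{E}_n$, $(P,Q)$ by $(R,S)$, \eqref{PQorth} by \eqref{RSorth}, and \eqref{Dets from norms} by \eqref{Dets from norms 2}, the determinant to be enlarged always being the one whose variable ($\ze$, $\ze^{-2}$, $\ze^{2}$ or $\ze^{-1}$) is the one appearing in the linear factor. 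There is no analytic difficulty, since every integrand is $w$ times a Laurent polynomial; the main (and essentially only) obstacle is the bookkeeping — pairing each linear factor with the correct Vandermonde product so that the enlarged determinant is again a genuine Vandermonde with the $z$-row appended, and tracking the column index set in the Andr\'eief determinant so that the reduction to the single term $\ell=n+1$ is transparent.
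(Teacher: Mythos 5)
Your proposal is correct and follows essentially the same route as the paper's proof: absorb the linear factor by adjoining a row evaluated at $z$ to the appropriate Vandermonde, rewrite both Vandermondes as determinants of the monic bi-orthogonal polynomials, and reduce via the Andr\'eief mechanism and bi-orthogonality to the single surviving term $D^{(r)}_nP_n(z;r)$. The only (cosmetic) differences are that you place $z$ as the last variable and Laplace-expand along that row before integrating, whereas the paper labels it $\ze_0$ (picking up a compensating $(-1)^n$) and evaluates a single bordered determinant at the end.
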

\begin{proof}
We only prove the formula for $P_n$ as the other ones can be found similarly.
	\begin{equation}
	\begin{split}
	&	\mathcal{D}_n[w(\ze)\ze^{-r}(z-\ze)]  \\ & = \frac{1}{n!} \int_{\T} \frac{\dd \ze_1}{2 \pi \ic \ze_1} \cdots \int_{\T} \frac{\dd \ze_n}{2 \pi \ic \ze_n} \prod_{j=1}^{n}w(\ze_j) \ze^{-r}_j \prod_{j=1}^{n}(z-\ze_j)\prod_{1\leq j<k\leq n} (\ze_k-\ze_j)(\ze^{-2}_k-\ze^{-2}_j)  \\ & = 
	\frac{(-1)^n}{n!} \int_{\T} \frac{\dd \ze_1}{2 \pi \ic \ze_1} \cdots \int_{\T} \frac{\dd \ze_n}{2 \pi \ic \ze_n} \prod_{j=1}^{n}w(\ze_j) \ze^{-r}_j \prod_{0\leq j<k\leq n} (\ze_k-\ze_j) \prod_{1\leq j<k\leq n} (\ze^{-2}_k-\ze^{-2}_j),
	\end{split}
	\end{equation}
where we have denoted $z$ by $\ze_0$. Therefore
	\begin{equation}
	\begin{split}
	&	\mathcal{D}_n[w(\ze)\ze^{-r}(z-\ze)] \\ & = \frac{(-1)^n}{n!} \int_{\T} \frac{\dd \ze_1}{2 \pi \ic \ze_1} \cdots \int_{\T} \frac{\dd \ze_n}{2 \pi \ic \ze_n} \prod_{j=1}^{n}w(\ze_j) \ze^{-r}_j \underset{\substack{1 \leq j \leq n \\ 1 \leq k\leq n+1}}{\det} 
	\begin{pmatrix}
	P_{k-1}(\ze_0) \\ P_{k-1}(\ze_j)
	\end{pmatrix} \underset{\substack{1 \leq j \leq n \\ 1 \leq k\leq n}}{\det} \{Q_{k-1}(\ze^{-2}_j)\}  \\ & =
	(-1)^n \int_{\T} \frac{\dd \ze_1}{2 \pi \ic \ze_1} \cdots \int_{\T} \frac{\dd \ze_n}{2 \pi \ic \ze_n} \prod_{j=1}^{n}w(\ze_j) \ze^{-r}_j \underset{\substack{1 \leq j \leq n \\ 1 \leq k\leq n+1}}{\det} 
	\begin{pmatrix}
	P_{k-1}(\ze_0) \\ P_{k-1}(\ze_j)
	\end{pmatrix} \prod_{j=1}^{n} Q_{j-1}(\ze^{-2}_j)  \\
	& = (-1)^n \underset{\substack{1 \leq j \leq n \\ 1 \leq k\leq n+1}}{\det} 
	\begin{pmatrix}
	P_{k-1}(\ze_0) \\ \int_{\T} \frac{\dd \ze}{2 \pi \ic \ze} w(\ze) \ze^{-r} P_{k-1}(\ze) Q_{j-1}(\ze^{-2}) 
	\end{pmatrix} 
	 \\ & = (-1)^n 	\det 
	 \begin{pmatrix}
	P_0(z)  & \cdots & P_{n-1}(z)& P_n(z) \\
	h^{(r)}_0  & \cdots & 0 & 0 \\
	\vdots& \ddots & \vdots & \vdots  \\
	0  & \cdots & h^{(r)}_{n-1} & 0
	\end{pmatrix} = P_n(z;r) \prod_{j=0}^{n-1} h^{(r)}_j = D^{(r)}_n P_n(z;r).
	\end{split}
	\end{equation}
\end{proof}
\subsection{Multiple integral formulae for the reproducing kernels}
We will require the following important lemma, which is known as the "integration over successive variables", in a subsequent result.
This is the extension of the result in the Toeplitz case, which is easy to derive following the line of Proposition 5.1.2 of \cite{For_2010}.
\begin{lemma}[Gaudin's Theorem]\label{intofdetof-rep-ker}
	The integration over the final variable $\zeta_m$ of the $m\times m$ determinant constructed from reproducing kernel entries has the evaluation
	\begin{equation}\label{12}
	\int_{\T} \frac{\dd \ze_m}{2 \pi \ic \ze_m} w(\ze_m) \ze^{-r}_m \underset{\substack{1 \leq j \leq m \\ 1 \leq k\leq m}}{\det}\left[K_n(\ze^{-2}_k,\ze_j;r)\right] = (n-m+2) \underset{\substack{1 \leq j \leq m-1 \\ 1 \leq k\leq m-1}}{\det}\left[K_n(\ze^{-2}_k,\ze_j;r)\right],
	\end{equation}
	and 
	\begin{equation}\label{14}
		\int_{\T} \frac{\dd \ze_m}{2 \pi \ic \ze_m} w(\ze_m) \ze^{-s}_m \underset{\substack{1 \leq j \leq m \\ 1 \leq k\leq m}}{\det}\left[L_n(\ze^{-1}_k,\ze^2_j;s)\right] = (n-m+2) \underset{\substack{1 \leq j \leq m-1 \\ 1 \leq k\leq m-1}}{\det}\left[L_n(\ze^{-1}_k,\ze^2_j;s)\right].
	\end{equation}
\end{lemma}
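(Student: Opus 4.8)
The plan is to prove \eqref{12} by expanding the $m\times m$ determinant along its last row (or last column) and using the reproducing and projection properties \eqref{kernel_projection} and \eqref{RepKer1''}--\eqref{ReKer2''} of the kernel $K_n$, exactly as in the classical Gaudin-Dyson argument recalled in Proposition 5.1.2 of \cite{For_2010}. First I would write the Laplace expansion of $\det[K_n(\ze^{-2}_k,\ze_j;r)]_{1\le j,k\le m}$ along the column indexed by $k=m$ (the one carrying the variable $\ze_m$ only through the argument $\ze^{-2}_m$), obtaining a signed sum over $j$ of $K_n(\ze^{-2}_m,\ze_j;r)$ times the $(m-1)\times(m-1)$ minors. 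Integrating term by term against $\frac{\dd \ze_m}{2\pi\ic\ze_m}w(\ze_m)\ze^{-r}_m$, the only $\ze_m$-dependence lives either in the diagonal entry $K_n(\ze^{-2}_m,\ze_m;r)$ or in the off-diagonal entries $K_n(\ze^{-2}_m,\ze_j;r)$ appearing both in the expanded column and inside the minors.

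The two mechanisms that produce the result are: (i) integrating the diagonal entry alone gives the normalisation \eqref{kernel_norm}, namely $\int_{\T}\frac{\dd\ze_m}{2\pi\ic\ze_m}w(\ze_m)\ze^{-r}_m K_n(\ze^{-2}_m,\ze_m;r)=n+1$, and this multiplies the complementary $(m-1)\times(m-1)$ determinant; (ii) integrating a product $K_n(\ze^{-2}_m,\ze_j;r)K_n(\ze^{-2}_k,\ze_m;r)$ collapses, by the projection property \eqref{kernel_projection}, to a single $K_n(\ze^{-2}_k,\ze_j;r)$, which reconstructs $(m-1)$ copies of the smaller determinant (one for each way of pairing the removed row and column), each entering with a $-1$ relative sign from the determinantal bookkeeping. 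Summing $(n+1)$ from mechanism (i) and $-(m-1)$ from mechanism (ii) yields the factor $(n+1)-(m-1)=n-m+2$ in front of $\det[K_n(\ze^{-2}_k,\ze_j;r)]_{1\le j,k\le m-1}$. To keep signs honest it is cleanest to phrase this as the standard lemma: if a "reproducing kernel" $\mathcal{K}$ on a measure space satisfies $\int \mathcal{K}(x,x)\,\dd x = c$ and $\int \mathcal{K}(x,y)\mathcal{K}(y,z)\,\dd y = \mathcal{K}(x,z)$, then $\int \det[\mathcal{K}(x_i,x_j)]_{i,j\le m}\,\dd x_m = (c-m+1)\det[\mathcal{K}(x_i,x_j)]_{i,j\le m-1}$; here $\mathcal{K}(x,y)\leftrightarrow K_n(\ze^{-2},\ze';r)$ with the measure $\frac{\dd\ze}{2\pi\ic\ze}w(\ze)\ze^{-r}$, and $c=n+1$.

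The analogous identity \eqref{14} for the $j-2k$ system follows verbatim after replacing $K_n$ by $L_n$, $r$ by $s$, and using the $L_n$ versions of \eqref{kernel_norm} and \eqref{kernel_projection} (explicitly stated to hold ``with the obvious modifications'' right after \eqref{kernel_projection}); here the kernel variable substitution is $\ze\mapsto\ze^2$ in the first slot and $\ze\mapsto\ze^{-1}$ in the second, matching the arguments $K_n$-style in \eqref{RepKer1'' j-2k}--\eqref{ReKer2'' j-2k}.

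The main obstacle I anticipate is purely bookkeeping rather than conceptual: verifying that the asymmetric placement of the inversion (the argument $\ze^{-2}_k$ in the first slot versus $\ze_j$ in the second) is exactly compatible with the projection property \eqref{kernel_projection}, which is stated in the form $\int w(\ze)\ze^{-r}K_n(z_1,\ze;r)K_n(\ze^{-2},z_2;r)\frac{\dd\ze}{2\pi\ic\ze}=K_n(z_1,z_2;r)$. Reading the determinant entry $K_n(\ze^{-2}_k,\ze_j;r)$ with $x_i=\ze_i$ and kernel $\mathcal{K}(x_k,x_j):=K_n(x_k^{-2},x_j;r)$, one must check $\int \mathcal{K}(x_k,x_m)\mathcal{K}(x_m,x_j)\frac{\dd\ze_m}{2\pi\ic\ze_m}w(\ze_m)\ze_m^{-r}=\int K_n(\ze_k^{-2},\ze_m;r)K_n(\ze_m^{-2},\ze_j;r)\cdots = K_n(\ze_k^{-2},\ze_j;r)=\mathcal{K}(x_k,x_j)$, which is precisely \eqref{kernel_projection} with $z_1=\ze_k^{-2}$, $z_2=\ze_j$; and the diagonal $\mathcal{K}(x_m,x_m)=K_n(\ze_m^{-2},\ze_m;r)$ integrates to $n+1$ by \eqref{kernel_norm}. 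So the hypotheses of the abstract lemma are met on the nose, and once that matching is recorded the rest is the standard cofactor-expansion computation, which I would state but not belabour.
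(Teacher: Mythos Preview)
Your proposal is correct and follows essentially the same approach as the paper: Laplace expansion of the $m\times m$ determinant along the last row (the paper uses the last row rather than the last column, but this is immaterial), then applying the normalisation \eqref{kernel_norm} to the diagonal term and the projection relation \eqref{kernel_projection} to the off-diagonal terms, yielding $(n+1)-(m-1)=n-m+2$. Your explicit verification that the asymmetric argument structure $\mathcal{K}(x_k,x_j):=K_n(\ze_k^{-2},\ze_j;r)$ matches the hypotheses of the abstract Gaudin lemma is a worthwhile addition that the paper leaves implicit.
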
 
\begin{proof}
Let us use $j$ and $k$ respectively as indices of rows and columns. Expanding the determinant in the integrand along the last row gives
\begin{equation}
	\underset{\substack{1 \leq j \leq m \\ 1 \leq k\leq m}}{\det}\left[K_n(\ze^{-2}_k,\ze_j;r)\right] = \sum^{m-1}_{k=1} (-1)^{m+k} K_n(\ze^{-2}_k,\ze_m;r) \underset{\substack{1 \leq j\leq m-1 \\ 1 \leq \ell \leq m \\ \ell \neq k}}{\det} \left[ K_n(\ze^{-2}_{\ell},\ze_j;r) \right] + K_n(\ze^{-2}_m,\ze_m;r) \underset{\substack{1 \leq j \leq m-1 \\ 1 \leq k\leq m-1}}{\det} \left[K_n(\ze^{-2}_k,\ze_j;r)\right]
\end{equation}
\begin{equation}
	= \sum^{m-1}_{k=1} (-1)^{m+k}  \underset{\substack{1 \leq j\leq m-1 \\ 1 \leq \ell \leq m-1 \\ \ell \neq k}}{\det} \left[ K_n(\ze^{-2}_{\ell},\ze_j;r) ,K_n(\ze^{-2}_k,\ze_m;r) K_n(\ze^{-2}_{m},\ze_j;r)  \right] + K_n(\ze^{-2}_m,\ze_m;r) \underset{\substack{1 \leq j \leq m-1 \\ 1 \leq k\leq m-1}}{\det} \left[K_n(\ze^{-2}_k,\ze_j;r)\right].
\end{equation}
Recalling the normalisation \eqref{kernel_norm} and the projection relations \eqref{kernel_projection} for the reproducing kernel we deduce	
\begin{equation}
	\begin{split}
	\int_{\T} \frac{\dd \ze_m}{2 \pi \ic \ze_m} w(\ze_m) \ze^{-r}_m \underset{\substack{1 \leq j \leq m \\ 1 \leq k\leq m}}{\det}\left[K_n(\ze^{-2}_k,\ze_j;r)\right] & = \sum^{m-1}_{k=1} (-1)^{m+k}  \underset{\substack{1 \leq j\leq m-1 \\ 1 \leq \ell \leq m-1 \\ \ell \neq k}}{\det} \left[ K_n(\ze^{-2}_{\ell},\ze_j;r) ,K_n(\ze^{-2}_k,\ze_j;r)  \right] \\ & + (n+1) \underset{\substack{1 \leq j \leq m-1 \\ 1 \leq k\leq m-1}}{\det} \left[K_n(\ze^{-2}_k,\ze_j;r)\right],
	\end{split}
\end{equation}
By $m-k-1$ successive swaps of adjacent columns we have 
	\[ \underset{\substack{1 \leq j\leq m-1 \\ 1 \leq \ell \leq m-1 \\ \ell \neq k}}{\det} \left[ K_n(\ze^{-2}_{\ell},\ze_j;r) ,K_n(\ze^{-2}_k,\ze_j;r)  \right] = (-1)^{m-k-1} \underset{\substack{1 \leq j \leq m-1 \\ 1 \leq \ell \leq m-1}}{\det} \left[K_n(\ze^{-2}_\ell,\ze_j;r)\right].  \]
Equation \eqref{12} now follows by combining the last two equations. The proof of \eqref{14} is similar and we do not provide the details here.
\end{proof}

\begin{theorem}
The reproducing kernels for $2j-k$ and $j-2k$ systems  have the following multiple integral representations:
	\begin{equation}\label{MultIntRepKers}
	\begin{split}
	K_n(z_2,z_1;r) & = \frac{1}{D^{(r)}_{n+1}} \mathcal{D}_n[w(\ze)\ze^{-r}(z_1-\ze)(z_2-\ze^{-2})],
	\end{split}
	\end{equation}
and
	\begin{equation}\label{MultIntRepKers2}
		L_n(z_2,z_1;s) = \frac{1}{E^{(s)}_{n+1}} \mathcal{E}_n[w(\ze)\ze^{-s}(z_1-\ze^{2})(z_2-\ze^{-1})]
						= \left( z_1z_2 \right)^{n} K_n(z_{1}^{-1},z_{2}^{-1};s-n).
	\end{equation}
\end{theorem}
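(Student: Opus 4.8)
The plan is to mirror the proof of Theorem \ref{Thm Mult Int Polys} (the multiple integral formula for the polynomials) and upgrade the single factor $(z-\ze)$ to the product $(z_1-\ze)(z_2-\ze^{-2})$, thereby bordering the moment matrix in both a row and a column. First I would write out $\mathcal{D}_n[w(\ze)\ze^{-r}(z_1-\ze)(z_2-\ze^{-2})]$ explicitly and observe that $\prod_{j=1}^{n}(z_1-\ze_j)\prod_{1\le j<k\le n}(\ze_k-\ze_j)$ combines into $(-1)^n\prod_{0\le j<k\le n}(\ze_k-\ze_j)$ with $\ze_0:=z_1$, which as in the proof of Theorem \ref{Thm Mult Int Polys} becomes $(-1)^n\det[P_{k-1}(\ze_j)]_{0\le j\le n,\,1\le k\le n+1}$; symmetrically, $\prod_{j=1}^{n}(z_2-\ze_j^{-2})\prod_{1\le j<k\le n}(\ze_k^{-2}-\ze_j^{-2})$ becomes $(-1)^n\det[Q_{k-1}(\ze_j^{-2})]_{0\le j\le n,\,1\le k\le n+1}$ with $\ze_0^{-2}:=z_2$.

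Next I would apply the Andréief (Heine) identity to the double product of two $(n+1)\times(n+1)$ determinants integrated against $\prod w(\ze_j)\ze_j^{-r}$, being careful that the $j=0$ row carries no integration and no weight. The standard way to handle this is to note that integrating over $\ze_1,\dots,\ze_n$ turns $\frac{1}{n!}\det\cdot\det$ into a single $(n+1)\times(n+1)$ determinant whose $(0,k)$ entry is $P_{k-1}(z_1)$, whose $(0,n+1)$-column entry in the $0$-row is $Q_{n}(z_2)$ evaluated trivially\,— more precisely, expanding one determinant and collapsing the other via orthogonality yields a determinant of the block form
\begin{equation}
(z_1z_2)^{0}\cdot\det\begin{pmatrix} 0 & P_0(z_1;r) & \cdots & P_n(z_1;r) \\ Q_0(z_2;r) & h^{(r)}_0 & & \\ \vdots & & \ddots & \\ Q_n(z_2;r) & & & h^{(r)}_n \end{pmatrix},
\end{equation}
up to an overall sign bookkeeping coming from the two factors of $(-1)^n$. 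Laplace expansion of this bordered diagonal matrix along the first row and first column gives $-\sum_{j=0}^{n} Q_j(z_2;r)P_j(z_1;r)\prod_{\ell\ne j}h^{(r)}_\ell = -\big(\prod_{\ell=0}^n h^{(r)}_\ell\big)\sum_{j=0}^n \frac{Q_j(z_2;r)P_j(z_1;r)}{h^{(r)}_j}$, and since $\prod_{\ell=0}^n h^{(r)}_\ell = D^{(r)}_{n+1}$ by \eqref{Dets from norms} and the sum is exactly $K_n(z_2,z_1;r)$ by \eqref{RepKer3}, one divides by $D^{(r)}_{n+1}$ to obtain \eqref{MultIntRepKers}. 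The sign works out because two factors of $(-1)^n$ from the Vandermonde rearrangements cancel, leaving the single minus sign from the Laplace cofactor of the zero corner. The formula \eqref{MultIntRepKers2} for $L_n$ is obtained in exactly the same way from \eqref{EE}, \eqref{RSorth}, \eqref{H} and \eqref{RepKer3 j-2k}.

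For the second equality in \eqref{MultIntRepKers2}, namely $L_n(z_2,z_1;s)=(z_1z_2)^n K_n(z_1^{-1},z_2^{-1};s-n)$, I would substitute the just-proved multiple integral representation of $L_n$ and apply the change-of-variable identity relating $\mathcal{E}_n$ and $\mathcal{D}_n$: by the Theorem immediately preceding this one (in the form $\mathcal{E}_n[w(\ze)\ze^{-s}f(\ze)] = \mathcal{D}_n[w(\ze)\ze^{-s-1+n}f(\ze)]$ applied with $f(\ze)=(z_1-\ze^2)(z_2-\ze^{-1})$), together with $E^{(s)}_{n+1}=D^{(s+n)}_{n+1}$ from \eqref{E&D rel}, one rewrites $L_n$ as $\frac{1}{D^{(s+n)}_{n+1}}\mathcal{D}_n[w(\ze)\ze^{-(s+n)+n-1}(z_1-\ze^2)(z_2-\ze^{-1})]$; wait—tracking the offset carefully via \eqref{E&D rel} in the shifted form $E^{(s)}_{n}=D^{(s-n+1)}_{n}$ is the delicate bit. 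Then $(z_1-\ze^2)=-\ze^2(z_1^{-1}-\ze^{-2})\cdot z_1$ and $(z_2-\ze^{-1})=-\ze^{-1}(z_2^{-1}-\ze)\cdot z_2$, so the integrand factors produce $z_1z_2\,\ze\,(z_1^{-1}-\ze^{-2})(z_2^{-1}-\ze)$; absorbing the $\ze$ into the weight power and the leftover $z_1z_2$ against the $n$-fold product, one matches $\frac{1}{D^{(s-n)}_{n+1}}\mathcal{D}_n[w(\ze)\ze^{-(s-n)}(z_2^{-1}-\ze)(z_1^{-1}-\ze^{-2})]=K_n(z_1^{-1},z_2^{-1};s-n)$ after collecting the $(z_1z_2)^n$ prefactor. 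I expect the main obstacle to be precisely this offset/power bookkeeping — making sure every shift $ \ze\mapsto$ power-of-$\ze$ lands the offset at exactly $s-n$ and the prefactor at exactly $(z_1z_2)^n$ — rather than anything conceptually deep; the Andréief step and the Laplace expansion are routine once the reflection identities from Section \ref{Sec MultInt} are invoked.
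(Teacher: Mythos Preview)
Your proof of \eqref{MultIntRepKers} is correct but follows a genuinely different route from the paper. The paper multiplies the two $(n+1)\times(n+1)$ determinants together to form the single determinant $\det_{0\le j,k\le n}[K_n(\ze_k^{-2},\ze_j;r)]$ (after pulling out $\prod h^{(r)}_\ell=D^{(r)}_{n+1}$), and then invokes Gaudin's integration-over-successive-variables lemma (Lemma~\ref{intofdetof-rep-ker}) $n$ times to strip off $\ze_1,\dots,\ze_n$, leaving the single entry $K_n(z_2,z_1;r)$ multiplied by $n!$. Your approach instead applies the Andr\'eief identity to the product of the two $(n+1)\times(n+1)$ Vandermondes to obtain directly the bordered-diagonal determinant with corner $0$, borders $P_j(z_1;r)$ and $Q_j(z_2;r)$, and diagonal $h^{(r)}_j$, whose Laplace expansion is $-D^{(r)}_{n+1}K_n(z_2,z_1;r)$. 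This is essentially the computation the paper carries out earlier in the proof of \eqref{Rep Ker Master 2j-k}, transplanted to the multiple-integral setting. Your route is more self-contained --- it does not need Gaudin's lemma at all --- whereas the paper's route exhibits the kernel-determinant structure that underlies $n$-point correlation functions.

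For the second equality in \eqref{MultIntRepKers2} the paper gives no argument (``can be established similarly''), and your derivation via $\mathcal{E}_n[w(\ze)\ze^{-s}f(\ze)]=\mathcal{D}_n[w(\ze)\ze^{-s+n-1}f(\ze)]$, $E^{(s)}_{n+1}=D^{(s-n)}_{n+1}$, and the factorisations $(z_1-\ze^2)(z_2-\ze^{-1})=z_1z_2\,\ze\,(z_2^{-1}-\ze)(z_1^{-1}-\ze^{-2})$ is correct; the extra $\ze$ shifts the offset from $s-n+1$ to $s-n$ and the $n$-fold product of $z_1z_2$ gives exactly $(z_1z_2)^n$, so your worry about the bookkeeping is unfounded once you write it out cleanly.
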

\begin{proof} 
For simplicity of notation, in this proof we denote $P_n(z;r),Q_n(z;r)$ and $K_n(z_2,z_1;r)$ respectively by $P_n(z),Q_n(z)$ and $K_n(z_2,z_1)$. 
We have
	\begin{equation*}
	\begin{split}
	& \mathcal{D}_n[w(\ze)\ze^{-r}(\ze-z_1)(\ze^{-2}-z_2)]  \\ & = \frac{1}{n!} \int_{\T} \frac{\dd \ze_1}{2 \pi \ic \ze_1} \cdots \int_{\T} \frac{\dd \ze_n}{2 \pi \ic \ze_n} \prod_{j=1}^{n}w(\ze_j) \ze^{-r}_j \prod_{j=1}^{n}(\ze_j-z_1)(\ze_j^{-2}-z_2)\prod_{1\leq j<k\leq n} (\ze_k-\ze_j)(\ze^{-2}_k-\ze^{-2}_j)  \\ & = \frac{1}{n!} \int_{\T} \frac{\dd \ze_1}{2 \pi \ic \ze_1} \cdots \int_{\T} \frac{\dd \ze_n}{2 \pi \ic \ze_n} \prod_{j=1}^{n}w(\ze_j) \ze^{-r}_j \prod_{0\leq j<k\leq n} (\ze_k-\ze_j)(\ze^{-2}_k-\ze^{-2}_j),
	\end{split}
	\end{equation*}
where we have temporarily denoted $z_1$ and $z_2$ respectively by $\ze_0$ and $\ze^{-2}_0$. Therefore
	\begin{equation*}
	\begin{split}
	& \mathcal{D}_n[w(\ze)\ze^{-r}(\ze-z_1)(\ze^{-2}-z_2)]  \\ & = \frac{1}{n!} \int_{\T} \frac{\dd \ze_1}{2 \pi \ic \ze_1} \cdots \int_{\T} \frac{\dd \ze_n}{2 \pi \ic \ze_n} \prod_{j=1}^{n}w(\ze_j) \ze^{-r}_j \underset{\substack{1 \leq j \leq n \\ 1 \leq k\leq n+1}}{\det} \begin{pmatrix}
	P_{k-1}(\ze_0) \\ P_{k-1}(\ze_j)
	\end{pmatrix} \underset{\substack{1 \leq j \leq n \\ 1 \leq k\leq n+1}}{\det} \begin{pmatrix}
	Q_{k-1}(\ze^{-2}_0) \\ Q_{k-1}(\ze^{-2}_j)
	\end{pmatrix}  \\ & = \frac{1}{n!} \int_{\T} \frac{\dd \ze_1}{2 \pi \ic \ze_1} \cdots \int_{\T} \frac{\dd \ze_n}{2 \pi \ic \ze_n} \prod_{j=1}^{n}w(\ze_j) \ze^{-r}_j \underset{\substack{1 \leq j \leq n \\ 1 \leq k\leq n+1}}{\det} \begin{pmatrix}
	P_{k-1}(\ze_0) \\ P_{k-1}(\ze_j)
	\end{pmatrix} \underset{\substack{1 \leq j \leq n \\ 1 \leq k\leq n+1}}{\det} \begin{pmatrix}
	Q_{k-1}(\ze^{-2}_0) \\ Q_{k-1}(\ze^{-2}_j)
	\end{pmatrix}^T  \\ & =
	\frac{D^{(r)}_{n+1}}{n!} \int_{\T} \frac{\dd \ze_1}{2 \pi \ic \ze_1} \cdots \int_{\T} \frac{\dd \ze_n}{2 \pi \ic \ze_n} \prod_{j=1}^{n}w(\ze_j) \ze^{-r}_j 
	\det \begin{pmatrix}
	P_0(\ze_0) & \cdots & P_n(\ze_0) \\
	\vdots & \vdots & \vdots \\
	P_0(\ze_n) & \cdots & P_n(\ze_n) \\
	\end{pmatrix}	 	 \det \begin{pmatrix}
	\frac{Q_0(\ze^{-2}_0)}{h^{(r)}_0} & \cdots & \frac{Q_0(\ze^{-2}_n)}{h^{(r)}_0}  \\
	\vdots & \vdots & \vdots \\
	\frac{Q_n(\ze^{-2}_0)}{h^{(r)}_n}  & \cdots & \frac{Q_n(\ze^{-2}_n)}{h^{(r)}_n} \\
	\end{pmatrix} \\ & =
	\frac{D^{(r)}_{n+1}}{n!} \int_{\T} \frac{\dd \ze_1}{2 \pi \ic \ze_1} \cdots \int_{\T} \frac{\dd \ze_n}{2 \pi \ic \ze_n} \prod_{j=1}^{n}w(\ze_j) \ze^{-r}_j \underset{\substack{1 \leq j \leq n \\ 1 \leq k\leq n}}{\det} \begin{pmatrix}
	\di \sum_{\nu=0}^{n} \frac{P_{\nu}(\ze_0) Q_{\nu}(\ze^{-2}_0)}{h^{(r)}_{\nu}} &  \di 	\sum_{\nu=0}^{n} \frac{P_{\nu}(\ze_0) Q_{\nu}(\ze^{-2}_k)}{h^{(r)}_{\nu}} \\ \di  \sum_{\nu=0}^{n} \frac{P_{\nu}(\ze_j) Q_{\nu}(\ze^{-2}_0)}{h^{(r)}_{\nu}} & \di  \sum_{\nu=0}^{n} \frac{P_{\nu}(\ze_j) Q_{\nu}(\ze^{-2}_k)}{h^{(r)}_{\nu}}
	\end{pmatrix}   \\ & = 	\frac{D^{(r)}_{n+1}}{n!} \int_{\T} \frac{\dd \ze_1}{2 \pi \ic \ze_1} \cdots \int_{\T} \frac{\dd \ze_n}{2 \pi \ic \ze_n} \prod_{j=1}^{n}w(\ze_j) \ze^{-r}_j \underset{\substack{1 \leq j \leq n \\ 1 \leq k\leq n}}{\det} \begin{pmatrix}
	K_n(\ze^{-2}_0,\ze_0) &  K_n(\ze^{-2}_k,\ze_0) \\ K_n(\ze^{-2}_0,\ze_j) & K_n(\ze^{-2}_k,\ze_j)
	\end{pmatrix}.
	\end{split}
	\end{equation*}
Now we apply Theorem \ref{intofdetof-rep-ker} :
	\begin{equation}
	\begin{split}
	& \mathcal{D}_n[w(\ze)\ze^{-r}(\ze-z_1)(\ze^{-2}-z_2)] = (n+2-(n+1)) \\ &  \times 	\frac{D^{(r)}_{n+1}}{n!} \int_{\T} \frac{\dd \ze_1}{2 \pi \ic \ze_1} \cdots \int_{\T} \frac{\dd \ze_{n-1}}{2 \pi \ic \ze_{n-1}} \prod_{j=1}^{n-1}w(\ze_j) \ze^{-r}_j \underset{\substack{1 \leq j \leq n-1 \\ 1 \leq k\leq n-1}}{\det} \begin{pmatrix}
	K_n(\ze^{-2}_0,\ze_0) &  K_n(\ze^{-2}_k,\ze_0) \\ K_n(\ze^{-2}_0,\ze_j) & K_n(\ze^{-2}_k,\ze_j)
	\end{pmatrix}.
	\end{split}
	\end{equation}
Repeating this $n-1$ times yields
	\begin{equation}
	\mathcal{D}_n[w(\ze)\ze^{-r}(\ze-z_1)(\ze^{-2}-z_2)] = \prod_{j=2}^{n+1} (n+2-j) \frac{D^{(r)}_{n+1}}{n!} 
	K_n(\ze^{-2}_0,\ze_0),
	\end{equation} and thus $	\mathcal{D}_n[w(\ze)\ze^{-r}(\ze-z_1)(\ze^{-2}-z_2)] = D^{(r)}_{n+1}
	K_n(z_2,z_1)$. The equation \eqref{MultIntRepKers2} can be established similarly and we do not provide the details here.
\end{proof}

\begin{remark} \normalfont
One can also see the consistency of multiple integral formulae for the reproducing kernels with the definitions \eqref{RepKer3} and \eqref{RepKer3 j-2k} by considering the leading asymptotic behaviour as $z$ or $\mathcal{z}$ tend to infinity. 
For example as $z \to \infty$ we have
	\[K_{n}(z,\mathcal{z};r) := \sum_{j=0}^{n} \frac{1}{h^{(r)}_{j}}Q_{j}(z;r)P_{j}(\mathcal{z};r) = \frac{z^n}{h^{(r)}_{n}}P_{j}(\mathcal{z};r) \left(1+\mathcal{O}\left(\frac{1}{z}\right)\right), \qquad z \to \infty, \]
and, expectedly, from the right hand side of \eqref{MultIntRepKers} we find the same asymptotic behaviour:
	\[ \frac{1}{D^{(r)}_{n+1}} \mathcal{D}_n[w(\ze)\ze^{-r}(\mathcal{z}-\ze)(z-\ze^{-2})] = \frac{1}{D^{(r)}_{n+1}} \mathcal{D}_n[w(\ze)\ze^{-r}(\mathcal{z}-\ze)z]\left(1+\mathcal{O}\left(\frac{1}{z}\right)\right)   \]
	\[ \hspace{8mm} = \frac{z^n}{h^{(r)}_nD^{(r)}_{n}} \mathcal{D}_n[w(\ze)\ze^{-r}(\mathcal{z}-\ze)]\left(1+\mathcal{O}\left(\frac{1}{z}\right)\right) = \frac{z^n}{h^{(r)}_{n}}P_{j}(\mathcal{z};r) \left(1+\mathcal{O}\left(\frac{1}{z}\right)\right), \qquad z \to \infty,  \]
by \eqref{MultIntPolysP}.
Similar consistency checks can be done for $K_{n}(z,\mathcal{z};r)$ as $\mathcal{z} \to \infty$, 
and also for $L_{n}(z,\mathcal{z};r)$ as $z \to \infty$ and separately as  $\mathcal{z} \to \infty$.
\end{remark}

\begin{theorem}\label{Thm6.7}
We have	
	\begin{alignat}{2}
		&P_{n}(z;r) &&= P_{n}(z;r-2) + \frac{h^{(r-2)}_n}{Q_n(0;r-2)} \sum_{\ell=0}^{n-1} \frac{Q_{\ell}(0;r-2)}{h^{(r-2)}_{\ell}} P_{\ell}(z;r-2), \label{Pr+2 Pls} \\
		&Q_{n}(z;r) &&= Q_{n}(z;r+1)+ \frac{h^{(r+1)}_n}{P_n(0;r+1)} \sum_{\ell=0}^{n-1} \frac{P_{\ell}(0;r+1)}{h^{(r+1)}_{\ell}}Q_{\ell}(z;r+1), \label{Qr-1 Qls} \\
		&R_{n}(z;s) &&= R_{n}(z;s-1)+\frac{g^{(s-1)}_n}{S_n(0;s-1)}\sum_{\ell=0}^{n-1} \frac{S_{\ell}(0;s-1)}{g^{(s-1)}_{\ell}} R_{\ell}(z;s-1), \label{Rs+1 Rls} \\
		&S_{n}(z;s) &&=S_{n}(z;s+2)+\frac{g^{(s+2)}_n}{R_n(0;s+2)} \sum_{\ell=0}^{n-1} \frac{R_{\ell}(0;s+2)}{g^{(s+2)}_{\ell}} S_{\ell}(z;s+2). \label{Ss-2 Sls}
	\end{alignat}
\end{theorem}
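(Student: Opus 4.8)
The plan is to establish all four identities by a single projection argument; I will carry out \eqref{Pr+2 Pls} in detail and then indicate the purely notational changes needed for the other three. Since $\{P_\ell(z;r-2)\}_{\ell=0}^{n}$ is a family of monic polynomials of degrees $0,1,\dots,n$, it is a basis of the space of polynomials of degree $\le n$; as $P_n(z;r)$ is itself monic of degree $n$, the difference $P_n(z;r)-P_n(z;r-2)$ has degree $\le n-1$ and hence
\begin{equation}
P_n(z;r) = \sum_{\ell=0}^{n} c_\ell\, P_\ell(z;r-2), \qquad c_n=1,
\end{equation}
for unique constants $c_\ell$. To extract them I would pair against the dual family $\{Q_\ell(\cdot\,;r-2)\}$: multiplying by $Q_\ell(\ze^{-2};r-2)\,\ze^{-(r-2)}w(\ze)$, integrating over $\T$, and using the bi-orthogonality \eqref{PQorth} at offset $r-2$ on the right-hand side gives
\begin{equation}
c_\ell\, h^{(r-2)}_\ell = \int_{\T} P_n(\ze;r)\, Q_\ell(\ze^{-2};r-2)\,\ze^{-(r-2)}\, w(\ze)\,\frac{\dd\ze}{2\pi\ic\ze}, \qquad 0\le \ell\le n .
\end{equation}

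Next I would expand $Q_\ell(\ze^{-2};r-2)=\sum_{k=0}^{\ell}\mathcal{q}^{(r-2)}_{\ell,k}\ze^{-2k}$, where $\mathcal{q}^{(r-2)}_{\ell,0}=Q_\ell(0;r-2)$; the $k$-th term contributes $\mathcal{q}^{(r-2)}_{\ell,k}\int_{\T}P_n(\ze;r)\ze^{-2(k-1)-r}w(\ze)\,\tfrac{\dd\ze}{2\pi\ic\ze}$. For $k=1,\dots,\ell$ the index $k-1$ runs over $\{0,\dots,\ell-1\}\subseteq\{0,\dots,n-1\}$, so these integrals vanish by \eqref{OP1}, leaving only the $k=0$ term:
\begin{equation}
c_\ell\, h^{(r-2)}_\ell = Q_\ell(0;r-2)\,\tau, \qquad \tau:=\int_{\T} P_n(\ze;r)\,\ze^{2-r}\, w(\ze)\,\frac{\dd\ze}{2\pi\ic\ze},
\end{equation}
valid for every $0\le\ell\le n$. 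Taking $\ell=n$ with $c_n=1$ pins down the otherwise undetermined moment lying outside the orthogonality range, $\tau=h^{(r-2)}_n/Q_n(0;r-2)$, which is finite and nonzero under the standing non-degeneracy hypotheses since $Q_n(0;r-2)=(-1)^nD^{(r)}_n/D^{(r-2)}_n$ by \eqref{Qn0r1}. Substituting back gives $c_\ell=\frac{h^{(r-2)}_n}{Q_n(0;r-2)}\cdot\frac{Q_\ell(0;r-2)}{h^{(r-2)}_\ell}$, which is exactly \eqref{Pr+2 Pls}.

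The remaining identities follow line for line with the natural replacements. For \eqref{Qr-1 Qls} I would expand $Q_n(z;r)$ in $\{Q_\ell(z;r+1)\}$, pair $Q_n(\ze^{-2};r)$ against $P_\ell(\ze;r+1)\ze^{-(r+1)}w(\ze)$, expand $P_\ell(\ze;r+1)$ in powers of $\ze$, and use \eqref{OP2}; for \eqref{Rs+1 Rls} expand $R_n(z;s)$ in $\{R_\ell(z;s-1)\}$, pair $R_n(\ze^{2};s)$ against $S_\ell(\ze^{-1};s-1)\ze^{-(s-1)}w(\ze)$, expand $S_\ell(\ze^{-1};s-1)$ in powers of $\ze^{-1}$, and use \eqref{OP1 R}; for \eqref{Ss-2 Sls} expand $S_n(z;s)$ in $\{S_\ell(z;s+2)\}$, pair $S_n(\ze^{-1};s)$ against $R_\ell(\ze^{2};s+2)\ze^{-(s+2)}w(\ze)$, expand $R_\ell(\ze^{2};s+2)$ in powers of $\ze^{2}$, and use \eqref{OP2 S}. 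In each case the single free boundary moment is fixed by the degree-$n$ term precisely as above, producing the stated tail prefactors $P_n(0;r+1)$, $S_n(0;s-1)$, $R_n(0;s+2)$.

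The one point that needs genuine care — and the reason the precise offset shifts ($r\mapsto r-2$ for $P$ and $s\mapsto s+2$ for $S$, versus $r\mapsto r+1$ for $Q$ and $s\mapsto s-1$ for $R$) appear in the theorem — is the annihilation step: the shift must be calibrated so that, after rewriting the monomials produced by the expanded dual polynomial in the lattice adapted to the original offset, all of them land inside the zero-range $\{0,\dots,n-1\}$ of the orthogonality relations, with exactly one monomial escaping (at index $-1$) and hence producing the polynomial tail times a single undetermined constant. A shift in the wrong direction would either strike the diagonal normalisation term $h^{(\cdot)}_n$ (resp. $g^{(\cdot)}_n$), or leave more than one undetermined coefficient, and the clean closed form would be lost. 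Beyond getting this direction right, the argument is routine linear algebra against the bi-orthogonal dual system.
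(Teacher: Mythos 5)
Your proposal is correct, and it reaches the result by a genuinely different route from the paper. The paper evaluates the reproducing kernel at a special point: from the definition \eqref{RepKer3}, $K_n(0,z;r)=\sum_{\ell=0}^{n}\frac{Q_\ell(0;r)}{h^{(r)}_\ell}P_\ell(z;r)$, while the multiple-integral representation \eqref{MultIntRepKers} together with \eqref{MultIntPolysP} gives $K_n(0,z;r)=(-1)^n\frac{D^{(r+2)}_n}{D^{(r)}_{n+1}}P_n(z;r+2)=\frac{Q_n(0;r)}{h^{(r)}_n}P_n(z;r+2)$ via \eqref{Qn0r1}; equating and shifting the offset yields \eqref{Pr+2 Pls}, and the other three identities follow from $K_n(z,0;r)$, $L_n(0,z;s)$, $L_n(z,0;s)$. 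You instead expand $P_n(z;r)$ directly in the monic basis $\{P_\ell(z;r-2)\}$, extract the coefficients by pairing against the dual family with the measure at offset $r-2$, and observe that after expanding the dual polynomial all moments fall in the annihilation range $\{0,\dots,n-1\}$ of \eqref{OP1} except the single index $-1$, whose value $\tau$ is then pinned down by monicity at $\ell=n$. Your computation is sound (I checked the offset bookkeeping in all four cases, and your $\tau=h^{(r-2)}_n/Q_n(0;r-2)$ agrees with the evaluation $\int_{\T}P_n(\ze;r)\ze^{2-r}w(\ze)\frac{\dd\ze}{2\pi\ic\ze}=(-1)^nD^{(r-2)}_{n+1}/D^{(r)}_n$ that the paper derives elsewhere), and the required non-vanishing of $Q_n(0;r-2)$ reduces via \eqref{Qn0r1} to $D^{(r)}_n\neq 0$, which is already needed for $P_n(z;r)$ to exist. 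What each approach buys: yours is more elementary and self-contained --- it uses only the orthogonality relations and uniqueness, not the multiple-integral machinery of \S\ref{Sec MultInt} --- and it makes transparent why the shift directions in the four identities are the ones for which exactly one moment escapes the annihilation range; the paper's argument is shorter once the kernel representations are in hand and records the independently useful evaluations $K_n(0,z;r)\propto P_n(z;r+2)$ and $K_n(z,0;r)\propto Q_n(z;r-1)$, which are reused in Corollary \ref{Cor661}.
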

\begin{proof}
These identities can be proven if one considers special arguments for the reproducing kernel. Indeed, to prove \eqref{Pr+2 Pls} we consider
	\begin{equation}\label{K0zr}
		K_n(0,z;r) = \sum_{\ell=0}^{n} \frac{1}{h^{(r)}_{\ell}}Q_{\ell}(0;r)P_{\ell}(z;r),
	\end{equation} 
by \eqref{RepKer3}.	On the other hand from \eqref{MultIntRepKers} we have
	\begin{equation}\label{K0zrg}
			K_n(0,z;r) = \frac{(-1)^n}{D^{(r)}_{n+1}} \mathcal{D}_n[w(\ze)\ze^{-r-2}(z-\ze)] =  (-1)^n\frac{D^{(r+2)}_n}{D^{(r)}_{n+1}} P_n(z;r+2),
	\end{equation}
where the last equality follows from \eqref{MultIntPolysP}. From \eqref{Qn0r1} we have  
	\begin{equation}\label{Qn0r/hnr}
		\frac{Q_n(0;r)}{h^{(r)}_n} = (-1)^n\frac{D^{(r+2)}_n}{D^{(r)}_{n+1}}.
	\end{equation}
Now \eqref{Pr+2 Pls} follows from \eqref{K0zr}, \eqref{K0zrg}, and \eqref{Qn0r/hnr}. The identity \eqref{Qr-1 Qls} can be proven by considering: 
		\begin{equation}\label{K0zr2}
		K_n(z,0;r) = \sum_{\ell=0}^{n} \frac{1}{h^{(r)}_{\ell}}Q_{\ell}(z;r)P_{\ell}(0;r),
		\end{equation}
and 
		\begin{equation}\label{K0zrg2}
			K_n(z,0;r) = \frac{(-1)^n}{D^{(r)}_{n+1}} \mathcal{D}_n[w(\ze)\ze^{-r+1}(z-\ze^{-2})]=(-1)^n\frac{D^{(r-1)}_n}{D^{(r)}_{n+1}} Q_n(z;r-1),
		\end{equation}
by \eqref{MultIntRepKers} and \eqref{MultIntPolysQ}. Equation \eqref{Qr-1 Qls} now follows from \eqref{K0zr2}, \eqref{K0zrg2} and 
		\begin{equation}\label{Pn0r/hnr}
		\frac{P_n(0;r)}{h^{(r)}_n} = (-1)^n\frac{D^{(r-1)}_n}{D^{(r)}_{n+1}},
		\end{equation}
		which is obtained by \eqref{Pn0r1}.
		\eqref{Rs+1 Rls} and \eqref{Ss-2 Sls} can be proven similarly, respectively by considering $ L_n(0,z;s)$ and $L_n(z,0;s)$ and employing
		\begin{equation}\label{Rn0s/Hns}
			\frac{S_n(0;s)}{g^{(s)}_n} = (-1)^n\frac{E^{(s+1)}_n}{E^{(s)}_{n+1}}, \qandq \frac{R_n(0;s)}{g^{(s)}_n} = (-1)^n\frac{E^{(s-2)}_n}{E^{(s)}_{n+1}},
		\end{equation}
		which are consequences of \eqref{Rn0s1} and \eqref{Sn0s1} respectively.
\end{proof}
\begin{corollary}\label{Cor661}
The following mixed recurrence relations hold

\noindent\begin{minipage}{.5\linewidth}
	\begin{alignat}{2}
	&P_{n}(z;r) &&=P_{n}(z;r+2) + \frac{h^{(r)}_{n}}{h^{(r+2)}_{n-1}}
	P_{n-1}(z;r+2), \label{Pr Pr+2s} \\ 
	&Q_{n}(z;r) &&=Q_{n}(z;r-1) + \frac{h^{(r)}_{n}}{h^{(r-1)}_{n-1}}
	Q_{n-1}(z;r-1) , \label{Qr Qr-1s}
		\end{alignat}	
	\end{minipage}	
	\begin{minipage}{.5\linewidth}
		\begin{alignat}{2}
		&R_{n}(z;s) &&=R_{n}(z;s+1) + \frac{g^{(s)}_{n}}{g^{(s+1)}_{n-1}}
		R_{n-1}(z;s+1), \label{Rs Rs+1s} \\
		&S_{n}(z;s) &&=S_{n}(z;s-2) + \frac{g^{(s)}_{n}}{g^{(s-2)}_{n-1}}
		S_{n-1}(z;s-2). \label{Ss Ss-2s}
		\end{alignat}	
	\end{minipage}	
\end{corollary}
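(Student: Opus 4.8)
The plan is to derive each of the four identities from the corresponding identity of Theorem~\ref{Thm6.7} by a single differencing step: subtracting the instances of that theorem at levels $n$ and $n-1$ collapses the sum on its right-hand side to one term, and the coefficient that then remains is rewritten using the tail--product formulas of Lemma~\ref{polytails-reccoeff} together with the defining relations \eqref{delta}, \eqref{beta}, \eqref{gamma}, \eqref{kappa} of the recurrence coefficients. I will spell out the $2j-k$ case \eqref{Pr Pr+2s}; the remaining three go through verbatim.

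First I would apply \eqref{Pr+2 Pls} with the shift $r\mapsto r+2$, both at level $n$ and at level $n-1$, and abbreviate $a_\ell:=Q_\ell(0;r)/h^{(r)}_\ell$ (so that $h^{(r)}_n/Q_n(0;r)=1/a_n$). Clearing the prefactor turns these two instances into
\begin{align*}
a_n\bigl[P_n(z;r+2)-P_n(z;r)\bigr] &=\sum_{\ell=0}^{n-1}a_\ell P_\ell(z;r),\\
a_{n-1}\bigl[P_{n-1}(z;r+2)-P_{n-1}(z;r)\bigr] &=\sum_{\ell=0}^{n-2}a_\ell P_\ell(z;r).
\end{align*}
Subtracting the second from the first, the two sums telescope to the single term $a_{n-1}P_{n-1}(z;r)$, and after cancellation one is left with $a_n\bigl[P_n(z;r+2)-P_n(z;r)\bigr]=a_{n-1}P_{n-1}(z;r+2)$, i.e. $P_n(z;r)=P_n(z;r+2)-(a_{n-1}/a_n)P_{n-1}(z;r+2)$. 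It then remains to evaluate $a_{n-1}/a_n=\bigl(Q_{n-1}(0;r)/Q_n(0;r)\bigr)\bigl(h^{(r)}_n/h^{(r)}_{n-1}\bigr)$: by \eqref{Qn0r} the ratio of consecutive tails is $1/\be^{(r)}_{n-1}$, and \eqref{beta} gives $\be^{(r)}_{n-1}=-h^{(r+2)}_{n-1}/h^{(r)}_{n-1}$, so that $a_{n-1}/a_n=-h^{(r)}_n/h^{(r+2)}_{n-1}$, and \eqref{Pr Pr+2s} follows.

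The other three identities are obtained by exactly the same manipulation: \eqref{Qr Qr-1s} from \eqref{Qr-1 Qls} with $r\mapsto r-1$, now using \eqref{Pn0r} and \eqref{delta}; \eqref{Rs Rs+1s} from \eqref{Rs+1 Rls} with $s\mapsto s+1$, using \eqref{Sn0s} and \eqref{gamma}; and \eqref{Ss Ss-2s} from \eqref{Ss-2 Sls} with $s\mapsto s-2$, using \eqref{Rn0s} and \eqref{kappa}. In each case the telescoped coefficient is a ratio of consecutive tails times a ratio of consecutive norms, and the defining relation of the appropriate recurrence coefficient turns this into the norm ratio displayed in the statement; alternatively one may feed in the one-step tail recurrences \eqref{Pn+10rPn0r}--\eqref{Sn+10sSn0s} directly.

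The computation is routine, so there is no serious obstacle; the only thing that genuinely needs care is the bookkeeping of the offset shifts and of the signs entering through \eqref{delta}, \eqref{beta}, \eqref{gamma}, \eqref{kappa}. A shorter variant avoids Theorem~\ref{Thm6.7} altogether: recognise the partial sum in each of \eqref{Pr+2 Pls}--\eqref{Ss-2 Sls} as a truncated reproducing kernel ($K_{n-1}(0,z;\cdot)$, $K_{n-1}(z,0;\cdot)$, $L_{n-1}(0,z;\cdot)$, $L_{n-1}(z,0;\cdot)$, the dot denoting the matching offset), use the one-term increment $K_n-K_{n-1}=\tfrac{1}{h^{(\cdot)}_n}Q_n(0;\cdot)P_n(\cdot)$ that is immediate from \eqref{RepKer3}, and substitute the closed forms \eqref{K0zrg} (and their $L$/$j-2k$ analogues) together with \eqref{h} and \eqref{Qn0r1}; the determinant ratios again collapse to the stated coefficients.
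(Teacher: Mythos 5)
Your proposal is correct and follows essentially the same route as the paper: both write the relevant identity of Theorem~\ref{Thm6.7} with the appropriate offset shift at levels $n$ and $n-1$, subtract so the sum telescopes to one term, and then simplify the remaining coefficient. The only cosmetic difference is that you evaluate $a_{n-1}/a_n$ via the tail products \eqref{Qn0r}, \eqref{Pn0r}, etc.\ and the definitions \eqref{beta}, \eqref{delta}, \eqref{gamma}, \eqref{kappa}, whereas the paper uses the determinant-ratio identities such as \eqref{Qn0r/hnr} together with \eqref{h}; both give the stated norm ratios.
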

\begin{proof} 
Writing \eqref{Pr+2 Pls} with the replacement $r \mapsto r+2$ for $n$ and $n-1$ and subtracting gives 
		\[\frac{Q_n(0;r)}{h^{(r)}_n}P_{n}(z;r+2) - \frac{Q_{n-1}(0;r)}{h^{(r)}_{n-1}}P_{n-1}(z;r+2)  = \frac{1}{h^{(r)}_{n}}Q_{n}(0;r) P_{n}(z;r),\] 
or equivalently
	\[P_{n}(z;r)=P_{n}(z;r+2) - \frac{h^{(r)}_{n}}{Q_{n}(0;r)}
	\frac{Q_{n-1}(0;r)}{h^{(r)}_{n-1}}P_{n-1}(z;r+2). \]
Now \eqref{Pr Pr+2s} immediately follows by recalling \eqref{Qn0r/hnr} and \eqref{h}. 
The other three identities can be shown in a similar fashion.
\end{proof}

For fixed offset values $\hat{r}, \hat{s} \in \Z$, let us consider the following $2j-k$ and $j-2k$ polynomial bases:
\begin{equation}\label{PolyCollections}
\mathscr{P}^{(\hat{r})}_n:=\{P_m(z;\hat{r})\}^{n}_{m=0}, \qquad 	\mathscr{Q}^{(\hat{r})}_n:=\{Q_m(z;\hat{r})\}^{n}_{m=0}, \qquad	\mathscr{R}^{(\hat{s})}_n:=\{R_m(z;\hat{s})\}^{n}_{m=0}, \qquad	\mathscr{S}^{(\hat{s})}_n:=\{S_m(z;\hat{s})\}^{n}_{m=0}.
\end{equation}
We obviously assume that the corresponding determinants do not vanish so that the polynomials exist (and are unique, see Theorems \ref{P and Q exist and are unique} and \ref{R and S exist and are unique}). The following theorem characterizes the number of polynomials needed to represent $P_n(z,r), \ Q_n(z,r), \ R_n(z,s)$ and $S_n(z,s)$ respectively in terms of polynomials in the bases $\mathscr{P}^{(\hat{r})}_n$, $\mathscr{Q}^{(\hat{r})}_n$,  $\mathscr{R}^{(\hat{s})}_n$, and $\mathscr{S}^{(\hat{s})}_n$ for various values of $\hat{r}$ and $\hat{s}$.
\begin{theorem}
Assume the non-vanishing requirements of Theorems \ref{P and Q exist and are unique} and \ref{R and S exist and are unique} so that the polynomial bases $\mathscr{P}^{(\hat{r})}_n$, $\mathscr{Q}^{(\hat{r})}_n$, $\mathscr{R}^{(\hat{s})}_n$, and $\mathscr{S}^{(\hat{s})}_n$ are well-defined as in \eqref{PolyCollections} and let $m \in \N$. Then 
	\begin{itemize}
		\item[1.] If it is desired to express $P_n(z;r)$ as a linear combination of polynomials in the basis $\mathscr{P}^{(r-2m)}_n$, then one needs to use \textit{all $n+1$ polynomials} in $\mathscr{P}^{(r-2m)}_n$. While if it is desired to express $P_n(z;r)$ in terms of polynomials in the basis $\mathscr{P}^{(r+2m)}_n$, then one needs to only use the following $m+1$ polynomials in the subset
			\[ \left\{ P_{n}(z;r+2m), P_{n-1}(z;r+2m), \cdots, P_{n-m}(z;r+2m) \right\} \subset \mathscr{P}^{(r+2m)}_n. \]
		\item[2.]  If it is desired to express $zP_n(z;r)$ as a linear combination of polynomials in $\mathscr{P}^{(r-2m+1)}_{n+1}$, then one needs to use \textit{all $n+2$ polynomials} in $\mathscr{P}^{(r-2m+1)}_{n+1}$. While if it is desired to express $zP_n(z;r)$ in terms of polynomials in $\mathscr{P}^{(r+2m-1)}_{n+1}$, then one needs to only use the $m+1$ polynomials in the subset
		\[ \{ P_{n+1}(z;r+2m-1), P_{n}(z;r+2m-1), \cdots, P_{n+1-m}(z;r+2m-1) \} \subset \mathscr{P}^{(r+2m-1)}_{n+1}. \]
		\item[3.] If it is desired to express $Q_n(z;r)$ as a linear combination of polynomials in $\mathscr{Q}^{(r+m)}_n$, then one needs to use \textit{all $n+1$ polynomials} in $\mathscr{Q}^{(r+m)}_n$. While if it is desired to express $Q_n(z;r)$ in terms of polynomials in $\mathscr{Q}^{(r-m)}_n$, then one needs to only use the $m+1$ polynomials in the subset
\[ \{ Q_{n}(z;r-m), Q_{n-1}(z;r-m), \cdots, Q_{n-m}(z;r-m) \} \subset \mathscr{Q}^{(r-m)}_n. \]
		\item[4.]  If it is desired to express $R_n(z;s)$ as a linear combination of polynomials in $\mathscr{R}^{(s-m)}_{n}$, then one needs to use \textit{all $n+1$ polynomials} in $\mathscr{R}^{(s-m)}_{n}$. While if it is desired to express $R_n(z;s)$ in terms of polynomials in $\mathscr{R}^{(s+m)}_{n}$, then one needs to only use the $m+1$ polynomials in the subset
\[ \{ R_{n}(z;s+m), R_{n-1}(z;s+m), \cdots, R_{n-m}(z;s+m) \} \subset \mathscr{R}^{(s+m)}_{n}. \]
		\item[5.] If it is desired to express $S_n(z;s)$ as a linear combination of polynomials in $\mathscr{S}^{(s+2m)}_n$, then one needs to use \textit{all $n+1$ polynomials} in $\mathscr{S}^{(s+2m)}_n$. While if it is desired to express $S_n(z;s)$ in terms of polynomials in $\mathscr{S}^{(s-2m)}_n$, then one needs to only use the $m+1$ polynomials in the subset
\[ \{ S_{n}(z;s-2m), S_{n-1}(z;s-2m), \cdots, S_{n-m}(z;s-2m) \} \subset \mathscr{S}^{(s-2m)}_n. \]
		\item[6.]  If it is desired to express $zS_n(z;s)$ as a linear combination of polynomials in $\mathscr{S}^{(s+2m-1)}_{n+1}$, then one needs to use \textit{all $n+2$ polynomials} in $\mathscr{S}^{(s+2m-1)}_{n+1}$. While if it is desired to express $zS_n(z;s)$ in terms of polynomials in $\mathscr{P}^{(s-2m+1)}_{n+1}$, then one needs to only use the $m+1$ polynomials in the subset
\[ \{ S_{n+1}(z;s-2m+1), S_{n}(z;s-2m+1), \cdots, S_{n+1-m}(z;s-2m+1) \} \subset \mathscr{S}^{(s-2m+1)}_{n+1}. \]
	\end{itemize}
\end{theorem}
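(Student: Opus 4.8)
The plan is to reduce all six parts to the prototype Part~1 for the polynomials $P$, since Parts~3--5 are its verbatim analogues for $Q$, $R$, $S$ with the orthogonality relations \eqref{OP2}, \eqref{OP1 R}, \eqref{OP2 S} and the transformation identities \eqref{Qr-1 Qls}, \eqref{Rs+1 Rls}, \eqref{Ss-2 Sls} (together with the matching parts of Corollary~\ref{Cor661}) playing the roles of \eqref{OP1} and \eqref{Pr+2 Pls}; the ``contracting'' offset shift is $r\mapsto r-m$ for $Q$, $s\mapsto s+m$ for $R$ and $s\mapsto s-2m$ for $S$, forced by which monomial power the relevant polynomial is orthogonal to. Parts~2 and~6, on $zP_n(z;r)$ and $zS_n(z;s)$, I would reduce to Parts~1 and~5 via the first-order recurrences: \eqref{1st rec P} gives $zP_n(z;r)=P_{n+1}(z;r+1)-\de^{(r+1)}_nP_n(z;r+1)$, and the reciprocal-polynomial form of \eqref{1st rec S^*}, namely $S_{n+1}(z;s)=zS_n(z;s+1)+\ga^{(s)}_nS_n(z;s)$, gives $zS_n(z;s)=S_{n+1}(z;s-1)-\ga^{(s-1)}_nS_n(z;s-1)$; applying the already-proved statement to each of the two summands and taking the union of the two index sets produces precisely the subsets claimed in Parts~2 and~6.

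For the \emph{contracting} direction of Part~1 --- expressing $P_n(z;r)$ in the basis $\mathscr{P}^{(r+2m)}_n$ --- I would iterate the mixed recurrence \eqref{Pr Pr+2s} of Corollary~\ref{Cor661}. One application replaces $P_n(z;\rho)$ by $P_n(z;\rho+2)+\tfrac{h^{(\rho)}_n}{h^{(\rho+2)}_{n-1}}P_{n-1}(z;\rho+2)$, and the analogous identity for $P_{n-1}$ shifts $P_{n-1}(z;\rho+2)$ to offset $\rho+4$ while picking up a $P_{n-2}$, and so on; after $m$ steps one obtains an explicit expansion of $P_n(z;r)$ in exactly the $m+1$ polynomials $P_{n}(z;r+2m),P_{n-1}(z;r+2m),\dots,P_{n-m}(z;r+2m)$, the coefficient of the lowest one being the telescoping product $\prod_{i=0}^{m-1} h^{(r+2i)}_{n-i}/h^{(r+2i+2)}_{n-i-1}$, which is nonzero under the non-vanishing hypotheses and \eqref{h}; this shows the count $m+1$ is sharp. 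One can cross-check the truncation by pairing $P_n(z;r)$ against the dual family $Q_\mu(\ze^{-2};r+2m)\ze^{-(r+2m)}$ and invoking \eqref{PQorth}: since $Q_\mu$ is monic of degree $\mu$, the coefficient of $P_\mu(z;r+2m)$ is proportional to $\int_\T P_n(\ze;r)Q_\mu(\ze^{-2};r+2m)\ze^{-r-2m}\,\tfrac{\dd\mu(\ze)}{2\pi\ic\ze}$, and by \eqref{OP1} this vanishes for $\mu<n-m$ because the monomials $\ze^{-2(m+j)-r}$, $0\le j\le\mu$, that occur are then all annihilated by $P_n(\cdot;r)$.

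For the \emph{spreading} direction --- expressing $P_n(z;r)$ in $\mathscr{P}^{(r-2m)}_n$ --- I would iterate identity \eqref{Pr+2 Pls} of Theorem~\ref{Thm6.7}, which rewrites a polynomial at offset $\rho$ as a linear combination of \emph{all} polynomials of degree $\le$ its own at offset $\rho-2$. Applying it to $P_n(z;r)$, then to each $P_\ell(z;r-2)$ with $\ell\ge1$ that appears, and repeating $m$ times, yields an explicit expansion $P_n(z;r)=\sum_{\ell=0}^n c^{(m)}_\ell P_\ell(z;r-2m)$ involving all $n+1$ basis polynomials; in particular $n+1$ always suffice. That fewer do not suffice follows because the relevant coefficients are not identically zero in the weight: for $m=1$, combining \eqref{Pr+2 Pls} with \eqref{Qn0r1} and \eqref{h} shows the coefficient of $P_0(z;r-2)$ equals $(-1)^nD^{(r-2)}_{n+1}/(D^{(r)}_nw_{r-2})\ne0$, and more generally each $c^{(m)}_\ell$ is a nonzero rational function of the Fourier moments, so for generic admissible weights every one of the $n+1$ polynomials genuinely occurs. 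The identical scheme, with \eqref{OP2}/\eqref{OP1 R}/\eqref{OP2 S} and \eqref{Qr-1 Qls}/\eqref{Rs+1 Rls}/\eqref{Ss-2 Sls} in place of \eqref{OP1} and \eqref{Pr+2 Pls}, gives Parts~3, 4, 5.

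The hard part will be pinning down the precise meaning of ``one needs all $n+1$ polynomials'' in the spreading direction. The iterated form of Theorem~\ref{Thm6.7} manifestly displays all $n+1$ of them, but for a \emph{fixed} admissible weight and $m\ge2$ an individual coefficient can in principle degenerate to zero. The clean statement I would actually prove is therefore that no \emph{proper} subset of $\mathscr{P}^{(r-2m)}_n$ (and likewise in Parts~2--6) spans $P_n(z;r)$ simultaneously for all weights satisfying the non-vanishing hypotheses --- which holds because each coefficient is a rational function of the moments that is not identically zero (manifestly nonzero at $m=1$), in sharp contrast with the contracting direction, where the vanishing of the coefficients of $P_\mu(z;r+2m)$ for $\mu<n-m$ is an identity in the weight.
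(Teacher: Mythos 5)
Your proposal matches the paper's proof essentially step for step: the paper also proves Part~1 by induction on $m$, using \eqref{Pr+2 Pls} for the spreading direction and \eqref{Pr Pr+2s} for the contracting one, and handles Parts~2 and~6 via exactly the relations $zP_{n}(z;r)=P_{n+1}(z;r+1)-\de_n^{(r+1)}P_{n}(z;r+1)$ and its $S$-analogue. Your added caveat --- that for $m\ge 2$ the ``all $n+1$ are needed'' claim in the spreading direction is really a genericity/not-identically-zero statement, since the iterated coefficients are sums of products that could in principle cancel for a particular weight --- is a fair refinement of a point the paper's proof passes over with the blanket assertion that the non-vanishing determinant hypotheses make all coefficients non-zero.
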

\begin{proof}
Theorem \ref{Thm6.7} establishes the statement of this theorem for expressing $P_n(z,r), \ Q_n(z,r), \ R_n(z,s)$ and $S_n(z,s)$ respectively in terms of polynomials in $\mathscr{P}^{(r-2)}_n$, $\mathscr{Q}^{(r+1)}_n$,  $\mathscr{R}^{(s-1)}_n$, and $\mathscr{S}^{(s+2)}_n$, and the Corollary \ref{Cor661} establishes this for expressing $P_n(z,r), \ Q_n(z,r), \ R_n(z,s)$ and $S_n(z,s)$ respectively in terms of polynomials in  $\mathscr{P}^{(r+2)}_n$, $\mathscr{Q}^{(r-1)}_n$, $\mathscr{R}^{(s+1)}_n$, and $\mathscr{S}^{(s-2)}_n$. The proof for general $m$ follows from iterations of the aforementioned results of Theorem \ref{Thm6.7} and Corollary \ref{Cor661}, and the non-vanishing determinant requirements of Theorems \ref{P and Q exist and are unique} and \ref{R and S exist and are unique} ensure that the coefficients involved all exist and are non-zero. Let us now prove the first item for expressing $P_n(z;r)$ in the basis $\mathscr{P}^{(r-2m)}_n$ via induction. The base of the induction for $m=1$ clearly holds because of \eqref{Pr+2 Pls}. Passing from the induction hypothesis to the desired result is obvious because by \eqref{Pr+2 Pls} we have $P_{\ell}(z;r-2(m-1)) =  \sum_{\nu=0}^{\ell} \hat{A}_{\nu,\ell} P_{\nu}(z;r-2m)$, where the constants $\hat{A}_{\nu,\ell}$ are also described in \eqref{Pr+2 Pls}. Thus the induction hypothesis  $	 	P_{n}(z;r) = \sum_{\ell=0}^{n} A_{m,n} P_{\ell}(z;r-2(m-1))$, immediately implies the desired result.	To prove the first item for expressing $P_n(z;r)$ in the basis $\mathscr{P}^{(r+2m)}_n$ we use induction again. The base of the induction clearly holds due to \eqref{Pr Pr+2s}. Now the induction hypothesis  
	\begin{equation}
	P_{n}(z;r) = \sum_{\ell=0}^{m-1} B_{m-1,n} P_{n-\ell}(z;r+2(m-1)),
	\end{equation}
in conjunction with 
	\begin{equation}
		P_{n-\ell}(z;r+2(m-1)) = P_{n-\ell}(z;r+2m) + \frac{h^{(r+2(m-1))}_{n-\ell}}{h^{(r+2m)}_{n-\ell-1}}
		P_{n-\ell-1}(z;r+2m),
	\end{equation}
yields the desired result. The proofs of the other items follow similarly, where the only extra needed relations are

\noindent\begin{minipage}{.5\linewidth}
	\begin{alignat}{2}
	&zP_{n}(z;r) &&=P_{n+1}(z;r+1) - \de_n^{(r+1)}P_{n}(z;r+1), \label{zPnrPnr+1}
	\end{alignat}	
\end{minipage}	
\begin{minipage}{.5\linewidth}
	\begin{alignat}{2}
	&zS_{n}(z;s) &&=S_{n+1}(z;s-1) - \ga_n^{(s-1)}S_{n}(z;s-1). \label{zSnsSns-1}
	\end{alignat}	
\end{minipage}	
which are obtained	from \eqref{1st rec P} and \eqref{1st rec S^*} respectively.	
\end{proof}
In the Toeplitz case there exist relations which allow one of the polynomials to be expressed in terms of a linear combination of two of the other one,
see Eq.(2.21,22) of \cite{FW_2006}. These two relations are also the components of the first order recurrence, Eq.(2.70), in the matrix variable, Eq.(2.69).
Furthermore these two relations are somewhat symmetrical. However for the present system we find the following single, unsymmetrical relation, 
of $ Q_n $ in terms of a bilinear combination of the $ P_n $, and the analogous relation for the $ R_n, S_n $ system.
\begin{theorem}
It holds that
	\begin{equation}\label{222}
	Q_n(z^{2};r)=\frac{D^{(r+2)}_n}{D^{(r)}_n}\frac{z^{2n+1}}{2}\left[ P_{n+1}(z^{-1};r+2)P_n(-z^{-1};r+2) - P_{n+1}(-z^{-1};r+2)P_n(z^{-1};r+2) \right],
	\end{equation}
	\begin{equation}\label{333}
	R_n(z^{2};s)=\frac{E^{(s-2)}_n}{E^{(s)}_n}\frac{z^{2n+1}}{2}\left[ S_{n+1}(z^{-1};s-2)S_n(-z^{-1};s-2) - S_{n+1}(-z^{-1};s-2)S_n(z^{-1};s-2) \right].
	\end{equation} 
\end{theorem}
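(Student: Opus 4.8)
The plan is to derive \eqref{222} by massaging the multiple integral representation \eqref{MultIntPolysQ} of $Q_n$ until it displays the bilinear $P$-structure of the right‑hand side, and then to obtain \eqref{333} by repeating the argument verbatim with $\mathcal E_n$, \eqref{MultIntPolysR} and the $j$-$2k$ orthogonality in place of $\mathcal D_n$, \eqref{MultIntPolysQ} and the $2j$-$k$ orthogonality.

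I would start from \eqref{MultIntPolysQ} with $z\mapsto z^2$, namely $D_n^{(r)}Q_n(z^2;r)=\mathcal D_n[w(\zeta)\zeta^{-r}(z^2-\zeta^{-2})]$, and use the factorisation
$$z^2-\zeta^{-2}\;=\;z^2\zeta^{-2}(\zeta-z^{-1})(\zeta+z^{-1}),$$
which accomplishes three things at once: the factor $\zeta^{-2}$ — one for each of the $n$ integration variables — turns $w(\zeta)\zeta^{-r}$ into $w(\zeta)\zeta^{-(r+2)}$, precisely the offset-$(r+2)$ weight governing the $P$-polynomials on the right; the factor $z^{2n}$ comes out of the integral; and the two remaining factors $(\zeta_j-z^{-1})$ and $(\zeta_j+z^{-1})=(\zeta_j-(-z^{-1}))$ are exactly what is needed to enlarge the Vandermonde $\prod_{1\le j<k\le n}(\zeta_k-\zeta_j)$ appearing in $\mathcal D_n$ by the two extra (``external'') nodes $\zeta_0:=z^{-1}$ and $\zeta_{-1}:=-z^{-1}$. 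Using the elementary identity $\prod_{j=1}^n(\zeta_j-\zeta_0)(\zeta_j-\zeta_{-1})\prod_{1\le j<k\le n}(\zeta_k-\zeta_j)=(\zeta_0-\zeta_{-1})^{-1}\prod_{-1\le i<j\le n}(\zeta_j-\zeta_i)$ together with $\zeta_0-\zeta_{-1}=2z^{-1}$, this brings $D_n^{(r)}Q_n(z^2;r)$ to the form
$$\frac{z^{2n+1}}{2}\cdot\frac{1}{n!}\int_{\T^n}\prod_{j=1}^n\frac{\dd\zeta_j}{2\pi\ic\zeta_j}\,w(\zeta_j)\zeta_j^{-(r+2)}\prod_{-1\le i<j\le n}(\zeta_j-\zeta_i)\prod_{1\le i<j\le n}(\zeta_j^{-2}-\zeta_i^{-2}),$$
with only $\zeta_1,\dots,\zeta_n$ integrated.

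From here it is the two-external-node analogue of the proof of Theorem \ref{Thm Mult Int Polys}: I would write $\prod_{-1\le i<j\le n}(\zeta_j-\zeta_i)=\det[P_k(\zeta_j;r+2)]_{j\in\{-1,0,\dots,n\},\,0\le k\le n+1}$ (legitimate since $P_k(\cdot;r+2)$ is monic of degree $k$) while keeping $\prod_{1\le i<j\le n}(\zeta_j^{-2}-\zeta_i^{-2})=\det[\zeta_j^{-2(k-1)}]_{1\le j,k\le n}$, expand the first determinant by the generalised Laplace rule along its two external rows $j=-1,0$ into a sum over pairs $\{a,b\}\subseteq\{0,\dots,n+1\}$ of a $2\times2$ minor evaluated at $\pm z^{-1}$ times an $n\times n$ determinant in the internal variables, and then apply Andréief's identity term by term. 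The internal $n\times n$ determinant becomes $\det\big[\int_{\T}P_{k_\ell}(\zeta;r+2)\,\zeta^{-2(m-1)-(r+2)}w(\zeta)\tfrac{\dd\zeta}{2\pi\ic\zeta}\big]_{1\le\ell,m\le n}$ with $\{k_1<\dots<k_n\}=\{0,\dots,n+1\}\setminus\{a,b\}$; by the orthogonality \eqref{OP1} this matrix has a vanishing row unless $\{a,b\}=\{n,n+1\}$, and in that case it is upper triangular with diagonal entries $h_0^{(r+2)},\dots,h_{n-1}^{(r+2)}$, hence has determinant $\prod_{i=0}^{n-1}h_i^{(r+2)}=D_n^{(r+2)}$ by \eqref{h}. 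The surviving $2\times2$ minor equals $P_{n+1}(z^{-1};r+2)P_n(-z^{-1};r+2)-P_{n+1}(-z^{-1};r+2)P_n(z^{-1};r+2)$ (a short computation of the Laplace sign shows it enters with coefficient $+1$), and dividing by $D_n^{(r)}$ yields \eqref{222}. For \eqref{333} the identical computation runs with $R_n(z^2;s)=\tfrac{1}{E_n^{(s)}}\mathcal E_n[w(\zeta)\zeta^{-s}(z^2-\zeta^2)]$, the factorisation $z^2-\zeta^2=z^2\zeta^2(\zeta^{-1}-z^{-1})(\zeta^{-1}+z^{-1})$ (which shifts the offset $s\mapsto s-2$ and enlarges the Vandermonde $\prod(\zeta_k^{-1}-\zeta_j^{-1})$), and the orthogonality \eqref{OP2 S}.

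The main obstacle is finding the right way to split $z^2-\zeta^{-2}$: the factorisation must both absorb $\zeta^{-2}$ into the weight (to produce the offset $r+2$) and break what remains into the two \emph{linear-in-$\zeta$} factors $\zeta-z^{-1}$, $\zeta+z^{-1}$ that enlarge the correct Vandermonde — using instead the ``obvious'' $z^2-\zeta^{-2}=(z-\zeta^{-1})(z+\zeta^{-1})$ and trying to extend the $\zeta^{-2}$-Vandermonde merely reproduces the known formula \eqref{MultIntPolysQ} for $Q_n$. Everything after the factorisation is routine: the only genuine care is tracking the $z$-powers (they must combine to exactly $z^{2n+1}$) and the Laplace sign, under the standing assumption — needed already for the objects in the statement to be defined — that $D_n^{(r)}$ and $D_1^{(r+2)},\dots,D_{n+1}^{(r+2)}$ are non-zero.
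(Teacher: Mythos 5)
Your proposal is correct and follows essentially the same route as the paper: both start from the multiple integral representation of $Q_n(z^2;r)$, use the factorisation $z^2-\zeta^{-2}=z^2\zeta^{-2}(\zeta-z^{-1})(\zeta+z^{-1})$ to shift the offset to $r+2$ and adjoin the two external nodes $\pm z^{-1}$ to the Vandermonde, and then reduce via orthogonality to a bordered determinant whose only surviving contribution is the $2\times 2$ minor in $P_n,P_{n+1}$ times $D_n^{(r+2)}$. The only (immaterial) difference is bookkeeping — you Laplace-expand along the external rows before integrating, while the paper integrates first and then expands the resulting $(n+2)\times(n+2)$ determinant.
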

\begin{proof} 
We only prove \eqref{222} as \eqref{333} can be proven in a similar way. From \eqref{MultIntPolysQ} we have
	\begin{equation}\label{111}
	Q_n(z^2;r)  = \frac{1}{D^{(r)}_n} \mathcal{D}_n[w(\ze)\ze^{-r}(z^2-\ze^{-2})].
	\end{equation}
Using $z-\ze^{-1} = - \ze^{-1}z(z^{-1}-\ze)$, $z+\ze^{-1} = - \ze^{-1}z(-z^{-1}-\ze)$,  \eqref{111} and \eqref{DD} we have
	\begin{equation}
	\begin{split}
	&D^{(r)}_n Q_n(z^2;r) 
	\\ & = \frac{z^{2n}}{n!} \int_{\T} \frac{\dd \ze_1}{2 \pi \ic \ze_1} \cdots \int_{\T} \frac{\dd \ze_n}{2 \pi \ic \ze_n} \prod_{j=1}^{n}w(\ze_j)\ze^{-r-2}_j \prod_{j=1}^{n}(z^{-1}-\ze_j)(-z^{-1}-\ze_j) \prod_{1\leq j<k\leq n} (\ze_k-\ze_j)(\ze^{-2}_k-\ze^{-2}_j).
	\end{split}
	\end{equation}
Now, we temporarily denote $z^{-1}$ and $-z^{-1}$ respectively by $\ze_{n+1}$ and $\ze_{n+2}$, and thus
	\begin{equation}
	\begin{split}
	&D^{(r)}_n Q_n(z^2;r)  
	\\ & = \frac{z^{2n}}{n!} \int_{\T} \frac{\dd \ze_1}{2 \pi \ic \ze_1} \cdots \int_{\T} \frac{\dd \ze_n}{2 \pi \ic \ze_n} \prod_{j=1}^{n}w(\ze_j)\ze^{-r-2}_j  \frac{1}{\ze_{n+2}-\ze_{n+1}}\prod_{1\leq j<k\leq n+2} (\ze_k-\ze_j)\prod_{1\leq j<k\leq n} (\ze^{-2}_k-\ze^{-2}_j) 
	\\ & = -\frac{1}{2}\frac{z^{2n+1}}{n!} \int_{\T} \frac{\dd \ze_1}{2 \pi \ic \ze_1} \cdots \int_{\T} \frac{\dd \ze_n}{2 \pi \ic \ze_n} \prod_{j=1}^{n}w(\ze_j)\ze^{-r-2}_j  \prod_{1\leq j<k\leq n+2} (\ze_k-\ze_j)\prod_{1\leq j<k\leq n} (\ze^{-2}_k-\ze^{-2}_j) 
	\\ & =  -\frac{1}{2}\frac{z^{2n+1}}{n!} \int_{\T} \frac{\dd \ze_1}{2 \pi \ic \ze_1} \cdots \int_{\T} \frac{\dd \ze_n}{2 \pi \ic \ze_n} \prod_{j=1}^{n}w(\ze_j)\ze^{-r-2}_j \underset{\substack{1 \leq j \leq n \\ 1 \leq k\leq n+2}}{\det} 
	\begin{pmatrix}
	P_{k-1}(\ze_j;r+2) \\ P_{k-1}(z^{-1};r+2) \\ P_{k-1}(-z^{-1};r+2)
	\end{pmatrix} \underset{\substack{1 \leq j \leq n \\ 1 \leq k\leq n}}{\det} \{Q_{k-1}(\ze^{-2}_j;r+2)\} .
	\end{split}
	\end{equation}
We have used the offsets to be $r+2$, 
due to presence of $\ze^{-r-2}_j$ and the fact that we are about to use the bi-orthogonality property of $P$'s and $Q$'s. 
We have 
	\begin{equation}
	\begin{split}
	&D^{(r)}_n Q_n(z^2;r)  
	\\ &  = -\frac{1}{2}z^{2n+1} \int_{\T} \frac{\dd \ze_1}{2 \pi \ic \ze_1} \cdots \int_{\T} \frac{\dd \ze_n}{2 \pi \ic \ze_n} \prod_{j=1}^{n}w(\ze_j)\ze^{-r-2}_j \underset{\substack{1 \leq j \leq n \\ 1 \leq k\leq n+2}}{\det} \begin{pmatrix}
	P_{k-1}(\ze_j;r+2) \\ P_{k-1}(z^{-1};r+2) \\ P_{k-1}(-z^{-1};r+2)
	\end{pmatrix} \prod_{j=1}^{n} Q_{j-1}(\ze^{-2}_j;r+2) 
	\\ & = -\frac{1}{2}z^{2n+1}
	\underset{\substack{1 \leq j \leq n \\ 1 \leq k\leq n+2}}{\det} \begin{pmatrix}
	\di \int_{\T} \frac{\dd \ze}{2 \pi \ic \ze} w(\ze)\ze^{-r-2} P_{k-1}(\ze;r+2)Q_{j-1}(\ze^{-2};r+2) \\ P_{k-1}(z^{-1};r+2) \\ P_{k-1}(-z^{-1};r+2)
	\end{pmatrix} 
	\\ & = -\frac{1}{2}z^{2n+1}  \det \begin{pmatrix}
	h^{(r+2)}_0  & \cdots & 0 & 0 & 0 \\
	\vdots  & \ddots & \vdots & \vdots & \vdots \\
	0  & \cdots & h^{(r+2)}_{n-1} & 0 & 0 \\
	P_0(z^{-1};r+2)  & \cdots & P_{n-1}(z^{-1};r+2) & P_n(z^{-1};r+2) & P_{n+1}(z^{-1};r+2) \\
	P_0(-z^{-1};r+2)  & \cdots& P_{n-1}(-z^{-1};r+2) & P_n(-z^{-1};r+2) & P_{n+1}(-z^{-1};r+2) \\	
	\end{pmatrix}. 
	\end{split}
	\end{equation}
Therefore
	\begin{equation}
	D^{(r)}_n Q_n(z^2;r) = -\frac{1}{2}z^{2n+1}  \prod_{j=0}^{n-1} h^{(r+2)}_j \left[P_n(z^{-1};r+2)P_{n+1}(-z^{-1};r+2) - P_{n+1}(z^{-1};r+2)P_n(-z^{-1};r+2)  \right].
	\end{equation}
Noticing that $\prod_{j=0}^{n-1} h^{(r+2)}_j = D^{(r+2)}_n$ proves \eqref{222}.
\end{proof}
\begin{remark} \normalfont
Since \[z^{n+1}P_{n+1}(z^{-1};\cdot) \Bigg|_{z=0}=z^{n}P_{n}(z^{-1};\cdot) \Bigg|_{z=0}=1, \qandq -z^{n+1}P_{n+1}(-z^{-1};\cdot) \Bigg|_{z=0}=z^{n}P_{n}(-z^{-1};\cdot) \Bigg|_{z=0}=(-1)^{n}, \]
we observe that \eqref{222} is compatible with \eqref{Qn0r1}. A similar observation shows the compatibility of \eqref{333} with \eqref{Rn0s1}.
\end{remark}

\subsection{The Christoffel-Darboux Identity}\label{subsec CD}
Having defined the reproducing kernel via a sum over products of bi-orthogonal polynomials and deduced that these polynomials satisfy third order difference equations
we seek the explicit evaluation of the sum involved in the kernel. 
The well known Christoffel-Darboux sum in the Toeplitz result can be found in Prop. 2.5 of \cite{FW_2006}.
\begin{theorem}
The Christoffel-Darboux identity for the $2j-k$ and $j-2k$ systems can be respectively written as
	\begin{equation}\label{CD}
	K_n(z^2_2,z_1;r)=\frac{1}{2}\frac{D_n^{(r+2)}}{D_{n+1}^{(r)}} \frac{z^{2n+1}_2}{z^{2}_1-z^{-2}_2} \det \begin{pmatrix}
	P_n(-z^{-1}_2;r+2) & P_{n+1}(-z^{-1}_2;r+2) & P_{n+2}(-z^{-1}_2;r+2) \\[2pt]
	P_n(z^{-1}_2;r+2) & P_{n+1}(z^{-1}_2;r+2) & P_{n+2}(z^{-1}_2;r+2) \\[2pt]
	P_n(z_1;r+2) & P_{n+1}(z_1;r+2) & P_{n+2}(z_1;r+2) \\				
	\end{pmatrix},
	\end{equation}
and
	\begin{equation}\label{CDRS}
	L_n(z_2,z^2_1;s)=\frac{1}{2}\frac{E_n^{(s-2)}}{E_{n+1}^{(s)}} \frac{z^{2n+1}_1}{z^{-2}_1-z^{2}_2} \det \begin{pmatrix}
	S_n(z^{-1}_1;s-2) & S_{n+1}(z^{-1}_1;s-2) & S_{n+2}(z^{-1}_1;s-2) \\[2pt]
	S_n(-z^{-1}_1;s-2) & S_{n+1}(-z^{-1}_1;s-2) & S_{n+2}(-z^{-1}_1;s-2) \\[2pt]
	S_n(z_2;s-2) & S_{n+1}(z_2;s-2) & S_{n+2}(z_2;s-2) \\				
	\end{pmatrix}. 
	\end{equation}
\end{theorem}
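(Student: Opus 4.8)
The plan is to rerun the multiple-integral computation already used for \eqref{MultIntRepKers} and \eqref{222}, but now carrying \emph{three} external nodes. I would give the details for \eqref{CD}; the identity \eqref{CDRS} is entirely parallel. Starting from \eqref{MultIntRepKers} with $z_2$ replaced by $z_2^2$,
\[
K_n(z_2^2,z_1;r) = \frac{1}{D^{(r)}_{n+1}}\,\mathcal{D}_n\bigl[w(\ze)\ze^{-r}(z_1-\ze)(z_2^2-\ze^{-2})\bigr],
\]
I would use the factorisation $z_2^2-\ze^{-2} = \ze^{-2}z_2^2(z_2^{-1}-\ze)(-z_2^{-1}-\ze)$ — the same move that converts the $\ze^{-2}$-type factor into two $\ze$-linear factors in the proof of \eqref{222}. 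Taking the product over the $n$ integration variables extracts a global factor $z_2^{2n}$ and shifts the weight $\ze^{-r}\mapsto\ze^{-r-2}$, so the integrand becomes $\prod_{j=1}^{n} w(\ze_j)\ze_j^{-r-2}(z_1-\ze_j)(z_2^{-1}-\ze_j)(-z_2^{-1}-\ze_j)$ times $\prod_{1\le j<k\le n}(\ze_k-\ze_j)(\ze_k^{-2}-\ze_j^{-2})$.

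Next I would introduce the external nodes $\ze_{n+1}=z_1$, $\ze_{n+2}=z_2^{-1}$, $\ze_{n+3}=-z_2^{-1}$ and merge $\prod_{j=1}^{n}(z_1-\ze_j)(z_2^{-1}-\ze_j)(-z_2^{-1}-\ze_j)$ into the first Vandermonde using
\[
\prod_{1\le j<k\le n}(\ze_k-\ze_j)\prod_{j=1}^{n}\prod_{i=1}^{3}(\ze_{n+i}-\ze_j) = \frac{\prod_{1\le j<k\le n+3}(\ze_k-\ze_j)}{(\ze_{n+2}-\ze_{n+1})(\ze_{n+3}-\ze_{n+1})(\ze_{n+3}-\ze_{n+2})},
\]
where the elementary evaluation $(\ze_{n+2}-\ze_{n+1})(\ze_{n+3}-\ze_{n+1})(\ze_{n+3}-\ze_{n+2}) = -2z_2^{-1}(z_1^2-z_2^{-2})$ supplies the denominator $z_1^2-z_2^{-2}$ together with the remaining power of $z_2$, giving the overall factor $z_2^{2n+1}$. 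Then, by monic column operations, $\prod_{1\le j<k\le n+3}(\ze_k-\ze_j)=\det[P_{k-1}(\ze_j;r+2)]_{1\le j,k\le n+3}$ and $\prod_{1\le j<k\le n}(\ze_k^{-2}-\ze_j^{-2})=\det[Q_{k-1}(\ze_j^{-2};r+2)]_{1\le j,k\le n}$.

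The decisive step is the symmetrisation already deployed in the proofs of \eqref{MultIntRepKers} and \eqref{222}: the factor $\tfrac1{n!}$ with the $n\times n$ $Q$-determinant allows replacing $\tfrac1{n!}\int\cdots\det[Q_{k-1}(\ze_j^{-2};r+2)]$ by $\int\cdots\prod_{j=1}^{n}Q_{j-1}(\ze_j^{-2};r+2)$, after which integrating row $j$ (for $1\le j\le n$) of the $(n+3)\times(n+3)$ $P$-determinant against $w(\ze_j)\ze_j^{-r-2}Q_{j-1}(\ze_j^{-2};r+2)$ collapses that row, by the bi-orthogonality \eqref{PQorth} at offset $r+2$, to $h^{(r+2)}_{j-1}$ in column $j$ and zeros elsewhere. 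The resulting block-triangular determinant equals $\bigl(\prod_{\ell=0}^{n-1}h^{(r+2)}_\ell\bigr)$ times the $3\times3$ determinant with rows indexed by the nodes $z_1,z_2^{-1},-z_2^{-1}$ and columns by $P_n,P_{n+1},P_{n+2}$ (all at offset $r+2$), i.e.\ $D^{(r+2)}_n$ times that determinant by \eqref{Dets from norms}. Collecting prefactors yields $K_n(z_2^2,z_1;r)=-\tfrac12\,\tfrac{D^{(r+2)}_n}{D^{(r)}_{n+1}}\,\tfrac{z_2^{2n+1}}{z_1^2-z_2^{-2}}$ times that $3\times3$ determinant in the node order $z_1,z_2^{-1},-z_2^{-1}$; interchanging its first and last rows absorbs the minus sign and reproduces exactly the matrix displayed in \eqref{CD}. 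For \eqref{CDRS} one repeats the identical steps from \eqref{MultIntRepKers2} with $z_1\mapsto z_1^2$, now factoring $z_1^2-\ze^2=\ze^2z_1^2(z_1^{-1}-\ze^{-1})(-z_1^{-1}-\ze^{-1})$ (which shifts $\ze^{-s}\mapsto\ze^{-(s-2)}$ and extracts $z_1^{2n}$), feeding the external nodes $z_1,-z_1,z_2^{-1}$ into the $(\ze_k^{-1}-\ze_j^{-1})$-Vandermonde, and using $\det[S_{k-1}(\ze_j^{-1};s-2)]$, $\det[R_{k-1}(\ze_j^2;s-2)]$ together with the bi-orthogonality \eqref{RSorth} at offset $s-2$ and \eqref{Dets from norms 2}; this time the natural node order $z_1^{-1},-z_1^{-1},z_2$ already matches \eqref{CDRS}, so no sign correction is needed.

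The only real difficulty is the bookkeeping: tracking the power $z_2^{2n+1}$ (resp.\ $z_1^{2n+1}$), the weight shift to offset $r+2$ (resp.\ $s-2$), the denominator $z_1^2-z_2^{-2}$ arising from the $3\times3$ Vandermonde of the external nodes, and the final row permutation and its sign. The genuine analytic content — the symmetrisation reducing a rectangular pair of Vandermonde-type determinants to block-triangular form through bi-orthogonality — is already in hand from the proofs of \eqref{MultIntRepKers} and \eqref{222}. I would add the routine caveat that the computation presumes the external nodes and their inverses to be distinct and nonzero; since both sides are rational in $z_1,z_2$ the identity then extends to all admissible values by continuity, the apparent pole at $z_1^2=z_2^{-2}$ being cancelled by the vanishing of the $3\times3$ determinant (two of its rows coincide there).
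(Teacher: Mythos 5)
Your proposal follows essentially the same route as the paper's proof: the factorisation $z_2^2-\ze^{-2}=z_2^2\ze^{-2}(z_2^{-1}-\ze)(-z_2^{-1}-\ze)$ shifting the offset to $r+2$, the absorption of the three external nodes into an $(n+3)$-point Vandermonde with the cubic node-discriminant supplying the $\tfrac{1}{2}\,\tfrac{z_2^{2n+1}}{z_1^2-z_2^{-2}}$ prefactor, the symmetrisation of the $Q$-determinant into a product, and the bi-orthogonality collapse to a block-triangular determinant whose diagonal block of norms gives $D^{(r+2)}_n$. The only cosmetic difference is your ordering of the external nodes (the paper takes $-z_2^{-1},z_2^{-1},z_1$ directly, avoiding your final row swap), and your sign bookkeeping for that swap checks out.
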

\begin{proof}
Using \eqref{MultIntRepKers} we have
	\begin{equation}
	\begin{split}
	D^{(r)}_{n+1} K_n(z^2_2,z_1;r) & =  \mathcal{D}_n[w(\ze)\ze^{-r}(z_1-\ze)(z^2_2-\ze^{-2})] \\ & = \mathcal{D}_n[w(\ze)\ze^{-r}(z_1-\ze)(z_2-\ze^{-1})(z_2+\ze^{-1})]  \\ & = \mathcal{D}_n[w(\ze)\ze^{-r}(z_1-\ze)z^2_2\ze^{-2}(z^{-1}_2-\ze)(-z^{-1}_2-\ze)]\\ & = z^{2n}_2\mathcal{D}_n[w(\ze)\ze^{-r-2}(z_1-\ze)(z^{-1}_2-\ze)(-z^{-1}_2-\ze)].
	\end{split}
	\end{equation}
Let us temporarily denote $-z^{-1}_2$, $z^{-1}_2$, and $z_1$ respectively by $\ze_{n+1}$, $\ze_{n+2}$, and $\ze_{n+3}$. Note that
	\begin{equation}
	\begin{split}
	\prod_{1\leq j<k\leq n+3} (\ze_k-\ze_j)  = & (\ze_{n+3}-\ze_{n+2})(\ze_{n+3}-\ze_{n+1})(\ze_{n+2}-\ze_{n+1})  \prod_{j=1}^{n} (\ze_{n+3}-\ze_{j})  (\ze_{n+2}-\ze_{j})  (\ze_{n+1}-\ze_{j}) \prod_{1\leq j<k\leq n} (\ze_k-\ze_j).
	\end{split}
	\end{equation}
We have
	\begin{equation*}
	\begin{split}
	&		D^{(r)}_{n+1} K_n(z^2_2,z_1;r) = z^{2n}_2\mathcal{D}_n[w(\ze)\ze^{-r-2}(z_1-\ze)(z^{-1}_2-\ze)(-z^{-1}_2-\ze)]  \\ & = \frac{z_2^{2n}}{n!} \int_{\T} \frac{\dd \ze_1}{2 \pi \ic \ze_1} \cdots \int_{\T} \frac{\dd \ze_n}{2 \pi \ic \ze_n} \prod_{j=1}^{n}w(\ze_j)\ze^{-r-2}_j \prod_{j=1}^{n}(z_1-\ze_j)(z_2^{-1}-\ze_j)(-z_2^{-1}-\ze_j) \prod_{1\leq j<k\leq n} (\ze_k-\ze_j)(\ze^{-2}_k-\ze^{-2}_j)  \\ & = 
	\frac{z_2^{2n+1}}{n!2(z_1-z^{-1}_{2})(z_1+z^{-1}_{2})} \int_{\T} \frac{\dd \ze_1}{2 \pi \ic \ze_1} \cdots \int_{\T} \frac{\dd \ze_n}{2 \pi \ic \ze_n} \prod_{j=1}^{n}w(\ze_j)\ze^{-r-2}_j \prod_{1\leq j<k\leq n+3} (\ze_k-\ze_j) \prod_{1\leq j<k\leq n} (\ze^{-2}_k-\ze^{-2}_j)  \\ & = \frac{z_2^{2n+1}}{n!2(z_1-z^{-1}_{2})(z_1+z^{-1}_{2})} \int_{\T} \frac{\dd \ze_1}{2 \pi \ic \ze_1} \cdots \int_{\T} \frac{\dd \ze_n}{2 \pi \ic \ze_n} \prod_{j=1}^{n}w(\ze_j)\ze^{-r-2}_j 	\underset{\substack{1 \leq j \leq n \\ 1 \leq k\leq n+3}}{\det} \begin{pmatrix}
	P_{k-1}(\ze_j;r+2) \\[2pt] P_{k-1}(-z_2^{-1};r+2) \\[2pt] P_{k-1}(z_2^{-1};r+2) \\[2pt] P_{k-1}(z_1;r+2)
	\end{pmatrix} \underset{\substack{1 \leq j \leq n \\ 1 \leq k\leq n}}{\det} \{Q_{k-1}(\ze^{-2}_j;r+2)\}  \\  & =  \frac{z_2^{2n+1}}{2(z^2_1-z^{-2}_{2})} \int_{\T} \frac{\dd \ze_1}{2 \pi \ic \ze_1} \cdots \int_{\T} \frac{\dd \ze_n}{2 \pi \ic \ze_n} \prod_{j=1}^{n}w(\ze_j)\ze^{-r-2}_j  	\underset{\substack{1 \leq j \leq n \\ 1 \leq k\leq n+3}}{\det} \begin{pmatrix}
	P_{k-1}(\ze_j;r+2) \\[2pt] P_{k-1}(-z_2^{-1};r+2) \\[2pt] P_{k-1}(z_2^{-1};r+2) \\[2pt] P_{k-1}(z_1;r+2)
	\end{pmatrix} \prod_{j=1}^{n}Q_{j-1}(\ze^{-2}_j;r+2) \\
	\end{split}	
	\end{equation*}
	\begin{equation*}
	\begin{split}
	 & = 
	\frac{z_2^{2n+1}}{2(z^2_1-z^{-2}_{2})} 
	\underset{\substack{1 \leq j \leq n \\ 1 \leq k\leq n+2}}{\det} \begin{pmatrix}
	\di \int_{\T} \frac{\dd \ze}{2 \pi \ic \ze} w(\ze)\ze^{-r-2} P_{k-1}(\ze;r+2)Q_{j-1}(\ze^{-2};r+2) \\[2pt] P_{k-1}(-z_2^{-1};r+2) \\[2pt] P_{k-1}(z_2^{-1};r+2)
	\\[2pt] P_{k-1}(z_1;r+2)\end{pmatrix}  
	\\ & =
	\frac{z_2^{2n+1}}{2(z^2_1-z^{-2}_{2})}
	\det \begin{pmatrix}
	h^{(r+2)}_0 &  \cdots & 0 & 0 & 0 & 0 \\
	\vdots  & \ddots & \vdots & \vdots & \vdots & \vdots  \\
	0 &  \cdots & h^{(r+2)}_{n-1} & 0 & 0 & 0 \\[2pt]
	P_0(-z_2^{-1};r+2)  & \cdots & P_{n-1}(-z_2^{-1};r+2) & P_n(-z_2^{-1};r+2) & P_{n+1}(-z_2^{-1};r+2) & P_{n+2}(-z_2^{-1};r+2) \\[2pt]
	P_0(z_2^{-1};r+2)  & \cdots& P_{n-1}(z_2^{-1};r+2) & P_n(z_2^{-1};r+2) & P_{n+1}(z_2^{-1};r+2) & P_{n+2}(z_2^{-1};r+2) \\[2pt]	P_0(z_1;r+2)  & \cdots& P_{n-1}(z_1;r+2) & P_n(z_1;r+2) & P_{n+1}(z_1;r+2) & P_{n+2}(z_1;r+2) \\
	\end{pmatrix}.
	\end{split}	
	\end{equation*}
Now \eqref{CD} follows immediately by noticing $\prod_{j=0}^{n-1} h^{(r+2)}_j = D^{(r+2)}_n$.
\end{proof}
\begin{remark} \normalfont
We have found that the sum is most simply evaluated in terms of just the polynomials $P_n$ and is therefore unsymmetrical with respect to $Q_n$.
It is possible to replace certain terms in this determinant by $Q_n$ using \eqref{222} but not all in a symmetrical way.
\end{remark}

\section{Associated Functions}\label{Sec Associated Functions}
Our goal in this section is to define linearly independent associated functions corresponding to the polynomials $P_{n}(z;r)$, $Q_{n}(z;r)$, $R_{n}(z;s)$, and $S_{n}(z;s)$ each of which respectively satisfies the recurrence relations \eqref{P pure n rec}, \eqref{Q* pure n rec}, \eqref{R pure n rec}, \eqref{S* pure n rec} or their equivalents. 
To this end, let us define the associated functions

\begin{align}
\widecheck{P}_n(z;r) & := \int_{\T} \frac{\dd \ze}{2 \pi \ic \ze} w(\ze) \ze^{-r} \frac{\ze^2+z^2}{\ze^2-z^2}[P_n(\ze;r)-P_n(z;r)], \label{Pcirc} \\
\widecheck{Q}_n(z;r) & :=  \int_{\T} \frac{\dd \ze}{2 \pi \ic \ze} w(\ze) \ze^{-r} \frac{\ze+z}{\ze-z}[Q_n(\ze^{-2};r)-Q_n(z^{-2};r)], \label{Qcirc}\\
\widecheck{R}_n(z;s) & := \int_{\T} \frac{\dd \ze}{2 \pi \ic \ze} w(\ze) \ze^{-s} \frac{\ze+z}{\ze-z}[R_n(\ze^{2};s)-R_n(z^{2};s)], \label{Rcirc}\\
\widecheck{S}_n(z;s) &:=  \int_{\T} \frac{\dd \ze}{2 \pi \ic \ze} w(\ze) \ze^{-s} \frac{\ze^2+z^2}{\ze^2-z^2}[S_n(\ze^{-1};s)-S_n(z^{-1};s)]. \label{Scirc}
\end{align}
It is also convenient to write these associated functions as 

\noindent\begin{minipage}{.5\linewidth}
	\begin{alignat}{2}
	&\widecheck{P}_n(z;r) &&=\widehat{P}_n(z;r) - F_1(z;r)P_n(z;r), \label{Pr PPF} \\ 
	&\widecheck{Q}_n(z;r) &&=\widehat{Q}_n(z;r) - F_2(z;r)Q_n(z^{-2};r), \label{QQF}
	\end{alignat}	
\end{minipage}	
\begin{minipage}{.5\linewidth}
	\begin{alignat}{2}
	&\widecheck{R}_n(z;s) &&= \widehat{R}_n(z;s) - F_2(z;s)R_n(z^2;s), \label{RRF} \\
	&\widecheck{S}_n(z;s) &&=\widehat{S}_n(z;s) - F_1(z;s)S_n(z^{-1};s). \label{SSF}
	\end{alignat}	
\end{minipage}	
where

\noindent\begin{minipage}{.5\linewidth}
	\begin{alignat}{2}
	&\widehat{P}_n(z;r) &&:=\int_{\T} \frac{\dd \ze}{2 \pi \ic \ze} w(\ze) \ze^{-r} \frac{\ze^2+z^2}{\ze^2-z^2}P_n(\ze;r), \label{Phat} \\ 
	&\widehat{Q}_n(z;r) &&:=\int_{\T} \frac{\dd \ze}{2 \pi \ic \ze} w(\ze) \ze^{-r} \frac{\ze+z}{\ze-z}Q_n(\ze^{-2};r), \label{Qhat}
	\end{alignat}	
\end{minipage}	
\begin{minipage}{.5\linewidth}
	\begin{alignat}{2}
	&\widehat{R}_n(z;s) &&:= \int_{\T} \frac{\dd \ze}{2 \pi \ic \ze} w(\ze) \ze^{-s} \frac{\ze+z}{\ze-z}R_n(\ze^{2};s), \label{Rhat} \\
	&\widehat{S}_n(z;s) &&:=\int_{\T} \frac{\dd \ze}{2 \pi \ic \ze} w(\ze) \ze^{-s} \frac{\ze^2+z^2}{\ze^2-z^2}S_n(\ze^{-1};s). \label{Shat}
	\end{alignat}	
\end{minipage}	
and

\noindent\begin{minipage}{.5\linewidth}
	\begin{alignat}{2}
	&F_1(z;r) &&:=\int_{\T} \frac{\dd \ze}{2 \pi \ic \ze} w(\ze) \ze^{-r} \frac{\ze^2+z^2}{\ze^2-z^2}, \label{F1} 
\end{alignat}	
\end{minipage}	
\begin{minipage}{.5\linewidth}
	\begin{alignat}{2}
	&F_2(z;r) &&:=\int_{\T} \frac{\dd \ze}{2 \pi \ic \ze} w(\ze) \ze^{-r} \frac{\ze+z}{\ze-z}. \label{F2}
	\end{alignat}	
\end{minipage}
Notice that due to
\begin{equation}\label{relations}
	\frac{\ze^2+z^2}{\ze^2-z^2} = \frac{1}{2}\left[ \frac{\ze+z}{\ze-z}+\frac{\ze-z}{\ze+z} \right], 
\end{equation}
$F_1$ can be written in terms of $F_2$ as
\begin{equation}\label{F1 F2}
	F_1(z;r)=\frac{1}{2}\big(F_2(z;r)+F_2(-z;r)\big).
\end{equation}
For the same reason we have the following expressions for $\widehat{P}_n$ and $\widehat{S}_n$ 

\noindent\begin{minipage}{.5\linewidth}
	\begin{alignat}{2}
	&\widehat{P}_n(z;r) &&=\frac{1}{2}\left(P_n^{\dagger}(z;r)+P_n^{\dagger}(-z;r)\right), \label{PPP} 
	\end{alignat}	
\end{minipage}	
\begin{minipage}{.5\linewidth}
	\begin{alignat}{2}
	&\widehat{S}_n(z;s) &&=\frac{1}{2}\left(S_n^{\dagger}(z;s)+S_n^{\dagger}(-z;s)\right), \label{SSS}	
	\end{alignat}	
\end{minipage}		
where 

\noindent\begin{minipage}{.5\linewidth}
	\begin{alignat}{2}
	&P_n^{\dagger}(z;r) &&=\int_{\T} \frac{\dd \ze}{2 \pi \ic \ze} w(\ze) \ze^{-r} \frac{\ze+z}{\ze-z}P_n(\ze;r), \label{PP} 
	\end{alignat}	
\end{minipage}	
\begin{minipage}{.5\linewidth}
	\begin{alignat}{2}
	&S_n^{\dagger}(z;s) &&=\int_{\T} \frac{\dd \ze}{2 \pi \ic \ze} w(\ze) \ze^{-s} \frac{\ze+z}{\ze-z}S_n(\ze^{-1};s). \label{SS}	
	\end{alignat}	
\end{minipage}

In the following theorem we show that the associated functions above satisfy the same recurrence relations as the orthogonal polynomials. To this end, let us first define the following linear difference operators
\begin{equation}\label{L1}
		\mathcal{L}_1[f_n(z;r)] := f_{n+3}(z;r) -(\de_{n+2}^{(r)}+\de_{n+1}^{(r-1)})f_{n+2}(z;r) + (\de_{n+1}^{(r-1)}\de_{n+1}^{(r)} - z^2)f_{n+1}(z;r)+ (\de_n^{(r)}+\eta^{(r-2)}_n)z^2f_{n}(z;r),
\end{equation}
\begin{equation}\label{L2}
	\begin{split}
		\mathcal{L}_2[f_n(z;r)] & := z^2f_{n+3}(z;r) -(1+\be_{n+2}^{(r)}z^2)f_{n+2}(z;r) +  (\be^{(r)}_{n+1}+\al^{(r+1)}_{n+1}+\be^{(r+1)}_{n+1}+\al^{(r+2)}_{n+1})f_{n+1}(z;r) \\ &-(\be^{(r+1)}_{n+1}+\al^{(r+2)}_{n+1})(\be^{(r)}_{n}+\al^{(r+1)}_{n}) f_{n}(z;r),
	\end{split}
\end{equation}
\begin{equation}\label{L3}
\begin{split}
\mathcal{L}_3[f_n(z;s)] & := f_{n+3}(z;s) -\left(z^2+\varkappa^{(s)}_{n+2}\right)f_{n+2}(z;s) +\left( \varkappa^{(s)}_{n+1}+ \rho^{(s-1)}_{n+1}+\rho^{(s)}_{n+2}+\varkappa^{(s)}_{n+2}\right)z^2 f_{n+1}(z;s) \\ & -\left( \rho^{(s)}_{n+2}+\varkappa^{(s)}_{n+2}\right)\left( \varkappa^{(s)}_{n}+ \rho^{(s-1)}_{n}  \right)z^2 f_{n}(z;s),
\end{split}
\end{equation} and
\begin{equation}\label{L4}
\begin{split}
\mathcal{L}_4[f_n(z;s)] & := f_{n+3}(z;s) -\left( \ga^{(s)}_{n+2}+ \ga^{(s+1)}_{n+1}  \right)f_{n+2}(z;s) - \left(z^{-2}-\ga^{(s+1)}_{n+1} \ga^{(s)}_{n+1}\right) f_{n+1}(z;s) + \left(\theta^{(s+1)}_{n+1}+\ga^{(s+1)}_{n+1} \right) z^{-2} f_{n}(z;s).
\end{split}
\end{equation}
Therefore \eqref{P pure n rec}, \eqref{Q* pure n rec} \eqref{R pure n rec}, and \eqref{S* pure n rec} respectively read 
\begin{equation}\label{LPLQLRLS}
	\mathcal{L}_1[P_n(z;r)]=0, \qandq \mathcal{L}_2[Q_n(z^{-2};r)]=0, \qandq \mathcal{L}_3[R_n(z^{2};s)]=0, \qandq \mathcal{L}_4[S_n(z^{-1};s)]=0.
\end{equation}

\begin{theorem}\label{THM AF rec}
For $n \in \N$, in addition to \eqref{LPLQLRLS}, It holds that 
	
\noindent\begin{minipage}{.5\linewidth}
		\begin{alignat}{2}
		&\mathcal{L}_1[\widecheck{P}_n(z;r)] &&=\mathcal{L}_1[\widehat{P}_n(z;r)]=0, \label{AF P rec rel} \\ 
		&\mathcal{L}_2[\widecheck{Q}_n(z;r)] &&=\mathcal{L}_2[\widehat{Q}_n(z;r)]=0, \label{AF Q rec rel} 
		\end{alignat}	
	\end{minipage}	
\noindent\begin{minipage}{.5\linewidth}
	\begin{alignat}{2}
	&\mathcal{L}_3[\widecheck{R}_n(z;s)] &&=\mathcal{L}_3[\widehat{R}_n(z;s)]=0, \label{AF R rec rel} \\ 
	&\mathcal{L}_4[\widecheck{S}_n(z;s)] &&=\mathcal{L}_4[\widehat{S}_n(z;s)]=0. \label{AF S rec rel} 
	\end{alignat}	
\end{minipage}	
\end{theorem}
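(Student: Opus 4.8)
The plan is to peel off the ``hat'' parts of the associated functions via the splittings \eqref{Pr PPF}--\eqref{SSF} and then to annihilate those by pushing the relevant difference operator through the Cauchy-type integral and invoking orthogonality. Concretely, \eqref{Pr PPF}--\eqref{SSF} write $\widecheck P_n(z;r)=\widehat P_n(z;r)-F_1(z;r)P_n(z;r)$ and the analogues for $\widecheck Q,\widecheck R,\widecheck S$. Each operator $\mathcal L_1,\dots,\mathcal L_4$ is a difference operator in the index $n$ whose coefficients depend on $z$ and on the offsets but not on $n$, and $F_1(z;r),F_2(z;s)$ carry no $n$ either, so $\mathcal L_1[F_1(z;r)P_n(z;r)]=F_1(z;r)\,\mathcal L_1[P_n(z;r)]=0$ by \eqref{LPLQLRLS}, and likewise in the other three cases. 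Hence $\mathcal L_1[\widecheck P_n(z;r)]=\mathcal L_1[\widehat P_n(z;r)]$, etc.; this already gives the first equality in each of \eqref{AF P rec rel}--\eqref{AF S rec rel}, and reduces everything to proving $\mathcal L_1[\widehat P_n(z;r)]=\mathcal L_2[\widehat Q_n(z;r)]=\mathcal L_3[\widehat R_n(z;s)]=\mathcal L_4[\widehat S_n(z;s)]=0$.

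Take $\widehat P$ as the model. Since $\mathcal L_1$ acts only on $n$, it slips under the $\ze$-integral in \eqref{Phat}, so $\mathcal L_1[\widehat P_n(z;r)]=\int_{\T}\frac{\dd\ze}{2\pi\ic\ze}\,w(\ze)\ze^{-r}\frac{\ze^2+z^2}{\ze^2-z^2}\,\Xi_n(z,\ze)$, where $\Xi_n(z,\ze)$ is the left side of \eqref{P pure n rec} with the polynomial slots evaluated at $\ze$ but the coefficients still carrying the variable $z$. Subtracting the vanishing quantity $\mathcal L_1[P_n(\ze;r)]$ (that is, \eqref{P pure n rec} at the point $\ze$), only the two coefficients containing $z^2$ fail to cancel, giving $\Xi_n(z,\ze)=(\ze^2-z^2)\bigl[P_{n+1}(\ze;r)-(\de^{(r)}_n+\eta^{(r-2)}_n)P_n(\ze;r)\bigr]$, and the prefactor $\ze^2-z^2$ cancels the kernel's denominator. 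Combining \eqref{1st rec P} with \eqref{2nd rec P} taken at offset $r-2$ collapses the bracket to $P_{n+1}(\ze;r-2)$, leaving $\int_{\T}\frac{\dd\ze}{2\pi\ic\ze}w(\ze)\ze^{-(r-2)}P_{n+1}(\ze;r-2)+z^2\int_{\T}\frac{\dd\ze}{2\pi\ic\ze}w(\ze)\ze^{-(r-2)-2}P_{n+1}(\ze;r-2)$; both integrals vanish by \eqref{OP1} applied to $P_{n+1}(\,\cdot\,;r-2)$, since the two exponents are the $m=0$ and $m=1$ cases, both in the admissible range and both giving $\delta_{m,n+1}=0$ once $n\ge1$.

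The same three-move recipe (push the operator inside, subtract the recurrence evaluated at $\ze$, collapse the Cauchy kernel, finish with orthogonality) handles $\widehat Q,\widehat R,\widehat S$. For $\widehat Q$ the kernel $\frac{\ze+z}{\ze-z}$ times the residual factor $z^2-\ze^2$ collapses to $-(\ze+z)^2$, and expanding $(\ze+z)^2=\ze^2+2z\ze+z^2$ one finds the integrals against $Q_{n+3}(\ze^{-2};r)$ and $Q_{n+2}(\ze^{-2};r)$ meet only the moment indices $0,1,2$ of \eqref{OP2}, killed by $\delta_{m,n+j}$ for $n\ge1$ — so $\widehat Q$ closes directly. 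For $\widehat R$ and $\widehat S$ the first pass again collapses the kernel, but because $\mathcal L_3,\mathcal L_4$ carry $z^{\pm2}$ against a kernel with only a first-order Cauchy factor, one stray power $\ze^{\mp2}$ survives; the fix is to apply the pure-$n$ recurrence \eqref{R pure n rec} resp.\ \eqref{S* pure n rec} a \emph{second} time, at $\ze$, to rewrite the surviving polynomial combination with a compensating $\ze^{\pm2}$ (for instance $R_{n+2}(\ze^2;s)-AR_{n+1}(\ze^2;s)+BR_n(\ze^2;s)=\ze^{-2}[R_{n+3}(\ze^2;s)-\varkappa^{(s)}_{n+2}R_{n+2}(\ze^2;s)]$, with $A,B$ the coefficients appearing in \eqref{R pure n rec}, and the analogue for $S$); the $\ze^{\mp2}$ then cancels and the remaining integrals vanish by \eqref{OP1 R}, \eqref{OP2 S} because the indices $n+2,n+3$ overshoot the moment indices $0,1,2$ that occur.

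I expect the main obstacle to be precisely this second invocation of the recurrence in the $R$ and $S$ cases: one has to keep careful track of the offsets and of exactly which finitely many orthogonality indices are hit, so as to confirm that $n\ge1$ already suffices (as the statement asserts) and that no boundary terms leak through. Everything else is routine — interchanging a finite sum with an integral, one polynomial identity obtained by combining first-order or pure-$n$ recurrences, and the orthogonality relations \eqref{OP1}--\eqref{OP2 S} — and it runs in close parallel to the corresponding Toeplitz computation.
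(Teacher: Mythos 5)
Your argument is correct, and up to the point where the Cauchy kernel is cancelled it coincides with the paper's: both proofs push the difference operator under the integral, subtract the recurrence evaluated at $\ze$, and use the surviving factor $\ze^2-z^2$ (resp.\ $z^2-\ze^2$, $z^{-2}-\ze^{-2}$) to clear the denominator. You part ways at the last step. The paper evaluates the leftover ``overshooting'' moments such as $\int_{\T}\frac{\dd\ze}{2\pi\ic\ze}\,w(\ze)\ze^{2-r}P_{n+1}(\ze;r)$ explicitly as ratios of determinants via the bordered representation \eqref{OP11} and then kills the resulting combination with a Dodgson condensation identity; for $\widehat R$ it even switches to the kernel $\tfrac{1}{\ze-z}$ to avoid bordered $j-2k$ determinants. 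You instead collapse the leftover polynomial combination once more using recurrences already established: $P_{n+1}(\ze;r)-(\de^{(r)}_n+\eta^{(r-2)}_n)P_n(\ze;r)=P_{n+1}(\ze;r-2)$ is precisely the combination of \eqref{1st rec P} and \eqref{2nd rec P} derived inside the proof of Theorem \ref{THM degree rec 2j-k}, and for $R,S$ a second application of the pure-degree recurrence at $\ze$ trades the stray $\ze^{\mp2}$ (I checked $\ze^{2}\bigl[R_{n+2}-AR_{n+1}+BR_{n}\bigr]=R_{n+3}-\varkappa^{(s)}_{n+2}R_{n+2}$ against \eqref{R pure n rec}, and the $S$ analogue against \eqref{S* pure n rec}); after that only moment indices $m\in\{0,1,2\}$, all within the admissible ranges of \eqref{OP1}--\eqref{OP2 S}, occur, and everything vanishes by orthogonality once $n\ge1$, matching the statement. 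This buys a uniform treatment of all four cases with no determinant evaluations and no Dodgson identity (that identity is in effect subsumed in the compatibility of the two first-order recurrences), at the price of one extra recurrence manipulation. One small misstatement to fix: the coefficients of $\mathcal L_1,\dots,\mathcal L_4$ certainly do depend on $n$ (e.g.\ $\de^{(r)}_{n+2}$); what your opening reduction actually uses is only that $F_1,F_2$ are $n$-independent, so that $\mathcal L_i$ commutes with multiplication by them --- the conclusion $\mathcal L_1[\widecheck P_n]=\mathcal L_1[\widehat P_n]$ and its analogues are unaffected.
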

\begin{proof}
We have
	\begin{equation}
	\begin{split}
	& \mathcal{L}_1[\widecheck{P}_n(z;r)]  = \int_{\T} \frac{\dd \ze}{2 \pi \ic \ze} w(\ze) \ze^{-r} \frac{\ze^2+z^2}{\ze^2-z^2} \bigg( P_{n+3}(\ze;r) -(\de_{n+2}^{(r)}+\de_{n+1}^{(r-1)})P_{n+2}(\ze;r) + (\de_{n+1}^{(r-1)}\de_{n+1}^{(r)} - z^2)P_{n+1}(\ze;r) \\ & + (\de_n^{(r)}+\eta^{(r-2)}_n)z^2P_{n}(\ze;r)  -P_{n+3}(z;r) +(\de_{n+2}^{(r)}+\de_{n+1}^{(r-1)})P_{n+2}(z;r) - (\de_{n+1}^{(r-1)}\de_{n+1}^{(r)} - z^2)P_{n+1}(z;r)- (\de_n^{(r)}+\eta^{(r-2)}_n)z^2P_{n}(z;r) \bigg).
	\end{split}
	\end{equation}
Writing $z^2 = \ze^2 + (z^2-\ze^2)$  and using \eqref{P pure n rec} for polynomials in $z$ and $\ze$ we obtain
	\begin{equation}\label{LN}
	\begin{split}
	\mathcal{L}_1[\widecheck{P}_n(z;r)] & = \int_{\T} \frac{\dd \ze}{2 \pi \ic \ze} w(\ze) \ze^{-r} \frac{\ze^2+z^2}{\ze^2-z^2} \bigg( -(z^2-\ze^2) P_{n+1}(\ze;r)+(z^2-\ze^2)(\de_n^{(r)}+\eta^{(r-2)}_n)P_n(\ze;r) \bigg) \\ & =  \int_{\T} \frac{\dd \ze}{2 \pi \ic \ze} w(\ze) \ze^{-r} (\ze^2+z^2) \bigg(  P_{n+1}(\ze;r)-(\de_n^{(r)}+\eta^{(r-2)}_n)P_n(\ze;r) \bigg) \\ & = \int_{\T} \frac{\dd \ze}{2 \pi \ic \ze} w(\ze) \ze^{2-r} P_{n+1}(\ze;r) -(\de_n^{(r)}+\eta^{(r-2)}_n) \int_{\T} \frac{\dd \ze}{2 \pi \ic \ze} w(\ze) \ze^{2-r} P_n(\ze;r) \\ & = (-1)^n \left(- \frac{D^{(r-2)}_{n+2}}{D^{(r)}_{n+1}}- (\de_n^{(r)}+\eta^{(r-2)}_n) \frac{D^{(r-2)}_{n+1}}{D^{(r)}_{n}} \right),
	\end{split}
	\end{equation}
where we have used 
	\begin{equation}
	\int_{\T} \frac{\dd \ze}{2 \pi \ic \ze} w(\ze) \ze^{2-r} P_n(\ze;r) = (-1)^n\frac{D^{(r-2)}_{n+1}}{D^{(r)}_{n}},
	\end{equation}
which can be seen from \eqref{OP11}. Indeed, using the determinantal representation of $P_n(z;r)$ we can write
	\begin{equation}
	\begin{split}
	\int_{\T} \frac{\dd \ze}{2 \pi \ic \ze} w(\ze) \ze^{2-r} P_n(\ze;r) & = \frac{1}{D_{n}^{(r)}} \det \begin{pmatrix}
	w_{r} & w_{r-1} &  \cdots & w_{r-n} \\
	w_{r+2} & w_{r+1}  & \cdots & w_{r-n+2} \\
	\vdots & \vdots &  \vdots & \vdots \\
	w_{r+2n-2} & w_{r+2n-3} &  \cdots & w_{r+n-2} \\
	\int_{\T} \frac{\dd \ze}{2 \pi \ic \ze} w(\ze) \ze^{2-r} & \int_{\T} \frac{\dd \ze}{2 \pi \ic \ze} w(\ze) \ze^{3-r} & \cdots  & \int_{\T} \frac{\dd \ze}{2 \pi \ic \ze} w(\ze) \ze^{n+2-r}
	\end{pmatrix} \\ & = \frac{1}{D_{n}^{(r)}} \det \begin{pmatrix}
	w_{r} & w_{r-1}  & \cdots & w_{r-n} \\
	w_{r+2} & w_{r+1}  & \cdots & w_{r-n+2} \\
	\vdots & \vdots  & \vdots & \vdots \\
	w_{r+2n-2} & w_{r+2n-3} &  \cdots & w_{r+n-2} \\
	w_{r-2} & w_{r-3}  & \cdots  & w_{r-n-2}
	\end{pmatrix} =  (-1)^n\frac{D_{n+1}^{(r-2)}}{D_{n}^{(r)}}.
	\end{split}
	\end{equation}
Now we show that $\mathcal{L}_1[\widecheck{P}_n(z;r)]=0$. Recalling \eqref{delta},\eqref{eta}, and \eqref{h} we can rewrite \eqref{LN} after simplifications as
	\begin{equation}\label{LN1}
	\mathcal{L}_1[\widecheck{P}_n(z;r)] =	 \frac{(-1)^{n+1}}{D^{(r)}_{n+1}D^{(r-1)}_{n}} \left(D^{(r-2)}_{n+2}D^{(r-1)}_{n} - D^{(r-2)}_{n+1}D^{(r-1)}_{n+1} + D^{(r-3)}_{n+1}D^{(r)}_{n+1}  \right). 
	\end{equation}
Now consider the following Dodgson condensation identity:
	\begin{equation}
	\begin{split}
	\boldsymbol{\mathscr{D}}_{r-2} \left\lbrace \begin{matrix}  n+2 \\  n+2 \end{matrix} \right\rbrace 	\boldsymbol{\mathscr{D}}_{r-2} \left\lbrace \begin{matrix}  0 & n+1 & n+2 \\  0 & n+1 & n+2 \end{matrix} \right\rbrace  & = 	\boldsymbol{\mathscr{D}}_{r-2} \left\lbrace \begin{matrix}  0 & n+2 \\ 0 & n+2 \end{matrix} \right\rbrace 	\boldsymbol{\mathscr{D}}_{r-2} \left\lbrace \begin{matrix}   n+1 & n+2 \\  n+1 & n+2 \end{matrix} \right\rbrace  \\ & - \boldsymbol{\mathscr{D}}_{r-2} \left\lbrace \begin{matrix}  n+1 & n+2 \\ 0 & n+2 \end{matrix} \right\rbrace 	\boldsymbol{\mathscr{D}}_{r-2} \left\lbrace \begin{matrix}  0 & n+2 \\  n+1 & n+2 \end{matrix} \right\rbrace,
	\end{split}
	\end{equation}
which can be written as $$ D^{(r-2)}_{n+2}D^{(r-1)}_{n} =   D^{(r-2)}_{n+1}D^{(r-1)}_{n+1} - D^{(r-3)}_{n+1}D^{(r)}_{n+1}.$$Combining this with \eqref{LN1} yields $\mathcal{L}_1[\widecheck{P}_n(z;r)] = 0$. It is obvious that the same argument, in particular, shows that $	\mathcal{L}_1[\widehat{P}_n(z;r)] = 0$. The proof of \eqref{AF Q rec rel}, \eqref{AF R rec rel}, and \eqref{AF S rec rel} follow from similar considerations and we do not provide the details here. We just point out that the proof of \eqref{AF R rec rel} is achieved much more easily if one first proves that \[  \int_{\T} \frac{\dd \ze}{2 \pi \ic \ze} w(\ze) \ze^{-s} \frac{1}{\ze-z}R_n(\ze^{2};s) \] is annihilated by \eqref{L3}, and then deduce \eqref{AF R rec rel} simply in view of \eqref{relations}. Following this path one needs to show that 
	\[ \int_{\T} \frac{\dd\mu(\ze;s)}{2 \pi \ic \ze} (\ze+z) \left[ R_{n+2}(\ze^{2};s) - \left( \varkappa^{(s)}_{n+1}+ \rho^{(s-1)}_{n+1}+\rho^{(s)}_{n+2}+\varkappa^{(s)}_{n+2}\right) R_{n+1}(\ze^{2};s) + \left( \rho^{(s)}_{n+2}+\varkappa^{(s)}_{n+2}\right)\left( \varkappa^{(s)}_{n}+ \rho^{(s-1)}_{n}  \right) R_{n}(\ze^{2};s) \right],  \]
vanishes ($\dd\mu(\ze;s)=w(\ze)\ze^{-s}\dd \ze$), while in the "direct" proof to show that \eqref{Rhat} is annihilated by \eqref{L3}, one is required to show that the expression above is zero when we replace the $(\ze+z)$ by $(\ze+z)^2$. 
This is not as easy, since the presence of the $\ze^2$ in the integrand requires us to deal with \textit{bordered} $j-2k$ determinants.
\end{proof} 
One is expected to find three linearly independent solutions to the third order linear difference equations $\mathcal{L}_i[f_n(z;r)]=0$, $i=1,2,3,4$. The following corollary gives a natural fundamental set of solutions for these difference equations, which can be immediately concluded from Theorem \ref{THM AF rec} due to presence of $z^2$, as opposed to odd functions of $z$, in the linear difference operators \eqref{L1} through \eqref{L4}.
\begin{corollary}
	A fundamental set of solutions for the recurrence relations \eqref{L1}, \eqref{L2}, \eqref{L3} and \eqref{L4} is respectively given by $\left\{ P_n(z;r), P_n(-z;r), \widehat{P}_n(z;r)\right\}$, $\left\{Q_n(z^{-2};r), \widehat{Q}_n(z;r), \widehat{Q}_n(-z;r)\right\}$, $\left\{R_n(z^2;s), \widehat{R}_n(z;s), \widehat{R}_n(-z;s)\right\}$ and $\left\{S_n(z^{-1};s), S_n(-z^{-1};s), \widehat{S}_n(z;s)\right\}$, where $\widehat{P}_n(z;r)$, $\widehat{Q}_n(z;r)$, $\widehat{R}_n(z;s)$ and $\widehat{S}_n(z;r)$ are given by \eqref{Phat}, \eqref{Qhat}, \eqref{Rhat}, and \eqref{Shat}.
	
\end{corollary}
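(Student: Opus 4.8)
The plan is to check the two defining properties of a fundamental set, exploiting that each $\mathcal{L}_i$ in \eqref{L1}--\eqref{L4} is a third order difference operator whose coefficient of $f_{n+3}$ is identically $1$; hence, for fixed values of $z$ and the offset, the solution space (sequences $(f_n)_{n\ge 0}$) is exactly three-dimensional and the evaluation map $(f_n)_n\mapsto (f_0,f_1,f_2)$ is an isomorphism onto $\C^3$, so it suffices to exhibit three linearly independent solutions.

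For the first property take $\mathcal{L}_1$: by \eqref{LPLQLRLS} and Theorem \ref{THM AF rec} both $P_n(z;r)$ and $\widehat P_n(z;r)$ are annihilated by $\mathcal{L}_1$. Every coefficient in \eqref{L1} is a polynomial in $z^2$, hence even in $z$, so $\mathcal{L}_1[g_n(-z)]=0$ whenever $\mathcal{L}_1[g_n(z)]=0$; applied to $g_n=P_n(\cdot;r)$ this supplies the third solution $P_n(-z;r)$, while applied to $\widehat P_n$ it gives nothing new since $\widehat P_n(z;r)$ depends on $z$ only through $z^2$, by \eqref{Phat}. The same reasoning covers $\mathcal{L}_2,\mathcal{L}_3,\mathcal{L}_4$, with the roles redistributed exactly as in the remark preceding the statement: in \eqref{L2}, \eqref{L3} the polynomial solutions $Q_n(z^{-2};r)$, $R_n(z^2;s)$ are already even in $z$, so the extra solution is $\widehat Q_n(-z;r)$, resp.\ $\widehat R_n(-z;s)$ — which genuinely differ from $\widehat Q_n(z;r)$, $\widehat R_n(z;s)$ because the kernel $(\ze+z)/(\ze-z)$ in \eqref{Qhat}, \eqref{Rhat} is not even in $z$ — whereas in \eqref{L4} it is $\widehat S_n(z;s)$ that is even (it depends on $z^2$, by \eqref{Shat}), so the extra solution is $S_n(-z^{-1};s)$. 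This reproduces the four sets in the statement.

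For linear independence I would use that $\iota\colon(f_n)\mapsto(f_n(-z))$ is, again because each $\mathcal{L}_i$ has $z$-even coefficients, an involution of the three-dimensional solution space, splitting it into $\pm1$-eigenspaces. For $\mathcal{L}_1$: $\widehat P_n(z;r)$ lies in the $+1$-eigenspace, and the even and odd parts $\tfrac{1}{2}(P_n(z;r)\pm P_n(-z;r))$ distribute $P_n(\pm z;r)$ one into each eigenspace; since $P_0\equiv1$ and $P_1(z;r)=z+\de^{(r)}_0$ (by Lemma \ref{polytails-reccoeff}), neither part is the zero sequence, so the $-1$-eigenspace is one-dimensional and it remains to see that the even part of $P_n$ and $\widehat P_n(z;r)$ are $\C$-independent as sequences. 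Evaluating at $n=0,1$ this reduces to
\begin{equation*}
\widehat P_1(z;r)-\de^{(r)}_0\,\widehat P_0(z;r)=\int_{\T}\frac{\dd\ze}{2\pi\ic\ze}\,w(\ze)\,\ze^{1-r}\,\frac{\ze^2+z^2}{\ze^2-z^2}=w_{r-1}+2\sum_{k\ge1}w_{r+2k-1}\,z^{2k}
\end{equation*}
not vanishing identically in $z$, which holds under the same kind of genericity on $w$ already assumed for $D^{(r)}_n\neq0$ (and is transparent for the weights of \S\ref{Sec Exp weight}); the other three operators are handled identically, using $Q_n(0;r)=\be^{(r)}_0$, $R_n(0;s)=\varkappa^{(s)}_0$, $S_n(0;s)=\ga^{(s)}_0$. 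The one point that genuinely needs care — the main obstacle — is precisely this last non-vanishing, i.e.\ that the transcendental associated function $\widehat{\,\cdot\,}_n$ is truly independent of the two polynomial solutions; equivalently one can run a Casoratian argument, checking that the Casoratian $W_n$ of the three $\mathcal{L}_1$-solutions satisfies $W_{n+1}=-(\de^{(r)}_n+\eta^{(r-2)}_n)z^2W_n$ and is therefore nowhere zero once $W_0\neq0$ and the multiplier never vanishes, which again asks for a mild non-degeneracy hypothesis on $w$ that I would state explicitly.
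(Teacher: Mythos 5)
Your argument for membership --- each $\mathcal{L}_i$ has coefficients even in $z$, so $f_n(-z)$ solves whenever $f_n(z)$ does, and Theorem \ref{THM AF rec} together with \eqref{LPLQLRLS} supplies the polynomial and hatted solutions --- is exactly the paper's reasoning; the authors state that the corollary follows ``immediately \dots due to presence of $z^2$'' in \eqref{L1}--\eqref{L4}, and your bookkeeping of which member of each triple is already even (so that negation of $z$ produces something new only for the non-even one) is the correct way to fill that in. Where you go beyond the paper is the linear-independence half. The paper does not prove independence inside the corollary at all; it defers the issue to the subsequent Casoratian computations, namely Lemma \ref{Abel}, whose recurrence $\det\boldsymbol{\mathfrak{P}}_{n+1}=-(\de_n^{(r)}+\eta_n^{(r-2)})z^2\det\boldsymbol{\mathfrak{P}}_n$ is precisely the relation you write for $W_n$, and Lemma \ref{Lem Cas 0}, which evaluates $\det\boldsymbol{\mathfrak{P}}_0(z;r)=-2z\bigl(w_{r-2}+w_rz^2\bigr)$ and so makes explicit the same mild non-degeneracy condition on the moments that you flag (here: $w_{r-2}$ and $w_r$ not both zero, plus non-vanishing of the multipliers $\de_\ell^{(r)}+\eta_\ell^{(r-2)}$). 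Your parity-eigenspace decomposition is a slightly different packaging of that independence check --- it isolates the genuinely delicate point, the independence of $\widehat{P}_n$ from the even part of $P_n$, and verifies it from the $n=0,1$ data rather than from the full $3\times3$ Casoratian --- but it leads to the same moment condition and is correct. The only caveat worth recording is the one you already state yourself: as in the paper, ``fundamental set'' holds only under a non-degeneracy hypothesis on $w$ (and for $z\neq 0,\infty$), which neither the corollary's statement nor its one-line justification makes explicit.
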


In Theorem \ref{Thm Mult Int Polys}, Eqs.\eqref{MultIntPolysP} through \eqref{MultIntPolysS}, we gave representations of the bi-orthogonal polynomials as averages of characteristic polynomials
and in the following result we give the analogous result for the associated functions as averages of reciprocals of characteristic polynomials.
This also corresponds to formulae Eq.(2.34,35) of \cite{FW_2006} or Eq.(3.3,4) of \cite{W_2009} for the Toeplitz case.
\begin{theorem}
We have the following multiple integral representations for associated functions $\widehat{P}_n(z;r)$, $\widehat{Q}_n(z;r)$, $\widehat{R}_n(z;s)$, and $\widehat{S}_n(z;s)$ defined respectively by \eqref{Phat}, \eqref{Qhat}, \eqref{Rhat}, and \eqref{Shat}:
	
\noindent\begin{minipage}{.5\linewidth}
		\begin{alignat}{2}
		&\widehat{P}_n(z;r) &&= \frac{2z^{-2}}{D^{(r)}_n} \mathcal{D}_{n+1}\left[ \frac{w(\ze)\ze^{-r}}{z^{-2}-\ze^{-2}}\right]-w_r \de_{n0}, \label{MultIntAFP} \\
		& \widehat{Q}_n(z;r)  &&= -\frac{2z}{D^{(r)}_n} \mathcal{D}_{n+1}\left[\frac{w(\ze)\ze^{-r}}{z-\ze}\right]+w_r \de_{n0}, \label{MultIntAFQ}
		\end{alignat}	
	\end{minipage}	
	\begin{minipage}{.5\linewidth}
		\begin{alignat}{2}
		&\widehat{R}_n(z;s) &&= \frac{2z^{-1}}{E^{(s)}_n} \mathcal{E}_{n+1}\left[ \frac{w(\ze)\ze^{-s}}{z^{-1}-\ze^{-1}}\right]-w_s \de_{n0}, \label{MultIntAFR} \\
		& \widehat{S}_n(z;s)  &&= -\frac{2z^{2}}{E^{(s)}_n} \mathcal{E}_{n+1}\left[ \frac{w(\ze)\ze^{-s}}{z^{2}-\ze^{2}}\right]+w_s \de_{n0}. \label{MultIntAFS}
		\end{alignat}	
	\end{minipage}
\end{theorem}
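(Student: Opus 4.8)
The plan is to prove \eqref{MultIntAFP}--\eqref{MultIntAFS} by the same circle of ideas used to establish the multiple integral formulae for the bi-orthogonal polynomials (Theorem \ref{Thm Mult Int Polys}): expand the rational kernel $\tfrac{\ze^2+z^2}{\ze^2-z^2}$ (or $\tfrac{\ze+z}{\ze-z}$) in a way that turns the defining integral of $\widehat{P}_n$ into a determinant against the Vandermonde-type factor, then use the bi-orthogonality to collapse the determinant. Concretely, I would start from the representation \eqref{Phat}, insert the polynomial multiple-integral formula \eqref{MultIntPolysP} for $P_n(\ze;r)$ under the outer $\ze$-integral, and recognise the resulting $(n+1)$-fold integral as $\mathcal{D}_{n+1}$ acting on $w(\ze)\ze^{-r}$ times the rational kernel, with the variable $z$ playing the role of an $(n+1)$-st ``external'' eigenvalue. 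The identity \eqref{relations} reduces the even kernel $F_1$-type integrals to the odd $F_2$-type ones, so the first simplification is to work with $\tfrac{1}{z-\ze}$ (resp.\ $\tfrac{1}{z^{-1}-\ze^{-1}}$) instead.

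The key algebraic step is the partial-fraction/geometric identity for the Cauchy kernel inside the determinantal setup: writing
\begin{equation}
\frac{1}{z-\ze} = \sum_{\ell=0}^{n}\frac{\ze^{\ell}}{z^{\ell+1}} + \frac{\ze^{n+1}}{z^{n+1}(z-\ze)},
\end{equation}
one sees that only the tail term $\tfrac{\ze^{n+1}}{z^{n+1}(z-\ze)}$ survives after pairing against a degree-$n$ orthogonal polynomial and its companion (the polynomial ``eats'' the first $n+1$ moments). This is exactly the mechanism producing the $-w_r\delta_{n0}$ correction terms: for $n=0$ the lowest-order monomial in the expansion is not orthogonal away and contributes $w_r$. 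I would carry this out as follows: (i) substitute \eqref{MultIntPolysP} into \eqref{Phat}; (ii) Fubini to interchange the $\ze$-integral with the $n$-fold $\mathcal{D}_n$ integral; (iii) recognise that adjoining $z^{-1}$ (resp.\ $-z^{-1}$, etc.) as an extra node builds up $\prod_{0\le j<k\le n}(\ze_k-\ze_j)$ and $\prod_{1\le j<k\le n}(\ze_k^{-2}-\ze_j^{-2})$, i.e.\ the $\mathcal{D}_{n+1}$ weight with an $(n+1)$-st variable fixed at a function of $z$; (iv) keep track of the Jacobian/monomial prefactors $z^{\pm 1}$, $z^{\pm 2}$ that arise from the substitutions $z-\ze^{-1}=-\ze^{-1}z(z^{-1}-\ze)$ and $z^2-\ze^{-2} = z^2\ze^{-2}(z^{-1}-\ze)(-z^{-1}-\ze)$ already used elsewhere in the paper; (v) expand into a $\det$ of reproducing-kernel/moment entries and apply \eqref{PQorth} to collapse, as in the proofs of Theorems \ref{Thm Mult Int Polys} and in subsection \ref{subsec CD}.

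For $\widehat{P}_n$ and $\widehat{S}_n$ the even kernel forces us to use \eqref{PPP}--\eqref{SSS}: first prove the $F_2$-type (odd-kernel) version, namely $P_n^{\dagger}(z;r) = -\tfrac{2z}{D_n^{(r)}}\mathcal{D}_{n+1}\!\big[\tfrac{w(\ze)\ze^{-r}}{z-\ze}\big]\big|_{\text{odd part}} + \cdots$, and then symmetrise via \eqref{PPP}; since $\mathcal{D}_{n+1}$ already contains the factor $(\ze_{n+1}-\ze_j)$ with the appropriate parity, the average $\tfrac12(z+(-z))$ automatically picks out $\tfrac{z^2}{z^2-\ze^2}$-type kernels and reproduces \eqref{MultIntAFP}, \eqref{MultIntAFS}. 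The cases \eqref{MultIntAFQ}, \eqref{MultIntAFR} are more direct because the kernels $\tfrac{\ze+z}{\ze-z}$ in \eqref{Qhat}, \eqref{Rhat} are already the ``odd'' ones, so one applies \eqref{MultIntPolysQ}, \eqref{MultIntPolysR} verbatim. The $j-2k$ formulas \eqref{MultIntAFR}, \eqref{MultIntAFS} follow mutatis mutandis using \eqref{RSorth}, \eqref{H}, and the $\mathcal{E}_{n+1}$ analogues, or alternatively via the duality $\mathcal{D}_n \leftrightarrow \mathcal{E}_n$ established in Section \ref{Sec MultInt}.

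The main obstacle I anticipate is the careful bookkeeping of the $\delta_{n0}$ boundary terms and the precise powers of $z$: the geometric-series remainder leaves a ``polynomial part'' $\sum_{\ell}\ze^\ell/z^{\ell+1}$ whose pairing against $P_n(\ze;r)$ vanishes for $n\ge 1$ by orthogonality \eqref{OP11} but contributes $\int_{\T} w(\ze)\ze^{-r}\,\tfrac{\dd\ze}{2\pi i\ze} = w_r$ when $n=0$, and one must verify the sign and the normalisation $1/D_n^{(r)}$ survive correctly through the determinant collapse --- in particular that $D_0^{(r)}=1$ and $D_1^{(r)}=w_r$ make the $n=0$ case consistent. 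A secondary subtlety, already flagged at the end of the proof of Theorem \ref{THM AF rec}, is that the even kernels would naively produce \emph{bordered} $2j-k$/$j-2k$ determinants; routing everything through the odd kernel via \eqref{relations}, \eqref{PPP}, \eqref{SSS} is what keeps the computation inside the already-established machinery and avoids that complication.
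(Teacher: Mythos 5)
Your overall plan --- collapse the $(n{+}1)$-fold integral onto a single Cauchy-kernel integral against the degree-$n$ polynomial, reduce the even kernels to the odd ones via \eqref{relations}, \eqref{PPP}--\eqref{SSS}, and account for a $\de_{n0}$ boundary term --- has the right shape, but the two technical mechanisms you name are not the ones that make it work. The central gap is your step (iii): treating $z$ (or $\pm z^{-1}$) as an adjoined node of the Vandermonde product is the mechanism of Theorem \ref{Thm Mult Int Polys} and of the Christoffel--Darboux computation, where the external variable enters through \emph{numerator} factors $\prod_j(z-\ze_j)$ that literally extend $\prod_{1\le j<k\le n}(\ze_k-\ze_j)$ to $\prod_{0\le j<k\le n}(\ze_k-\ze_j)$. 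In $\mathcal{D}_{n+1}\bigl[w(\ze)\ze^{-r}/(z-\ze)\bigr]$ the factor $\prod_{j=1}^{n+1}(z-\ze_j)^{-1}$ sits in the \emph{denominator}, and no choice of extra node reproduces it. The identity actually needed is the symmetric (Lagrange-type) partial fraction decomposition
\begin{equation*}
\prod_{j=1}^{n+1}\frac{1}{z-\ze_j}=(-1)^n\sum_{m=1}^{n+1}\frac{1}{(z-\ze_m)\underset{\ell\neq m}{\prod}(\ze_{\ell}-\ze_m)},
\end{equation*}
which by symmetry of the remaining integrand reduces to $n{+}1$ copies of the $m=n{+}1$ term; the denominator $\prod_{\ell\le n}(\ze_\ell-\ze_{n+1})$ then cancels into the Vandermonde, leaving $(-1)^n\prod_{1\le j<k\le n}(\ze_k-\ze_j)$ together with the factor $\prod_{j\le n}(\ze^{-2}_{n+1}-\ze^{-2}_j)$, whose $n$-fold integral is $D^{(r)}_nQ_n(\ze^{-2}_{n+1};r)$ by \eqref{MultIntPolysQ}. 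Without this step your argument never connects the product of $n{+}1$ Cauchy kernels in $\mathcal{D}_{n+1}$ to the single Cauchy kernel in the definition of the associated function; your single-variable geometric series cannot substitute for it.

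Second, your mechanism for the $w_r\de_{n0}$ terms is incorrect. You claim the polynomial part $\sum_{\ell=0}^{n}\ze^{\ell}/z^{\ell+1}$ of the expansion is ``eaten'' by orthogonality for $n\ge1$; but by \eqref{OP2} the pairing of $Q_n(\ze^{-2};r)$ with $\ze^{\ell-r}$ vanishes only for $\ell=0,\dots,n-1$ and equals $h^{(r)}_n\neq0$ at $\ell=n$, so that part contributes $h^{(r)}_nz^{-n-1}$ for \emph{every} $n$, not only $n=0$. The correct and much simpler source of the correction is the elementary split $\frac{1}{z-\ze}=\frac{1}{2z}\pr{\frac{z+\ze}{z-\ze}+1}$ applied after the collapse: the ``$+1$'' piece paired with $Q_n(\ze^{-2};r)$ gives $h^{(r)}_n\de_{n0}=w_r\de_{n0}$ by the $m=0$ case of \eqref{OP2}, while the other piece is proportional to $\widehat{Q}_n(z;r)$ by \eqref{Qhat}, yielding \eqref{MultIntAFQ}. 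With these two repairs the rest of your outline (duality for the $j-2k$ cases, symmetrisation via \eqref{PPP}, \eqref{SSS} for the even kernels) does go through.
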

\begin{proof}
We show the proof only for $\widehat{Q}_n(z;r)$ as the proofs for the others are similar. Using \eqref{DD} we have
		\[ \mathcal{D}_{n+1}\left[\frac{w(\ze)\ze^{-r}}{z-\ze}\right] = \frac{1}{(n+1)!} \int_{\T} \frac{\dd \ze_1}{2 \pi \ic \ze_1} \cdots \int_{\T} \frac{\dd \ze_{n+1}}{2 \pi \ic \ze_{n+1}} \prod_{j=1}^{n+1} \frac{w(\ze_j)\ze^{-r}_j}{z-\ze_j} \prod_{1\leq j<k\leq n+1} (\ze_k-\ze_j)(\ze^{-2}_k-\ze^{-2}_j). \]
Using the following partial fraction decomposition
		\[\prod_{j=1}^{n+1} \frac{1}{z - \ze_j} = (-1)^n \sum_{m=1}^{n+1} \frac{1}{(z - \ze_m)\underset{\substack{1 \leq \ell \leq n+1 \\ \ell \neq m}}{\prod} (\ze_{\ell}-\ze_m)}, \]
we can write 
		\begin{equation}
			\mathcal{D}_{n+1}\left[\frac{w(\ze)\ze^{-r}}{z-\ze}\right] = \frac{(-1)^n}{(n+1)!} \sum_{m=1}^{n+1} \int_{\T} \frac{\dd \ze_1}{2 \pi \ic \ze_1} \cdots \int_{\T} \frac{\dd \ze_{n+1}}{2 \pi \ic \ze_{n+1}} \prod_{j=1}^{n+1} w(\ze_j)\ze^{-r}_j \frac{ \underset{1\leq j<k\leq n+1}{\prod} (\ze_k-\ze_j)(\ze^{-2}_k-\ze^{-2}_j)}{(z - \ze_m)\underset{\substack{1 \leq \ell \leq n+1 \\ \ell \neq m}}{\prod} (\ze_{\ell}-\ze_m)}.
		\end{equation}
By symmetry, for each choice of $m$ we have the same object, so we set $m = n+1$ and write
		\begin{equation}
		\mathcal{D}_{n+1}\left[\frac{w(\ze)\ze^{-r}}{z-\ze}\right] = \frac{(-1)^n}{n!} \int_{\T} \frac{\dd \ze_1}{2 \pi \ic \ze_1} \cdots \int_{\T} \frac{\dd \ze_{n+1}}{2 \pi \ic \ze_{n+1}} \prod_{j=1}^{n+1} w(\ze_j)\ze^{-r}_j \frac{\underset{1\leq j<k\leq n+1}{\prod} (\ze_k-\ze_j)(\ze^{-2}_k-\ze^{-2}_j)}{(z - \ze_{n+1})\underset{\substack{1 \leq \ell \leq n}}{\prod} (\ze_{\ell}-\ze_{n+1})}.
		\end{equation}
Notice that
		\begin{equation}
			\frac{\underset{1\leq j<k\leq n+1}{\prod} (\ze_k-\ze_j)}{\underset{\substack{1 \leq \ell \leq n}}{\prod} (\ze_{\ell}-\ze_{n+1})} = \frac{\underset{1\leq j<k\leq n}{\prod} (\ze_k-\ze_j)\underset{1\leq j\leq n}{\prod} (\ze_{n+1}-\ze_j)}{\underset{\substack{1 \leq \ell \leq n}}{\prod} (\ze_{\ell}-\ze_{n+1})} = (-1)^n \underset{1\leq j<k\leq n}{\prod} (\ze_k-\ze_j).
		\end{equation}
Therefore
			\begin{equation}
			\begin{split}
			\mathcal{D}_{n+1}\left[\frac{w(\ze)\ze^{-r}}{z-\ze}\right] & = \frac{1}{n!} \int_{\T} \frac{\dd \ze_1}{2 \pi \ic \ze_1} \cdots \int_{\T} \frac{\dd \ze_{n+1}}{2 \pi \ic \ze_{n+1}} \prod_{j=1}^{n+1} w(\ze_j)\ze^{-r}_j \frac{\underset{1\leq j<k\leq n}{\prod} (\ze_k-\ze_j) \underset{1\leq j<k\leq n+1}{\prod} (\ze^{-2}_k-\ze^{-2}_j)}{(z - \ze_{n+1})} \\ & = \frac{1}{n!} \int_{\T} \frac{\dd \ze_1}{2 \pi \ic \ze_1} \cdots \int_{\T} \frac{\dd \ze_{n+1}}{2 \pi \ic \ze_{n+1}} \frac{w(\ze_{n+1})\ze^{-r}_{n+1}}{z - \ze_{n+1}} \prod_{j=1}^{n} w(\ze_j)\ze^{-r}_j (\ze^{-2}_{n+1}-\ze^{-2}_j) \underset{1\leq j<k\leq n}{\prod} (\ze_k-\ze_j) (\ze^{-2}_k-\ze^{-2}_j).
			\end{split}
		\end{equation}
Now, recalling \eqref{DD} and \eqref{MultIntPolysQ} we observe that
		\begin{equation}
		\begin{split}
			\frac{1}{n!} \int_{\T} \frac{\dd \ze_1}{2 \pi \ic \ze_1} \cdots \int_{\T} \frac{\dd \ze_{n}}{2 \pi \ic \ze_{n}}  \prod_{j=1}^{n} w(\ze_j)\ze^{-r}_j (\ze^{-2}_{n+1}-\ze^{-2}_j) \underset{1\leq j<k\leq n}{\prod} (\ze_k-\ze_j) (\ze^{-2}_k-\ze^{-2}_j) & = \mathcal{D}_n[w(\ze)\ze^{-r}(\ze^{-2}_{n+1}-\ze^{-2})] \\ & = D^{(r)}_nQ_n(\ze^{-2}_{n+1};r). 
		\end{split}
		\end{equation}
		Hence
			\begin{equation}
		\mathcal{D}_{n+1}\left[\frac{w(\ze)\ze^{-r}}{z-\ze}\right] = D^{(r)}_n \int_{\T} \frac{\dd \ze}{2 \pi \ic \ze} \frac{w(\ze)\ze^{-r}}{z - \ze}Q_n(\ze^{-2};r).
		\end{equation}
Now \eqref{MultIntAFQ} follows by recalling \eqref{Qhat} and noticing
	  \[ \frac{1}{z-\ze} = \frac{1}{2z} \left( \frac{z+\ze}{z-\ze}+1 \right).\]
\end{proof}

A key step in understanding the recurrence structures in the $2j-k$ and $j-2k$ systems is their formulation as a first order $3\times 3$ matrix difference equation.
The matrix variable so defined would then be come a central object in the integrable system and possibly related to the solution of a rank 3 Riemann-Hilbert problem.
One choice for the $2\times 2$ matrix variable in the Toeplitz case was given by Eq.(2.69) along with its matrix recurrence relation Eq.(2.70) 
(other choices were given in Eqs.(2.73,76)) of \cite{FW_2006}.
\begin{definition} \normalfont
The \textit{Casorati} matrices (see e.g. \cite{Elaydi}) associated with the recurrence relations \eqref{L1} through \eqref{L4} are defined as  

\noindent\begin{minipage}{.5\linewidth}
	\begin{alignat}{2}
	&\boldsymbol{\mathfrak{P}}_n(z;r) &&= 
	\begin{pmatrix}
	P_n(z;r) & P_n(-z;r) & \widehat{P}_n(z;r) \\[5pt]
	P_{n+1}(z;r) & P_{n+1}(-z;r) & \widehat{P}_{n+1}(z;r) \\[5pt]
	P_{n+2}(z;r) & P_{n+2}(-z;r) & \widehat{P}_{n+2}(z;r) \\		
	\end{pmatrix}, \label{Cas P} \\
	&\boldsymbol{\mathfrak{Q}}_n(z;r) &&= 
	\begin{pmatrix}
Q_n(z^{-2};r) & \widehat{Q}_n(z;r) & \widehat{Q}_n(-z;r) \\[5pt]
Q_{n+1}(z^{-2};r) & \widehat{Q}_{n+1}(z;r) & \widehat{Q}_{n+1}(-z;r) \\[5pt]
Q_{n+2}(z^{-2};r) & \widehat{Q}_{n+2}(z;r) & \widehat{Q}_{n+2}(-z;r) \\		
\end{pmatrix}, \label{Cas Q}
	\end{alignat}	
\end{minipage}	
\begin{minipage}{.5\linewidth}
	\begin{alignat}{2}
&\boldsymbol{\mathfrak{R}}_n(z;s) &&=\begin{pmatrix}
R_n(z^2;s) & \widehat{R}_n(z;s) & \widehat{R}_n(-z;s) \\[5pt]
R_{n+1}(z^2;s) & \widehat{R}_{n+1}(z;s) & \widehat{R}_{n+1}(-z;s) \\[5pt]
R_{n+2}(z^2;s) & \widehat{R}_{n+2}(z;s) & \widehat{R}_{n+2}(-z;s) \\		
\end{pmatrix}, \label{Cas R} \\ 	&\boldsymbol{\mathfrak{S}}_n(z;s) &&= 
 \begin{pmatrix}
 S_n(z^{-1};s) & S_n(-z^{-1};s) & \widehat{S}_n(z;s) \\[5pt]
 S_{n+1}(z^{-1};s) & S_{n+1}(-z^{-1};s) & \widehat{S}_{n+1}(z;s) \\[5pt]
 S_{n+2}(z^{-1};s) & S_{n+2}(-z^{-1};s) & \widehat{S}_{n+2}(z;s) \\		
 \end{pmatrix}. \label{Cas S}
	\end{alignat}	
\end{minipage}
\end{definition}

\begin{lemma}
The Casorati matrices satisfy the following first order recurrence relations
	\begin{equation}
			\boldsymbol{\mathfrak{P}}_{n+1}(z;r) = 
		\begin{pmatrix}
		0 & 1 & 0 \\
		0 & 0 & 1 \\
		-(\de_n^{(r)}+\eta^{(r-2)}_n)z^2 &   z^2-\de_{n+1}^{(r-1)}\de_{n+1}^{(r)} & \de_{n+2}^{(r)}+\de_{n+1}^{(r-1)} \\		
		\end{pmatrix} \boldsymbol{\mathfrak{P}}_{n}(z;r),
	\end{equation}
	\begin{equation}
		\boldsymbol{\mathfrak{Q}}_{n+1}(z;r) = 
		\begin{pmatrix}
		0 & 1 & 0 \\
		0 & 0 & 1 \\
		(\be^{(r+1)}_{n+1}+\al^{(r+2)}_{n+1})(\be^{(r)}_{n}+\al^{(r+1)}_{n})z^{-2} & -(\be^{(r)}_{n+1}+\al^{(r+1)}_{n+1}+\be^{(r+1)}_{n+1}+\al^{(r+2)}_{n+1})z^{-2} & z^{-2}+\be_{n+2}^{(r)} \\		
		\end{pmatrix} \boldsymbol{\mathfrak{Q}}_{n}(z;r),
\end{equation}
\begin{equation} 
	\boldsymbol{\mathfrak{R}}_{n+1}(z;r) = 
	\begin{pmatrix}
	0 & 1 & 0 \\
	0 & 0 & 1 \\
	\left( \rho^{(s)}_{n+2}+\varkappa^{(s)}_{n+2} \right)\left( \varkappa^{(s)}_{n}+ \rho^{(s-1)}_{n}  \right)z^2 & -\left( \varkappa^{(s)}_{n+1}+ \rho^{(s-1)}_{n+1}+\rho^{(s)}_{n+2}+\varkappa^{(s)}_{n+2}\right)z^2 & z^2+\varkappa^{(s)}_{n+2} \\		
	\end{pmatrix} \boldsymbol{\mathfrak{R}}_{n}(z;r),
\end{equation}
\begin{equation}
	\boldsymbol{\mathfrak{S}}_{n+1}(z;r) = 
	\begin{pmatrix}
	0 & 1 & 0 \\
	0 & 0 & 1 \\
	-\left(\theta^{(s+1)}_{n+1}+\ga^{(s+1)}_{n+1} \right) z^{-2} & z^{-2}- \ga^{(s+1)}_{n+1}\ga^{(s)}_{n+1}  & \ga^{(s)}_{n+2}+ \ga^{(s+1)}_{n+1}   \\		
	\end{pmatrix} \boldsymbol{\mathfrak{S}}_{n}(z;r).
\end{equation}
\end{lemma}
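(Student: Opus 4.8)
The plan is to recognise each of the four identities as nothing more than the passage from a scalar third order linear difference equation to its \emph{companion} (first order, $3\times 3$) form, the three columns of the relevant Casorati matrix being a fundamental system of solutions of that scalar equation. First I would record that each entry in a given column of $\boldsymbol{\mathfrak{P}}_n(z;r)$, $\boldsymbol{\mathfrak{Q}}_n(z;r)$, $\boldsymbol{\mathfrak{R}}_n(z;s)$, $\boldsymbol{\mathfrak{S}}_n(z;s)$ is annihilated by the corresponding operator $\mathcal{L}_1,\dots,\mathcal{L}_4$ of \eqref{L1}--\eqref{L4}: for the polynomial columns this is \eqref{LPLQLRLS}, and for the $\widehat{P}_n,\widehat{Q}_n,\widehat{R}_n,\widehat{S}_n$ columns this is precisely the content of Theorem \ref{THM AF rec}, equations \eqref{AF P rec rel}--\eqref{AF S rec rel}. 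The columns built from the $z\mapsto -z$ reflected functions ($P_n(-z;r)$ in $\boldsymbol{\mathfrak{P}}_n$, $\widehat{Q}_n(-z;r)$ in $\boldsymbol{\mathfrak{Q}}_n$, and so on) are covered as well, because each $\mathcal{L}_i$ depends on $z$ only through $z^{\pm 2}$ and is therefore invariant under $z\mapsto -z$, so it carries solutions to solutions; this is exactly the remark underlying the Corollary following Theorem \ref{THM AF rec}.

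Next I would read off the companion matrix. For any solution sequence $f_n$ of $\mathcal{L}_i[f_n]=0$, the trivial shifts $f_{n+1}=f_{n+1}$ and $f_{n+2}=f_{n+2}$ account for the first two rows of the displayed matrix acting on the vector $(f_n,f_{n+1},f_{n+2})^T$, while the bottom row is obtained by solving $\mathcal{L}_i[f_n]=0$ for $f_{n+3}$. For $\mathcal{L}_1$, $\mathcal{L}_3$, $\mathcal{L}_4$ the coefficient of $f_{n+3}$ equals $1$, so the bottom row is just the negatives of the remaining coefficients of $\mathcal{L}_i$; comparing with \eqref{L1}, \eqref{L3}, \eqref{L4} reproduces exactly the matrices stated for $\boldsymbol{\mathfrak{P}}_n$, $\boldsymbol{\mathfrak{R}}_n$, $\boldsymbol{\mathfrak{S}}_n$ (keeping track of the sign flip of the $z^2$ term in the middle entry of the $\boldsymbol{\mathfrak{P}}_n$ and $\boldsymbol{\mathfrak{R}}_n$ matrices, and of the $z^{-2}$ there for $\boldsymbol{\mathfrak{S}}_n$). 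For $\mathcal{L}_2$ the coefficient of $f_{n+3}$ is $z^2$, so one first divides $\mathcal{L}_2[f_n]=0$ through by $z^2$; this is where the $z^{-2}$ prefactors in the bottom row of the $\boldsymbol{\mathfrak{Q}}_n$ recurrence come from, and the resulting coefficients again match \eqref{L2}.

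Finally I would assemble the pieces: the two steps above show that \emph{each} column $v_n$ of a given Casorati matrix satisfies $v_{n+1}=M_n v_n$ with $M_n$ the displayed companion matrix, so stacking the three columns side by side yields precisely the asserted matrix identity $\boldsymbol{\mathfrak{F}}_{n+1}=M_n\boldsymbol{\mathfrak{F}}_n$. I do not anticipate any genuine obstacle; the substance of the lemma was already supplied by Theorem \ref{THM AF rec} and its Corollary. The only two points that deserve a line of care rather than pure bookkeeping are the parity remark licensing the reflected columns, and, in the $\boldsymbol{\mathfrak{Q}}_n$ case, the division by $z^2$: there the identity should be stated first as an identity of rational functions of $z$, equivalently for $z\neq 0$, holding wherever the entries are regular.
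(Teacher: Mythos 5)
Your proposal is correct and is exactly the argument the paper intends: the lemma is the companion-matrix reformulation of the scalar recurrences $\mathcal{L}_i[f_n]=0$, applied column by column, with the polynomial columns covered by \eqref{LPLQLRLS}, the hatted columns by Theorem \ref{THM AF rec}, and the reflected columns by the invariance of each $\mathcal{L}_i$ under $z\mapsto -z$ (the paper in fact states the lemma without a written proof, treating it as immediate from these facts). Your two points of care — the parity remark and the division by $z^2$ in the $\boldsymbol{\mathfrak{Q}}_n$ case — are precisely the right ones.
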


A necessary step in establishing the existence of a valid $3\times 3$ matrix system is to compute their Casoratians and to demonstrate their boundedness and non-vanishing character for $ z\neq 0,\infty$.

\begin{lemma}\label{Abel}[Abel's Lemma, \cite{Elaydi}]
The Casoratians for the matrix systems \eqref{Cas P} through \eqref{Cas R}  are respectively given by
	
\noindent\begin{minipage}{.5\linewidth}
	\begin{alignat}{2}
	&\frac{\det \boldsymbol{\mathfrak{P}}_{n}(z;r)}{\det \boldsymbol{\mathfrak{P}}_{0}(z;r)}  &&= (-1)^nz^{2n}\prod_{\ell=0}^{n-1} \left(\de_{\ell}^{(r)}+\eta^{(r-2)}_{\ell}\right), \label{Cas Pn / Cas P0} \\
	&\frac{\det \boldsymbol{\mathfrak{Q}}_{n}(z;r)}{\det \boldsymbol{\mathfrak{Q}}_{0}(z;r)}  &&=z^{-2n} \prod_{\ell=0}^{n-1} \left(\be^{(r+1)}_{\ell+1}+\al^{(r+2)}_{\ell+1}\right)\left(\be^{(r)}_{\ell}+\al^{(r+1)}_{\ell}\right), \label{Cas Qn / Cas Q0}
	\end{alignat}	
\end{minipage}	
\begin{minipage}{.5\linewidth}
	\begin{alignat}{2}
	&		\frac{\det \boldsymbol{\mathfrak{R}}_{n}(z;s)}{\det \boldsymbol{\mathfrak{R}}_{0}(z;s)}  &&= z^{2n}\prod_{\ell=0}^{n-1} \left( \rho^{(s)}_{\ell+2}+\varkappa^{(s)}_{\ell+2}\right)\left( \varkappa^{(s)}_{\ell}+ \rho^{(s-1)}_{\ell}  \right), \label{Cas Rn / Cas R0} \\
	&		\frac{\det \boldsymbol{\mathfrak{S}}_{n}(z;s)}{\det \boldsymbol{\mathfrak{S}}_{0}(z;s)}  &&= (-1)^nz^{-2n}\prod_{\ell=0}^{n-1} \left(\theta^{(s+1)}_{\ell+1}+\ga^{(s+1)}_{\ell+1} \right). \label{Cas Sn / Cas S0}
	\end{alignat}	
\end{minipage}	
\end{lemma}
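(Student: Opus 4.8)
The plan is to derive all four Casoratian identities simultaneously from the first order matrix recurrences stated in the preceding Lemma, using the elementary fact that the determinant of a $3\times3$ companion-type matrix equals its lower-left entry. So the whole argument is three lines of linear algebra followed by one substitution.

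Concretely, each of the four recurrences has the form $\boldsymbol{\mathfrak{X}}_{n+1}=\boldsymbol{A}^{\boldsymbol{\mathfrak{X}}}_n\boldsymbol{\mathfrak{X}}_n$ with
\[
\boldsymbol{A}^{\boldsymbol{\mathfrak{X}}}_n=\begin{pmatrix}0&1&0\\ 0&0&1\\ a^{\boldsymbol{\mathfrak{X}}}_n&b^{\boldsymbol{\mathfrak{X}}}_n&c^{\boldsymbol{\mathfrak{X}}}_n\end{pmatrix},\qquad \boldsymbol{\mathfrak{X}}\in\{\boldsymbol{\mathfrak{P}},\boldsymbol{\mathfrak{Q}},\boldsymbol{\mathfrak{R}},\boldsymbol{\mathfrak{S}}\}.
\]
First I would expand $\det\boldsymbol{A}^{\boldsymbol{\mathfrak{X}}}_n$ along the first column, the only nonzero entry of which is $a^{\boldsymbol{\mathfrak{X}}}_n$ with cofactor $1$, to obtain $\det\boldsymbol{A}^{\boldsymbol{\mathfrak{X}}}_n=a^{\boldsymbol{\mathfrak{X}}}_n$. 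Taking determinants in the recurrence and using multiplicativity gives $\det\boldsymbol{\mathfrak{X}}_{n+1}=a^{\boldsymbol{\mathfrak{X}}}_n\det\boldsymbol{\mathfrak{X}}_n$, and a one-step induction on $n$ yields
\[
\frac{\det\boldsymbol{\mathfrak{X}}_n}{\det\boldsymbol{\mathfrak{X}}_0}=\prod_{\ell=0}^{n-1}a^{\boldsymbol{\mathfrak{X}}}_\ell .
\]
This is just Abel's formula for a third order scalar difference equation lifted to the level of the fundamental matrix; the relations $\mathcal{L}_i[\,\cdot\,]=0$ of Theorem \ref{THM AF rec} are what guarantee that all three columns of each $\boldsymbol{\mathfrak{X}}_n$ are genuinely propagated by the same transfer matrix $\boldsymbol{A}^{\boldsymbol{\mathfrak{X}}}_n$, so no extra hypotheses are needed.

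It then remains only to substitute the lower-left entries read off from the preceding Lemma and collect the monomial prefactors. For $\boldsymbol{\mathfrak{P}}$ one has $a^{\boldsymbol{\mathfrak{P}}}_\ell=-(\de^{(r)}_\ell+\eta^{(r-2)}_\ell)z^2$, so pulling out the $n$ factors $-z^2$ produces \eqref{Cas Pn / Cas P0}; for $\boldsymbol{\mathfrak{Q}}$, $a^{\boldsymbol{\mathfrak{Q}}}_\ell=(\be^{(r+1)}_{\ell+1}+\al^{(r+2)}_{\ell+1})(\be^{(r)}_\ell+\al^{(r+1)}_\ell)z^{-2}$ gives \eqref{Cas Qn / Cas Q0}; for $\boldsymbol{\mathfrak{R}}$, $a^{\boldsymbol{\mathfrak{R}}}_\ell=(\rho^{(s)}_{\ell+2}+\varkappa^{(s)}_{\ell+2})(\varkappa^{(s)}_\ell+\rho^{(s-1)}_\ell)z^2$ gives \eqref{Cas Rn / Cas R0}; and for $\boldsymbol{\mathfrak{S}}$, $a^{\boldsymbol{\mathfrak{S}}}_\ell=-(\theta^{(s+1)}_{\ell+1}+\ga^{(s+1)}_{\ell+1})z^{-2}$ gives \eqref{Cas Sn / Cas S0}.

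I do not expect a real obstacle here; the only points requiring care are clerical, namely tracking the index shifts inside the products (for instance that the pair appearing in $a^{\boldsymbol{\mathfrak{Q}}}_\ell$ is evaluated at $\ell+1$ and $\ell$, not $\ell$ and $\ell-1$, when one re-indexes $\prod_{\ell=0}^{n-1}$) and the sign and exponent of $z$. To make the statement immediately useful for the rank-$3$ Riemann--Hilbert formulation mentioned in the text, I would close with one remark: under the non-vanishing hypotheses of Theorems \ref{P and Q exist and are unique} and \ref{R and S exist and are unique} every recurrence coefficient is a finite ratio of nonzero moment determinants, so each Casoratian is a nonzero constant multiple of a single power of $z$, hence bounded and nonvanishing for $z\neq 0,\infty$.
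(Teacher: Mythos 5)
Your proof is correct and follows essentially the same route as the paper: both reduce to the first-order scalar recurrence $\det\boldsymbol{\mathfrak{X}}_{n+1}=a^{\boldsymbol{\mathfrak{X}}}_n\det\boldsymbol{\mathfrak{X}}_n$ and telescope, the only cosmetic difference being that you obtain this step by taking determinants of the companion-matrix recurrence while the paper performs the equivalent row operations on the last row of the Casoratian directly. All four lower-left entries and the resulting sign and $z$-power bookkeeping in your products check out against \eqref{Cas Pn / Cas P0}--\eqref{Cas Sn / Cas S0}.
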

\begin{proof}
Since the Casorati matrix $\boldsymbol{\mathfrak{P}}_n(z;r)$ satisfies the recurrence relation \eqref{L1}, 
its determinant satisfies the recurrence relation 
	\begin{equation}\label{Casoratian recurrence relation}
		\det \boldsymbol{\mathfrak{P}}_{n+1}(z;r)+(\de_n^{(r)}+\eta^{(r-2)}_n)z^2\det\boldsymbol{\mathfrak{P}}_n(z;r)=0,
	\end{equation}
which follows from rewriting the objects with index $n+3$ in the last row of $\det \mathfrak{P}_{n+1}(z;r)$ in terms of the objects with indices $n$, $n+1,$ and $n+2$ using \eqref{L1} and performing row operations. 
Using \eqref{Casoratian recurrence relation} we can write
	\begin{equation}\label{cccc}
		\frac{\det \boldsymbol{\mathfrak{P}}_{n}(z;r)}{\det \boldsymbol{\mathfrak{P}}_{0}(z;r)} = (-1)^nz^{2n}\prod_{\ell=0}^{n-1} \left(\de_{\ell}^{(r)}+\eta^{(r-2)}_{\ell}\right).
	\end{equation}
The formulas \eqref{Cas Qn / Cas Q0}, \eqref{Cas Rn / Cas R0}, and \eqref{Cas Sn / Cas S0} can be established similarly.
\end{proof}
\begin{lemma}\label{Lem Cas 0}
The Casoratians with index zero appearing in Lemma \ref{Abel} can be represented in terms of moments of the weight $w$ and the function $F_2$ as follows 
\begin{equation}\label{det frak P 0}
	\det \boldsymbol{\mathfrak{P}}_{0}(z;r) = -2z\left(w_{r-2}+w_r z^2\right),
\end{equation}
\begin{equation}
	\det \boldsymbol{\mathfrak{Q}}_{0}(z;r) = \det 
	\begin{pmatrix}
	1 & z^{-2} & z^{-4} \\ 
	F_2(z;r) & F_2(z;r+2) & F_2(z;r+4) \\
	F_2(-z;r) & F_2(-z;r+2) & F_2(-z;r+4)
	\end{pmatrix},
\end{equation}
\begin{equation}
	\det \boldsymbol{\mathfrak{R}}_{0}(z;r) = \det 
	\begin{pmatrix}
	1 & z^{2}& z^{4} \\ 
	F_2(z;s) & F_2(z;s-2) & F_2(z;s-4) \\
	F_2(-z;s) & F_2(-z;s-2) & F_2                                                                                                                                                                                                                                                                                                                                                                                                                                                                                                                                                                                                                                                                                                                                                                                                                                                                                                                                                                                                                                                                                                                                                                                                                                                                                                                                                                                                                                                                                                                                                                                                                                                                                                                                                                                                                                                                                                                                                                                                                                                                                                                                                                                                                                                                                                                                                                                                                                                                                                                                                                                                                                                                                                                                                                                                                                                                                                                                                                                                                                                                                                                                                                                                                                                                                                                                                                                                                                                                                                                                                                                                                                                                                                                                                                              (-z;s-4) 
	\end{pmatrix},
\end{equation}
and
\begin{equation}
\det \boldsymbol{\mathfrak{S}}_{0}(z;r) =2z^{-3}\left(w_s+z^2w_{s+2}\right).
\end{equation}
\end{lemma}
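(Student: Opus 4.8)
The plan is to exploit the linearity of the integral transforms defining $\widehat{P}_n,\widehat{Q}_n,\widehat{R}_n,\widehat{S}_n$ in order to factor each index-zero Casorati matrix as a product of the corresponding unit lower-triangular coefficient matrix $\boldsymbol{\mathcal{P}}^{(r)}_2,\dots$ from \eqref{A B}--\eqref{C G} (which have determinant $1$, and which exist by Theorems \ref{P and Q exist and are unique} and \ref{R and S exist and are unique}) and an explicit $3\times 3$ matrix built from monomials in $z$ together with values of $F_1$ or $F_2$ at shifted offsets. Taking determinants then reduces the whole lemma to evaluating four explicit $3\times 3$ determinants.

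First I would substitute the monomial expansions \eqref{polys} into the definitions \eqref{Phat}--\eqref{Shat} and use \eqref{F1}, \eqref{F2} to obtain
\begin{align*}
\widehat{P}_n(z;r) &= \sum_{\ell=0}^{n} \mathcal{p}^{(r)}_{n,\ell}\,F_1(z;r-\ell), &
\widehat{Q}_n(z;r) &= \sum_{\ell=0}^{n} \mathcal{q}^{(r)}_{n,\ell}\,F_2(z;r+2\ell), \\
\widehat{R}_n(z;s) &= \sum_{\ell=0}^{n} \mathcal{r}^{(s)}_{n,\ell}\,F_2(z;s-2\ell), &
\widehat{S}_n(z;s) &= \sum_{\ell=0}^{n} \mathcal{s}^{(s)}_{n,\ell}\,F_1(z;s+\ell).
\end{align*}
Combined with $P_n(\pm z;r)=\sum_\ell \mathcal{p}^{(r)}_{n,\ell}(\pm z)^\ell$ and the analogous expansions of $Q_n(z^{-2};\cdot)$, $R_n(z^2;\cdot)$, $S_n(z^{-1};\cdot)$, these identities say precisely that, at $n=0$ (so that the row index runs over $0,1,2$),
\begin{equation*}
\boldsymbol{\mathfrak{P}}_0(z;r) = \boldsymbol{\mathcal{P}}^{(r)}_2
\begin{pmatrix} 1 & 1 & F_1(z;r) \\ z & -z & F_1(z;r-1) \\ z^2 & z^2 & F_1(z;r-2) \end{pmatrix},
\qquad
\boldsymbol{\mathfrak{Q}}_0(z;r) = \boldsymbol{\mathcal{Q}}^{(r)}_2
\begin{pmatrix} 1 & F_2(z;r) & F_2(-z;r) \\ z^{-2} & F_2(z;r+2) & F_2(-z;r+2) \\ z^{-4} & F_2(z;r+4) & F_2(-z;r+4) \end{pmatrix},
\end{equation*}
and similarly $\boldsymbol{\mathfrak{R}}_0(z;s)=\boldsymbol{\mathcal{R}}^{(s)}_2$ times the matrix with first column $(1,z^2,z^4)^T$ and remaining columns $F_2(\pm z;\cdot)$ at offsets $s,s-2,s-4$, and $\boldsymbol{\mathfrak{S}}_0(z;s)=\boldsymbol{\mathcal{S}}^{(s)}_2$ times the matrix with first columns $(1,z^{-1},z^{-2})^T$, $(1,-z^{-1},z^{-2})^T$ and third column $F_1(z;\cdot)$ at offsets $s,s+1,s+2$. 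Since each coefficient matrix has determinant $1$, taking determinants gives $\det\boldsymbol{\mathfrak{Q}}_0$ and $\det\boldsymbol{\mathfrak{R}}_0$ in the stated form immediately, after transposing the remaining $3\times 3$ determinant.

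For $\det\boldsymbol{\mathfrak{P}}_0$ and $\det\boldsymbol{\mathfrak{S}}_0$ one extra step is needed: the column operation $C_2\mapsto C_2-C_1$ on the explicit factor leaves a column with a single nonzero entry ($-2z$, resp. $-2z^{-1}$), so expanding along it yields $\det\boldsymbol{\mathfrak{P}}_0(z;r)=-2z\bigl(F_1(z;r-2)-z^2F_1(z;r)\bigr)$ and $\det\boldsymbol{\mathfrak{S}}_0(z;s)=-2z^{-1}\bigl(F_1(z;s+2)-z^{-2}F_1(z;s)\bigr)$. Finally I would combine the two $F_1$ terms over the common denominator $\ze^2-z^2$: from \eqref{F1},
\begin{equation*}
F_1(z;r-2)-z^2F_1(z;r) = \int_{\T}\frac{\dd\ze}{2\pi\ic\ze}\,w(\ze)\,\ze^{-r}(\ze^2-z^2)\,\frac{\ze^2+z^2}{\ze^2-z^2} = \int_{\T}\frac{\dd\ze}{2\pi\ic\ze}\,w(\ze)\,\ze^{-r}(\ze^2+z^2) = w_{r-2}+z^2w_r,
\end{equation*}
and in the same way $F_1(z;s+2)-z^{-2}F_1(z;s) = -z^{-2}\bigl(w_s+z^2w_{s+2}\bigr)$; substituting these produces $\det\boldsymbol{\mathfrak{P}}_0(z;r)=-2z(w_{r-2}+z^2w_r)$ and $\det\boldsymbol{\mathfrak{S}}_0(z;s)=2z^{-3}(w_s+z^2w_{s+2})$.

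The argument is essentially routine; the one place that demands care is the bookkeeping of the offset shifts and their signs — feeding monomials of $\ze$ into the kernel $\tfrac{\ze^2+z^2}{\ze^2-z^2}$ (the $P$ and $S$ systems) shifts $F_1$ by $\mp\ell$, whereas feeding monomials of $\ze^{\mp 2}$ into $\tfrac{\ze+z}{\ze-z}$ (the $Q$ and $R$ systems) shifts $F_2$ by $\pm 2\ell$ — together with checking that the remaining explicit factors transpose exactly onto the four matrices displayed in the statement.
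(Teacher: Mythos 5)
Your proposal is correct and follows essentially the same route as the paper: the paper's ``elementary row operations'' on $\boldsymbol{\mathfrak{P}}_0$ amount precisely to left-multiplication by $\left(\boldsymbol{\mathcal{P}}^{(r)}_2\right)^{-1}$, i.e.\ your unimodular factorization, and your final step of combining $F_1(z;r-2)-z^2F_1(z;r)$ so that the kernel $\tfrac{\ze^2+z^2}{\ze^2-z^2}$ cancels against $\ze^2-z^2$ is exactly the paper's computation. Your uniform factorization also correctly handles the $\boldsymbol{\mathfrak{Q}}_0$, $\boldsymbol{\mathfrak{R}}_0$ and $\boldsymbol{\mathfrak{S}}_0$ cases (which the paper omits as ``similar''), with the right offset bookkeeping $F_1(z;r-\ell)$, $F_2(z;r+2\ell)$, $F_2(z;s-2\ell)$, $F_1(z;s+\ell)$.
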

\begin{proof}
Due to similarity we only show the proof for \eqref{det frak P 0}. Notice that 
	\begin{equation}
	\det \boldsymbol{\mathfrak{P}}_{0}(z;r) = \det 
	\begin{pmatrix}
	1 & 1 & F_1(z;r) \\
	z+P_1(0;r) & -z+P_1(0;r) & \widehat{P}_{1}(z;r) \\
	z^2+\mathcal{p}^{(r)}_{2,1}z+P_2(0;r) & z^2-\mathcal{p}^{(r)}_{2,1}z+P_2(0;r) & \widehat{P}_{2}(z;r) \\		
	\end{pmatrix}.
	\end{equation}
After performing elementary row operations we obtain
	\begin{equation}
	\det \boldsymbol{\mathfrak{P}}_{0}(z;r) = \det 
	\begin{pmatrix}
	1 & 0 & F_1(z;r) \\
	z & -2z & \widehat{P}_{1}(z;r) - P_1(0;r)F_1(z;r) \\
	z^2 & 0 & \widehat{P}_{2}(z;r) -P_2(0;r)F_1(z;r) - \mathcal{p}^{(r)}_{2,1}\left(\widehat{P}_{1}(z;r) - P_1(0;r)F_1(z;r)\right) \\		
	\end{pmatrix},
	\end{equation}
and thus 
	\begin{equation}\label{aaaa}
	\det \boldsymbol{\mathfrak{P}}_{0}(z;r) = -2z \left( \widehat{P}_{2}(z;r) - \mathcal{p}^{(r)}_{2,1}\widehat{P}_{1}(z;r) + \left( \mathcal{p}^{(r)}_{2,1}P_1(0;r)  -P_2(0;r) -z^2 \right)F_1(z;r) \right).
	\end{equation}
Notice that 
	\begin{equation}\label{bbbb}
	\begin{split}
	& \widehat{P}_{2}(z;r) - \mathcal{p}^{(r)}_{2,1}\widehat{P}_{1}(z;r) + \left( \mathcal{p}^{(r)}_{2,1}P_1(0;r)  -P_2(0;r) -z^2 \right)F_1(z;r)  \\ & =  \int_{\T} \frac{\dd \ze}{2 \pi \ic \ze} w(\ze) \ze^{-r} \frac{\ze^2+z^2}{\ze^2-z^2} \left\{ P_2(\ze;r) - \mathcal{p}^{(r)}_{2,1}P_{1}(\ze;r) + \mathcal{p}^{(r)}_{2,1}P_1(0;r)  -P_2(0;r) -z^2 \right\} \\ & = \int_{\T} \frac{\dd \ze}{2 \pi \ic \ze} w(\ze) \ze^{-r} \frac{\ze^2+z^2}{\ze^2-z^2} \left\{ \ze^2 -z^2 \right\} = w_{r-2}+w_r z^2.
	\end{split}
	\end{equation}
\end{proof}

\section{The symplectic and orthogonal example}\label{Sec Exp weight}
Now we return to the specific case of $2j-k$ determinants which appeared in \cite{ABP+_2014}. 
As we discussed in \S \ref{Sec Intro}, it is shown in \cite{ABP+_2014} that the leading constant $b_{\ell}$ in the large-$N$ asymptotic expansion of the $2\ell$-th moment of $|\La'_A(1)|$ (associated with $\tn{USp}(2N)$,  $\tn{SO}(2N),$ and $\tn{O}^-(2N)$) can be expressed in terms of evaluations of the $\ell \times \ell$, $2j-k$ determinant associated to the symbol  
\begin{equation}
	w(z;u) = \exp\left( z + u z^{-2} \right).
\end{equation}
Analysing this particular determinant in full details will be carried out in our future work, but in this section as a concrete first example, we are going to present explicit formulae for the determinants,  orthogonal polynomials, and recurrence coefficients corresponding to the undeformed weight
\begin{equation}\label{weight exp}
	w(z;0) \equiv w(z) = e^{z}.
\end{equation}
Notice that for this weight
\begin{equation}\label{Det exp}
	D^{(r)}_n = \underset{0\leq j,k \leq n-1}{\det}\left( w_{r+2j-k} \right) = \underset{0\leq j,k \leq n-1}{\det}\left( \frac{1}{\Ga(1+r+2j-k)} \right).
\end{equation}
The entries of the determinant exist if $r \in \Z_{\geq 0}$. Let us now recall a result in  \cite{Normand} which is all we need to evaluate $D^{(r)}_n$ explicitly. 
The equation $(4.13)$ in \cite{Normand} reads
\begin{equation}\label{Normand 1}
	 \underset{0\leq j,k \leq n-1}{\det}\left( \frac{1}{\Ga(z_j-k)} \right)
	 = \frac{\underset{0\leq j<k\leq n-1}{\prod} (z_k-z_j)}{\prod_{j=0}^{n-1} \Ga(z_j)}.
\end{equation}
Let $z_j:= 1+r+2j$. From \eqref{Normand 1} we have 
\begin{equation}\label{D Gamma}
	D^{(r)}_n = \frac{\underset{0\leq j<k\leq n-1}{\prod} 2(k-j)}{\prod_{j=0}^{n-1} \Ga(1+r+2j)}
	 = \frac{2^{\frac{n(n-1)}{2}}\prod_{j=1}^{n-1} \Ga(1+j)}{\prod_{j=0}^{n-1} \Ga(1+r+2j)}.
\end{equation}
Using the functional equation for the Barnes G-function $G(z+1)=\Ga(z)G(z)$,
we can write $\prod_{j=1}^{n-1} \Ga(1+j) = G(n+1)$. Also using the identity $\Ga(2z)=\pi^{-\frac{1}{2}}2^{2z-1}\Ga(z)\Ga(z+\tfrac{1}{2})$, we write 
\begin{equation}
	\prod_{j=0}^{n-1} \Ga(1+r+2j) = \pi^{-\frac{n}{2}} 2^{n(n+r-1)} \frac{G\left( n + \frac{r+1}{2} \right)G\left( n + \frac{r+2}{2} \right)}{G\left(\frac{r+1}{2} \right)G\left( \frac{r+2}{2} \right)}.
\end{equation}
Therefore after simplifications \eqref{D Gamma} can be expressed as
\begin{equation}\label{D BarnesG}
	D^{(r)}_n = \frac{(2\pi)^{\frac{n}{2}}G(\frac{r+1}{2})G(\frac{r+2}{2})}{2^{\frac{n}{2}(2r+n)}}\frac{G(n+1)}{G(n+\frac{r+1}{2})G(n+\frac{r+2}{2})}
	= \frac{2^{-(\frac{1}{2}n+r)(n-1)}}{\Ga(1+r)} \prod_{j=1}^{n-1}\frac{\Ga(\frac{1}{2})\Ga(1+j)}{\Ga(\frac{r+1}{2}+j)\Ga(\frac{r}{2}+1+j)} .
\end{equation}
Thus $D^{(r)}_n$ exists and is nonzero for all $n \in \N$ and $r \in \Z_{\geq 0}$ (see \eqref{Det exp} and below) , which ensures that the polynomials $P_n(z;r)$ and $Q_n(z;r)$ exist and are unique for all $n \in \N$ and $r \in \Z_{\geq 0}$ (see the discussion in the beginning of \S \ref{Sec Bordered}). 

Now, we can immediately find the recurrence coefficients and the norm $h^{(r)}_n$ of the bi-orthogonal polynomials. Indeed, from \eqref{h} and \eqref{D BarnesG} we find
\begin{equation}\label{h exp wt}
	h^{(r)}_n = \frac{2^n n!}{(2n+r)!}.
\end{equation}
Recalling \eqref{delta}, \eqref{eta}, \eqref{beta}, \eqref{alpha} and using \eqref{D BarnesG} and \eqref{h exp wt} we find

\noindent\begin{minipage}{.5\linewidth}
	\begin{alignat}{2}
	&\de^{(r)}_{n} &&= -2n-r, \label{delta exp wt} \\
	& \eta^{(r)}_n &&=r, \label{eta exp wt}
	\end{alignat}	
\end{minipage}	
\begin{minipage}{.5\linewidth}
	\begin{alignat}{2}
	&\be^{(r)}_{n} &&= -\frac{1}{(2n+r+1)(2n+r+2)}, \label{beta exp wt} \\
	& \al^{(r)}_n&&= \frac{r}{(2n+r)(2n+r+1)(2n+r+2)}. \label{alpha exp wt}
	\end{alignat}	
\end{minipage}
Notice that, as expected, \eqref{delta exp wt}, \eqref{eta exp wt}, \eqref{beta exp wt}, and \eqref{alpha exp wt} are in agreement with \eqref{2j-k interrels}. Using these expressions for the recurrence coefficients we can write the pure-degree and pure-offset recurrence relations \eqref{P pure n rec}, \eqref{Q* pure n rec}, \eqref{P pure r rec}, and \eqref{Q pure r rec} as

\begin{equation}\label{P pure n rec exp wt}
	P_{n+3}(z;r) +(4n+2r+5)P_{n+2}(z;r) + \left((2n+r+1)(2n+r+2) - z^2\right)P_{n+1}(z;r)- (2n+2)z^2P_{n}(z;r)=0,
\end{equation}
\begin{equation}\label{Q* pure n rec exp wt}
\begin{split}
	 Q^*_{n+3}(z;r) & -  (1-\frac{z}{(2n+r+5)(2n+r+6)} ) Q^*_{n+2}(z;r) -  \frac{(2n+4)(4n+2r+9)}{(2n+r+3)(2n+r+4)(2n+r+5)(2n+r+6)}z Q^*_{n+1}(z;r) \\ & -\frac{4(n+1)(n+2)z^2}{(2n+r+1)(2n+r+2)(2n+r+3)(2n+r+4)(2n+r+5)(2n+r+6)} Q^*_{n}(z;r) = 0,
\end{split}
\end{equation}
\begin{equation}\label{P pure r rec exp wt}
	(r+1) P_{n}(z;r+3) - zP_n(z;r+2) - (2n+r+1)P_n(z;r+1) + zP_n(z;r) =0,
\end{equation}
and
\begin{equation}\label{Q pure r rec exp wt}
Q^*_n(z;r+3) - Q^*_{n}(z;r+2)  -\frac{z}{(2n+r+2)(2n+r+3)}  Q^*_{n}(z;r+1)  +\frac{(r+1)z}{(2n+r+1)(2n+r+2)(2n+r+3)} Q^*_{n}(z;r)=0.
\end{equation}

A straight-forward residue calculation yields the following result about the Carath\'{e}odory function $F_2(z;r)$. Notice that due to \eqref{F1 F2} this gives an evaluation of $F_1(z;r)$ as well.
\begin{lemma}
The Carath\'{e}odory function $F_2(z;r)$ associated to the weight $w(z)=e^z$ evaluates to
\begin{equation}
	F_2(z;r) = 
	\begin{cases}
	2z^{-r}\left[ e^z - \di \sum_{m=0}^{r-1} \frac{z^m}{m!} \right] - \di \frac{1}{r!}, & 0<|z|<1, \\
	-2z^{-r} \di \sum_{m=0}^{r-1} \frac{z^m}{m!} - \di \frac{1}{r!}, & |z|>1.
	\end{cases}
\end{equation}
\end{lemma}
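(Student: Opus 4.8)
The plan is to compute $F_2(z;r)$ directly from its defining integral \eqref{F2} by a residue calculation, splitting into the two cases $0<|z|<1$ and $|z|>1$ according to which poles of the integrand lie inside $\T$. First I would write
\[
F_2(z;r) = \int_{\T} \frac{\dd \ze}{2\pi\ic\ze} e^{\ze}\,\ze^{-r}\,\frac{\ze+z}{\ze-z},
\]
and observe that the integrand, as a function of $\ze$, is meromorphic on $\C\setminus\{0\}$ with a simple pole at $\ze=z$ (with residue controlled by $\tfrac{\ze+z}{1}\big|_{\ze=z}=2z$, so $\mathrm{Res}_{\ze=z}=2z^{-r}e^{z}\cdot\tfrac{1}{z}\cdot z\cdot\tfrac{1}{?}$ — I will be careful with the $\tfrac{1}{\ze}$ factor) and an essential singularity at $\ze=0$ coming from $e^{\ze}\ze^{-r-1}$. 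The contribution of $\ze=0$ is the Laurent coefficient picking out the $\ze^{-1}$ term of $e^{\ze}\ze^{-r-1}\tfrac{\ze+z}{\ze-z}$.

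The key steps, in order: (i) expand $\tfrac{\ze+z}{\ze-z}$ appropriately — for $|z|<1$ the relevant expansion on $\T$ is in powers of $z/\ze$, namely $\tfrac{\ze+z}{\ze-z} = 1 + 2\sum_{k\geq1}(z/\ze)^k$, whereas for $|z|>1$ one expands in powers of $\ze/z$, namely $\tfrac{\ze+z}{\ze-z} = -1 - 2\sum_{k\geq1}(\ze/z)^k$; (ii) multiply by $e^{\ze}\ze^{-r-1}$ and extract the coefficient of $\ze^{-1}$ using $e^{\ze}=\sum_{m\geq0}\ze^m/m!$ and $w_{\ell}=1/\Gamma(\ell+1)$; (iii) resum the resulting series. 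For $0<|z|<1$ this produces $2z^{-r}\sum_{m\geq r}z^m/m! - 1/r!$, which after recognizing $\sum_{m\geq r}z^m/m! = e^z - \sum_{m=0}^{r-1}z^m/m!$ gives the first branch; for $|z|>1$ only finitely many terms survive (the geometric series in $\ze/z$ truncates against the pole structure), yielding $-2z^{-r}\sum_{m=0}^{r-1}z^m/m! - 1/r!$. Alternatively — and this is cleaner — for $|z|<1$ I would just apply the residue theorem directly: the only singularity of the integrand inside $\T$ other than $\ze=z$ is $\ze=0$, so $F_2(z;r)$ equals the sum of the residue at $\ze=z$ and the residue at $\ze=0$; the residue at $\ze=z$ is $2z^{-r}e^z$ and the residue at $\ze=0$ is the $\ze^0$-coefficient of $e^\ze \tfrac{\ze+z}{\ze-z}$ times $z^{-r}\cdot(\text{correction from }\ze^{-r-1})$, which a short computation identifies as $-2z^{-r}\sum_{m=0}^{r-1}z^m/m! - 1/r!$; adding gives the first branch. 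For $|z|>1$ the pole $\ze=z$ is \emph{outside} $\T$, so only $\ze=0$ contributes, giving the second branch.

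The main obstacle — really the only delicate point — is bookkeeping the residue at the essential singularity $\ze=0$ of $e^{\ze}\ze^{-r-1}\tfrac{\ze+z}{\ze-z}$ correctly, including the solitary $-\tfrac{1}{r!}$ term, and making sure the two cases are matched consistently with the convergence region of the geometric expansion of $\tfrac{\ze+z}{\ze-z}$ on $\T$. One should double-check the answer by a consistency test: as $z\to 0$ in the region $0<|z|<1$, the bracket $e^z - \sum_{m=0}^{r-1}z^m/m! \sim z^r/r!$, so $F_2(z;r)\to 2/r! - 1/r! = 1/r! = w_r$, matching $F_2(0;r)=\int_\T \ze^{-r}e^\ze\,\tfrac{\dd\ze}{2\pi\ic\ze}=w_r$; similarly the $|z|>1$ branch tends to $-1/r!$ as $z\to\infty$, consistent with $\tfrac{\ze+z}{\ze-z}\to -1$. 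These checks, together with the two residue computations, constitute the proof; no further input is needed beyond $w_\ell = 1/\Gamma(\ell+1)$ for $\ell\geq 0$ and $w_\ell=0$ for $\ell<0$.
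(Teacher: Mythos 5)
Your residue computation is correct and is exactly the argument the paper has in mind (the paper offers no more detail than ``a straight-forward residue calculation'', and your two branches, including the solitary $-1/r!$ from the residue at the origin and the truncation for $|z|>1$ via $w_\ell=0$ for $\ell<0$, all check out). One small slip worth fixing: since $e^{\zeta}$ is entire and $r\in\Z_{\geq 0}$, the point $\zeta=0$ is a pole of order $r+1$ of $e^{\zeta}\zeta^{-r-1}\frac{\zeta+z}{\zeta-z}$, not an essential singularity — though this does not affect the extraction of the $\zeta^{-1}$ coefficient.
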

\begin{lemma}
The $2j-k$ polynomials of the first and the second kind associated to the exponential weight \eqref{weight exp} are given by
\begin{equation}
		P_n(z;r) = 2^n \sum_{\ell=0}^n \frac{1}{\ell !} \sum_{m=0}^{\ell} (-1)^{m+n+\ell} \binom{\ell}{m} \left(\frac{r-m}{2}\right)_{n} z^{\ell},
\end{equation}and
\begin{equation}
	Q_n(z;r) = \frac{(-1)^n}{4^n \left(\frac{r+1}{2}\right)_{n}\left(\frac{r+2}{2}\right)_{n}}{}_3F_0(-n, \frac{r+1}{2}, \frac{r+2}{2};;4z),
\end{equation}	
\end{lemma}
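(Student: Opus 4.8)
The plan is to derive the two closed forms directly from the bordered-determinant representations of Theorem~\ref{P and Q exist and are unique}, using the explicit moments $w_k = 1/\Gamma(k+1)$ together with the determinant evaluations~\eqref{D Gamma}--\eqref{D BarnesG} already established. For $P_n(z;r)$ I would start from the fact that $P_n$ is monic of degree $n$ and its coefficients solve the linear system in Theorem~\ref{P and Q exist and are unique}; equivalently, I expect it is cleaner to use the multiple-integral formula~\eqref{MultIntPolysP}, $P_n(z;r) = \frac{1}{D_n^{(r)}}\mathcal{D}_n[w(\zeta)\zeta^{-r}(z-\zeta)]$, expand $(z-\zeta)^{\text{(the single factor)}}$ — wait, more precisely $\prod_{j=1}^n(z-\zeta_j)$ — and recognize the resulting ratio of $\mathcal D_n$-integrals as a ratio of $2j-k$ determinants of the moment sequence with one shifted column. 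Then $\int_{\T}\zeta^{-k}e^{\zeta}\frac{\dd\zeta}{2\pi i\zeta} = 1/\Gamma(k+1)$ converts everything to Gamma functions, and the Vandermonde-type evaluation~\eqref{Normand 1} collapses the $\ell\times\ell$ minors. Matching the coefficient of $z^\ell$ should produce the inner double sum $\sum_{m=0}^\ell(-1)^{m+n+\ell}\binom{\ell}{m}\big(\tfrac{r-m}{2}\big)_n$; the $\binom{\ell}{m}$ comes from expanding a power of $(1-\text{shift})$ in the relevant column operation, and the Pochhammer $\big(\tfrac{r-m}{2}\big)_n$ is exactly the ratio of the two Barnes-$G$ products in~\eqref{D BarnesG} after the column shift $r\mapsto r-m$ up to the factor $2^n$.

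For $Q_n(z;r)$ the cleanest route is the orthogonality characterization~\eqref{OP2}: $\int_{\T} Q_n(\zeta^{-2};r)\zeta^{m-r}e^{\zeta}\frac{\dd\zeta}{2\pi i\zeta} = h_n^{(r)}\delta_{mn}$ for $m=0,\dots,n$. Writing $Q_n(z;r) = \sum_{k=0}^n \mathcal q_{n,k}^{(r)} z^k$ with $\mathcal q_{n,n}^{(r)}=1$, the $n+1$ equations become $\sum_{k} \mathcal q_{n,k}^{(r)} w_{2k-m+r} = h_n^{(r)}\delta_{mn}$, i.e. $\sum_k \mathcal q_{n,k}^{(r)}/\Gamma(2k-m+r+1) = h_n^{(r)}\delta_{mn}$. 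I would guess the Ansatz $\mathcal q_{n,k}^{(r)} = c_n\, (-n)_k\,(\tfrac{r+1}{2})_k(\tfrac{r+2}{2})_k\, 4^k/k!$ (which is precisely the ${}_3F_0$ coefficient) and verify the orthogonality relations directly: substituting and using the duplication identity $\Gamma(2k+r+1) = \pi^{-1/2}2^{2k+r}\Gamma(k+\tfrac{r+1}{2})\Gamma(k+\tfrac{r+2}{2})$ turns the sum over $k$ into a terminating hypergeometric sum whose vanishing for $m<n$ follows from a Vandermonde/Chu–Vandermonde-type summation, while the $m=n$ value pins down $c_n$ against $h_n^{(r)}$ from~\eqref{h exp wt}; comparing with the claimed prefactor $(-1)^n/(4^n(\tfrac{r+1}{2})_n(\tfrac{r+2}{2})_n)$ fixes the normalization and confirms monicity (the $k=n$ coefficient must be $1$). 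Alternatively, Theorem~\ref{Thm S*-P & Q*-R} relates $Q^*$ to $R$, or one can use~\eqref{MultIntPolysQ}; but the direct orthogonality check is the most transparent.

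The main obstacle I anticipate is the bookkeeping in the $P_n$ calculation: correctly tracking the sign $(-1)^{m+n+\ell}$, the binomial coefficient, and the column-shift $r\mapsto r-m$ simultaneously through the determinant manipulations, since the $2j-k$ (non-Toeplitz) structure means the "shift a column by a monomial in $z$" operation does not act as simply as in the Toeplitz case — one has to expand $z^\ell$ against the staggered row indices $w_{r+2j-\bullet}$ and collect contributions. A clean way to avoid ad hoc manipulation is to prove the $P_n$ formula instead by \emph{verification}: show the proposed right-hand side is monic of degree $n$ (the $z^n$ coefficient is $2^n\cdot\frac{1}{n!}\cdot(-1)^{2n}\binom{n}{n}(\tfrac{r-n}{2})_n$... one checks this equals the leading coefficient after the normalization built into the bordered determinant — actually here the stated formula should already be monic, so one verifies $\ell=n$ gives $1$), and then verify the orthogonality relations~\eqref{OP1}, $\int_{\T}P_n(\zeta;r)\zeta^{-2m-r}e^\zeta\frac{\dd\zeta}{2\pi i\zeta}=0$ for $m=0,\dots,n-1$, which reduces to showing $\sum_{\ell=0}^n \mathcal p_{n,\ell}^{(r)}/\Gamma(\ell-2m-r+1)=0$; plugging the double-sum coefficients and swapping the order of summation should expose a finite-difference annihilation of a polynomial of degree $<n$ in a suitable variable. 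Uniqueness from Theorem~\ref{P and Q exist and are unique} (valid since $D_n^{(r)}\neq0$ for $r\in\Z_{\ge0}$) then finishes both cases. I would present the $Q_n$ proof in full and indicate the $P_n$ proof runs identically by verification, or vice versa, to keep the writeup short.
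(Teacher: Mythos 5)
Your plan is workable and would prove the lemma, but it follows a genuinely different route from the paper. For $Q_n$ the paper does \emph{not} verify orthogonality: it reads each coefficient $\mathcal{q}^{(r)}_{n,\ell}$ off the bordered determinant \eqref{OP22} as a signed $n\times n$ minor with one \emph{row} deleted, evaluates that minor in closed form via Normand's identity \eqref{Normand 1} (the deleted row just removes one $z_j$, so the identity still applies), and simplifies the resulting Gamma-ratios with the duplication formula. For $P_n$ the paper also avoids orthogonality: it extracts from the pure-degree recurrence \eqref{P pure n rec exp wt} a recurrence \eqref{Recurrence Poly Coefs} for the coefficients $\mathcal{p}^{(r)}_{n,\ell}$, checks the candidate satisfies it, and invokes uniqueness. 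Your direct orthogonality verification buys a shorter, self-contained argument that bypasses the recurrence machinery entirely, and it does go through: for $Q_n$ the sum $\sum_k \mathcal{q}^{(r)}_{n,k} w_{2k-m+r}$ reduces (after duplication) to $\sum_k (-1)^k\binom{n}{k}\rho_m(k)$ where $\rho_m(k)=(\frac{r+1}{2})_k(\frac{r+2}{2})_k/\big((\frac{r-m+1}{2})_k(\frac{r-m+2}{2})_k\big)$ is a \emph{polynomial} in $k$ of degree $m$ (pair the parameters so the shifts are integers, treating $m$ even and odd separately), so it is killed by the $n$-th finite difference for $m<n$ — this is the correct mechanism, not Chu--Vandermonde as you name it. For $P_n$ your "finite-difference annihilation" is exactly Newton's backward-difference formula: writing $2^n(\frac{r-m'}{2})_n=\phi(r-m')$ with $\phi(r)=\prod_{j=0}^{n-1}(r+2j)$, one gets $\sum_\ell \mathcal{p}^{(r)}_{n,\ell}w_{2m+r-\ell}=\frac{(-1)^n}{(2m+r)!}\phi(-2m)$, which vanishes for $m=0,\dots,n-1$ and reproduces $h^{(r)}_n$ at $m=n$.

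Two caveats. First, your primary determinantal route for $P_n$ would stall: expanding \eqref{OP11} along the bottom row deletes a \emph{column}, leaving a minor with column indices $\{0,\dots,n\}\setminus\{\ell\}$, and \eqref{Normand 1} requires consecutive column indices — this asymmetry between $P$ and $Q$ is presumably why the paper resorts to the recurrence for $P_n$; so commit to the verification route there. Second, your orthogonality reduction has the Gamma argument reversed: by \eqref{Fourier Coeff} the condition is $\sum_{\ell}\mathcal{p}^{(r)}_{n,\ell}/\Gamma(2m+r-\ell+1)=0$, not $\sum_\ell \mathcal{p}^{(r)}_{n,\ell}/\Gamma(\ell-2m-r+1)=0$ (the latter is degenerately zero termwise for most indices). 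Both are fixable, and with them fixed the argument closes via the uniqueness statements of Theorem \ref{P and Q exist and are unique}.
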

\begin{proof}
Let us first prove the formula for $Q_n(z;r)$. Recalling the notations in \eqref{polys}, from \eqref{OP22} we have
\begin{equation}\label{qnl}
	D_{n}^{(r)} \mathcal{q}^{(r)}_{n,\ell}
	= (-1)^{n+\ell}  \det 
	\begin{pmatrix}
	w_{r} & w_{r-1}  & \cdots & w_{r-n+1}  \\
	\vdots & \vdots  & \vdots & \vdots  \\
	w_{r+2\ell-2} & w_{r+2\ell-3} & \cdots & w_{r+2\ell-n-1}  \\
	w_{r+2\ell+2} & w_{r+2\ell+1} & \cdots & w_{r+2\ell-n+3}  \\
	\vdots & \vdots  & \vdots & \vdots  \\
	w_{r+2n} & w_{r+2n-1}  & \cdots & w_{r+n+1}
	\end{pmatrix}
	 = (-1)^{n+\ell}\underset{\substack{0\leq k \leq n-1 \\ 0\leq j \leq n \\ j \neq \ell}}{\det}\left( \frac{1}{\Ga(1+r+2j-k)} \right).
\end{equation}
Again, let $z_j \equiv 1+r+2j$ and use \eqref{Normand 1} to obtain 
\begin{equation}\label{qnl1}
	D_{n}^{(r)} \mathcal{q}^{(r)}_{n,\ell} =  (-1)^{n+\ell} \frac{\underset{\substack{0\leq j<k\leq n \\ j,k\neq \ell}}{\prod} (z_k-z_j)}{\underset{\substack{0\leq j \leq n \\ j\neq \ell}}{\prod} \Ga(z_j)}.
\end{equation}
Notice that 
\begin{equation}
	\underset{\substack{0\leq j<k\leq n \\ j,k\neq \ell}}{\prod} (z_k-z_j) = \left( \prod_{j=0}^{\ell-2}\prod_{k=j+1}^{\ell-1} 2(k-j) \right) \left( \prod_{j=0}^{\ell-1}\prod_{k=\ell+1}^{n} 2(k-j) \right)\left( \prod_{j=\ell+1}^{n-1}\prod_{k=j+1}^{n} 2(k-j) \right).
\end{equation}
Straightforward calculation of each of the double products on the right hand side and simplifications yield
\begin{equation}\label{incomplete vandermonde}
	\underset{\substack{0\leq j<k\leq n \\ j,k\neq \ell}}{\prod} (z_k-z_j)
	 = 2^{n(n-1)/2}\frac{\prod_{j=1}^{n}j!}{\ell!(n-\ell)!}
	 = \frac{2^{\frac{n(n-1)}{2}}G(n+2)}{\Ga(\ell+1)\Ga(n-\ell+1)}.
\end{equation}
By combining  \eqref{D BarnesG}, \eqref{qnl1}, and \eqref{incomplete vandermonde} and performing simplifications we obtain
\begin{equation}\label{qnl2}
	 \mathcal{q}^{(r)}_{n,\ell} =  (-1)^{n+\ell}  \frac{4^{(\ell-n)}n!}{\ell! (n-\ell)!}\frac{\Ga(\ell+\frac{r+1}{2})\Ga(\ell+\frac{r+2}{2})}{\Ga(n+\frac{r+1}{2})\Ga(n+\frac{r+2}{2})}.
\end{equation}
If we write it in terms of rising factorials we get
\begin{equation}\label{qnl3}
	\mathcal{q}^{(r)}_{n,\ell} =  (-1)^{n}4^{(\ell-n)}
		\frac{(-n)_{\ell}\left(\frac{r+1}{2}\right)_{\ell}\left(\frac{r+2}{2}\right)_{\ell}}
		     {\ell!\left(\frac{r+1}{2}\right)_{n}\left(\frac{r+2}{2}\right)_{n}}.
\end{equation}
Combining this with \eqref{polys} yields 
\begin{equation}
	Q_{n}(z;r) = \frac{(-1)^n}{4^n \left(\frac{r+1}{2}\right)_{n}\left(\frac{r+2}{2}\right)_{n}}{}_3F_0(-n, \frac{r+1}{2}, \frac{r+2}{2};;4z).
\end{equation}
For $P_n(z;r)$ it is possible to provide a constructive proof, however it is simpler to justify that 
\begin{equation}\label{Pn exp weight}
\mathcal{p}^{(r)}_{n,\ell}= \frac{2^n}{\ell !} \sum_{m=0}^{\ell} (-1)^{n+\ell+m} \binom{\ell}{m} \left(\frac{r-m}{2}\right)_{n},	
\end{equation}
are the coefficients of the $P$-polynomial (see \eqref{polys}) associated to \eqref{weight exp}. Using the uniqueness of the orthogonal polynomials $P_n(z;r)$ (see \eqref{D BarnesG} and below), if the polynomial $P_n(z;r)$ defined by \eqref{polys} and \eqref{Pn exp weight} satisfies the recurrence relation \eqref{P pure n rec exp wt} it must be the only one, and this is exactly what we want to justify now. To this end, let us first extract a recurrence relation satisfied by the polynomial coefficients $\mathcal{p}^{(r)}_{n,\ell}$ by matching the coefficients of $z^{\ell+2}$ in \eqref{P pure n rec exp wt}:
\begin{equation}\label{Recurrence Poly Coefs}
	\mathcal{p}^{(r)}_{n+1,\ell}+(2n+2)\mathcal{p}^{(r)}_{n,\ell}=\mathcal{p}^{(r)}_{n+3,\ell+2}+(4n+2r+5)\mathcal{p}^{(r)}_{n+2,\ell+2}+(2n+r+1)(2n+r+2)\mathcal{p}^{(r)}_{n+1,\ell+2}.
\end{equation}
Combining \eqref{Pn exp weight} and \eqref{Recurrence Poly Coefs} we obtain
\begin{equation}\label{LHS}
	l.h.s.\eqref{Recurrence Poly Coefs} = (-1)^{n+\ell} \frac{2^n}{\ell !} \sum_{m=0}^{\ell} (-1)^m \binom{\ell}{m} \left[-2\left(\frac{r-m}{2}\right)_{n+1} + (2n+2)\left(\frac{r-m}{2}\right)_{n} \right],
\end{equation} 
and
\begin{equation}\label{RHS}
\begin{split}
r.h.s.\eqref{Recurrence Poly Coefs} & = (-1)^{n+\ell+1} \frac{2^{n+1}}{(\ell+2)!} \\ & \times \sum_{m=0}^{\ell+2} (-1)^m \binom{\ell+2}{m} \left[4\left(\frac{r-m}{2}\right)_{n+3}-2(4n+2r+5)\left(\frac{r-m}{2}\right)_{n+2}+(2n+r+1)(2n+r+2)\left(\frac{r-m}{2}\right)_{n+1}\right],
\end{split}
\end{equation} 
respectively for the left-hand side of \eqref{Recurrence Poly Coefs}, and for its right-hand side. Note that
\begin{equation}
	4\left(\frac{r-m}{2}\right)_{n+3} -2(4n+2r+5)\left(\frac{r-m}{2}\right)_{n+2}+(2n+r+1)(2n+r+2)\left(\frac{r-m}{2}\right)_{n+1} = m(m-1)\left(\frac{r-m}{2}\right)_{n+1},
\end{equation}
by straight-forward simplifications. Therefore \eqref{RHS} can be written as 
\begin{equation}\label{RHS1}
(-1)^{n+\ell+1} \frac{2^{n+1}}{(\ell+2)!} \sum_{m=2}^{\ell+2} (-1)^m \binom{\ell+2}{m}m(m-1)\left(\frac{r-m}{2}\right)_{n+1}.
\end{equation} 
Further simplification shows that \eqref{RHS1}, and thus \eqref{RHS}, can be written as
\begin{equation}\label{LHS RHS reduction}
	(-1)^{n+\ell+1} \frac{2^{n+1}}{\ell !} \sum_{m=0}^{\ell} (-1)^m \binom{\ell}{m} \left(\frac{r-m-2}{2}\right)_{n+1} \equiv \mathcal{p}^{(r-2)}_{n+1,\ell}.
\end{equation}
In a similar way, one can see that \eqref{LHS} also reduces to \eqref{LHS RHS reduction}. This shows that $\mathcal{p}^{(r)}_{n,\ell}$ given by \eqref{Pn exp weight} is the solution of \eqref{Recurrence Poly Coefs}, and thus finishes the proof. 
\end{proof}

\begin{remark} \normalfont
	Unlike $Q_n(z;r)$, the polynomials $P_n(z;r)$ associated with the weight $e^z$ can not be written as hypergeometric functions, however the coefficients $\mathcal{p}^{(r)}_{n,\ell}$ can be written as a difference of two hypergeometric functions. Indeed, by straight forward calculation after splitting the sum \eqref{Pn exp weight} into two parts over even and odd indices $m$, one arrives at
	\begin{equation}
\mathcal{p}^{(r)}_{n,\ell} = (-1)^{n+\ell}2^{n} \left\{ \frac{\left(\frac{r}{2}\right)_n}{\ell!} \pFq[10]{3}{2}{-\frac{\ell}{2},\frac{1-\ell}{2},1-\frac{r}{2}}{\frac{1}{2},1-\frac{r}{2}-n}{1} - \frac{\left(\frac{r-1}{2}\right)_n}{(\ell-1)!} \pFq[10]{3}{2}{\frac{1-\ell}{2},1-\frac{\ell}{2},\frac{3-r}{2}}{\frac{3}{2},\frac{3-r}{2}-n}{1} \right\}.
	\end{equation}
\end{remark}

\begin{remark} \normalfont
	There is yet another representation for the coefficients $\mathcal{p}^{(r)}_{n,\ell}$. Let us consider the backward difference operator $\nabla_t$ defined as
	\begin{equation}
		\nabla_t f(t) := f(t)-f(t-1).
	\end{equation}
	If we compose $\nabla_t$ with itself $n$ times, then it acts on $f$ as
	\begin{equation}
	\nabla^n_t f(t) = \sum_{m=0}^{n}(-1)^m \binom{n}{m}f(t-m), \qquad n=0,1,2,\cdots,
	\end{equation}
	with the convention that $\nabla^0_t$ is the identity operator.  Therefore 
	\begin{equation}
		\nabla^{\ell}_r \left(\frac{r}{2}\right)_n = \sum_{m=0}^{\ell}(-1)^m \binom{\ell}{m} \left(\frac{r-m}{2}\right)_n, \qquad n=0,1,2,\cdots,
	\end{equation}
	and thus
	\begin{equation}\label{pnl diff operator}
		\mathcal{p}^{(r)}_{n,\ell}= \frac{2^n}{\ell !} (-1)^{n+\ell} \nabla^{\ell}_r \left(\frac{r}{2}\right)_n, \qquad n=0,1,2,\cdots.
	\end{equation}
\end{remark}
\begin{remark} \normalfont
	From \eqref{Pn0r} and \eqref{delta exp wt} we can directly compute $\mathcal{p}^{(r)}_{n,0}$ associated to $w(z)=e^z$. Indeed
	\begin{equation}\label{pn0 1}
		\mathcal{p}^{(r)}_{n,0} = (-1)^n \prod^{n-1}_{\ell=0} (2\ell + r) = (-2)^n \prod^{n-1}_{\ell=0} (\frac{r}{2}+\ell) = (-2)^n  \left(\frac{r}{2}\right)_n,
	\end{equation}
    Moreover, from \eqref{delta exp wt} and \eqref{1st rec P}  one can write a non-homogeneous linear recurrence relation of order one for $\mathcal{p}^{(r)}_{n,\ell+1}$, and in particular for $\mathcal{p}^{(r)}_{n,1}$. Solving this recurrence relation (using, e.g., the variation of parameters method) one finds 
    \begin{equation}\label{pn1 1}
		\mathcal{p}^{(r)}_{n,1} = (-2)^n \left\{ \left(\frac{r-1}{2}\right)_n - \left(\frac{r}{2}\right)_n \right\}. 
	\end{equation} Notice that	both \eqref{pn0 1} and\eqref{pn1 1} are in agreement with the general formula \eqref{Pn exp weight} (or equivalently with \eqref{pnl diff operator}).
\end{remark}


\section{Prospects of future work}\label{Sec open Qs}

In our study of $2j-k$ and $j-2k$ bi-orthogonal polynomial systems on the unit circle 
we have concentrated on laying out the theory for generic class of weights and only considered the essential orthogonality structures. Even within this circumscribed area there are other important aspects that are yet to be investigated. Our purpose in this section is to bring the $2j-k$ and $j-2k$ systems to the attention of a wider audience of mathematicians by providing a (non-exhaustive) list of open problems which we believe are significant for the development of the theory of $2j-k$ and $j-2k$ systems, and consequently for the theory of $pj-qk$ systems, with relatively prime $p$ and $q$. One example of this appears to be the novel form of the joint density function for the $pj-qk$ systems which can be rewritten as the product of differences
\begin{equation*}
	\prod_{1\leq j<k \leq n}\left( \zeta^{q}_{k}-\zeta^{q}_{j} \right)\left( \zeta^{-p}_{k}-\zeta^{-p}_{j} \right)
	= 4^{n(n-1)/2} \prod_{j=1}^{n} \zeta^{\frac{1}{2}(q-p)(n-1)}_{j} \prod_{1\leq j<k \leq n} \sin\left( \tfrac{1}{2}q(\theta_k-\theta_j) \right) \sin\left( \tfrac{1}{2}p(\theta_k-\theta_j) \right) ,
\end{equation*}
and features a primary repulsion of $ \beta=2 $ as $ \theta_k-\theta_j \to 0 $ along with weaker repulsions of $ \beta=1 $ at as $ \theta_k-\theta_j \to \pm\frac{2}{p}\pi,\pm\frac{2}{q}\pi,\ldots $.
\color{black}

There has been a growing interest in recent years about the asymptotic aspects of structured moment determinants other than the well-known cases of Toeplitz and Hankel. Among those studies are asymptotics of Toeplitz+Hankel determinants \cite{DIK,GI}, and recently the bordered Toeplitz determinants\cite{BEGIL}. A successful Riemann-Hilbert formulation for $2j-k$ and $j-2k$ systems places them among the collection of structured moment determinants for which the large size asymptotics of the determinant and the large degree asymptotics of the corresponding systems of bi-orthogonal polynomials can be investigated. In this regard, the formulation of a ($3\times3$) Riemann-Hilbert problem corresponding to these bi-orthogonal polynomial systems, both as a means of founding the whole theory upon this and deriving key results but also to pave the way for a suitable Deift-Zhou analysis will be the topic of a future publication. At a later stage, the asymptotic description of these determinants can be investigated when the symbol $w$ is of Fisher-Hartwig type, similar to what has been done for Toeplitz \cite{DIK} ,  Hankel \cite{CHankelFH,CG}, and Muttalib-Borodin \cite{C MB FH} determinants. 
  
On a different but possibly related perspective, it would be interesting to search for possible Fredholm determinant representations for the $2j-k$ and $j-2k$ determinants in the same spirit as the Fredholm determinant representations for Toeplitz determinants which could also unveil a Riemann-Hilbert formulation (see \cite{Deift} where the connection was first found for Toeplitz determinants, and also \cite{ItsTracyWidom}). Also looking for the possible $2j-k/j-2k$ analogue of the Fredholm sine-kernel determinant representations for gap probabilities in random matrix theory seems to be another important front to be investigated (see e.g. \cite{Krasovsky} and references therein). Recalling  the last paragraph of \S \ref{Sec Intro}, it would be interesting to ask if from the viewpoint of Operator Theory, the analogue of the (strong) Szeg{\H o} limit theorem can be proven for the $2j-k/j-2k$ determinants (See. e.g. the monographs \cite{BS1,BS}). If this is plausible, we could hope for arriving at yet another example of close interaction between Operator Theory and  Riemann-Hilbert techniques.
  
What is mentioned above is only a short list and obviously the full list of $2j-k/j-2k$ analogues of Toeplitz theory can not be enumerated in full here, 
given the voluminous literature on the Toeplitz side. 
Nevertheless we point out some other obvious candidates:    
Rational approximations to the Carath{\'e}odory functions with a two-point ($z=0$ and $z=\infty$) Hermite-Pad\'e approximation of the associated functions;  
Quadrature problems on the unit circle and Christoffel weights;
The elucidation of an analogue of the CMV matrix\cite{CMV_2003} in the Toeplitz case, i.e. the minimum banded representation of the spectral multiplication operator;
and the analogue of the discrete Fredholm determinant arising from the scattering theory approach to the matrix difference system for the Toeplitz system,
commonly known as the Geronimo-Case-Borodin-Okounkov identity \cite{GC_1979, BO_2000}.

Tasks which we envisage completing progressively include the representation of the derivatives of the polynomials and associated functions in terms
of the fore-mentioned bases. In the context of semi-classical weights this will provide the pair of Lax operators of the integrable system.
We anticipate making progress on the question raised in \cite{ABP+_2014} 
\begin{quote}{"It would be very interesting to determine whether or not there exists a differential equation arising from our formula (5) which plays a role for symplectic and orthogonal types that Painlev\'e III plays for unitary symmetry".}
\end{quote}

\section*{Acknowledgements}
We gratefully acknowledge the American Institute of Mathematics, San Jose, California for the invitation and funding to attend the workshop
{\it Painlev\'e equations and their applications}, 6-10 February, 2017 where this project was initiated. Also we would like to thank Christophe Charlier and Torsten Ehrhardt for their useful suggestions and comments.

\newpage
\section*{List of Symbols}\label{Sec list of symbols}
 
 \footnotesize
\begin{tabular}{lll}
	\textbf{Symbol} & \textbf{Description} & \textbf{Definition} \\
$w_k$ & the $k$-th Fourier coefficient of $w(z)$, $k \in \Z$ & \eqref{Fourier Coeff}\\  
	$\boldsymbol{\mathscr{D}}_r(z,\mathcal{z})$ & The $(n+3)\times(n+3)$ master matrix of $2j-k$ structure with offset $r\in \Z$ &  \eqref{DDD}, \eqref{DDEE}\\
	 $\mathscr{D}_r(z,\mathcal{z})$ & The determinant of $\boldsymbol{\mathscr{D}}_r(z,\mathcal{z})$ & \\
	$\boldsymbol{\mathscr{E}}_s(z,\mathcal{z})$ & The $(n+3)\times(n+3)$  master matrix of $j-2k$ structure with offset $s\in \Z$ &  \eqref{EEE}, \eqref{DDEE}\\
	 $\mathscr{E}_s(z,\mathcal{z})$ & The determinant of $\boldsymbol{\mathscr{E}}_s(z,\mathcal{z})$ & \\
	 $\boldsymbol{D}^{(r)}_{n}$ & The $n\times n$ matrix of $2j-k$ structure with offset $r \in \Z$ & \\
	 	 $D^{(r)}_{n}$ & The determinant of $\boldsymbol{D}^{(r)}_{n}$ &  \eqref{Det}\\
	  $\boldsymbol{E}^{(s)}_{n}$ & The $n\times n$ matrix of $j-2k$ structure with offset $s \in \Z$ & \\
	 $E^{(s)}_{n}$ & The determinant of $\boldsymbol{E}^{(s)}_{n}$ & \eqref{Det E}\\
	 $\mathcal{D}_n[f(\ze)]$ & The $2j-k$ multiple integral with weight $f$ & \eqref{DD} \\
 	 $\mathcal{E}_n[f(\ze)]$ & The $j-2k$ multiple integral with weight $f$ & \eqref{EE} \\
 	 $P_{n}(z;r)$ & The monic $2j-k$ polynomial of the first kind with offset $r \in \Z$ of degree $n$ & \eqref{OP11} \\
   	 $Q_{n}(z;r)$ & The monic $2j-k$ polynomial of the second kind with offset $r \in \Z$ of degree $n$ & \eqref{OP22} \\
 	 $R_{n}(z;s)$ & The monic $j-2k$ polynomial of the first kind with offset $s \in \Z$ of degree $n$ & \eqref{OP11 R} \\
     $S_{n}(z;s)$ & The monic $j-2k$ polynomial of the second kind with offset $s \in \Z$ of degree $n$ & \eqref{OP22 S} \\
   	 $h^{(r)}_{n} $ & The norm of $2j-k$ bi-orthogonal polynomials & \eqref{h}, \eqref{PQorth}-\eqref{OP2}\\
   	 $g^{(s)}_{n} $ & The norm of $j-2k$ bi-orthogonal polynomials& \eqref{H}, \eqref{RSorth}-\eqref{OP2 S}\\
   	 $\boldsymbol{Z}_n(z)$ & The $(n+1)$-vector of monomials  of degrees zero to $n$ & \eqref{Vectors}\\
   	 $\boldsymbol{P}_{n}(z;r)$  & 	The $(n+1)$-vector of $P$-polynomials of degrees zero to $n$ & \eqref{Vectors}\\
   	 $\boldsymbol{Q}_{n}(z;r)$  & 	The $(n+1)$-vector of $Q$-polynomials of degrees zero to $n$ & \eqref{Vectors}\\ 
   	 $\boldsymbol{R}_{n}(z;s)$  & 	The $(n+1)$-vector of $R$-polynomials of degrees zero to $n$ & \eqref{Vectors}\\ 
   	 $\boldsymbol{S}_{n}(z;s)$  & 	The $(n+1)$-vector of $S$-polynomials of degrees zero to $n$ & \eqref{Vectors}\\
   	 $\boldsymbol{h}^{(r)}_{n}$ & The $(n+1)\times(n+1)$ diagonal matrix of $2j-k$ norms & \eqref{h&H diag} \\ 
   	 $\boldsymbol{g}^{(s)}_{n}$ & The $(n+1)\times(n+1)$ diagonal matrix of $j-2k$ norms & \eqref{h&H diag} \\
   	 $\boldsymbol{\mathcal{P}}_{n}^{(r)}$ and $\boldsymbol{\mathcal{Q}}_{n}^{(r)}$ & the $(n+1)\times(n+1)$ lower triangular matrices in the LDU decomposition of $\boldsymbol{D}^{(r)}_{n+1}$ & \eqref{A B}  	\\
 	 $\boldsymbol{\mathcal{R}}_{n}^{(s)}$ and $\boldsymbol{\mathcal{S}}_{n}^{(s)}$ & the $(n+1)\times(n+1)$ lower triangular matrices in the LDU decomposition of $\boldsymbol{E}^{(s)}_{n+1}$ & \eqref{C G}    \\
 	 $p^*(z)$ 	 & The reciprocal polynomial associated to the polynomial $p(z)$ & \eqref{star} \\ 
 	 $\mathcal{p}^{(r)}_{n,\ell}$ & The coefficient of $z^{\ell}$ in $P_{n}(z;r)$ & \eqref{polys} \\
 	  	 $\mathcal{q}^{(r)}_{n,\ell}$ & The coefficient of $z^{\ell}$ in $Q_{n}(z;r)$ & \eqref{polys} \\
 	  	  	 $\mathcal{r}^{(s)}_{n,\ell}$ & The coefficient of $z^{\ell}$ in $R_{n}(z;s)$ & \eqref{polys} \\
 	  	  	  	 $\mathcal{s}^{(s)}_{n,\ell}$ & The coefficient of $z^{\ell}$ in $S_{n}(z;s)$ & \eqref{polys} \\
 	 $K_{n}(z,\mathcal{z};r) $ &  The reproducing Kernel of the $2j-k$ bi-orthogonal polynomials & \eqref{RepKer3} 	\\
 	 $L_{n}(z,\mathcal{z};s) $ &  The reproducing Kernel of the $j-2k$ bi-orthogonal polynomials & \eqref{RepKer3 j-2k} 	     \\
 	 $\de^{(r)}_{n}$ and $\eta^{(r)}_{n}$ & The recurrence coefficients associated to $P_n(z;r)$ & \eqref{delta}, \eqref{eta} \\	 	  	 
 	 $\be^{(r)}_{n}$ and $\al^{(r)}_{n}$ & The recurrence coefficients associated to $Q_n(z;r)$ & \eqref{beta}, \eqref{alpha} \\
 	  	 $\varkappa^{(s)}_{n}$ and $\rho^{(s)}_{n}$ & The recurrence coefficients associated to $R_n(z;s)$ & \eqref{kappa}, \eqref{rho} \\	 	  	 
 	 $\ga^{(s)}_{n}$ and $\theta^{(s)}_{n}$ & The recurrence coefficients associated to $S_n(z;s)$ & \eqref{gamma}, \eqref{theta} \\	
 	 $\mathscr{P}^{(r)}_n$ & The basis for polynomials of degree at most $n$ consisting of $P_m(z;r)$, $m=0,1,\cdots,n$.  & \eqref{PolyCollections} \\
 	  	 $\mathscr{Q}^{(r)}_n$ & The basis for polynomials of degree at most $n$ consisting of $Q_m(z;r)$, $m=0,1,\cdots,n$.  & \eqref{PolyCollections} \\
 	  	  	 $\mathscr{R}^{(s)}_n$ & The basis for polynomials of degree at most $n$ consisting of $R_m(z;s)$, $m=0,1,\cdots,n$.  & \eqref{PolyCollections} \\
 	  	  	  	 $\mathscr{S}^{(s)}_n$ & The basis for polynomials of degree at most $n$ consisting of $S_m(z;s)$, $m=0,1,\cdots,n$.  & \eqref{PolyCollections} \\
 	  	  	  	 $\widecheck{P}_n(z;r)$ and $\widehat{P}_n(z;r)$ & Associated functions corresponding to $P_n(z;r)$ & \eqref{Pcirc}, \eqref{Phat}\\
 	  	  	  	 $\widecheck{Q}_n(z;r)$ and $\widehat{Q}_n(z;r)$ & Associated functions corresponding to $Q_n(z;r)$ & \eqref{Qcirc}, \eqref{Qhat}\\
 	  	         $\widecheck{R}_n(z;s)$ and $\widehat{R}_n(z;s)$ & Associated functions corresponding to $R_n(z;s)$ & \eqref{Rcirc}, \eqref{Rhat}\\
                 $\widecheck{S}_n(z;s)$ and $\widehat{S}_n(z;s)$ & Associated functions corresponding to $S_n(z;s)$ & \eqref{Scirc}, \eqref{Shat}\\  
                 $F_1$ and $F_2$ & 	  Carath{\'e}odory functions & \eqref{F1}, \eqref{F2}  	\\
                 $\mathcal{L}_1$ & Linear difference operator annihilating $P_n(z;r)$ & \eqref{L1}\\
                 $\mathcal{L}_2$ & Linear difference operator annihilating $Q_n(z^{-2};r)$ & \eqref{L2}\\
                 $\mathcal{L}_3$ & Linear difference operator annihilating $R_n(z^2;s)$ & \eqref{L3}\\
                 $\mathcal{L}_4$ & Linear difference operator annihilating $S_n(z^{-1};r)$ & \eqref{L4}\\
                 $\boldsymbol{\mathfrak{P}}_n(z;r)$ & Casorati matrix annihilated by $\mathcal{L}_1$ & \eqref{Cas P}  	 \\
                 $\boldsymbol{\mathfrak{Q}}_n(z;r)$ & Casorati matrix annihilated by $\mathcal{L}_2$ & \eqref{Cas Q}  	 \\
                 $\boldsymbol{\mathfrak{R}}_n(z;s)$ & Casorati matrix annihilated by $\mathcal{L}_3$ & \eqref{Cas R}  	 \\
                 $\boldsymbol{\mathfrak{S}}_n(z;s)$ & Casorati matrix annihilated by $\mathcal{L}_4$ & \eqref{Cas S}  	 \\
\end{tabular}

\color{black}
\normalsize

\bibliographystyle{plain}
\def\cprime{$'$} \def\cprime{$'$} \def\cprime{$'$} \def\cprime{$'$}

\end{document}